\let\OLDthebibliography\thebibliography
\renewcommand\thebibliography[1]{
	\OLDthebibliography{#1}
	\setlength{\parskip}{0pt}
	\setlength{\itemsep}{2pt} 
}
\theoremstyle{definition}
\newtheorem{df}{Definition}[section]
\newtheorem{eg}[df]{Example}
\newtheorem{rem}[df]{Remark}
\newtheorem{ass}[df]{Assumption}
\newtheorem{cv}[df]{Convention}
\newtheorem{thma}[df]{Theorem}
\theoremstyle{plain}
\newtheorem{thm}[df]{Theorem}
\newtheorem{pp}[df]{Proposition}
\newtheorem{co}[df]{Corollary}
\newtheorem{lm}[df]{Lemma}
\newtheorem{Mthm}{Theorem}
\newcommand{\fk}{\mathfrak}
\newcommand{\mc}{\mathcal}
\newcommand{\wtd}{\widetilde}
\newcommand{\wht}{\widehat}
\newcommand{\wch}{\widecheck}
\newcommand{\ovl}{\overline}
\newcommand{\End}{\mathrm{End}} 
\newcommand{\id}{\mathbf{1}}
\newcommand{\Hom}{\mathrm{Hom}}
\newcommand{\Conf}{\mathrm{Conf}}
\newcommand{\Res}{\mathrm{Res}}
\newcommand{\Rep}{\mathrm{Rep}}
\newcommand{\bk}[1]{\langle {#1}\rangle}
\newcommand{\scr}{\mathscr}
\newcommand{\im}{\mathbf{i}}
\newcommand{\SX}{{S_{\fk X}}}
\newcommand{\mbb}{\mathbb}
\newcommand{\mbf}{\mathbf}
\newcommand{\bsb}{\boldsymbol}
\newcommand{\blt}{\bullet}
\newcommand{\Vbb}{\mathbb V}
\newcommand{\Ubb}{\mathbb U}
\newcommand{\Xbb}{\mathbb X}
\newcommand{\Wbb}{\mathbb W}
\newcommand{\Mbb}{\mathbb M}
\newcommand{\Cbb}{\mathbb C}
\newcommand{\Nbb}{\mathbb N}
\newcommand{\Zbb}{\mathbb Z}
\newcommand{\Pbb}{\mathbb P}
\newcommand{\Rbb}{\mathbb R}
\newcommand{\cbf}{\mathbf c}
\newcommand{\wt}{\mathrm{wt}}
\newcommand{\Sbf}{\mathbf{S}}
\newcommand{\Nbf}{\mathbf N}
\newcommand{\pr}{\mathrm {pr}}
\newcommand{\vbf}{\mathbf v}
\newcommand{\wbf}{\mathbf w}
\newcommand{\CB}{\mathrm{CB}}
\newcommand{\Perm}{\mathrm{Perm}}
\newcommand{\Orb}{\mathrm{Orb}}
\newcommand{\Lss}{{L_{0,\mathrm{s}}}}
\newcommand{\Lni}{{L_{0,\mathrm{n}}}}
\newcommand{\tipaz}{\text{\textctyogh}}
\newcommand{\tipaomega}{\text{\textcloseomega}}
\newcommand{\tipae}{\text{\textrhookschwa}}
\newcommand{\tipak}{\text{\texthtk}}
\newcommand{\tipxgamma}{\text{\textfrtailgamma}}
\newcommand{\tipxcc}{\text{\textctstretchc}}
\newcommand{\tipxphi}{\text{\textqplig}}
\numberwithin{equation}{section}
\title{Genus-zero Permutation-twisted Conformal Blocks for Tensor Product Vertex Operator Algebras: The Tensor-factorizable Case}
\author{{\sc Bin Gui}
}
\date{}
\begin{document}\sloppy 
	\pagenumbering{arabic}
	\setcounter{section}{-1}

	\maketitle

\newcommand\blfootnote[1]{%
	\begingroup
	\renewcommand\thefootnote{}\footnote{#1}%
	\addtocounter{footnote}{-1}%
	\endgroup
}


\begin{abstract}
Let $\Vbb=\bigoplus_{n\in\Nbb}\Vbb(n)$ be a  vertex operator algebra (VOA),  let $E$ be a finite set, and let $G$ be a subgroup of the permutation group $\Perm(E)$ which acts on $\Vbb^{\otimes E}=\bigotimes_{e\in E}\Vbb$ in a natural way. For each $g\in G$, the $g$-twisted $\Vbb^{\otimes E}$-modules were first constructed and characterized in \cite{BDM02} when $g\curvearrowright E$ has only one orbit, i.e., $E=\bk{g}e$ for some $e\in E$. In general, if $E$ is a disjoint union of several $\bk{g}$-orbits $E=E_1\sqcup\cdots\sqcup E_k$, and if for each orbit $E_i$  one chooses a $g$-twisted $\Vbb^{\otimes E_i}$-module $\mc W_i$, then $\mc W=\mc W_1\otimes\cdots\otimes\mc W_k$ is a $g$-twisted $\Vbb^{\otimes E}$-module. A direct sum of such $\mc W$ is called  a \textbf{$\otimes$-factorizable} $g$-twisted $\Vbb^{\otimes E}$-module. It is known that  all $g$-twisted modules are $\otimes$-factorizable if $\Vbb$ is rational \cite{BDM02}.

In this article, we use the main result of \cite{Gui24b} to construct an explicit isomorphism from the space of genus-$0$ conformal blocks associated to the  $G$-twisted $\Vbb^{\otimes E}$-modules (i.e., $g$-twisted $\Vbb^{\otimes E}$-modules for some $g\in G$) that are $\otimes$-factorizable to the space of conformal blocks associated to the untwisted $\Vbb$-modules and a branched covering $C$ of the Riemann sphere $\Pbb^1$. When $\Vbb$ is CFT-type, $C_2$-cofinite, and rational, we use the above result, the (untwisted) factorization property \cite{DGT22}, and the Riemann-Hurwitz formula to completely determine the fusion rules among $G$-twisted $\Vbb^{\otimes E}$-modules. 

Furthermore, assuming $\Vbb$ is as above, we prove that the sewing/factorization of genus-$0$ $G$-twisted $\Vbb^{\otimes E}$-conformal blocks holds, and corresponds to the sewing/factorization of  untwisted $\Vbb$-conformal blocks associated to the branched coverings of $\Pbb^1$. This proves, in particular, the operator product expansion (i.e., associativity) of $G$-twisted $\Vbb^{\otimes E}$-intertwining operators (a key ingredient of the $G$-crossed braided tensor category $\Rep^G(\Vbb^{\otimes E})$ of the $G$-twisted $\Vbb^{\otimes E}$-modules) without assuming that the fixed point subalgebra $(\Vbb^{\otimes E})^G$ is $C_2$-cofinite (and rational), a condition known so far only when $G$ is solvable and remains a conjecture in the general case. More importantly, this result implies that besides the fusion rules, the associativity isomorphism of $\Rep^G(\Vbb^{\otimes E})$ is also characterized by the higher genus data of untwisted $\Vbb$-conformal blocks, which gives a new insight into the category $\Rep^G(\Vbb^{\otimes E})$.

We also discuss the applications to conformal nets, which are indeed the original motivations for the author to study the subject of this paper.
\end{abstract}

\newpage
\tableofcontents





	
	

	

\newpage

\section{Introduction}

\subsection{Motivations from fusion rule calculations}
An important problem that has long attracted people in orbifold conformal field theory is the following: given a nice (say, CFT-type, $C_2$-cofinite,  rational) vertex operator algebra (VOA) $\Vbb$ (or a completely rational conformal net $\mc A$) with a finite automorphism group $G$, once we know the tensor category $\Rep(\Vbb)$ of untwisted $\Vbb$-modules, what do we know about   the category $\Rep^G(\Vbb)$ of $G$-twisted $\Vbb$-modules?

The most studied examples of this problem are permutation orbifold VOAs, namely, the permutation action of $G$ on $\Ubb=\Vbb^{\otimes E}\equiv\bigotimes_{e\in E}\Vbb$ where $E$ is a finite set and $G$ is a subgroup of the permutation group $\Perm(E)$, cf. Def. \ref{lb72}. As shown in \cite{BHS98,BDM02} (cf. \cite{LX04,KLX05} for the conformal net version), all $G$-twisted $\Ubb$-modules (i.e., $g$-twisted $\Ubb$-modules for some $g\in G$) can be explicitly constructed from (untwisted) $\Vbb$-modules. The $n$-fold covering maps $z\mapsto z^n$ for $\Pbb^1$ (for VOAs) or $S^1$ (for conformal nets) play a central role in these constructions. Recently, \cite{DXY22} gave a construction using Zhu's algebras.

\subsubsection*{Fusion rules}

Fusion rules among $G$-twisted $\Ubb$-modules have also been investigated by physicists \cite{BHS98,Ban98,Ban02} and mathematicians. On the mathematics side, the fusion rules among $\Zbb_2$-twisted modules of $\Vbb^{\otimes 2}$ or $\mc A^{\otimes 2}$, or among two twisted modules and an untwisted one, have been completely determined in \cite{LX04,KLX05,DLXY24}. It turns out that the structure theory of completely rational conformal nets relies essentially on the idea of  cyclic permutation orbifolds \cite{KLM01,LX04}.

On the abstract tensor category/modular functor level, the computation of fusion rules of $\Rep^G(\Ubb)$ (or any modular category) is more or less complete. \cite{BS11} used topological methods to construct a $G$-crossed  braided weakly fusion category $\scr C$ extending the Deligne product $\Rep(\Vbb)^{\boxtimes E}\simeq\Rep(\Ubb)$;  the fusion rules of $\scr C$ can be described easily in terms of the higher genus fusion rules of $\Rep(\Vbb)$. \cite{BS11} gives us a hint on what the fusion ring of $\Rep^G(\Ubb)$ looks like: If the expected property that $\scr C$ is rigid were proved (\cite{BS11} proved this only for the $\Zbb_2$-permutation), then a result of \cite{ENO10} would imply that $\scr C$ and $\Rep^G(\Ubb)$ have equivalent fusion rings, even though they are not necessarily equivalent as fusion categories (cf. \cite{Bis20,EG18}).
More recently, \cite{BJ19} and \cite{Del19} provided rigorous and complete algorithms for computing the fusion ring of $\Rep^G(\Ubb)$ (or any (spherical) crossed extension of a modular category). In particular, in the case that $G$ is generated by a one cycle permutation, \cite{BJ19} explicitly calculated the fusion rings, which agree with \cite{BS11}.

However, the important progress mentioned in the previous paragraph does not mean that the task of computing permutation fusion rules for VOAs or conformal nets is complete. Indeed,  when applied to the VOA/conformal net context, the above categorical results do not directly give us the fusion rule among three twisted $\Vbb$- or $\mc A$-modules constructed explicitly in \cite{BDM02,LX04,KLX05}.  In other words, \textit{we still need to identify the objects constructed categorically in \cite{BS11,BJ19,Del19} with those constructed explicitly in the VOA/conformal net context}. On the other hand, the objects in \cite{LX04,KLX05,DLXY24} are explicit, but their results on the fusion rules are  far from complete even in the special case of cyclic permutations.

A main goal of this paper is to completely determine the fusion rules among $G$-twisted $\Vbb^{\otimes E}$-modules (as constructed in \cite{BDM02}) in terms of the fusion rules among untwisted $\Vbb$-modules. Here,  $\Vbb$ is CFT-type, $C_2$-cofinite, and rational. See Cor. \ref{lb50} for the result.\footnote{Our method is geometric. After this paper was finished, an algebraic computation of the fusion rules among $G$-twisted modules was given in \cite{DXY24} in terms of the $S$-matrix of $\Vbb$, where $G$ is a cyclic group.}

\subsubsection*{Why higher genus data appear in the fusion rules}

We should do more than just calculate the fusion rules. The (genus-$0$) fusion rules describe the dimensions of the spaces of conformal blocks associated to $\Pbb^1$ with three marked points. However, the expression of permutation-twisted fusion rules (as calculated in the previously mentioned literature) involves dimensions of untwisted conformal blocks on higher genus Riemann surfaces with arbitrary numbers of marked points. \textit{We should develop a theory that explains this higher genus phenomenon; the complete computation of permutation-twisted fusion rules should follow as a consequence.}

A rough explanation of this phenomenon is this: the permutation-twisted conformal blocks associated to a pointed compact Riemann surface $X$ correspond to untwisted conformal blocks associated to a branched covering of $X$. Bantay already noticed this fact in \cite{Ban98,Ban02}. He used this idea to compute the modular data of $\Rep^G(\Ubb)$, and to calculate the fusion rules indirectly using these modular data. But the coverings used by Bantay are mainly the unbranched coverings of tori, which are also tori. From Bantay's work, it is not clear what the correct branched coverings (called the \textbf{permutation coverings} in this article) are in general.

In \cite{BS11}, Barmeier and Schweigert have made important progress on this problem by constructing the \textit{topological} permutation coverings. Thus, to complete the story, we should not only equip the permutation covering $C$ with a complex structure (which is a standard process), but also determine the locations and the local coordiates of the marked points, and find the correspondence between these marked points and the untwisted $\Vbb$-modules. This task is nontrivial, and we will briefly explain our answer in the following section of the Introduction. 

\subsection{Main result: the twisted/untwisted correspondence}

\subsubsection*{Positively $N$-pathed Riemann spheres with local coordinates}

We are mainly interested in the case that the (complex analytic) permutation covering $C$ is the branched covering of $\Pbb^1$, but in fact many discussions also apply to other compact Riemann surfaces. 

To determine the covering $\varphi:C\rightarrow\Pbb^1$ and the marked points of $C$ with the correct labels, we need to add not only distinct marked points $x_1,\dots,x_N\in C$ and local coordinates $\eta_j$ at each $x_j$ (i.e., an analytic injective function from a neighborhood $W_j$ of $x_j$ to $\Cbb$ sending $x_j$ to $0$), but also paths $\upgamma_1,\dots,\upgamma_N$ in $\Pbb^1\setminus\Sbf$ (where $\Sbf=\{x_1,\dots,x_N\}$) with common end point $\upgamma_\blt(1)$. We assume that  each neighborhood $W_j$ is an open disc contain only $x_j$ among $x_1,\dots,x_N$. We assume that $\upgamma_j(0)$ is in the punctured disc $W_j\setminus\{x_j\}$ and satisfies
\begin{align}
\eta_j(\upgamma_j(0))>0.\label{eq101}	
\end{align}
Then the data
\begin{align}
\fk P=(\Pbb^1;x_1,\dots,x_N;\eta_1,\dots,\eta_N;\upgamma_1,\dots,\upgamma_N)=(\Pbb_1;x_\blt;\eta_\blt;\upgamma_\blt)\label{eq102}
\end{align}
is called a \textbf{positively $N$-pathed Riemann sphere with local coordinates}. See Figure \ref{fig1} for an example.
\begin{figure}[h]
	\centering
	\includegraphics[width=0.3\linewidth]{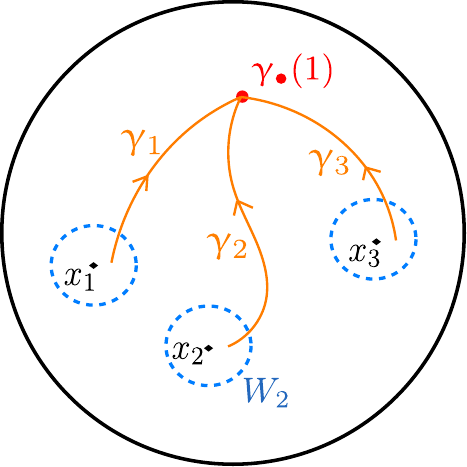}
	\caption{. A positively $3$-pathed Riemann sphere with local coordinates}
	\label{fig1}
\end{figure}

Indeed, in the main body of this article, we assume one more condition on  $\fk P$ for the sake of convenience. The fundamental group
\begin{align*}
\Gamma=\pi_1(\Pbb^1\setminus\Sbf,\upgamma_\blt(1))	
\end{align*}
is free with $N-1$ free generators. For each $j$, let $\upepsilon_j$ be an anticlockwise circle in $W_j\setminus\{x_j\}$ from and to $\upgamma_j(0)$, and let
\begin{align*}
\upalpha_j=\upgamma_j^{-1}\upepsilon_j\upgamma_j.	
\end{align*}
Our assumption is that $\Gamma$ is generated by the homotopy classes $[\upalpha_1],\dots,[\upalpha_N]$. Thus, the monodromy on $\Pbb^1\setminus\Sbf$ is determined by that around the marked points.

Now, fix a homomorphism $\Gamma\rightarrow\Perm(E)$ (where $\Perm(E)$ is the group of bijections of $E$), namely, a group action of $\Gamma$ on the finite set $E$. We let $g_j$ be the action of $[\upalpha_j]$. Then these $g_\blt$ determine the action of $\Gamma$, and we call the $g_\blt$ arising from the actions of $\Gamma$ \textit{admissible}  (with respect to $\fk P$). These group elements determine the types of twisted modules associated to the marked points.

\subsubsection*{Conformal blocks}\label{lb67}

Let us pause for a moment and discuss the meaning of conformal blocks. This will also motivate the definition of permutation coverings. \cite{FB04} defined an (infinite rank) vector bundle $\scr U_C$ (called \textbf{sheaf of VOA} for $\Ubb$ in this article) over any Riemann surface $C$ whose fibers are equivalent to the VOA $\Ubb$, and whose transition functions are described by Huang's change of coordinate formula \cite{Hua97}. If we associate untwisted $\Ubb$-modules $\mc W_1,\dots,\mc W_N$ to the marked points $x_1,\dots,x_N$, then a conformal block was defined by \cite{FB04} to be a linear functional $\uppsi:\mc W_\blt=\mc W_1\otimes\cdots\otimes\mc W_N\rightarrow\Cbb$ such that for each $w_\blt=w_1\otimes\cdots\otimes w_N$, the following condition holds: for all $j$, the expressions 
\begin{align}
u\in\Ubb\mapsto\uppsi(w_1\otimes\cdots\otimes Y(u,z)w_j\otimes\cdots\otimes w_N)	\label{eq100}
\end{align}
converge absolutely as a series of $z$ when $|z|$ is reasonably large, and (assuming the a trivialization $\mc U_\varrho(\eta_j):\scr U_{W_j}\xrightarrow{\simeq}\Ubb\otimes_\Cbb\scr O_{W_i}$ defined by the local coordinate $\eta_j$) can be extended to the same $\scr O_{\Pbb^1\setminus\Sbf}$-module morphism $\wr\uppsi(\cdot,w_\blt):\scr U_{\Pbb^1\setminus\Sbf}\rightarrow\scr O_{\Pbb^1\setminus\Sbf}$. When $\Pbb^1$ is replaced by any compact Riemann surface, one can use the same definition to define conformal blocks. See Sec. \ref{lb28} for details.

We can see that in order  to define untwisted conformal blocks, we do not need to choose paths in pointed Riemann surfaces (with local coordinates). This is not true for untwisted conformal blocks: $Y(u,z)$ is now multivalued over $z$, so \eqref{eq100} can never be extended to the same morphism. 

The correct definition is as follows. Associate a $g_j$-twisted $\Ubb$-module $\mc W_j$ (whose vertex operation $Y^{g_j}$ is temporarily also denoted by $Y$) to the marked point $x_j$, where $g_1,\dots,g_N$ arise from an action $\Gamma\curvearrowright E$. Due to the positivity condition \eqref{eq101}, when $z$ is close to $\eta_j(\upgamma_j(0))$, we may uniquely determine $Y(u,z)w_j$ by the fact that $\arg z$ is close to $0$. Then we require that \eqref{eq100} converges absolutely when $z$ is near $\eta_j(\upgamma_j(0))$, that for different $j_1,j_2$, the expression \eqref{eq100} with $j=j_1$ can be analytically continued to \eqref{eq100} with $j=j_2$ along the path $\upgamma_{j_1}\upgamma_{j_2}^{-1}$, and  can furthermore be extended to a ``multivalued" morphism $\scr U_{\Pbb^1\setminus\Sbf}\rightarrow\scr O_{\Pbb^1\setminus\Sbf}$. Cf. Subsec. \ref{lb65}.

We remark that the above definition relies on the previously mentioned assumption that $\Gamma$ is generated by all $[\upalpha_\blt]$. If $\Pbb^1$ is replaced by a higher genus $X$, then this condition is never satisfied. In this case, we just need to add one more condition relying on the data $\Gamma\curvearrowright E$. We do not explain this condition in the Introduction, and refer the readers to Rem. \ref{lb64} for details.

\subsubsection*{Permutation coverings}

We now describe the permutation covering of $\fk P$ associated to the admissible elements $g_\blt$ (equivalently, associated to the action $\Gamma\curvearrowright E$). The permutation covering  $\varphi:C\rightarrow\Pbb^1$ is unbranched outside $\Sbf$, and is determined by the restriction $\varphi:C\setminus\varphi^{-1}(\Sbf)\rightarrow\Pbb^1\setminus\Sbf$. By algebraic topology, a (resp. finite) \textit{connected} covering of $\Pbb^1\setminus\Sbf$ is described by either of the following two equivalent objects: 
\begin{enumerate}[label=(\arabic*)]
\item A conjugacy class of (resp. cofinite) subgroups of the fundamental group $\Gamma$.
\item A transitive (i.e., single-orbit) action of $\Gamma$  on a (resp. finite) set.
\end{enumerate}
For our purpose, it is more convenient to use (2) to describe $\varphi:C\setminus\varphi^{-1}(\Sbf)\rightarrow\Pbb^1\setminus\Sbf$. Then each connected component $C^\Omega$ of $C\setminus\varphi^{-1}(\Sbf)$ corresponds to a $\Gamma$-orbit $\Omega$ in $E$. (So $\Omega$ is of the form $\Gamma e$ for some $e\in E$.) The precise description is the following elegant statement (cf. Thm. \ref{lb38}) :

\textit{There exists a $\Gamma$-covariant bijection $\Psi_{\upgamma_\blt(1)}:E\rightarrow\varphi^{-1}(\upgamma_\blt(1))$.} 

By ``\textbf{$\Gamma$-covariant}", we mean that for every $e\in E$ and every closed path in $\Pbb^1\setminus\Sbf$ from and to $\upgamma_\blt(1)$, the lift of $\upmu$ to $C\setminus\varphi^{-1}(\Sbf)$ ending at $\Psi_{\upgamma_\blt(1)}(e)$ must start from $\Psi_{\upgamma_\blt(1)}([\upmu]e)$. (See Def. \ref{lb66}.) Then any two branched coverings of $\Pbb^1$ with such $\Gamma$-covariant bijections are equivalent (Thm. \ref{lb29}).

The set of marked points of our permutation covering $C$ is just $\varphi^{-1}(\Sbf)$. The untwisted $\Vbb$-conformal blocks will be defined for $C$ and these marked points. In order to  associate the correct $\Vbb$-module to each marked point, it is important to \textit{label the marked points by certain orbits of $E$}. Note that $\varphi^{-1}(\Sbf)=\bigsqcup_{1\leq j\leq N}\varphi^{-1}(x_j)$. The the labeling of the elements of $\varphi^{-1}(x_j)$ is given by a bijection
\begin{align*}
\Upsilon_j:\{\bk{g_j}\text{-orbits in }E\}\longrightarrow \varphi^{-1}(x_j)
\end{align*}
described as follows. We abbreviate $\Upsilon_j$ as $\Upsilon$. 

First of all, for each path $\uplambda$ in $\Pbb^1\setminus\Sbf$ from a point $x$ to $\upgamma_\blt(1)$, we define a bijection
\begin{align*}
\Psi_\uplambda:E\rightarrow\varphi^{-1}(x)	
\end{align*}
sending each $e\in E$ to the initial point of the lift of $\uplambda$ to $C\setminus\varphi^{-1}(\Sbf)$ ending at $\Psi_{\upgamma_\blt(1)}(e)$. Recall that $W_j$ is an open disc centered at $x_j$ and contains $\upgamma_j(0)$. For each $\bk{g_j}$-orbit $\bk{g_j}e$, we can find a unique connected component $\wtd W_j$ of $\varphi^{-1}(W_j)$ whose intersection with $\varphi^{-1}(\upgamma_j(0))$ is exactly the set of points
\begin{align*}
\Psi_{\upgamma_j}\big(\bk{g_j}e\big):=\Big\{\Psi_{\upgamma_j}(g_j^ke):k\in\Zbb\Big\}	
\end{align*}
evenly located around the center $\wtd W_j\cap\varphi^{-1}(x_j)$  of $\wtd W_j$. (The set $\wtd W_j\cap\varphi^{-1}(x_j)$ indeed has only one element.) The size of $\Psi_{\upgamma_j}\big(\bk{g_j}e\big)$ equals the size $k=|\bk{g_j}e|$ of the orbit $\bk{g_j}e$ and also equals the branching index of $\varphi$ at $\wtd W_j\cap\varphi^{-1}(x_j)$. (See Prop. \ref{lb12}.) Then $\Upsilon(\bk{g_j}e)$  is defined to be the unique point of $\wtd W_j\cap\varphi^{-1}(x_j)$.

We see that the covering $\varphi:C\rightarrow\Pbb^1$ depends only $\Gamma\curvearrowright E$, but the labeling of the marked points of $C$ depends also on the paths $\upgamma_\blt$.

Finally, we choose local coordinates of $C$ at each marked point as follows. For each $\bk{g_j}$-orbit we fix a distinct point, called the \textbf{marked point} of that orbit. Let 
\begin{align*}
	E(g_j)=\{\text{marked points of $\bk{g_j}$-orbits}\}.
\end{align*}
(So $|E(g_j)|$ equals the number of $\bk{g_j}$-orbits in $E$.) Recall that $\eta_j$ is a local coordinate defined on $W_j$ sending $x_j$ to $0$. For each marked point $\Upsilon(\bk{g_j}\tipae)$ of $C$ (where $\tipae\in E(g_j)$) which is contained in a unique connected component $\wtd W_{j,\tipae}$ of $\varphi^{-1}(W_j)$, there are $k=|\bk{g_j}\tipae|$ different injective analytic functions on $\wtd W_{j,\tipae}$ whose $k$-th power equals $\eta_j\circ\varphi$. The local coordinate we choose at $\Upsilon(\bk{g_j}\tipae)$ and denote by $\wtd\eta_{j,\tipae}$ is the one of them that satisfies
\begin{align*}
\wtd\eta_{j,\tipae}\big(\Psi_{\upgamma_j}(\tipae)\big)>0,	
\end{align*}
which exists because of $\Psi_{\upgamma_j}(\tipae)\in\varphi^{-1}(\upgamma_j(0))$ and the positivity condition \eqref{eq101}.

The above branched covering $\varphi:C\rightarrow\Pbb^1$ (or just $C$), together with the labeled marked points and local coordinates, is denoted by $\fk X$ and called the \textbf{permutation covering} of  $\fk P$ (see \eqref{eq102}) associated to the action $\Gamma\curvearrowright E$ and the set $E(g_\blt)$ of marked points of $\bk{g_\blt}$-orbits.

\subsubsection*{The correspondence of twisted/untwisted conformal blocks}




For each $\bk{g_j}$-orbit $\bk{g_j}\tipae$ in $E$ (where $\tipae\in E(g_j)$), we choose a $\Vbb$-module $\Wbb_{j,\tipae}$. Then by \cite{BDM02}, the vector space $\mc W_j=\otimes_{\tipae\in E(g_j)}\Wbb_{j,\tipae}$ is naturally equipped with a $g_j$-twisted $\Ubb$-module structure (see also Subsec. \ref{lb23}). A direct sum of such modules is called a \textbf{$\otimes$-factorizable} $g_j$-twisted $\Ubb$-module. If $\Vbb$ is rational, then any $g_j$-twisted $\Ubb$-module is $\otimes$-factorizable, cf. \cite[Thm. 6.4]{BDM02}.

It should be reminded that this twisted module structure depends not only on the $\bk{g_j}$-orbits but also on their marked points. For example, when $E=\{1,\dots,n\}$ and $g_j$ is the cycle $g=(12\cdots n)$, then for each $u=v_1\otimes\cdots \otimes v_n\in\Vbb^{\otimes n}$, setting $\tipaomega_n=e^{-2\pi\im/n}$, the twisted vertex operation $Y^g(u,z)$ at $z=1$ (with $\arg z=0$) is expressed by the untwisted ones $Y(\square v_i,\tipaomega_n^{i-1})$ (for all $i$) where $\square$ is a suitable operator. This corresponds to the marked point $1$. If we choose another marked point $l$, then $Y^g(u,1)$ should be expressed by all  $Y(\square v_i,\tipaomega_n^{i-l})$. There is no reason to assume that $1$ is superior than any other marked point.

We associate each $\Vbb$-module $\Wbb_{j,\tipae}$ to the marked point $\Upsilon(\bk{g_j}\tipae)$ of $\fk X$, and associate the twisted $\Ubb$-module $\mc W_j$ to the marked point $x_j$ of $\fk P$. Then, our first main result is:
\begin{Mthm}(Cf. Thm. \ref{lb14})\label{lb69}
A linear functional
\begin{align*}
\uppsi:\bigotimes_{1\leq j\leq N}\mc W_j=\bigotimes_{1\leq j\leq N}\bigotimes_{\tipae\in E(g_j)}\Wbb_{j,\tipae}\rightarrow\Cbb
\end{align*}
is a conformal block associated to $\fk P$ and the associated twisted $\Ubb=\Vbb^{\otimes E}$-modules if and only if it is a conformal block associated to $\fk X$ and the associated untwisted $\Vbb$-modules.
\emph{}
\end{Mthm}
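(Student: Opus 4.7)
The plan is to split the conformal block condition into a local analytic statement at each $x_j\in\Sbf$ (and at each of its preimages in $C$) and a global morphism-extension statement on the unbranched part. Both sides of the claimed equivalence are functionals on the same vector space $\bigotimes_{j,\tipae}\Wbb_{j,\tipae}$, and in each case the condition says that appropriate vertex operator matrix coefficients extend to sections of $\scr O_{\Pbb^1\setminus\Sbf}$ (multi-valued, with prescribed monodromy) or of $\scr O_{C\setminus\varphi^{-1}(\Sbf)}$ (single-valued). I would first verify that pullback under $\varphi$ identifies these two morphism-extension conditions on the unbranched part, and then invoke the main result of \cite{Gui21b} to pass between the local conditions at each $x_j$ and at its preimages in $C$, yielding the equivalence in both directions.

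For the local step, near $\Upsilon(\bk{g_j}\tipae)\in C$ of branching index $k=|\bk{g_j}\tipae|$, in the coordinates $\wtd\eta_{j,\tipae}$ and $\eta_j$ the map $\varphi$ is $w\mapsto w^k$. The $\otimes$-factorizable twisted module structure on $\mc W_j$ of \cite{BDM02} expresses $Y^{g_j}(u,z)$, for $u$ supported on the cycle $\bk{g_j}\tipae$, as a product of untwisted vertex operations on $\Wbb_{j,\tipae}$ evaluated at the $k$-th roots of $z$, corrected by Huang's change-of-coordinate factor associated to $w\mapsto w^k$. The main result of \cite{Gui21b} is precisely the statement that, under this identification, the local condition that $\uppsi(\cdots Y^{g_j}(u,z)w_j\cdots)$ extends meromorphically across $x_j$ along the branch dictated by $\upgamma_j$ is equivalent to the condition that the pullback $\varphi^*\uppsi$ extends meromorphically across each $\Upsilon(\bk{g_j}\tipae)$ with respect to $\wtd\eta_{j,\tipae}$. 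The positivity condition \eqref{eq101} together with $\wtd\eta_{j,\tipae}(\Psi_{\upgamma_j}(\tipae))>0$ ensures that the principal branch used on the twisted side matches the natural single-valued branch on $C$.

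Globally, away from $\Sbf$ the covering $\varphi$ is unbranched, and the $\Gamma$-covariant bijections $\Psi_\uplambda$ of Thm. \ref{lb38} give a canonical correspondence between single-valued $\scr O_C$-module morphisms $\scr V_C\to\scr O_C$ on $C\setminus\varphi^{-1}(\Sbf)$ and multi-valued morphisms $\scr U_{\Pbb^1\setminus\Sbf}\to\scr O_{\Pbb^1\setminus\Sbf}$ with monodromy prescribed by $\Gamma\curvearrowright E$. Under this correspondence, the analytic-continuation condition on the twisted side along the paths $\upgamma_{j_1}\upgamma_{j_2}^{-1}$ becomes mere single-valuedness on $C$. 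Combined with the local step, this yields the desired equivalence in both directions. The main obstacle is the simultaneous bookkeeping of all the normalizations --- the choice of marked orbit representatives $E(g_j)$, the paths $\upgamma_j$, the local coordinates $\wtd\eta_{j,\tipae}$, and the branch conventions in the \cite{BDM02} formula --- and the verification that the construction of $\fk X$ has been arranged to absorb them coherently. Once this alignment is in place, \cite{Gui21b} guarantees that no further obstruction remains.
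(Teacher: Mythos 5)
Your overall architecture (construct the propagation on one side from the propagation on the other, use the trivializations $\Psi_\uplambda$ and $\Gamma$-covariance for the unbranched part, and use the conformal-block realization of the \cite{BDM02} modules at the branch points) is indeed the skeleton of the paper's proof of Thm.~\ref{lb14}. But your ``local step'' hides the actual technical content behind an appeal to ``the main result of \cite{Gui21b}'', and that attribution is not correct: \cite{Gui21b} does not contain the statement that the extension of $\uppsi(\cdots Y^{g_j}(u,z)w_j\cdots)$ across $x_j$ is equivalent to the extension of an untwisted block across $\Upsilon(\bk{g_j}\tipae)$. What \cite{Gui21b} supplies is (i) the construction of the one-cycle twisted vertex operation as a propagation of the standard pairing $\tipxphi_{j,\tipae}$ on the $k$-fold covering sphere (used in Thm.~\ref{lb13}, \eqref{eq29}--\eqref{eq30}), and (ii) the theorem that sewing commutes with propagation (Thm.~\ref{lb19}). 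Assembling these into the claimed local equivalence is precisely the work of Secs.~\ref{lb1} and \ref{lb49}: in the ``only if'' direction one must prove that the mode sum \eqref{eq31} defining $Y^{g_j}$ inside $\uppsi$ converges absolutely and equals $\wr^E\upphi(\varphi^*\vbf,\wbf)$ at the tuple of preimages; the paper does this by realizing $\fk X$ as the (trivial, hence $q$-absolutely convergent) sewing of $\fk X$ with the auxiliary spheres $\fk Q_{j,\tipae}=(\Pbb^1;0,\infty;\zeta,\zeta^{-1})$ carrying $\tipxphi_{j,\tipae}$, so that \eqref{eq32} is literally a sewing of a propagation, and then invoking Thm.~\ref{lb19}. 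Your proposal contains no substitute convergence argument, so the central analytic step of the theorem remains unproved; ``meromorphic extension at $x_j$ corresponds to meromorphic extension at the preimages'' is an assertion, not something that follows from a coordinate computation with Huang's change-of-coordinate factor.

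A second, smaller gap is in your ``canonical correspondence'' between single-valued morphisms $\scr V_{C\setminus\varphi^{-1}(\Sbf)}\rightarrow\scr O$ and multivalued morphisms $\scr U_{\Pbb^1\setminus\Sbf}\rightarrow\scr O$. These morphisms take inputs from different sheaves ($\scr V_C$ versus $\scr U_{\Pbb^1}=\scr V_{\Pbb^1}^{\otimes E}$), so the correspondence is not an abstract pullback identification: going from untwisted to twisted one must use the $E$-fold propagation $\wr^E\upphi$ of Thm.~\ref{lb7} evaluated at $\varphi^*_\uplambda y$ (formula \eqref{eq3}), and going back one defines $\wr\upphi$ by inserting $(\varphi_*v)_e\otimes\id$ into $\wr\uppsi$ (formula \eqref{eq50}), whose well-definedness (independence of the sheet $e$ and the path $\uplambda$) requires the covariance identity \eqref{eq43}, itself a consequence of the $g_j$-equivariance of the twisted modules and of the standing assumption that $\Gamma$ is generated by the $[\upalpha_j]$. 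Your write-up gestures at monodromy but does not address either the multi-point versus one-point asymmetry or this well-definedness check, both of which are needed before the local step can even be formulated.
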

In particular, we have constructed an explicit isomorphism between the two spaces of conformal blocks. The proof of this theorem relies on the main results of \cite{Gui24b}.

\subsubsection*{Sewing and factorization}

We can relate not only  the permutation-twisted and untwisted conformal blocks, but also their sewing and factorization. If we have two positively pathed Riemann spheres with local coordinates $\fk P^a$ and $\fk P^b$, we can sew these two spheres by removing one disc from $\fk P^a$ around one of its marked point $x_0$, removing another one around $y_0$ from $\fk P^b$, and gluing the remaining part. Similarly, we can sew their permutation coverings $\fk X^a,\fk X^b$. Corresponding to this geometric sewing, we have the well-known sewing of conformal blocks. (See Subsec. \ref{lb44}.)

The product and the iterate of (twisted) intertwining operators can be viewed as sewing (twisted) conformal blocks associated to $\fk P^a$ and $\fk P^b$. These two types of sewing are equivalent and related by the \textbf{operator product expansion (OPE)} (i.e., associativity) of intertwining operators. Such relation defines the associativity isomorphisms of the crossed-braided tensor category $\Rep^G(\Ubb)$ of $G$-twisted $\Ubb$-modules (where $G$ is any subgroup of $\Perm(E)$). (Cf. \cite{Hua95,McR21}.) Our second main result is this:

\begin{Mthm}\label{lb70}
The following are true.
\begin{enumerate}
\item (Thm. \ref{lb46}) There is a suitable sewing $\fk X^{a\#b}$  of $\fk X^a$ and $\fk X^b$ which is isomorphic to the permutation covering of the sewing $\fk P^{a\#b}$ of $\fk P^a$ and $\fk P^b$. 
\item (Thm. \ref{lb48}) Assume $\Vbb$ is $C_2$-cofinite. If $\uppsi^a,\uppsi^b$ are permutation-twisted $\Ubb$-conformal blocks associated respectively to $\fk P^a,\fk P^b$ (equivalently, $\Vbb$-conformal blocks associated to $\fk X^a,\fk X^b$), then their sewing as permutation-twisted $\Ubb$-conformal blocks agree with that as $\Vbb$-conformal blocks, and the result of this sewing converges absolutely to a  $\Ubb$-conformal block associated to $\fk P^{a\#b}$ (equivalently, a $\Vbb$-conformal block associated to $\fk X^{a\#b}$).

\item (Thm. \ref{lb62}) If $\Vbb$ is CFT-type, $C_2$-cofinite, and rational, then any permutation-twisted $\Ubb$-conformal block associated to $\fk P^{a\#b}$ can be expressed in a unique way (understood in a suitable sense) as the sewing of those associated to $\fk P^a$ and $\fk P^b$.
\end{enumerate}
\end{Mthm}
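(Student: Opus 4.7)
The plan is to reduce every statement about permutation-twisted $\Ubb$-conformal blocks to a corresponding statement about untwisted $\Vbb$-conformal blocks on the permutation covering, using Theorem A. The three parts then split cleanly: part (1) is a purely geometric assertion about branched coverings under sewing; part (2) combines this geometric identification with the absolute convergence of sewing for untwisted conformal blocks on $C_2$-cofinite VOAs; and part (3) pulls back the untwisted factorization theorem of \cite{DGT19b} along the twisted/untwisted correspondence.

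For part (1), I would first fix the compatibility: the sewing of $\fk P^a$ at $x_0$ to $\fk P^b$ at $y_0$ is permissible as permutation-twisted data only when the admissible elements $g_0^a,g_0^b$ attached to the two sewing points satisfy $g_0^b=(g_0^a)^{-1}$ as actions on $E$. Given this, van Kampen's theorem identifies the fundamental group $\Gamma^{a\#b}$ of $\Pbb^1\setminus\Sbf^{a\#b}$ with the amalgamated free product of $\Gamma^a$ and $\Gamma^b$ modulo $[\upalpha_{x_0}]=[\upalpha_{y_0}]^{-1}$, so the induced action $\Gamma^{a\#b}\curvearrowright E$ is well defined and agrees with the restrictions to $\Gamma^a$ and $\Gamma^b$. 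I would then construct an explicit biholomorphism from the sewing $\fk X^{a\#b}$ onto the permutation covering of $\fk P^{a\#b}$: above $x_0$ and $y_0$ the marked points correspond to $\bk{g_0^a}$-orbits and $\bk{(g_0^a)^{-1}}$-orbits, which are the same set of orbits; the marked points of $\fk X^a$ and $\fk X^b$ above matched orbits are sewn in pairs; and the local coordinates at these points, which are $|\bk{g_0^a}\tipae|$-th roots of $\eta_{x_0}$ and $\eta_{y_0}$, sew correctly thanks to the positivity condition \eqref{eq101}. The remaining bookkeeping is to check that the branching indices at all other marked points of $\fk X^{a\#b}$ match those predicted by the orbit sizes of $\Gamma^{a\#b}\curvearrowright E$.

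For part (2), the key observation is that because $\mc W_0=\bigotimes_{\tipae\in E(g_0)}\Wbb_{0,\tipae}$ is $\otimes$-factorizable, any basis of $\mc W_0$ can be taken as a product of bases of the factors $\Wbb_{0,\tipae}$, and the $L_0$-grading is additive across the factors. The twisted sewing at $(x_0,y_0)$, a single formal series weighted by $q^{L_0}$, therefore decomposes into the iterated untwisted sewings at the pairs of marked points above $(x_0,y_0)$, weighted by $\prod_{\tipae}\wtd q_{\tipae}^{L_0}$; the relation between the sewing parameters comes from the identity $\wtd\eta_{\tipae}^{|\bk{g_0^a}\tipae|}=\eta_{x_0}\circ\varphi$ built into the definition of the permutation covering. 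Combined with Theorem A and part (1), this matches the two sewing procedures termwise, and absolute convergence is then inherited from the untwisted sewing convergence of \cite{DGT19b} applied on $\fk X^{a\#b}$ under $C_2$-cofiniteness of $\Vbb$.

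For part (3), one applies Theorem A to view $\uppsi$ as an untwisted $\Vbb$-conformal block on $\fk X^{a\#b}$. Since $\Vbb$ is CFT-type, $C_2$-cofinite, and rational, the untwisted factorization theorem of \cite{DGT19b} expresses this block uniquely as the convergent sewing of untwisted $\Vbb$-conformal blocks on the two components of the cut covering, which by part (1) are $\fk X^a$ and $\fk X^b$. Applying Theorem A in the reverse direction on each component yields the desired expression of $\uppsi$ as a sewing of twisted $\Ubb$-conformal blocks on $\fk P^a$ and $\fk P^b$, with uniqueness inherited from the untwisted side. The main obstacle is part (1): one must verify that $\otimes$-sewing of coverings (performed simultaneously on one pair of marked points per $\bk{g_0^a}$-orbit) is biholomorphic, with compatible labels, local coordinates, and branching indices, to the covering of the single sewing $\fk P^{a\#b}$. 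Once this geometric alignment is established, the remainder of Theorem B is essentially a transport of structure along Theorem A together with the known untwisted sewing and factorization results.
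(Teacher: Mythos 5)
Your proposal follows essentially the same route as the paper: part (1) is the geometric identification of the sewn covering with the covering of the sewing (the paper's Thm.~\ref{lb46}, likewise built on the van Kampen decomposition and the compatibility $g_0h_0=1$, with the bulk of the work in matching $\Gamma$-covariant bijections, $\Upsilon$-labelings, and local coordinates); parts (2) and (3) then transport the $q$-absolute convergence and the factorization of untwisted $\Vbb$-blocks through Theorem~A, exactly as in Thm.~\ref{lb48} and Thm.~\ref{lb62}, using the grading relation \eqref{eq74} to identify the two sewing series. The only minor discrepancy is attribution: the paper's convergence input is Thm.~\ref{lb39} (from \cite{Gui20b}), while the factorization input is Thm.~\ref{lb40}, which invokes \cite{DGT19b}; your proposal conflates these slightly, but the substance of the argument is the same.
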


The first part of this theorem says that sewing and taking permutation coverings are commuting procedures. The second part says that sewing  permutation-twisted $\Ubb$-conformal blocks amounts to sewing untwisted $\Vbb$-conformal blocks. The third part is the genus-$0$ sewing/factorization property for permutation-twisted conformal blocks. Its proof relies in particular on the untwisted factorization property \cite{DGT22}.

Thus, if we assume $\Vbb$ is CFT-type, $C_2$-cofinite, and rational, then by the above theorem, \textit{the OPE of permutation-twisted $\Ubb$-intertwining operators can be interpreted as an associativity of (possibly) higher genus untwisted $\Vbb$-conformal blocks}. (Here, the associativity means the equivalence of two different ways of factoring a possibly higher genus  untwisted $\Vbb$-conformal block into the sewing of two $\Vbb$-conformal blocks).  Therefore, we may use the higher genus data in the representation theory of $\Vbb$ to study $\Rep^G(\Ubb)$, and vice versa. See Sec. \ref{lb71} for a rigorous description.

\begin{figure}[h]
	\centering
	\includegraphics[width=0.7\linewidth]{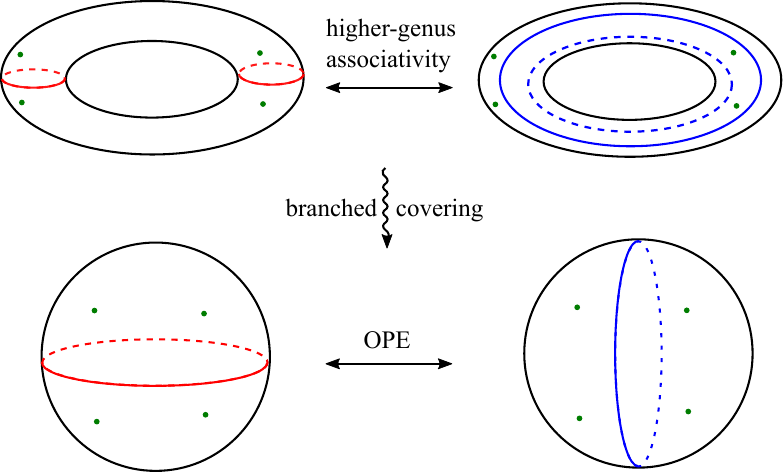}
	\caption{. The permutation-twisted OPE is equivalent to an associativity of  (possibly) higher genus conformal blocks via the permutation covering.}
	\label{fig2}
\end{figure}

An example for $E=\{1,2\}$ is illustrated in Figure \ref{fig2}. The bottom of this figure shows two ways of factoring a $4$-pointed (more precisely, positively $4$-pathed) sphere that correspond to the product and the iterate of $\Zbb_2$-permutation twisted intertwining operators. All the four marked points are associated with $(1,2)$-twisted $\Ubb=\Vbb^{\otimes 2}$-modules. The horizontal red circles and the vertical blue circles are associated with  untwisted $\Vbb^{\otimes 2}$-modules. All the four marked points are branched points with index $2$, and the permutation covering is genus $1$. In this example, the OPE of $\Zbb_2$-permutation twisted $\Vbb^{\otimes 2}$-intertwining operators amounts to an associativity of genus-$1$ untwisted $\Vbb$-conformal blocks. 

From Figure \ref{fig2}, the readers may notice the striking fact that \emph{this genus-$1$ associativity is the one that describes modular invariance}! Technically, the geometry of this associativity is \emph{topologically but not complex-analytically equivalent} to that of the modular invariance studied in \cite{Zhu96,Miy04,Hua05} and many other VOA articles: the genus-$1$ sewing in Figure \ref{fig2} is very different from sewing the boundary circles of \emph{standard} complex annuli. Nevertheless, there is good reason to believe that these two approaches will provide many of the same results. For instance, the $S$-matrices  defined originally by the invariance of genus-1 conformal blocks under the modular transform $\tau\mapsto -1/\tau$ should also be described by the genus-$1$ associativity in Figure \ref{fig2}, since it should be described by the braid/fusion matrices in the category $\Rep^{\Zbb_2}(\Vbb^{\otimes 2})$ of $\Zbb_2$-twisted $\Vbb^{\otimes 2}$-modules \cite{LX19}.

\subsection{Motivations from conformal nets}

\subsubsection*{Multi-interval Connes fusion and higher genus CFT}

The Haag-Kastler/conformal net theory (namely, the operator-algebraic approach to conformal field theory (CFT)) describes quantum fields on the Minkowski space $\Rbb^{1,1}$,  in contrast to the VOA approach where the field operators are defined on the Euclidean space $\Rbb^2=\Cbb$. The Euclidean CFT can be defined on higher genus Riemann surfaces by gluing genus-$0$ ones, since the Euclidean conformal symmetry is local.  The $\Rbb^2$-CFT (i.e., the genus-$0$ VOA theory) can be translated to the $\Rbb^{1,1}$-CFT by the Wick rotation.\footnote{Strictly speaking, the CFT one gets on $\Rbb^{1,1}$ is a Wightmann field theory. To get   Haag-Kastler CFT, one should take the smeared fields and consider certain von Neumann/$C^*$-algebras generated by them. See \cite{CKLW18}.} But it was not clear how to translate higher genus Euclidean CFT to Minkowskian CFT.

From this perspective, it is rather surprising that higher genus CFT data could be understood in the conformal net framework, which was indeed achieved by the multi-interval Connes fusion/Jones-Wassermann subfactors/Doplicher-Haag-Roberts superselection theory. (The single interval version describes genus-$0$ data; see e.g. \cite{FRS89,FRS92,Was98}.)  This observation was already made in \cite{Was94}. Later, it was proved in  \cite{KLM01,LX04} that if the index of multi-interval Jones-Wasserman subfactor (called the $\mu$-index) is finite, then the  category of semi-simple representations of the conformal net $\mc A$ is a modular tensor category; in particular, the $S$-matrices defined by the Hopf link is non-degenerate. Moreover, if the $\mu$-index is one, then $\mc A$ is holomorphic, i.e., it has only one irreducible representation: the vacuum representation.

If one notices the fact that a holomorphic VOA is described by the modular invariance of the one-point genus-$1$ conformal block defined by the $q$-trace of the vacuum vertex operation, one is surprised that holomorphic conformal nets are characterized by $\mu$ index $=1$, i.e., by the (multi-interval) Haag duality. Indeed, in full and boundary CFT, there are also equivalences of  \emph{modular invariance and "multi-region" Haag duality} (called strong Haag duality in \cite{Hen14}). (For full CFT, compare \cite[Prop. 6.6]{BKL15} with \cite[Thm. 5.7]{Kong08} and \cite[Thm. 3.4]{KR09a}. For boundary CFT, see \cite[Sec. 4]{KR09b} and the reference therein, and note that a question posed after Thm. 4.8 was solved in \cite[Prop. 4.18]{BKL15}.)

A main motivation of this article is to give a conceptual explanation of why multi-interval/multi-region Connes fusion is related to higher genus CFT. We now know the answer: it is well-known that multi-interval/multi-region Connes fusion is closely related to permutation orbifold CFT \cite{LX04,KLX05}, and from the main results of this article, we know that the latter has a higher genus CFT interpretation. (See \cite{BDH17} or \cite{LX19} for different explanations.)

\subsubsection*{Higher genus unitary VOA/conformal net correspondence}

A systematic study of the relationships between unitary VOAs and conformal nets was initiated by \cite{CKLW18}. In particular, for many unitary VOAs, the corresponding conformal nets were constructed by taking the smeared vertex operators. A different approach was given by \cite{Ten19a}. The methods in these two papers were generalized to relate the representation categories $\Rep(\Vbb),\Rep(\mc A_\Vbb)$ of a (strongly-rational) unitary VOA $\Vbb$ and the corresponding conformal net $\mc A_\Vbb$; see \cite{Ten19b,Ten19c} and \cite{Gui21,Gui20a}. 

In \cite{Gui21,Gui20a}, smeared intertwining operators are   crucial ingredients relating $\Rep(\Vbb)$ and $\Rep(\mc A_\Vbb)$: similar to the construction of smeared vertex operators in \cite{CKLW18}, we integrate $\mc Y(w,z)f(z)$ over the unit circle $I$, where $\mc Y$ is an intertwining operator of $\Vbb$, $w$ is a vector of a $\Vbb$-module, $I$ is an open non-dense interval of $\mbb S^1$, and $f$ is a smooth function supported in $I$. Since intertwining operators are $3$-pointed genus-$0$ conformal blocks, our method of comparing the tensor categories $\Rep(\Vbb)$ and $\Rep(\mc A_\Vbb)$ are essentially genus-$0$.

It is natural to think about relating higher genus aspects of unitary VOAs and conformal nets by integrating higher genus conformal blocks. However, an arbitrary integral will not give us the correct results. Note that in the genus-$0$ case, intertwining operators can only be integrated on an interval $I$ of $\mbb S^1$ ; otherwise the operators one gets will not have nice analytic properties. In arbitrary genus, one should integrate the conformal blocks on a correct real path inside the moduli space of pointed compact Riemann surfaces with (analytic) local coordinates.

Now, the twisted/untwisted correspondence proved in this article suggests how to find the correct paths and integrals: if we take $\mc Y$ to be a permutation-twisted $\Ubb=\Vbb^{\otimes E}$-intertwining operator, take $I\subset \mbb S^1$ to be an open interval, and consider the family of genus-$0$ permutation-twisted $\Ubb$-conformal blocks $z\in I\mapsto \mc Y(\cdot,z)$ as a family of untwisted (possibly higher-genus) $\Vbb$-conformal blocks, then the integral $\int_I \mc Y(w,z)f(z)dz$ is a correct integral of $\Vbb$-conformal blocks. The path in the moduli space, considered as a family of possibly higher-genus pointed compact Riemann surfaces with local coordinates, is the permutation covering of the family of $3$-pointed spheres $z\in I\mapsto (\Pbb^1;0,z,\infty)$ with local coordinates $\zeta,\zeta-z,\zeta^{-1}$ (where $\zeta$ is the standard coordinate of of $\Cbb$). 

This observation can be summarized by the slogan: \emph{the genus-$0$ permutation-twisted VOA/conformal net correspondence (as in \cite{CKLW18} or \cite{Ten19a}) is a higher-genus untwisted VOA/conformal net correspondence.}

This observation shows the  importance of relating the VOA and the conformal net tensor categories  for orbifold CFT, especially for permutation orbifolds, in the framework of \cite{CKLW18} or \cite{Ten19a}. For instance, the equivalence of tensor categories discussed in \cite{Gui21,Gui20a} should be generalized to orbifold CFT. We plan to explore this problem in future works.

\subsection{Future directions}

\subsubsection*{$\otimes$-nonfactorizable permutation twisted/untwisted correspondence.}

When $\Vbb$ is not rational, many twisted or untwisted  $\Ubb=\Vbb^{\otimes E}$-modules are not $\otimes$-factorizable. For such modules, we need to relate the $\Ubb$-conformal blocks with certain generalized $\Vbb$-conformal blocks (which are not yet studied in the literature). A generalized $\Vbb$-conformal block associated to a $N$-pointed compact Riemann surface with local coordinates is a linear functional $\uppsi:\mc M\rightarrow\Cbb$ satisfying a similar invariant condition as ordinary conformal blocks, where $\mc M$ is a $\Vbb^{\otimes N}$-module. We call such conformal blocks \textbf{$\otimes$-nonfactorizable}. 

We plan to study $\otimes$-nonfactorizable conformal blocks in the future. We also expect that they will play an essential role in the geometric theory of logarithmic CFT.

\subsubsection*{Higher genus twisted conformal blocks}

This article only discusses the $\Vbb$-conformal blocks that correspond to the genus-$0$ permutation-twisted $\Ubb$-conformal blocks. Higher genus twisted-conformal blocks are also defined (see Rem. \ref{lb64}). A natural problem is to generalize the results of this article to higher genus permutation-twisted conformal blocks. We expect that Thm. \ref{lb69} can be generalized in a straightforward way. The generalization of Thm. \ref{lb70} (the correspondence for sewing and factorization) would be more subtle and requires careful study.

\subsubsection*{Analogous results for conformal nets}

Let $\mc A$ be a completely-rational conformal net. Conformal blocks for finite-index (untwisted) representations of $\mc A$ were defined and studied in \cite{BDH17}. In particular, the factorization property was proved in that framework. Let $\mc H_1,\dots,\mc H_N,\mc H_{N+1}$ be finite-index permutation-twisted $\mc A^{\otimes E}$-representations. Construct an explicit isomorphism from $\Hom_{\mc A^{\otimes E}}(\mc H_1\boxtimes\cdots\boxtimes\mc H_N,\mc H_{N+1})$ to a space of $\mc A$-conformal blocks defined in \cite{BDH17}. Here, $\boxtimes$ is the Connes fusion product.

\subsubsection*{Geometry of genus-$1$  CFT: tori vs. elliptic curves}

Conventionally, the genus-$1$ properties of Euclidean  CFT were studied using the \emph{torus model}: taking the $q$-trace of vertex operators corresponds geometrically to sewing a standard annulus $\{a<|z|<b\}$ along the two boundaries. Our results in this article suggest that  genus-$1$ properties can also be studied using the \emph{elliptic curve model}: elliptic curves arise naturally not as the sewing of annuli, but as the branched coverings of $\Pbb^1$. This geometric picture corresponds to  the permutation-twisted/untwisted correspondence in the VOA world. 

It would be interesting to explore the genus-$1$ aspects of VOA via the elliptic curve model. For instance, when $\Vbb$ is strongly rational, using the genus-$0$ theory for permutation orbifolds, one may try to understand or give new proofs for the rigidity and the modularity of $\Rep(\Vbb)$ (originally proved in \cite{Hua08a,Hua08b} using the torus model), and to understand the full and boundary Euclidean CFT. (It should be sufficient to just consider $\Zbb_2$-permutations.) We also hope that this new framework will shed some light on proving the rigidity and understanding the modularity when $\Vbb$ is not rational.

\subsection{Outline}

In Chapter 1, we review the definition and the basic properties of (untwisted) conformal blocks. We also define a notion of conformal blocks for twisted VOA modules, and prove some elementary facts. For twisted conformal blocks, we focus mainly on the genus-$0$ case, although a definition for higher genus ones is also given (see Rem. \ref{lb64}).

We remark that a notion of conformal blocks for twisted modules already exists in the algebro-geometric approach to VOA  (cf. \cite{FS04}). Assuming genus-$0$ for simplicity, the main difference between that notion and ours is  that \cite{FS04} considered (single-valued) functions/ morphisms on a finite Galois branched covering of $\Pbb^1$, while we consider multi-valued functions/morphisms on $\Pbb^1$. These two definitions should be equivalent, although a translation between them may take quite a few pages. We choose our definition  because it is not difficult to relate to the twisted intertwining operators in the VOA literature \cite{Hua18,McR21}. Note also that the permutation coverings of $\Pbb^1$ are not necessarily Galois, thus they are in general not the same as the coverings considered in \cite{FS04}.

In Chapter 2, we describe the permutation coverings in details, and relate the permutation-twisted genus-$0$ conformal blocks and untwisted conformal blocks. In particular, we prove Main Thm. \ref{lb69}.

In Chapter 3, we show that the sewing procedure commutes with taking permutation coverings. We then show the permutation-twisted/untwisted correspondence for the sewing and factorization of conformal blocks. Namely, we prove Main Thm. \ref{lb70}.

In Chapter 4, we relate our definition of twisted conformal blocks with twisted intertwining operators. We then prove the OPE for permutation-twisted intertwining operators, and show that they correspond to the associativity of possibly higher genus untwisted conformal blocks (as indicated in Figure \ref{fig2}).

\subsection*{Acknowledgement}

I would like to thank Yi-Zhi Huang, Zhengwei Liu, Robert McRae, Jiayin Pan, Nicola Tarasca for helpful discussions.

\section{General results}

\subsection{The setting}

We set $\Nbb=\{0,1,2,\dots\}$ and $\Zbb_+=\{1,2,3,\dots\}$. Let $\Cbb^\times=\Cbb\setminus\{0\}$. \index{C@$\Cbb^\times$} For each $r>0$, we let $\mc D_r=\{z\in\Cbb:|z|<r\}$ and $\mc D_r^\times=\mc D_r\setminus\{0\}$. \index{Dr@$\mc D_r,\mc D_r^\times$} For any topological space $X$, we define the configuration space $\Conf^n(X)=\{(x_1,\dots,x_N)\in X^n:x_i\neq x_j~\forall 1\leq i<j<n\}$. \index{Conf@$\Conf^n(X)$, configuration space}

For a complex manifold $X$, $\scr O_X$ denotes the sheaf of (germ of) holomorphic functions on $X$, and $\scr O(X)=\scr O_X(X)$ is the space of (global) holomorphic functions on $X$.

Given a Riemann surface $C$ and a point $x\in C$, a \textbf{local coordinate} $\eta\in\scr O(U)$ of $C$ at $x$ means that $\eta$ is a holomorphic injective function on a neighborhood $U\subset C$ of $x$, and that $\eta(x)=0$.

For any ($\Cbb$-)vector space $W$, we define four spaces of formal series \index{z@$[[z]],[[z^{\pm 1}]],((z)),\{z\}$}
\begin{gather*}
W[[z]]=\bigg\{\sum_{n\in\mathbb N}w_nz^n:\text{each }w_n\in W\bigg\},\\
W[[z^{\pm 1}]]=\bigg\{\sum_{n\in\mathbb Z}w_nz^n:\text{each }w_n\in W\bigg\},\\
W((z))=\Big\{f(z):z^kf(z)\in W[[z]]\text{ for some }k\in\mbb Z \Big\},\\
W\{z\}=\bigg\{\sum_{n\in\mathbb \Cbb}w_nz^n:\text{each }w_n\in W\bigg\}.
\end{gather*}

If $X$ is a locally compact Hausdorff space, and if $\sum_{n\in\Cbb} f_n$ is a series of continuous functions $f_n$ on $X$, we say this series \textbf{converges absolutely and locally uniformly (a.l.u.)} on $X$, if for each compact subset $K\subset X$,
\begin{align}\label{eq10}
\sup_{x\in K}\sum_{n\in\Cbb}|f_n(x)|<+\infty.	
\end{align}

Let $\Vbb$ be a  VOA with vertex operator $Y(v,z)=\sum_{n\in\Zbb}Y(v)_nz^{-n-1}$, vacuum vector $\id$, and Virasoro operators $L_n=Y(\cbf)_{n+1}$. We assume throughout this article that a VOA $\Vbb$ has $L_0$-grading $\Vbb=\bigoplus_{n\in\Nbb}\Vbb(n)$ where each $\Vbb(n)$ is finite-dimensional.

As in \cite{Gui24b},  a $\Vbb$-module $\Wbb$ always means a \textbf{finitely-admissible $\Vbb$-module}. This means that $\Wbb$ is a weak $\Vbb$-module in the sense of \cite{DLM97}, that $\Wbb$ is equipped with a diagonalizable operator $\wtd L_0$ satisfying \index{L0@$\wtd L_0$}  
\begin{align}\label{eq34}
	[\wtd L_0,Y_\Wbb(v)_n]=Y_\Wbb(L_0 v)_n-(n+1)Y_\Wbb(v)_n,	
\end{align}
that the eigenvalues of $\wtd L_0$ are in $\Nbb$, and that each eigenspace $\Wbb(n)$ is finite-dimensional. Let \index{W@$\Wbb(n),\Wbb_{(n)}$}
\begin{align*}
	\Wbb=\bigoplus_{n\in\Nbb}\Wbb(n)	
\end{align*}
be the grading given by $\wtd L_0$. We set
\begin{align*}
	\Wbb^{\leq n}=\bigoplus_{0\leq k\leq n}	\Wbb{(k)}
\end{align*}
We choose  
\begin{align*}
\wtd L_0=L_0\qquad\text{on }\Vbb.	
\end{align*}
Any $\wtd L_0$-eigenvector of $\Wbb$  is called \textbf{$\wtd L_0$-homogeneous}.

If $\Wbb$ is finitely-admissible, then as $[\wtd L_0,L_0]=0$, each $\Wbb(n)$ is $L_0$-invariant, and hence is a sum of generalized eigenvectors of $L_0$. It follows that $\Wbb=\bigoplus_{\lambda\in\Cbb}\Wbb[\lambda]$ where $(L_0-\lambda)^k|_{\Wbb[n]}=0$ for some $k\in\Zbb_+$. Let $\Lss$ be the diagonalizable operator on $\Wbb$ which equals $\lambda$ when restricted to each $\Wbb[\lambda]$.  Clearly, for each $\alpha\in\Cbb$, $\Wbb_\alpha=\bigoplus_{n\in\Zbb}\Wbb[\alpha+n]$ is $\Vbb$-invariant. 

\begin{cv}
We assume that for each $\alpha\in\Cbb$, $\wtd L_0$ equals $\Lss$ plus a constant when acting on $\Wbb_\alpha$.
\end{cv}

We can define the \textbf{contragredient $\Vbb$-module} $\Wbb'$ of $\Wbb$ as in \cite{FHL93}. We choose $\wtd L_0$-grading to be
\begin{align*}
	\Wbb'=	\bigoplus_{n\in\Nbb}\Wbb'{(n)},\qquad \Wbb'{(n)}=\Wbb{(n)}^*.
\end{align*}
Therefore, if we let $\bk{\cdot,\cdot}$ be the pairing between $\Wbb$ and $\Wbb'$, then $\bk{\wtd L_0 w,w'}=\bk{w,\wtd L_0 w'}$ for each $w\in\Wbb,w'\in\Wbb'$. $\Wbb$ is the contragredient of $\Wbb'$. The vertex operation $Y_{\Wbb'}$ will be described in Example \ref{lb36}.

The vertex operation $Y_\Wbb$ of $\Wbb$ will be denoted by $Y$ when no confusion arises. Moreover, it can be extended $\Cbb((z))$-linearly to a map
\begin{align}
Y:\Big(\Vbb\otimes\Cbb((z))\Big)\otimes\Wbb\rightarrow\Wbb\otimes\Cbb((z)),\qquad v\otimes w\rightarrow Y(v,z)w,	\label{eq44}
\end{align}
and similarly
\begin{align}
Y:\Big(\Vbb\otimes\Cbb((z))dz\Big)\otimes\Wbb\rightarrow\Wbb\otimes\Cbb((z))dz,\qquad v\otimes w\rightarrow Y(v,z)wdz \label{eq45}	
\end{align}
so that we can take residue $\Res_{z=0}Y(v,z)wdz$. Let $(\Wbb')^*$ be the dual space of $\Wbb'$. Then for each $n\in\Nbb$ we can define a projection \index{Pn@$P_n,P_n^g$, the projection onto the $n$-eigenspace of $\wtd L_0$ resp. $\wtd L_0^g$}
\begin{align}
P_n:(\Wbb')^*=\prod_{n\in\Nbb}\Wbb(n)\rightarrow\Wbb(n).\label{eq46}
\end{align} 
If $v\in\Vbb$ and $w\in\Wbb$ are homogeneous with $\wtd L_0$-weights $\wt v,\wtd\wt w$ respectively, then  by \eqref{eq34}, 
\begin{align}
P_nY(v,z)w=Y(v)_{-n-1+\wt v+\wtd\wt w}	w\cdot z^{n-\wt v-\wtd\wt w}.\label{eq48}
\end{align}

A \textbf{family of transformations} over a complex manifold $X$ is by definition a holomorphic function $\rho$ on a neighborhood of $0\times X\subset \Cbb\times X$ sending each $(z,x)$ to $\rho_x(z)$, such that $\rho_x(0)=0$ and $(\partial_z\rho_x)(0)\neq 0$ for all $x\in X$. Let $c_0,c_1,\dots\in\scr O(X)$ be determined by
\begin{align*}
	\rho_x(z)=c_0(x)\cdot\exp\Big(\sum_{n>0}c_n(x)z^{n+1}\partial_z\Big)z
\end{align*}
on the level of $\scr O(X)[[z]]$. Then we necessarily have $c_0(x)=(\partial_z\rho_x)(0)$. On each $\Wbb$,   we set \index{U@$\mc U(\rho)$}
\begin{align*}
	\mc U(\rho)=(\partial_z\rho)(0)^{\wtd L_0}\cdot\exp\Big(\sum_{n>0}c_nL_n\Big)n
\end{align*}
as an automorphism of $\Wbb\otimes_\Cbb\scr O_X$, i.e., an ``$\End(\Wbb)$-valued holomorphic function" on $X$.

As an example, if $X$ is a Riemann surface, and if $\eta,\mu\in\scr O(X)$ are both locally injective (equivalenly, if $d\eta,d\mu$ vanish nowhere), we can define a family of transformations $\varrho(\eta|\mu)$ \index{zz@$\varrho(\eta\lvert\mu)$} on $X$ defined by
\begin{align}
\eta(y)-\eta(x)=\varrho(\eta|\mu)_x\Big(\mu(y)-\mu(x)\Big)	
\end{align}
for any $x\in X$ and any $y$ close to $x$.

For any two families of transformations $\rho_1,\rho_2$, if we let $\rho_1,\rho_2$ be their pointwise multiplication, then (cf. \cite[Sec. 4.2]{Hua97})
\begin{align*}
\mc U(\rho_1\rho_2)=\mc U(\rho_1)\mc U(\rho_2).	
\end{align*}

\begin{eg}\label{lb36}
Consider $\zeta^{-1}\in\scr O(\Cbb^\times)$ where $\zeta$ is the standard coordinate of $\Cbb$. Then the value of $\tipxgamma:=\varrho(\zeta^{-1}|\zeta)$ at each $z\in\Cbb^\times$ is \index{zz@$\tipxgamma$}
\begin{align*}
\tipxgamma_z(t)=\frac 1{~z+t~}-\frac 1{~z~}.	
\end{align*}
On any $\Vbb$-module $\Wbb$ we have (cf. for instance \cite[Ex. 1.4]{Gui24a})
\begin{align}
\mc U(\tipxgamma_z)\equiv\mc U(\tipxgamma)_z=e^{z L_1}(-z^{-2})^{\wtd L_0}.	
\end{align}
The vertex operator for the contragredient module $\Wbb'$ is determined by the fact that for each $v\in\Vbb,w\in\Wbb,w'\in\Wbb'$,
\begin{align*}
\bk{Y(v,z)w,w'}=\bk{w,Y(\mc U(\tipxgamma_z)v,z^{-1})w'}.
\end{align*}
\end{eg}

The \textbf{sheaf of VOA} $\scr V_C$ \index{VC@$\scr V_C,\scr V_C^{\leq n}$, sheaves of VOAs} associated to $\Vbb$ and a (non-necessarily compact) Riemann surface $C$ is an $\scr O$-module which associates to each connected open $U\subset C$ with locally injective $\eta\in\scr O(U)$ a trivialization (i.e., an $\scr O_U$-module isomorphism) \index{U@$\mc U_\varrho(\eta)$, trivialization for sheaves of VOAs}
\begin{align}
	\mc U_\varrho(\eta):\scr V_C|_U\xrightarrow{\simeq}\Vbb\otimes_\Cbb\scr O_U\label{eq5}	
\end{align}
such that for another similar $V\subset\mc C,\mu\in\scr O(V)$, the transition function is
\begin{align}
	\mc U_\varrho(\eta)\mc U_\varrho(\mu)^{-1}=\mc U(\varrho(\eta|\mu)):\Vbb\otimes_\Cbb\scr O_{U\cap V}\xrightarrow{\simeq}\Vbb\otimes_\Cbb\scr O_{U\cap V}. \label{eq65}
\end{align} 
For each $n\in\Nbb$, this transition function restricts to an automorphism of $\Vbb^{\leq n}\otimes_\Cbb\scr O_{U\cap V}$. Thus, we have a finite-rank locally free sheaf $\scr V_C^{\leq n}\subset\scr V_C$ having trivialization
\begin{align*}
\mc U_\varrho(\eta):\scr V_C^{\leq n}|_U\xrightarrow{\simeq}\Vbb^{\leq n}\otimes_\Cbb\scr O_U	
\end{align*}
being the restriction of \eqref{eq5}. $\scr V_C$ can be regarded as an infinite-rank (holomorphic) vector bundle which is the direct limit of the finite ones $\scr V_C^{\leq n}$.

The vacuum section $\id\in\scr V_C(C)$ \index{1@The vacuum section $\id$} denotes the one that is sent under any trivialization \eqref{eq5} to the constant vacuum vector $\id\in\Vbb\subset\Vbb\otimes_\Cbb\scr O(U)$.

\subsection{Conformal blocks for untwisted modules}\label{lb28}

\subsubsection{Conformal blocks and propagation}

By an \textbf{$N$-pointed compact Riemann surface with local coordinates}, we mean the following data
\begin{align}
\fk X=(C;x_1,\dots,x_N;\eta_1,\dots,\eta_N)	
\end{align}
where $C$ is a (non-necessarily connected) compact Riemann surface, $x_1,\dots,x_N$ are distinct points on $C$ such that each connected component of $C$ contains at least one of these points, and $\eta_1,\dots,\eta_N$ are local coordinates at $x_1,\dots,x_N$ respectively. We set \index{SX@$\SX$}
\begin{align*}
\SX=\{x_1,\dots,x_N\}.	
\end{align*}
For each $i$, choose a neighborhood $W_i$ of $x_i$ on which $\eta_i$ is defined, such that $W_i\cap\SX=\{x_i\}$.

Suppose $\Vbb$-modules $\Wbb_1,\dots,\Wbb_N$ are associated to the marked points $x_1,\dots,x_N$ respectively. Write
\begin{align*}
\Wbb_\blt=\Wbb_1\otimes\cdots\otimes \Wbb_N.	
\end{align*}
By $w\in\Wbb_\blt$, we mean a vector in $\Wbb_\blt$; by $w_\blt\in\Wbb_\blt$, we mean a vector of the form
\begin{align*}
w_\blt=w_1\otimes\cdots\otimes w_N	
\end{align*}
where each $w_i\in \Wbb_i$. Depending on the context, sometimes we also understand $\Wbb_\blt$ as the tuple $(\Wbb_1,\dots,\Wbb_N)$.

Let $\omega_C$ be the (holomorphic) cotangent bundle for $C$. Then $\scr V_C\otimes\omega_C(\star\SX)=\varinjlim_{n\in\Nbb}\scr V_C\otimes\omega_C(n\SX)$ is the sheaf of meromorphic sections of $\scr V_C\otimes\omega_C$ whose poles are only in $\SX$. The space of global sections is $H^0(C,\scr V_C\otimes\omega_C(\star\SX))$, which acts on $\Wbb_\blt$ in the following way. For each $v$ in this space, the restriction $v|_{W_i}$ can be regarded as a section of $\Vbb\otimes_\Cbb\omega_{W_i}$ through the trivialization $\mc U_\varrho(\eta_i):\scr V_C|_{W_i}\xrightarrow{\simeq}\Vbb\otimes\omega_{W_i}$. By taking series expansion with respect to the variable $\eta_i$ at $x_i$, $v|_{W_i}$ can furthermore be regarded as an element of $\Vbb\otimes\Cbb((z))dz$. Recall \eqref{eq45}, we define the action of $v$ on each $w_\blt\in\Wbb_\blt$ to be
\begin{align*}
v\cdot w_\blt=\sum_{i=1}^N w_1\otimes\cdots\otimes \Res_{z=0}Y(v,z)w_i \otimes\cdots\otimes w_N.	
\end{align*}

A \textbf{conformal block} $\upphi$ associated to $\fk X$ and $\Wbb_\blt$ is a linear functional
\begin{align*}
\upphi:\Wbb_\blt\rightarrow\Cbb	
\end{align*}
vanishing on $v\cdot w_\blt$ for all $w_\blt\in\Wbb_\blt$ and $v\in H^0(C,\scr V_C\otimes\omega_C(\star\SX))$.

We need the following propagation property of conformal blocks. Cf. \cite[Sec. 8]{Gui24b}. For each open $U_1,\dots,U_n\subset C$, we set \index{Conf@$\Conf(U_\blt\setminus\SX)$}
\begin{gather*}
\Conf(U_\blt\setminus\SX)=(U_1\times\cdots\times U_n)\cap\Conf^n(C\setminus\SX).	
\end{gather*}

\begin{thm}\label{lb7}
For each $n\in\Nbb$, we have the \textbf{$n$-propagation} of $\upphi$, which associates to each open $U_1,\dots,U_n\subset C$ a linear functional \index{zz@$\wr^n\upphi,\wr\uppsi$, propagation of conformal blocks}
\begin{gather*}
\wr^n\upphi:\scr V_C(U_1)\otimes\cdots\otimes\scr V_C(U_n)\otimes\Wbb_\blt\rightarrow\scr O(\Conf(U_\blt\setminus\SX))\\
v_1\otimes\cdots\otimes v_n\otimes w\mapsto \wr^n\upphi(v_1,\dots,v_n,w)
\end{gather*}
which is compatible with restriction to open subsets; namely, if $V_1\subset U_1,\dots,V_n\subset U_n$ are open, then
\begin{align*}
\wr^n\upphi(v_1|_{V_1},\dots,v_n|_{V_n},w)=\wr^n\upphi(v_1,\dots,v_n,w)\big|_{\Conf(V_\blt\setminus\SX)}.	
\end{align*}
$\wr^n\upphi$ intertwines the actions of $\scr O_C$, namely, for each $f_1\in\scr O(U_1),\dots,f_n\in\scr O(U_n)$,
\begin{align*}
\wr^n\upphi(f_1v_1,\dots,f_nv_n,w)=(f_1\circ\pr_1)\cdots (f_n\circ\pr_n)\wr^n\upphi(v_1,\dots,v_n,w)	
\end{align*}
where $\pr_i:C^n\rightarrow C$ is the projection onto the $i$-th component.

Choose any $w_\blt\in\Wbb_\blt$. For each $1\leq i\leq n$, choose an open subset  $U_i$  of $C$ equipped with an injective $\mu_i\in\scr O(U_i)$. Identify 
\begin{align*}
	\scr V_C\big|_{U_i}=\Vbb\otimes_\Cbb\scr O_{U_i}	\qquad\text{via }\mc U_\varrho(\mu_i).
\end{align*}
Choose $v_i\in\scr V_C(U_i)=\Vbb\otimes_\Cbb\scr O(U_i)$, and choose $(y_1,\dots,y_n)\in\Conf(U_\blt\setminus\SX)$. Then the following are true.
\begin{enumerate}[label=(\arabic*)]
\item If $U_1=W_j$ (where $1\leq j\leq N$) and contains only $y_1,x_j$ of all the points $x_\blt,y_\blt$, if $\mu_1=\eta_j$, and if $U_1$ contains the closed disc with center $x_j$ and radius $|\eta_j(y_1)|$ (under the coordinate $\eta_j$), then
\begin{align}
&\wr^n\upphi(v_1,v_2,\dots,v_n,w_\blt)\big|_{y_1,y_2,\dots,y_n}\nonumber\\
=&\wr^{n-1}\upphi\big(v_2,\dots,v_n,w_1\otimes\cdots\otimes Y(v_1,z)w_j\otimes\cdots\otimes w_N\big)\big|_{y_2,\dots,y_n}~\big|_{z=\eta_j(y_1)}\label{eq8}
\end{align}
where the series of $z$ on the right hand side converges absolutely, and $v_1$ is considered as an element of $\Vbb\otimes\Cbb((z))$ by taking Taylor series expansion with respect to the variable $\eta_j$ at $x_j$ (cf. \eqref{eq44}).
\item If $U_1=U_2$ and contains only $y_1,y_2$ of all the  points $x_\blt,y_\blt$, if $\mu_1=\mu_2$, and if $U_2$ contains the closed disc with center $y_2$ and radius $|\mu_2(y_1)-\mu_2(y_2)|$ (under the coordinate $\mu_2$), then
\begin{align}
&\wr^n\upphi(v_1,v_2,v_3,\dots,v_n,w_\blt)\big|_{y_1,y_2,\dots,y_n}\nonumber\\
=&\wr^{n-1}\upphi\big(Y(v_1,z)v_2,v_3,\dots,v_n,w_\blt\big)\big |_{y_2,\dots,y_n}~\big |_{z=\mu_2(y_1)-\mu_2(y_2)}\label{eq9}
\end{align}
where the series of $z$ on the right hand side converges absolutely, and $v_1$ is considered as an element of $\Vbb\otimes\Cbb((z))$ by taking Taylor series expansion with respect to the variable $\mu_2-\mu_2(y_2)$ at $y_2$.
\item  We have
\begin{align}
\wr^n\upphi(\id,v_2,v_3,\dots,v_n,w_\blt)=\wr^{n-1}\upphi(v_2,\dots,v_n,w_\blt).
\end{align}
\item For any permutation $\uppi$ of the set $\{1,2,\dots,n\}$, we have
\begin{align}
\wr^n\upphi(v_{\uppi(1)},\dots,v_{\uppi(n)},w_\blt)\big|_{y_{\uppi(1)},\dots,y_{\uppi(n)}}=\wr^n\upphi(v_1,\dots,v_n,w_\blt)\big|_{y_1,\dots,y_n}.
\end{align}
\end{enumerate}
\end{thm}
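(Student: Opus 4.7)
I would construct $\wr^n\upphi$ by induction on $n$, starting from $\wr^0\upphi=\upphi$. Formulas \eqref{eq8} and \eqref{eq9} are to be read as \emph{local recipes} for the new insertion at $y_1$: one shrinks $U_1$ to a small disc around $y_1$ meeting at most one element of $\SX\cup\{y_2,\ldots,y_n\}$, and then uses \eqref{eq8} if that element is some $x_j$, or the anticipated permutation symmetry combined with \eqref{eq9} if that element is some $y_i$. In either case, the claimed absolute convergence of the series in $z$ for $|z|$ small is immediate from the module axioms together with the polynomial bounds on $\wtd L_0$-weights that are encoded in \eqref{eq48}, so each value lives in $\scr O$ of a small polydisc neighbourhood of $(y_1,\ldots,y_n)$.

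The main obstacle is to show that different choices of ``nearest neighbour'' of $y_1$ produce the same germ — for instance, when $y_1$ sits in a region where two marked points $x_j,x_k$, or a marked point $x_j$ and an older insertion $y_i$, are both legitimate candidates. The standard mechanism, which I would import from \cite[Sec.~8]{Gui21b}, is to construct from the local data a single global section of $\scr V_C\otimes\omega_C$ on $C$ with poles only in $\SX\cup\{y_2,\ldots,y_n\}$, whose Laurent expansion at each disputed point matches the corresponding local recipe up to the required order. Existence of such a section is a Riemann--Roch/strong-residue-type surjectivity statement: tensoring $\scr V_C^{\leq m}\otimes\omega_C$ with a sufficiently ample divisor kills its $H^1$, so arbitrary Laurent tails at finitely many points can be realised simultaneously by one global section. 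The conformal block identity
\begin{align*}
\upphi\big(v\cdot w_\blt\big)=0
\end{align*}
then expands, via residues at each pole, into precisely the difference of the two candidate values of $\wr^n\upphi$, forcing them to agree.

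Once well-definedness is in hand, $\scr O_C$-linearity and compatibility with restriction to open subsets are immediate from the shape of \eqref{eq8}--\eqref{eq9}, and holomorphy of $\wr^n\upphi(v_\blt,w_\blt)$ in $(y_1,\ldots,y_n)$ on $\Conf(U_\blt\setminus\SX)$ is a consequence of the fact that overlapping local recipes glue. Property (3) reduces to the identity $Y(\id,z)=\id_{\Wbb_j}$ inserted into \eqref{eq8}. Property (4) for a transposition of two adjacent entries is itself a special instance of the well-definedness argument — one computes the same propagated value by declaring two different entries to be the ``latest insertion'' — and a general permutation follows by writing it as a product of adjacent transpositions and applying the inductive hypothesis inside $\wr^{n-1}\upphi$. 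The single non-routine input throughout is the global-section existence step, and that is the step I expect to cost the most work to make fully precise.
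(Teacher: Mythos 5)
First, note that this paper does not actually prove Theorem \ref{lb7}: it is quoted from \cite[Sec.~8]{Gui21b} (``Cf.\ \cite[Sec. 8]{Gui21b}''), so there is no in-paper argument for your sketch to be measured against. Judged on its own terms, your overall shape --- induction on $n$, local series recipes near a collision with an $x_j$ or an earlier $y_i$, and residue/Riemann--Roch arguments to compare competing recipes --- is in the right family of ideas, but two of your steps are genuinely gapped, and they are exactly the hard content of the theorem.

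(i) The convergence is not ``immediate from the module axioms.'' Equation \eqref{eq48} contains no estimate whatsoever; it only identifies which graded component contributes to each power of $z$. Absolute convergence of the series in \eqref{eq8} (equivalently of \eqref{eq47}) is a substantive analytic assertion: by Theorem \ref{lb24} it is one of the two conditions that together are \emph{equivalent} to $\upphi$ being a conformal block, and the nontrivial direction of that equivalence is precisely this convergence. Waving it through removes the main analytic content. (ii) Your recipes \eqref{eq8}--\eqref{eq9} only assign a value when $y_1$ lies in the convergence disc of some marked point $x_j$ (inside $W_j$) or of some older insertion $y_i$. The theorem is stated for an arbitrary compact Riemann surface $C$, arbitrary open $U_1,\dots,U_n$ and arbitrary sections $v_i$, and it demands a functional valued in $\scr O(\Conf(U_\blt\setminus\SX))$; on a higher-genus $C$ the collision neighbourhoods do not cover the configuration space, so ``overlapping local recipes glue'' defines nothing at generic configurations, and even where continuation is possible its single-valuedness on a non-simply-connected surface is not addressed. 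The global-section/strong-residue machinery you invoke is needed not merely to reconcile two competing local recipes but to produce the global holomorphic (single-valued) extension in the first place --- equivalently, to prove the convergence in (i); that global construction is the actual substance of \cite[Sec.~8]{Gui21b}, and without it your induction does not get started.
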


In the above theorem, $\wr^0\upphi$ is understood as $\upphi$.

The existence of $\wr\upphi$ that is compatible with restriction to open subsets, that intertwines the actions of $\scr O_C$ and that satisfies condition (1) of Theorem \ref{lb7}, can be regarded as an equivalent definition of a conformal block $\upphi:\Wbb_\blt\rightarrow\Cbb$; see \cite[10.1.2]{FB04}. We present the precise statement in a form that is closely related to the definition of twisted conformal blocks in Sec. \ref{lb22}.

\begin{thm}\label{lb24}
A linear functional $\upphi:\Wbb_\blt\rightarrow\Cbb$ is a conformal block associated to $\fk X$ and $\Wbb_\blt$ if and only if the following are satisfied
\begin{enumerate}
\item For each $v\in\Vbb$ and $1\leq j\leq N$, the series 
\begin{align}
&\upphi(w_1\otimes\cdots\otimes Y(v,z)w_j\otimes\cdots\otimes w_N)\nonumber\\
:=&\sum_{n\in \Nbb}\upphi(w_1\otimes\cdots\otimes P_nY(v,z)w_j\otimes\cdots\otimes w_N)\label{eq47}
\end{align}
of functions of $z$ converges a.l.u. on $\eta_j(W_j\setminus\{x_j\})$ in the sense of \eqref{eq10}.

\item There exists an operation $\wr\upphi$ which associates to each $w_\blt\in\Wbb_\blt$ an $\scr O_{C\setminus\SX}$-module morphism $\wr\upphi:\scr V_{C\setminus\SX}\rightarrow\scr O_{C\setminus\SX}$ satisfying the following conditions:

For each $1\leq j\leq N$, identify
\begin{align*}
	\scr V_{W_i}=\Vbb\otimes_\Cbb\scr O_{W_i}\qquad\text{via }\mc U_\varrho(\eta_j).	
\end{align*}
Choose any $v\in\Vbb\otimes_\Cbb\scr O(W_i\setminus\{x_i\})$. Then the function $\wr\upphi(v,w_\blt)\in\scr O(W_i\setminus\{x_i\})$ satisfies
\begin{align}
\wr\upphi(v,w_\blt)_x=\upphi\big(w_1\otimes\cdots\otimes Y\big(v(x),\eta_j(x)\big)w_j\otimes\cdots\otimes w_N\big)	\label{eq49}
\end{align}
for each $x\in U$, where the right hand side is understood as the limit of the series \eqref{eq47} by replacing $v$ by $v(x)$ and substituting $z=\eta_j(x)$.
\end{enumerate}
\end{thm}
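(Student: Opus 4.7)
The plan is to handle the two implications separately. For the forward direction, I would invoke Theorem \ref{lb7} with $n=1$: the $1$-propagation $\wr^1\upphi$ restricts, for open $U\subset C\setminus\SX$, to an $\scr O$-linear map $\scr V_C(U)\otimes\Wbb_\blt\rightarrow\scr O(U)$, giving the desired $\scr O_{C\setminus\SX}$-module morphism $\wr\upphi:\scr V_{C\setminus\SX}\rightarrow\scr O_{C\setminus\SX}$. Specializing item (1) of Theorem \ref{lb7} to $U_1=W_j$ with $\mu_1=\eta_j$ then yields both the absolute convergence asserted in (1) of the current theorem and the identity \eqref{eq49} of condition (2); the other clauses of Theorem \ref{lb7} are not needed here.

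The reverse direction is the substantive one. Suppose $\upphi$ satisfies (1) and (2). I need to verify $\upphi(v\cdot w_\blt)=0$ for every $v\in H^0(C,\scr V_C\otimes\omega_C(\star\SX))$ and $w_\blt\in\Wbb_\blt$. The strategy is to extend $\wr\upphi$ to sections of $\scr V_C\otimes\omega_C(\star\SX)$ by tensoring with $\omega$: on each open $U\subset C\setminus\SX$ with a locally injective $\mu\in\scr O(U)$, decompose $v|_U=\tilde v\cdot d\mu$ with $\tilde v\in\scr V_C(U)$ and set $\omega_v|_U=\wr\upphi(\tilde v,w_\blt)\cdot d\mu$. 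The $\scr O$-linearity of $\wr\upphi$ makes this independent of the decomposition, and the transition cocycle \eqref{eq65} ensures the local pieces glue into a global holomorphic one-form $\omega_v$ on $C\setminus\SX$. I would then invoke the residue theorem on the compact Riemann surface $C$.

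It remains to show $\omega_v$ extends meromorphically across each $x_i$ and to compute its residue. Near $x_i$, trivialize via $\mc U_\varrho(\eta_i)$ and write $v|_{W_i}$ as an element of $\Vbb\otimes\Cbb((z))dz$ with $z=\eta_i$. Identity \eqref{eq49} together with the a.l.u. convergence from (1) identifies $\omega_v|_{W_i\setminus\{x_i\}}$ with $\upphi(w_1\otimes\cdots\otimes Y(v(z),z)w_i\otimes\cdots\otimes w_N)\,dz$, which is meromorphic at $x_i$ and hence extends across it. Since the global section $v$ lies in some $\scr V_C^{\leq M}\otimes\omega_C(\star\SX)$ by compactness of $C$, each Laurent coefficient of $v(z)$ has $\wtd L_0$-weight bounded by $M$, so by \eqref{eq48} only finitely many summands contribute to the $z^{-1}$-coefficient of the above expansion. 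This finiteness legitimates exchanging residue extraction with $\upphi$, giving $\Res_{x_i}\omega_v=\upphi(w_1\otimes\cdots\otimes\Res_{z=0}Y(v,z)w_i\otimes\cdots\otimes w_N)$. Summing over $i$ and applying the residue theorem $\sum_i\Res_{x_i}\omega_v=0$ then produces $\upphi(v\cdot w_\blt)=0$.

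The main obstacle I anticipate is precisely this residue computation: one must track the $\wtd L_0$-degree arithmetic carefully to see that the $z^{-1}$-coefficient extraction commutes with $\upphi$, which is where the global bound on $v$ from compactness of $C$ becomes essential. A secondary point to verify is the coordinate independence of $\omega_v$, which should follow by applying the $\scr O$-linearity of $\wr\upphi$ to the explicit form of the VOA transition cocycle \eqref{eq65}; I do not expect surprises there, but it is the place where everything has to fit together compatibly.
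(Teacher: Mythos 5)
Your argument is correct and follows the paper's own route exactly: the forward direction is read off from items (1) and (2) of the propagation Theorem \ref{lb7} (plus \eqref{eq48} to reconcile the modes of convergence), and the reverse direction is the Stokes/residue computation applied to the global holomorphic one-form $\wr\upphi(\cdot,w_\blt)\otimes 1$ on $C\setminus\SX$, with \eqref{eq49} converting contour integrals into the residue terms of $v\cdot w_\blt$. Your proposal merely spells out in more detail a few points the paper keeps implicit — the gluing of local pieces (which is automatic since $\wr\upphi$ is already a global $\scr O$-module morphism), the meromorphic extension across $\SX$, and the finiteness argument for exchanging residues with $\upphi$ — so there is no genuinely new route, just the same proof with extra elaboration.
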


Recall that the projection $P_n$ was defined in \eqref{eq46}. Also, note that by linearity, in the second condition it suffices to verify \eqref{eq49} when $v$ is a constant section, i.e., $v\in\Vbb\subset\Vbb\otimes_\Cbb\scr O(W_i\setminus\{x_i\})$.

\begin{proof}
``Only if": Assume $\upphi$ is a conformal block, and let $\wr\upphi$ be its propagation. Then Thm. \ref{lb7} implies the two conditions of this theorem  whenever $v,w_j$ are homogeneous: indeed, by \eqref{eq48}, the convergences of the right hand side of \eqref{eq47} and that of \eqref{eq8} are in the same sense. The general case follows from $\Cbb$- or $\scr O(W_i)$-linearity and the triangle inequality.

``If": Let $\wr\upphi$ be as in Condition 2 of this theorem. Choose $v\in H^0(C,\scr V_C\otimes\omega_C(\star\SX))\subset H^0(C\setminus\SX,\scr V_{C\setminus\SX}\otimes\omega_{C\setminus\SX})$. Consider $\wr\upphi(\cdot,w_\blt)\otimes 1:\scr V_{C\setminus\SX}\otimes\omega_{C\setminus\SX}\rightarrow \omega_{C\setminus\SX}$, also written as $\wr\upphi(\cdot,w_\blt)$ for simplicity. Then $\wr\upphi(v,w_\blt)$ is a global holomorphic $1$-form on $C\setminus\SX$. By Stokes theorem (or residue theorem), if we choose a small circle around each $x_j$, then the sum of the integrals of $\wr\upphi(v,w_\blt)$ along each circle is $0$. Substituting \eqref{eq49} into this equality, we see $\upphi(v\cdot w_\blt)=0$. This proves that $\upphi$ is a conformal block.
\end{proof}

\subsubsection{Sewing and propagation}\label{lb44}

Let $N,M\in\Zbb_+$. Let
\begin{align}
\fk X=(C;x_1,\dots,x_N;x_1',\dots,x_M';x_1'',\dots,x_M'')	
\end{align}
be an $(N+2M)$-pointed compact Riemann surface with local coordinates $\eta_i$ at $x_i$, $\xi_j\in\scr O(W_j')$ at $x_j'$, and $\varpi_j\in\scr O(W_j'')$ at $x_j''$. We assume that each connected component of $C$ contains at least one of $\SX=\{x_1,\dots,x_N\}$.  We assume that
\begin{align*}
\xi_j(W_j')=\mc D_{r_j},\qquad \varpi_j(W_j'')=\mc D_{\rho_j}	
\end{align*}
where
\begin{align*}
r_j\rho_j>1.	
\end{align*}
Moreover, we shall also assume that:
\begin{ass}\label{lb20}
The open sets $W_1',\dots,W_M',W_1'',\dots,W_M''$ are mutually disjoint and are disjoint from $\SX$. (Thus, $W_j'$ resp. $W_j''$ contains only $x_j'$ resp. $x_j''$ among the $N+2M$ marked points.)
\end{ass}
$W_j',W_j''$ are the discs to be sewn. So the above assumption says that the marked points left after sewing (namely, $x_1,\dots,x_N$) should be away from the discs to be sewn.

The \textbf{sewing of $\fk X$ along the pairs} $x_j',x_j''$ (for all $j$) is an $N$-pointed compact Riemann surface with local coordinates \index{SX@$\scr S\fk X,\scr SC$, sewing compact Riemann surfaces}
\begin{align}
\scr S\fk X=(\scr SC;x_1,\dots,x_N;\eta_1,\dots,\eta_N)	
\end{align}
constructed as follows. Remove the closed subsets
\begin{align*}
F_j'=\{x\in W_j':|\xi_j(x)|\leq 1/\rho_j\},\qquad 	F_j''=\{x\in W_j'':|\varpi_j(x)|\leq 1/r_j\}
\end{align*}
(for all $1\leq j\leq M$) from $C$. Glue the remaining part of $C$ by gluing all $x\in W_j'\setminus F_j'$ and $y\in W_j''\setminus F_j''$ satisfying
\begin{align*}
\xi_j(x)\varpi_j(y)=1.	
\end{align*}
This gives us a new compact Riemann surface $\scr SC$. Clearly, $C\setminus\bigcup_j (F_j'\cup F_j'')$ can be identified with an open subset of $\scr SC$. In particular, each $x_i$ is also a point of $\scr SC$, and $\eta_i$ can be regarded as a local coordinate of $\scr SC$ at $x_i$.

The above sewing procedure is unchanged if for each $j$ we choose $\lambda_j>0$ and replace $\xi_j,\varpi_j$ by $\lambda_j\xi_j,\lambda^{-1}\varpi_j$, or if for each $j$ we replace $W_j',W_j''$ by new neighborhoods $\wtd W_j'\ni x_j',\wtd W_j''\ni x_j''$ on which $\xi_j,\varpi_j$ are defined respectively, such that $\xi_j(\wtd W_j')=\mc D_{\wtd r_j},\varpi_j(\wtd W_j'')=\mc D_{\wtd\rho_j}$ and $\wtd r_j\wtd \rho_j>1$.

Corresponding to this geometric sewing, we can define, for every conformal block
\begin{align*}
\upphi:\Wbb_\blt\otimes\Mbb_\blt\otimes\Mbb_\blt'=\Wbb_1\otimes\cdots\otimes \Wbb_N\otimes\Mbb_1\otimes\Mbb_1'\otimes\cdots\otimes\Mbb_M\otimes\Mbb_M'\rightarrow\Cbb	
\end{align*}
associated to $\fk X$ and the chosen local coordinates, the \textbf{sewing} $\scr S\upphi$ as follows. For each $1\leq j\leq M,n\in\Nbb$, set
\begin{align}
\bowtie_{j,n}\in\Mbb_j(n)\otimes\Mbb_j'(n)	
\end{align}
which, considered as an element of $\End(\Mbb_j(n))$, is the identity operator. (Recall that $\dim\Mbb_j(n)<+\infty$ and $\Mbb_j'(n)$ is the dual space of $\Mbb_j(n)$.) Then $\scr S\upphi$ associates to each $w\in\Wbb_\blt$ an infinite sum
\begin{align}
\scr S\upphi(w)=\sum_{n_1,\dots,n_M\in\Nbb}\upphi(w\otimes \bowtie_{1,n_1}\otimes\cdots\otimes\bowtie_{M,n_M}).	\label{eq11}
\end{align}
\begin{df}\label{lb18}
We say the sewing $\scr S\upphi$ \textbf{converges $\bsb q$-absolutely}, if there exist $R_1,\dots,R_M>1$ such that for each $w\in\Wbb_\blt$, the infinite series of functions of $(q_1,\dots,q_M)\in\mc D_{R_\blt}:=\mc D_{R_1}\times\cdots\times\mc D_{R_M}$:
\begin{align*}
\sum_{n_1,\dots,n_M\in\Nbb}\upphi(w\otimes \bowtie_{1,n_1}\otimes\cdots\otimes\bowtie_{M,n_M})q_1^{n_1}\cdots q_M^{n_M}	
\end{align*}
converges a.l.u. on $\mc D_{R_\blt}$ in the sense of \eqref{eq10}; equivalently, there exist $R_1,\dots,R_M>1$ such that for each $w\in\Wbb_\blt$,
\begin{align*}
\sum_{n_1,\dots,n_M\in\Nbb}\big|\upphi(w\otimes \bowtie_{1,n_1}\otimes\cdots\otimes\bowtie_{M,n_M})\big|\cdot R_1^{n_1}\cdots R_M^{n_M}<+\infty.
\end{align*}
\end{df}

When $\scr S\upphi$ converges $q$-absolutely, it can be regarded as a linear functional on $\Wbb_\blt$ sending each $w\in\Wbb_\blt$ to the limit of \eqref{eq11}.

\begin{thm}[{\cite[Thm. 11.3]{Gui24a}}]\label{lb37}
If $\scr S\upphi$ converges $q$-absolutely, then $\scr S\upphi$ is a conformal block associated to $\scr S\fk X$ and $\Wbb_\blt$.
\end{thm}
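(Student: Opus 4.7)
The plan is to invoke the characterization of conformal blocks in Thm.~\ref{lb24}: it suffices to exhibit a candidate propagation $\wr\scr S\upphi:\scr V_{\scr SC\setminus\SX}\to\scr O_{\scr SC\setminus\SX}$ and to check both conditions at each remaining marked point $x_i$. The natural candidate is
$$\wr\scr S\upphi(v,w)\;:=\;\sum_{n_1,\dots,n_M\in\Nbb}\wr\upphi\bigl(v,\,w\otimes\bowtie_{1,n_1}\otimes\cdots\otimes\bowtie_{M,n_M}\bigr),$$
where $v$ is regarded as a section of $\scr V_C$ on the open subset $C\setminus\bigcup_j(F_j'\cup F_j'')$ of $\scr SC\setminus\SX$, with the understanding that sections crossing a sewing region are prescribed on the two sides $W_j'\setminus F_j'$ and $W_j''\setminus F_j''$ via the transition function of $\scr V_{\scr SC}$ along $\xi_j\varpi_j=1$.

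First I would establish a.l.u.\ convergence of this series on compact sets. Off the sewing annuli the bound is immediate from the $q$-absolute hypothesis with $R_j>1$, because $\wr\upphi(v,w\otimes\bowtie_{\blt,n_\blt})$ restricted to such a set factors through a uniformly bounded section of $\scr V_C$, while $\sum_{n_\blt}|\upphi(w\otimes\bowtie_{\blt,n_\blt})|R_1^{n_1}\cdots R_M^{n_M}$ is finite. On a compact subannulus of a sewing region near $x_j'$, Thm.~\ref{lb7}(1) expresses $\wr\upphi(v,w\otimes\bowtie_{\blt,n_\blt})$ at $y\in W_j'\setminus F_j'$ as a series in $\xi_j(y)$ whose coefficients involve $Y(v)_k m_{j,n_j}\otimes m_{j,n_j}'$ with prescribed $\wtd L_0$-weight on the $\Mbb_j$-factor; using the grading one absorbs the extra factor as $|\xi_j(y)|^{n_j}\le R_j^{n_j}\cdot(|\xi_j(y)|/R_j)^{n_j}$, and one chooses the subannulus so that $|\xi_j(y)|/R_j<1$.

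The main obstacle is proving that $\wr\scr S\upphi$ is single-valued on $\scr SC\setminus\SX$, i.e., that the expression obtained by propagating at $x_j'$ using $\xi_j$ agrees with that obtained at $x_j''$ using $\varpi_j$ on the sewn annulus $\xi_j(y)\varpi_j(y')=1$. This should reduce to the contragredient identity of Example~\ref{lb36}: for a homogeneous basis $\{m_{j,n,k}\}$ of $\Mbb_j(n)$ with dual basis $\{m_{j,n,k}'\}$, summing
$$\bk{Y(v,z)m_{j,n,k},\,m_{j,n,k}'}\;=\;\bk{m_{j,n,k},\,Y(\mc U(\tipxgamma_z)v,\,z^{-1})m_{j,n,k}'}$$
over $k$ and using $\bowtie_{j,n}=\sum_k m_{j,n,k}\otimes m_{j,n,k}'$ converts the expansion of $\wr\upphi$ about $x_j'$ at $z=\xi_j(y)$ into the expansion about $x_j''$ at $z^{-1}=\varpi_j(y')$; the appearance of $\mc U(\tipxgamma_z)$ is precisely the transition of $\scr V_{\scr SC}$ under $\xi_j\mapsto\xi_j^{-1}=\varpi_j$, so the two sides define the same section of $\scr O_{\scr SC\setminus\SX}$ after summation over $n_j$. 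Justifying this rearrangement in a manner compatible with the doubly-indexed a.l.u.\ convergence just established is the delicate technical point.

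With $\wr\scr S\upphi$ in hand, the two conditions of Thm.~\ref{lb24} at each $x_i\in\SX$ follow easily: the local coordinate $\eta_i$ is unchanged under sewing, and the vertex operation acts only on the $\Wbb_i$-factor, commuting with the summation over $\bowtie_{\blt,\blt}$; hence condition (2) is inherited pointwise from the analogous property of $\wr\upphi$, and condition (1) (a.l.u.\ convergence of $\scr S\upphi(\cdots\otimes Y(v,z)w_i\otimes\cdots)$ on $\eta_i(W_i\setminus\{x_i\})$) follows from the convergence estimate above applied to a punctured neighbourhood of $x_i$. By Thm.~\ref{lb24}, $\scr S\upphi$ is then a conformal block on $\scr S\fk X$.
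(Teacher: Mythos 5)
The paper cites this result from [Gui20b, Thm.~11.3] without reproducing a proof, so there is no in-text argument to compare against; I evaluate your proposal on its own merits. Your strategy---termwise propagation $\wr\scr S\upphi$ plus the characterization of Thm.~\ref{lb24}---is plausible in outline, and the contragredient identity you invoke for single-valuedness across the sewing circle is the right tool. However, the convergence step is a genuine gap. You claim that off the sewing annuli the a.l.u.\ convergence of $\sum_{n_\blt}\wr\upphi(v,w\otimes\bowtie_{\blt,n_\blt})$ is ``immediate from the $q$-absolute hypothesis.'' It is not: Def.~\ref{lb18} only controls the unpropagated series $\sum_{n_\blt}|\upphi(w\otimes\bowtie_{\blt,n_\blt})|R_1^{n_1}\cdots R_M^{n_M}$ for a \emph{fixed} $w$. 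Expanding $\wr\upphi$ via Thm.~\ref{lb7}(1) replaces $w$ by an infinite family of vectors $w_1\otimes\cdots\otimes P_kY(v,z)w_i\otimes\cdots$ (or, near the sewing discs, $\cdots\otimes P_mY(v,z)m_{j,n_j}\otimes m_{j,n_j}'\otimes\cdots$), and one must sum over both indices; a hypothesis about a fixed $w$ does not dominate the resulting doubly-indexed series. This convergence is precisely the content of the first half of Thm.~\ref{lb19} ($=$ [Gui21b, Thm.~9.1]), a separate nontrivial result that you neither prove nor cite, and which, together with your step~3, constitutes essentially the whole theorem.

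In addition, your estimate near the sewing annulus has a sign error. From \eqref{eq48}, $P_mY(v,z)m_{j,n_j}=Y(v)_{-m-1+\wt v+n_j}m_{j,n_j}\cdot z^{m-\wt v-n_j}$, so the power of $z=\xi_j(y)$ attached to the weight-$n_j$ level is $z^{-n_j}$, not $z^{n_j}$. For $|\xi_j(y)|<1$ this grows with $n_j$, and your displayed inequality $|\xi_j(y)|^{n_j}\le R_j^{n_j}(|\xi_j(y)|/R_j)^{n_j}$ is aimed in the wrong direction. A correct bound would instead restrict to the subannulus $|\xi_j(y)|>1/R_j$ and exploit the pairing of the two dual insertions in $\bowtie_{j,n_j}$ simultaneously, not just the grading on one factor, which is more delicate than what you have written. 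Until both the global a.l.u.\ convergence and this local estimate are secured, the termwise rearrangement via the contragredient identity in step~3 is only formal, as you yourself note.
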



\begin{thm}[{\cite[Thm. 13.1]{Gui24a}}]\label{lb39}
If $\Vbb$ is $C_2$-cofinite, and if all the modules $\Wbb_1,\dots,\Wbb_N,\Mbb_1,\dots,\Mbb_M$ are finitely-generated, then $\scr S\upphi$ converges $q$-absolutely.
\end{thm}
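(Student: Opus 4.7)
The plan is to adapt Huang's strategy for proving convergence of genus-$1$ (and more generally sewn) conformal blocks in the $C_2$-cofinite case, making essential use of propagation (Thm. \ref{lb7}) and the formulation of sewing-with-parameter as conformal blocks on a varying family of Riemann surfaces.

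First, I would reformulate the convergence question as a statement about a formal power series. Introduce formal variables $q_1,\dots,q_M$ and consider
\[
\Sbf\upphi(w)(q_\blt)=\sum_{n_1,\dots,n_M\in\Nbb}\upphi\big(w\otimes\bowtie_{1,n_1}\otimes\cdots\otimes\bowtie_{M,n_M}\big)q_1^{n_1}\cdots q_M^{n_M}.
\]
Inserting the factor $q_j^{n_j}$ corresponds geometrically to rescaling $\xi_j\mapsto q_j\xi_j$ (or $\varpi_j\mapsto q_j\varpi_j$) before sewing, via the identity $q_j^{\wtd L_0}\bowtie_{j,n}=q_j^n\bowtie_{j,n}$ and the change-of-coordinates action $\mc U$. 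Thus, coefficient-wise, $\Sbf\upphi$ is related to the family of sewings $\scr S\fk X(q_\blt)$ obtained by rescaling the gluing coordinates. The goal is to show this formal series in fact converges a.l.u.\ on some polydisc $\mc D_{R_\blt}$ with all $R_j>1$.

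Second, I would use Thm. \ref{lb7} (propagation) together with $C_2$-cofiniteness to produce a system of differential equations in the variables $q_\blt$ satisfied by $\Sbf\upphi(w)$. Recall that if $\Vbb$ is $C_2$-cofinite and $\Wbb$ is a finitely-generated $\Vbb$-module, then $\Wbb/C_2(\Wbb)$ is finite-dimensional. Consequently, if $\Xbb$ denotes the (finite-dimensional) span of a set of $C_2$-generators of $\Wbb_\blt\otimes\Mbb_\blt\otimes\Mbb_\blt'$ (or rather a suitable quotient), then by propagation, each derivative $\partial_{q_j}\Sbf\upphi(w)(q_\blt)$ can be written as a linear combination, with coefficients holomorphic in $q_\blt$ in a neighborhood of $0$, of values $\Sbf\upphi(w')(q_\blt)$ where $w'$ ranges in a finite-dimensional space of ``lower-weight'' vectors. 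Iterating this, one obtains a linear system of PDEs in $q_\blt$ of the form
\begin{align*}
\partial_{q_j}\bsb F(q_\blt)=A_j(q_\blt)\bsb F(q_\blt),
\end{align*}
where $\bsb F$ is a finite column of sewing series indexed by a finite generating set, and each $A_j$ is a matrix of meromorphic functions on a polydisc, with at worst regular singularities along $q_j=0$.

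Third, standard Frobenius theory for systems of linear ODEs (here PDEs) with regular singular points implies that any formal power series solution to such a system with the correct singular behavior at $q_j=0$ converges a.l.u.\ on some polydisc $\mc D_{R_\blt}$ with $R_j>0$. Since $\Sbf\upphi$ is by construction a formal power series in $q_\blt$ (no negative or fractional powers), its coefficients coincide with the holomorphic solutions, and we conclude the a.l.u.\ convergence. Because the radii $R_j$ are determined by the nearest singularities (located at $q_j=0$ and at values $|q_j|>1$ coming from collisions of marked/sewing points under the original assumption $r_j\rho_j>1$), one can arrange $R_j>1$. By Def. \ref{lb18}, this is exactly $q$-absolute convergence.

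The main obstacle is the second step: producing the linear system of PDEs with regular singularities at $q_\blt=0$, and in particular showing that the ``remainder'' produced after moving vertex operators across the sewing using propagation really lies in a finitely-generated $\scr O$-module in the $q_\blt$-variables. This requires a careful combination of: (i) the $C_2$-cofiniteness bound to cut down to a finite-dimensional space of representatives modulo the action of $\mathrm{Res}\,Y(v,z)$-terms; (ii) the precise form of the propagation formula near the sewing annuli, which introduces poles only at $q_j=0$; and (iii) a bookkeeping of the $\wtd L_0$-gradings, ensuring that the resulting matrix $A_j$ is of Fuchsian type at $q_j=0$ so that Frobenius theory applies. Once these ingredients are aligned, the convergence follows from general ODE theory, yielding the theorem.
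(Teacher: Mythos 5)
This theorem is cited from \cite[Thm.\ 13.1]{Gui20b}; the present paper contains no proof, so there is no in-paper argument to compare against. That said, your outline matches the strategy employed in the cited reference: reinterpret the sewing as a formal power series $\Sbf\upphi(w)(q_\blt)$, use $C_2$-cofiniteness and propagation (Thm.\ \ref{lb7}, developed in \cite{Gui21b}) to derive a finite linear system of differential equations in $q_\blt$ with regular singularities along $\{q_j=0\}$, and conclude convergence from Frobenius theory. You also correctly locate the hard part in the derivation of that system.

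Two technical points are worth flagging, because without them the sketch has genuine gaps rather than just ``details to fill in.'' First, the grading operator here is $\wtd L_0$ (eigenvalues in $\Nbb$), not $L_0$, and $q_j\partial_{q_j}\Sbf\upphi$ corresponds to inserting $\wtd L_0$ at the $j$-th sewing pair. Relating this to a residue of the stress tensor at the sewing puncture --- so that propagation can redistribute it to the other marked points with meromorphic coefficients --- requires controlling the difference $\wtd L_0-L_0$; this is where the hypothesis that the $\Mbb_j$ are \emph{finitely generated} enters nontrivially (not merely through $C_2$-finiteness). Second, Frobenius theory by itself yields convergence on \emph{some} polydisc; to obtain radii $R_j>1$ as Def.\ \ref{lb18} requires, you must show that the non-$\{q_j=0\}$ singular locus of the system lies strictly outside the closed unit polydisc. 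This is not automatic from the ODE formalism: it relies on the geometric input $r_j\rho_j>1$ (the sewn family is smooth and the relevant meromorphic coefficients are holomorphic for $|q_j|<r_j\rho_j$). Both points are handled carefully in \cite{Gui20b}; as written, your step 3 ``one can arrange $R_j>1$'' asserts the conclusion without supplying this input.
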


Let $\mc E$ be a complete list of irreducible $\Vbb$-modules, namely, every irreducible $\Vbb$-module is equivalent to a unique element of $\mc E$. If $\Vbb$ is CFT-type, $C_2$-cofinite, and rational, then $\mc E$ is a finite set. Let
\begin{align*}
\CB_{\fk X}(\Wbb_\blt\otimes\Mbb_\blt\otimes\Mbb'_\blt)\qquad\text{resp.}\qquad \CB_{\scr S\fk X}(\Wbb_\blt)	
\end{align*}
denote the space of conformal blocks associated to $\fk X$ and $\Wbb_\blt\otimes\Mbb_\blt\otimes\Mbb'_\blt$ (resp. $\scr S\fk X$ and $\Wbb_\blt$). Fix finitely-generated $\Wbb_1,\dots,\Wbb_N$. Then,  by Thm. \ref{lb37} and \ref{lb39}, we have a linear map
\begin{gather}
\begin{array}{c}
\scr S:\bigoplus_{\Mbb_1,\dots,\Mbb_M\in\mc E}\CB_{\fk X}(\Wbb_\blt\otimes\Mbb_\blt\otimes\Mbb'_\blt)\rightarrow\CB_{\scr S\fk X}(\Wbb_\blt)	\\[0.8ex]
\bigoplus_{\Mbb_1,\dots,\Mbb_M\in\mc E}\upphi_{\Mbb_\blt}\mapsto\sum _{\Mbb_1,\dots,\Mbb_M\in\mc E}\scr S\upphi_{\Mbb_\blt}
\end{array}	\label{eq71}
\end{gather}
where $\Mbb_\blt$ denotes also the tuple $(\Mbb_1,\dots,\Mbb_M)$.

\begin{thm}\label{lb40}
If $\Vbb$ is CFT-type, $C_2$-cofinite, and rational, and if $\Wbb_1,\dots,\Wbb_N$ are semi-simple $\Vbb$-modules, then the linear map $\scr S$ defined by \eqref{eq71} is bijective.
\end{thm}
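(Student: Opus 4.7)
The strategy is to derive the theorem from the untwisted factorization theorem of \cite{DGT19b}, after reducing to the single sewing-pair case $M=1$. For the reduction, let $\fk X^{(k)}$ denote the partial sewing of the first $k$ pairs with the remaining local coordinates intact, so that $\fk X^{(0)} = \fk X$ and $\fk X^{(M)} = \scr S\fk X$. The $q$-absolute convergence of Thm.~\ref{lb39}, combined with Fubini's theorem, rewrites the $M$-fold infinite sum \eqref{eq11} as an iterated sum, so that $\scr S$ decomposes as the composition of $M$ one-pair sewings. All hypotheses persist at each intermediate stage, and the finiteness of $\mc E$ (ensured by rationality and $C_2$-cofiniteness) keeps the intermediate direct sums within the framework of \eqref{eq71}.

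For the single-pair case, surjectivity is the core content of the factorization theorem. Given $\uppsi \in \CB_{\scr S\fk X}(\Wbb_\blt)$, I would translate the present analytic setup (sewing via overlapping discs with $r\rho > 1$ and $q$-absolute convergence as in Def.~\ref{lb18}) into the algebro-geometric framework of \cite{DGT19b}, viewing the $q$-expansion of $\scr S\upphi$ as the expansion of a one-parameter family of pointed Riemann surfaces with local coordinates degenerating to the nodal curve $\fk X$ as $q \to 0$. The factorization theorem then yields the desired decomposition $\uppsi = \sum_{\Mbb \in \mc E} \scr S\upphi_\Mbb$ with $\upphi_\Mbb \in \CB_\fk X(\Wbb_\blt \otimes \Mbb \otimes \Mbb')$; finiteness of $\mc E$ makes this a finite direct sum.

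For injectivity, suppose $\sum_\Mbb \scr S\upphi_\Mbb = 0$. Applying Thm.~\ref{lb7} at points inside the sewed tube, the insertion of a vertex operator $Y(v,z)$ in the sewing region, followed by residues in $z$, recovers the values of each $\upphi_\Mbb$ on vectors of the form $w_\blt \otimes (v_n\bowtie_{1,k})$ (and similarly acting on the right factor), which as $v$ and $n$ vary span all of $\Mbb(k) \otimes \Mbb'(k)$ by Jacobson density applied to the simple $\Vbb$-modules $\Mbb$. Combined with the linear independence of the ``projectors'' onto distinct irreducibles in $\End(\bigoplus_\Mbb\Mbb)$ (a consequence of rationality and semi-simplicity of $\Rep(\Vbb)$), this recovers each $\upphi_\Mbb$ from $\scr S(\bigoplus_\Mbb \upphi_\Mbb)$, giving injectivity.

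The principal technical obstacle is the faithful translation between the analytic sewing framework used here and the algebro-geometric nearby-cycles formalism of \cite{DGT19b}: one must verify that the abstract factorization of nodal conformal blocks by irreducible $\Vbb$-modules at the node is compatible with the $q$-absolute convergence of Def.~\ref{lb18}, so that the pieces $\upphi_\Mbb$ obtained from \cite{DGT19b} actually reassemble into a convergent sewing series rather than merely a formal one. The uniform growth estimates on conformal blocks coming from $C_2$-cofiniteness (the same estimates underlying Thm.~\ref{lb39}), together with the finiteness of $\mc E$, are precisely what make this translation work.
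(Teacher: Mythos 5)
Your overall skeleton coincides with the paper's: the paper reduces to the single-pair case $M=1$ (by induction, which is your Fubini/iterated-sewing decomposition in different clothing) and then invokes \cite[Thm.~12.1]{Gui20b} for bijectivity in that case, with surjectivity there ultimately resting on the factorization property of \cite{DGT19b}. Your surjectivity discussion is therefore at the same level of detail as the paper (a translation into the nodal-degeneration framework of \cite{DGT19b}, asserted rather than carried out), and that part is acceptable.

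The genuine gap is in your injectivity argument. First, the compatibility of sewing and propagation (Thm.~\ref{lb19}) is only stated for insertion points \emph{disjoint} from the sewing discs $W_j',W_j''$, so ``applying Thm.~\ref{lb7} at points inside the sewed tube'' and then identifying the result with vertex operators acting on $\bowtie_{1,k}$ is not licensed by the results quoted here; you would need a separate identity of the form $\wr\scr S\upphi(v,w)\big|_y=\sum_{n}\upphi\big(w\otimes (Y(v,\xi(y))\otimes 1)\bowtie_{1,n}\big)$ for $y$ in the glued annulus. Second, and more seriously, even granting such an identity, every quantity you can extract by residues in the tube has the form $\sum_{n}\upphi_{\Mbb}\big(w\otimes (A\otimes 1)\bowtie_{1,n}\big)$ with a \emph{fixed} mode (or product of modes) $A$, i.e.\ a convergent sum over the grading in which the degrees on $\Mbb$ and $\Mbb'$ remain correlated. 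Jacobson density lets $A$ realize arbitrary maps on each $\Mbb(n)$, but it does not let you isolate a single $n$, nor separate the contributions of distinct irreducibles $\Mbb$, because the data you start from is the value of the sewing at the single point $q=1$. Recovering each $\upphi_\Mbb$ genuinely requires either varying the sewing modulus (the family $q\mapsto\scr S_q\upphi$, its differential equation and its $q\to 0$ asymptotics, with exponents governed by the lowest conformal weights) or a dimension count against the nodal curve via \cite{DGT19b}; this is precisely the content of \cite[Thm.~12.1]{Gui20b} that the paper cites, and it is not reproduced by vertex-operator insertions and ``linear independence of projectors'' alone.
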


\begin{proof}
When $M=1$, this follows from \cite[Thm. 12.1]{Gui24a}. (Note that the surjectivity of $\scr S$ follows from the remarkable factorization property proved by \cite{DGT22}.) Apply this result inductively, we can prove this theorem for a general $M$.
\end{proof}

The following theorem is crucial to the main result of this article. Cf. \cite[Thm. 9.1]{Gui24b}.

\begin{thm}\label{lb19}
Assume $\scr S\upphi$ converges $q$-absolutely. Let $U_1,\dots U_n\subset C$ be open and disjoint from $W_j',W_j''$ (for all $1\leq j\leq N$), which can also be viewed as open subsets of $\scr SC$. Then there exist $R_1,\dots,R_M>1$ such that for each $v_i\in\scr V_C(U_i)=\scr V_{\scr SC}(U_i)$ and $w\in\Wbb_\blt$, the following infinite series
\begin{align*}
\wtd{\mc S}\wr^n\upphi(v_1,\dots,v_n,w)=\sum_{n_1,\dots,n_M\in\Nbb}q_1^{n_1}\cdots q_M^{n_M}\cdot\wr^n\upphi(v_1,\dots,v_n,w\otimes\bowtie_{1,n_1}\otimes\cdots\otimes\bowtie_{M,n_M})	
\end{align*}
of holomorphic functions on $\mc D_{R_1}\times\cdots\times\mc D_{R_M}\times\Conf(U_\blt\setminus\SX)$ converges a.l.u. in the sense of \eqref{eq10}. Moreover, let $\scr S\wr^n\upphi$ be the limit of the above series at $q_1=\dots=q_M=1$.  Then
\begin{align}
\scr S\wr^n\upphi(v_1,\dots,v_n,w)=\wr^n\scr S\upphi(v_1,\dots,v_n,w).	
\end{align}
\end{thm}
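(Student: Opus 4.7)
The plan is to reduce the statement to the $q$-absolute convergence of $\scr S\upphi$ (the case $n=0$, already assumed) applied to suitably modified vectors, and then identify the limit $\scr S\wr^n\upphi$ with $\wr^n\scr S\upphi$ using the uniqueness characterization of propagation provided by Thm.\ \ref{lb24}. The key observation at the outset is that since $U_1,\dots,U_n$ are disjoint from the sewing discs $W_j',W_j''$, the sheaves $\scr V_C$ and $\scr V_{\scr SC}$ coincide on each $U_i$, so $v_i$ makes unambiguous sense on both Riemann surfaces, and the propagation of $\upphi$ and that of $\scr S\upphi$ can be meaningfully compared outside the sewing region.

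For the convergence part, I would work locally. Fix a base point $(y_1^\circ,\dots,y_n^\circ)\in\Conf(U_\blt\setminus\SX)$ and a compact neighborhood $K$ of it. Using Thm.\ \ref{lb7}(1) and (2), I would unwind the propagation $\wr^n\upphi(v_1,\dots,v_n,w\otimes\bowtie_{1,n_1}\otimes\cdots\otimes\bowtie_{M,n_M})$ step by step, moving each $y_i$ (via analytic continuation along an explicit path) into a small disc near some marked point $x_{j_i}\in\SX$, so that the propagated function becomes an iterated convergent series built from vertex operator insertions acting on $w_{j_i}$. Symbolically, the result has the form
\begin{align*}
\upphi\big(w_1\otimes\cdots\otimes A_j(y_\blt,v_\blt)w_j\otimes\cdots\otimes w_N\otimes\bowtie_{1,n_1}\otimes\cdots\otimes\bowtie_{M,n_M}\big),
\end{align*}
where each $A_j(y_\blt,v_\blt)$ is a vertex-operator-built endomorphism of $\Wbb_j$ depending holomorphically on $y_\blt\in K$ (with uniform bounds on its matrix coefficients because the intermediate vertex-operator series are a.l.u.\ convergent on compact subsets of the relevant coordinate domains). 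The $q$-absolute convergence of $\scr S\upphi$ then gives, for each fixed $y_\blt\in K$, absolute convergence of $\sum q_\blt^{n_\blt}\wr^n\upphi(\cdots)$ on some polydisc $\mc D_{R_\blt}$; the uniformity of the vertex operator bounds in $y_\blt\in K$ upgrades this to a.l.u.\ convergence on $\mc D_{R_\blt}\times K$. Covering $\Conf(U_\blt\setminus\SX)$ by such neighborhoods $K$ and using the identity principle for holomorphic functions yields the desired a.l.u.\ statement.

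For the identification of the limit, write $F$ for the value of the series at $q_1=\dots=q_M=1$. By the $\scr O_{C\setminus\SX}$-linearity and functoriality inherited from the propagation of $\upphi$, $F$ defines an $\scr O_{C\setminus(\SX\cup\bigcup W_j'\cup\bigcup W_j'')}$-morphism that extends to an $\scr O_{\scr SC\setminus\SX}$-morphism $\scr V_{\scr SC}\to\scr O_{\scr SC}$ (extending through the sewn region by analytic continuation, or equivalently by invoking Thm.\ \ref{lb7} for $\scr S\upphi$, which is a genuine conformal block on $\scr S\fk X$ by Thm.\ \ref{lb37}). To check $F=\wr^n\scr S\upphi$, by Thm.\ \ref{lb24} it suffices to verify the local characterization near each marked point $x_j$. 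Taking $U_1$ to be a disc around $x_j$ with coordinate $\eta_j$, I would apply Thm.\ \ref{lb7}(1) termwise inside the series defining $F$ and then exchange summations (justified by the a.l.u.\ convergence proved in the previous step), obtaining exactly
$\scr S\upphi(w_1\otimes\cdots\otimes Y(v_1(y_1),\eta_j(y_1))w_j\otimes\cdots\otimes w_N)$ times $\wr^{n-1}\scr S\upphi$-type data on $(y_2,\dots,y_n)$, which is precisely the local description of $\wr^n\scr S\upphi$. Hence $F=\wr^n\scr S\upphi$.

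The main obstacle is the uniformity in $y_\blt$ needed for a.l.u.\ convergence: the vertex operator expansions used to rewrite $\wr^n\upphi(v_\blt,w\otimes\bowtie_\blt)$ are only convergent in coordinate-dependent annular regions, so I must either choose paths of analytic continuation adapted to each compact $K$, or build a more invariant argument by interpreting $\wr^n\upphi$ as a conformal block on an augmented pointed surface (with extra marked points at $y_\blt$ and $\Vbb$ inserted there) and applying a parameter-dependent version of Thm.\ \ref{lb37}. Either route requires careful bookkeeping of the vertex-operator bounds, and this is where the bulk of the technical work lies.
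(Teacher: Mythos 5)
A preliminary remark: the paper does not actually prove this theorem — it is quoted from \cite[Thm. 9.1]{Gui21b} (the text says ``Cf. [Gui21b, Thm. 9.1]''), so there is no in-paper proof to compare with, and your argument has to stand on its own.

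Your identification step is fine in outline: once a.l.u.\ convergence is known, termwise application of Thm.\ \ref{lb7}(1) near a marked point, interchange of absolutely convergent sums, and uniqueness of propagation (or just the identity theorem on $\Conf(U_\blt\setminus\SX)$) give $\scr S\wr^n\upphi=\wr^n\scr S\upphi$. The genuine gap is in the convergence step, and it is not mere bookkeeping. First, the rewriting of $\wr^n\upphi(v_\blt,w\otimes\bowtie_{1,n_1}\otimes\cdots\otimes\bowtie_{M,n_M})$ at a point $y_\blt$ as $\upphi\big(w_1\otimes\cdots\otimes A_j(y_\blt,v_\blt)w_j\otimes\cdots\big)$ is only valid when the $y_i$ lie in the expansion regions prescribed by Thm.\ \ref{lb7}(1)--(2) (discs around the $x_j$, or around each other, with the closed-disc inclusion conditions); at a general $y_\blt\in\Conf(U_\blt\setminus\SX)$ the value of the propagation is defined only by analytic continuation, which changes the point and transports no estimates, so ``moving each $y_i$ near $x_{j_i}$'' does not let you bound the series at the original $y_\blt$. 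Second, even inside the expansion region, $A_j(y_\blt,v_\blt)w_j$ is an infinite series of vectors of $\Wbb_j$, so what you must control is a double series of the shape $\sum_{n_1,\dots,n_M}\sum_m R_1^{n_1}\cdots R_M^{n_M}\,\big|\upphi\big(w_1\otimes\cdots\otimes P_mY(\cdot)w_j\otimes\cdots\otimes\bowtie_{1,n_1}\otimes\cdots\otimes\bowtie_{M,n_M}\big)\big|$. The hypothesis that $\scr S\upphi$ converges $q$-absolutely only controls $\sum_{n_\blt}R_\blt^{n_\blt}|\upphi(u\otimes\bowtie_{\blt,n_\blt})|$ for each \emph{fixed} vector $u\in\Wbb_\blt$; since $\upphi$ is a bare linear functional on an infinite-dimensional space with no continuity, ``uniform bounds on the matrix coefficients of $A_j$'' cannot be pushed through $\upphi$, and finiteness of the two iterated sums does not yield the joint sum, let alone its local uniformity in $y_\blt$.

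This joint convergence is precisely the content of the theorem: note that no $C_2$-cofiniteness is assumed here, so you cannot fall back on Thm.\ \ref{lb39} applied to $\wr^n\upphi$ viewed as a conformal block with $n$ extra $\Vbb$-insertions. In \cite{Gui21b} the statement is obtained by a different and substantially longer argument developed for exactly this purpose. As written, your proposal reduces the theorem to its own hardest assertion and then defers that assertion to ``careful bookkeeping,'' so it does not constitute a proof.
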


\subsection{Conformal blocks for twisted modules}\label{lb22}

Let $\Ubb=\bigoplus_{n\in\Nbb}\Ubb(n)$ ($\dim\Ubb(n)<+\infty$) be a  VOA. An automorphism $g$ of $\Ubb$ is a linear map preserving the vacuum and the conformal vector of $\Ubb$, and satisfying $gY(u)_nv=Y(gu)_ngv$ for each $u,v\in\Ubb,n\in\Zbb$.

We let $G$ be a finite group of automorphisms of $\Ubb$.

\subsubsection{Twisted modules}\label{lb34}

For any $g\in G$ with order $|g|$, a (finitely-admissible)  \textbf{$g$-twisted $\Ubb$-module} is a vector space $\mc W$ together with a diagonalizable operator $\wtd L_0^g$, and an operation
\begin{gather*}
Y^g:\Ubb\otimes\mc W\rightarrow	\mc W[[z^{\pm 1/k}]]\\
u\otimes w\mapsto Y^g(u,z)w=\sum_{n\in \frac 1{|g|} \Zbb}Y^g(u)_nw\cdot z^{-n-1}
\end{gather*}
satisfying the following conditions:
\begin{enumerate}
\item $\mc W$ has $\wtd L_0^g$-grading \index{L0g@$\wtd L_0^g$} $\mc W=\bigoplus_{n\in\frac 1{|g|}\Nbb}\mc W(n)$, each eigenspace $\mc W(n)$ is finite-dimensional, and for any $u\in\Ubb$ we have
\begin{align}
[\wtd L_0^g,Y^g(u)_n]=Y^g(L_0u)_n-(n+1)Y^g(u)_n.\label{eq12}	
\end{align}
In particular, for each $w\in\mc W$ the lower truncation condition follows: $Y^g(u)_nw=0$ when $n$ is sufficiently small.
\item $Y^g(\id,z)=\id_{\mc W}$.
\item ($g$-equivariance) For each $u\in\Ubb$,
\begin{align}
Y^g(gu,z)=Y^g(u,e^{-2\im\pi}z):=\sum_{n\in \frac 1{|g|} \Zbb}Y^g(u)_nw\cdot e^{2(n+1)\im\pi}z^{-n-1}.\label{eq21}
\end{align}
\item (Analytic Jacobi identity)\footnote{In \cite[Def. 3.1]{Hua18}, this property is called the duality property.}  For each $u,v\in\Ubb,w\in\mc W,w'\in\mc W'$, and for each $z\neq \tipaz$ in $\Cbb^\times$ with chosen $\arg z,\arg \tipaz$, the following series of single-valued functions of $\log z,\log z,\log(z-\tipaz)$
\begin{gather}
\bk{Y^g(u,z)Y^g(v,\tipaz)w,w'}:=\sum_{n\in\frac 1{|g|}\Nbb}	\bk{Y^g(u,z)P_n^gY^g(v,\tipaz)w,w'}\label{eq13}\\
\bk{Y^g(v,\tipaz)Y^g(u,z)w,w'}:=\sum_{n\in\frac 1{|g|}\Nbb}	\bk{Y^g(v,\tipaz)P_n^gY^g(u,z)w,w'}\label{eq14}\\
\bk{Y^g(Y(u,z-\tipaz)v,\tipaz)w,w'}:=\sum_{n\in\Nbb}	\bk{Y^g(P_nY(u,z-\tipaz)v,\tipaz)w,w'}\label{eq15}
\end{gather}
(where $\tipaz$ is fixed) converge a.l.u.  on $|z|>|\tipaz|$, $|z|<|\tipaz|$, $|z-\tipaz|<|\tipaz|$ respectively (in the sense of \eqref{eq10}). Moreover, for any fixed $\tipaz\in\Cbb^\times$ with chosen argument $\arg \tipaz$, let $R_{\tipaz}$ be the ray with argument $\arg \tipaz$ from $0$ to $\infty$, but with $0,\tipaz,\infty$ removed. Any point on $R_{\tipaz}$ is assumed to have argument $\arg \tipaz$. Then the above three expressions, considered as functions of $z$ defined on $R_{\tipaz}$ satisfying the three mentioned inequalities respectively, can be analytically continued to the same holomorphic function on the open set
\begin{align*}
\Updelta_\tipaz=\Cbb\setminus\{\tipaz,-t\tipaz:t\geq 0\},
\end{align*}
which can furthermore be extended to a multivalued holomorphic function $f_{\tipaz}(z)$ on $\Cbb^\times\setminus\{\tipaz\}$ (i.e., a holomorphic function on the universal cover of $\Cbb^\times\setminus\{\tipaz\}$).
\end{enumerate}
In the above analytic Jacobi identity, let $\mc W'=\bigoplus_{n\in\frac 1{|g|}\Nbb}\mc W(n)^*$, then  $P_n^g$ \index{Pn@$P_n,P_n^g$, the projection onto the $n$-eigenspace of $\wtd L_0$ resp. $\wtd L_0^g$} is defined to be the projection
\begin{align}
	P_n^g:(\mc W')^*=\prod_{n\in\frac 1{|g|}\Nbb}\mc W(n)\rightarrow\mc W(n).\label{eq72}
\end{align}

The above vector space $\mc W'$ can be equipped with a $g^{-1}$-twisted $\Ubb$-module structure with $\wtd L_0$-grading  $\mc W'=\bigoplus_{n\in\frac 1{|g|}\Nbb}\mc W'(n)=\bigoplus_{n\in\frac 1{|g|}\Nbb}\mc W(n)^*$. Then for each $w\in\mc W,w'\in\mc W'$, 
\begin{align}
\bk{Y^{g^{-1}}(v,z)w',w}=&\bk{w',Y^g(\mc U(\tipxgamma_z)v,z^{-1})w}\nonumber\\
	=& \bk{w',Y^g(e^{zL_{-1}}(-z^{-2})^{L_0}v,z^{-1})w}.\label{eq64}
\end{align}
$\mc W'$ is called the \textbf{contragredient module} of $\mc W$. Cf. \cite[Prop. 3.3]{Hua18}. Using the easy fact that $\tipxgamma_{z^{-1}}\cdot \tipxgamma_z=1$ (and hence $\mc U(\tipxgamma_{z^{-1}})\cdot \mc U(\tipxgamma_z)=1$), one sees that $\mc W$ is the contragredient of $\mc W'$. By choosing $v$ to be the conformal vector $\cbf$ and setting
\begin{align*}
L_n=Y^g(\cbf)_{n+1}	
\end{align*}
when acting on  $\mc W$, we conclude
\begin{align}
\bk{L_n w,w'}=\bk{w,L_{-n}w'}.\label{eq85}	
\end{align}

The above analytic Jacobi identity is equivalent to its well-known algebraic form (cf. \cite[Sec. 10]{Gui24b}). It follows easily from that algebraic Jacobi identity that \eqref{eq12} holds if $\wtd L_0^g$ is replaced by $L_0$. Thus, $\wtd L_0^g-L_0$ commutes with the actions of vertex operators. In particular, if $\mc W$ is an irreducible twisted $\Ubb$-module, then $\wtd L_0^g-L_0$ is a constant.

\subsubsection{$\fk P$ is a positively $N$-pathed Riemann spheres with local coordinates}\label{lb30}

Let \index{00@Positively $N$-pathed Riemann sphere with local coordinates}
\begin{align}
\fk P=(\Pbb^1;x_1,\dots,x_N;\eta_1,\dots,\eta_N;\upgamma_1,\dots,\upgamma_N)=(\Pbb_1;x_\blt;\eta_\blt;\upgamma_\blt)\label{eq16}
\end{align}
where $(\Pbb^1;x_1,\dots,x_N;\eta_1,\dots,\eta_N)$ is an $N$-pointed Riemann sphere with local coordinates. Each $\eta_j$ is defined analytically on an open disc $W_j$ \index{W@$W_j$, the open disc centered at $x_j$} centered at $x_j$, i.e., $\eta_j(W_j)$ is an open disc centered at $0$. We assume $W_j\cap \{x_1,\dots,x_N\}=x_j$. We assume the (continuous) paths $\upgamma_1,\dots,\upgamma_N:[0,1]\rightarrow \Pbb^1$ have common end point in $\Pbb^1\setminus\{x_1,\dots,x_N\}$ (denoted by $\upgamma_\blt(1)$), \index{zz@$\upgamma_j$, a fixed path inside $\Pbb^1\setminus\Sbf$  starting from inside $W_j$} \index{zz@$\upgamma_\blt(1)$, the common end point of $\upgamma_1(1),\dots,\upgamma_N(1)$} and the initial point $\upgamma_j(0)$ for each $1\leq j\leq N$ satisfies $\upgamma_j(0)\in W_j$. (See Figure \ref{fig1}.)

\begin{cv}
	Unless otherwise stated, for any path $\uplambda$ in $\Pbb^1\setminus\Sbf$, its homotopy class $[\uplambda]$ denotes the class of all paths $\wtd{\uplambda}$ in $\Pbb^1\setminus\Sbf$ having the same initial and end points as $\uplambda$, and is homotopic (assuming the initial and end points are always fixed) in $\Pbb^1\setminus\Sbf$ to $\uplambda$. \index{zz@$[\uplambda]$} 
\end{cv} 

For each $x\in\Pbb^1\setminus\Sbf$, let \index{zz@$\Lambda_x$, the set of paths inside $\Pbb^1\setminus\Sbf$ from $x$ to $\upgamma_\blt(1)$}
\begin{align*}
	\Lambda_x=\big\{\text{continuous maps }\uplambda:[0,1]\rightarrow\Pbb^1\setminus\Sbf,\uplambda(0)=x,\uplambda(1)=\upgamma_\blt(1)\big\}.	
\end{align*}
Namely, $\Lambda_x$ is the set of all paths in $\Pbb^1\setminus\Sbf$ going from $x$ to $\upgamma_\blt(1)$. 

For each $j$, we let \index{zz@$\upepsilon_j$, the anticlockwise circle around $x$ from and to $\upgamma_j(0)$} 
\begin{align*}
	\upepsilon_j:[0,1]\rightarrow W_j\setminus\{x_j\}	
\end{align*}
be the anticlockwise circle (defined using $\eta_j$) centered at $x_j$ whose initial and end point is $\upgamma_\blt(0)$. We write
\begin{align*}
	\Gamma:=\pi_1(\Pbb^1\setminus \Sbf,\upgamma_\blt(1))	
\end{align*}
(the fundamental group of $\Pbb^1\setminus\Sbf$ with basepoint $\upgamma_\blt(1)$). \index{zz@$\Gamma=\pi_1(\Pbb^1\setminus \Sbf,\upgamma_\blt(1))$} Equivalently, $\Gamma=\{[\upmu]:\upmu\in\Lambda_{\upgamma_\blt(1)}\}$. Then $\Gamma$ is isomorphic to the free group $\mbb F_{N-1}$. Set \index{zz@$\upalpha_j=\upgamma_j^{-1}\upepsilon_j\upgamma_j$}
\begin{align}
	\upalpha_j:=\upgamma_j^{-1}\upepsilon_j\upgamma_j.	\label{eq1}
\end{align}
Then the homotopy class $[\upalpha_j]$ (in $\Pbb^1\setminus\Sbf$) belongs to $\Gamma$. We assume
\begin{align}
\Gamma=\bk{[\upalpha_1],\dots,[\upalpha_N]},\label{eq87}	
\end{align}
namely, these $N$ elements generate $\Gamma$.

Such data $\fk P$ is called an \textbf{$N$-pathed Riemann sphere with local coordinates}. If, moreover, for each $j$ we have
\begin{align}
	\begin{array}{c}
		\eta_j\circ\upgamma_j(0)\in (0,+\infty)\\[0.8ex]
		\arg \eta_j\circ\upgamma_j(0)=0
	\end{array}	\label{eq22}
\end{align} 
We say $\fk P$ is \textbf{positively $N$-pathed}. We set \index{S@$\Sbf=\{x_1,\dots,x_N\}$ in $\Pbb^1$}
\begin{align}
	\Sbf=\{x_1,\dots,x_N\}\qquad\subset\Pbb^1.\label{eq20}
\end{align}

\index{00@Positively $N$-pathed Riemann sphere with local coordinates}

\subsubsection{Conformal blocks}\label{lb65}

In this subsection, we define genus-$0$ twisted conformal blocks. For a comparison of our definition with that in algebraic geometry (cf. \cite{FS04}), see Introduction-Outline.

We let \index{P@$\mc P$, the universal cover of $\Pbb^1\setminus\Sbf$}
\begin{align*}
\mc P=\text{the universal cover of }\Pbb^1\setminus\Sbf.	
\end{align*}
Let $\scr U_C$ \index{UC@$\scr U_C$, the sheaf of VOA for $\Ubb$ and $C$} be the sheaf of VOA for $\Ubb$ associated to any Riemann surface $C$. Then $\scr U_{\mc P}$ can be identified naturally with the pullback of $\scr U_{\Pbb^1\setminus\Sbf}$ along the covering map $\mc P\rightarrow\Pbb^1\setminus\Sbf$.

\begin{lm}\label{lb17}
There is a one-to-one correspondence between: 
\begin{enumerate}
\item An $\scr O_{\mc P}$-module morphism $\text{\textOlyoghlig}:\scr U_{\mc P}\rightarrow\scr O_{\mc P}$.
\item An operation $\uppsi$ which associates to each simply-connected open subset $U\subset\Pbb^1\setminus\Sbf$, each path $\uplambda\subset\Pbb^1\setminus\Sbf$ from a point of $U$ to $\upgamma_\blt(1)$, and each section $v\in\scr U_{\Pbb^1}(U)$, an element $\uppsi(\uplambda,v)\in\scr O(U)$   satisfying the following properties:
\begin{enumerate}[label=(\alph*)]
	\item If $V\subset U$ is open, simply-connected, and contains $\uplambda(0)$, then $\uppsi(\uplambda,v|_V)=\uppsi(\uplambda,v)|_V$.
	\item If $f\in\scr O(U)$ then $\uppsi(\uplambda,fv)=f\uppsi(\uplambda,v)$.
	\item If $\uplambda'$ is another path in $\Pbb^1\setminus\Sbf$ with the same initial and end points as $\uplambda$, and if $[\uplambda]=[\uplambda']$, then $\uppsi(\uplambda,v)=\uppsi(\uplambda',v)$. Therefore, we may write $\uppsi(\uplambda,v)$ as $\uppsi([\uplambda],v)$.
	\item If $l$ is a path in $U$ ending at the initial point of $\uplambda$, then $\uppsi(\uplambda,v)=\uppsi(l\uplambda,v)$.
\end{enumerate}
\end{enumerate}
\end{lm}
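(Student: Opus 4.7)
The plan is to unpack what the universal cover $\mc P$ looks like and then transport the data across it. Recall that a point of $\mc P$ can be identified with a pair $(x,[\uplambda])$ where $x\in\Pbb^1\setminus\Sbf$ and $[\uplambda]$ is the homotopy class of a path from $x$ to $\upgamma_\blt(1)$ (two paths with the same endpoints being identified when they are homotopic in $\Pbb^1\setminus\Sbf$). The covering map sends $(x,[\uplambda])\mapsto x$. For any simply-connected open $U\subset\Pbb^1\setminus\Sbf$ and any fixed path $\uplambda$ in $\Pbb^1\setminus\Sbf$ from a point of $U$ to $\upgamma_\blt(1)$, there is a canonical sheet $\wtd U_{[\uplambda]}\subset\mc P$ over $U$, obtained by concatenating $\uplambda$ with short paths inside $U$; it is an open subset mapping biholomorphically to $U$, and depends only on the class $[\uplambda]$. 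Such sheets form a basis of the topology of $\mc P$, and two of them, $\wtd U_{[\uplambda]}$ and $\wtd V_{[\upmu]}$, agree on the preimage of $U\cap V$ over a connected component $W$ of $U\cap V$ precisely when, choosing any path $l$ inside $W$ from a point of $W$ to a point of $U$ (resp. a point of $V$), the prolongation $l\uplambda$ is homotopic to $\uplambda'$ for some representative $\uplambda'$ obtained in the same way from $\upmu$; this is exactly the content of conditions (c) and (d) above.

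Given the morphism $\text{\textOlyoghlig}$ in (1), I would define $\uppsi$ in (2) as follows: for $U$, $\uplambda$ and $v\in\scr U_{\Pbb^1}(U)$ as in (2), pull $v$ back along the biholomorphism $\wtd U_{[\uplambda]}\xrightarrow{\simeq}U$ to get a section $\wtd v$ of $\scr U_{\mc P}$ over $\wtd U_{[\uplambda]}$ (this is legal because $\scr U_{\mc P}$ is the pullback of $\scr U_{\Pbb^1\setminus\Sbf}$), apply $\text{\textOlyoghlig}$ to get an element of $\scr O(\wtd U_{[\uplambda]})$, and finally transport it back to $\scr O(U)$. Properties (a) and (b) are immediate from the fact that $\text{\textOlyoghlig}$ is an $\scr O_{\mc P}$-module morphism compatible with restriction; property (c) holds because $\wtd U_{[\uplambda]}=\wtd U_{[\uplambda']}$ when $[\uplambda]=[\uplambda']$; and property (d) holds because concatenating the initial point of $\uplambda$ with a path $l$ inside $U$ merely corresponds to changing the representative used to identify $\wtd U_{[\uplambda]}$.

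Conversely, given $\uppsi$ satisfying (a)--(d), I would construct $\text{\textOlyoghlig}$ by declaring, on each basic open set $\wtd U_{[\uplambda]}\subset\mc P$, the value of $\text{\textOlyoghlig}$ on a section coming from $v\in\scr U_{\Pbb^1}(U)$ to be $\uppsi([\uplambda],v)$. Conditions (c) and (d) are what is needed for these local definitions to be compatible on overlaps of basic opens, as described in the first paragraph, so they glue to a well-defined $\scr O_{\mc P}$-module morphism on $\scr U_{\mc P}$; conditions (a) and (b) then give compatibility with restriction and $\scr O_{\mc P}$-linearity. The two constructions are mutually inverse by design.

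The main obstacle, and the only part that requires genuine bookkeeping, is verifying that the sheet description of $\mc P$ really turns the gluing condition on $\mc P$ into exactly the combination of (c) and (d): given two paths $\uplambda,\upmu$ witnessing the same sheet of $\mc P$ over a connected open subset of $U\cap V$, one must check that they are related by a finite sequence of the two elementary moves "homotopy" (condition (c)) and "prepend a path inside the open set" (condition (d)). This is a direct unwinding of the definition of the universal cover, but it is the step where one must be most careful about basepoints and orientations in order for the correspondence to be an honest bijection.
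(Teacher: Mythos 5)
Your proposal is correct and takes essentially the same route as the paper: both identify sheets of the universal cover $\mc P$ over a simply-connected $U$ with homotopy classes of paths from $U$ to $\upgamma_\blt(1)$ (the paper phrases this by fixing a lift $p$ of $\upgamma_\blt(1)$ and lifting $\uplambda$ to end at $p$), and then transport $\text{\textOlyoghlig}$ back and forth across the sheet-to-base biholomorphism, with (c) and (d) providing well-definedness and (a), (b) the $\scr O$-module properties.
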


We call any $\uppsi$ in part 2 a \textbf{multivalued ($\scr O_{\Pbb^1\setminus\Sbf}$-module) morphism} $\scr U_{\Pbb^1\setminus\Sbf}\rightarrow\scr O_{\Pbb^1\setminus\Sbf}$.

\begin{proof}
Fix a lift $p\in\mc P$ of $\upgamma_\blt(1)$. If $\text{\textOlyoghlig}$ is as in part 1, we lift $\uplambda$ to a path $\wtd\uplambda$ in $\mc P$ ending at $p$, and lift $U$ to a unique simply-connected $\wtd U_\uplambda$ containing the initial point of the $\wtd\uplambda$. Identify $\wtd U_\uplambda$ with $U$ via the covering map. Then $\uppsi(\uplambda,v):=\text{\textOlyoghlig}(v)$ defines $\uppsi$.

Conversely, choose $\uppsi$ as in part 2. For each open simply-connected $U\subset\Pbb^1\setminus\Sbf$ and any lift $\wtd U$, we choose a path $\uplambda\subset\Pbb^1\setminus\Sbf$ from a point of $U$ to $\upgamma_\blt(1)$ such that $\wtd U=\wtd U_\uplambda$.  Identify $\wtd U$ with $U$ via the covering map, and set $\text{\textOlyoghlig}(v)=\uppsi(\uplambda,v)$ for each $v\in\scr U_{\wtd U}(\wtd U)$. This defines $\text{\textOlyoghlig}$ on any simply connected open set, and can be easily extended onto all open subsets of $\mc P$.
\end{proof}

Now, for each $j$, choose $g_j\in G$, and choose a $g_j$-twisted $\Ubb$-module $\mc W_j$ associated to the marked point $x_j$.  Let $\mc W_\blt=\mc W_1\otimes\cdots\otimes\mc W_N$. We are ready to define genus $0$ conformal blocks for twisted modules.

\begin{df}\label{lb8}
A \textbf{conformal block} associated to $\fk P$ and $\mc W_\blt$ is a linear functional $\uppsi:\mc W_\blt\rightarrow\Cbb$ satisfying the following conditions:
\begin{enumerate}
\item For each $u\in\Ubb$ and each $1\leq j\leq N$, the series
\begin{align}
&\uppsi(w_1\otimes\cdots\otimes Y^{g_j}(u,z)w_j\otimes\cdots\otimes w_N)\nonumber\\
:=&\sum_{n\in \frac 1{|g_j|}\Nbb}\uppsi(w_1\otimes\cdots\otimes P_n^{g_j}Y^{g_j}(u,z)w_j\otimes\cdots\otimes w_N)\label{eq17}
\end{align}
of single-valued functions of $\log z$ converges a.l.u. on $\exp^{-1}\big(\eta_j(W_j\setminus\{x_j\})\big)$  in the sense of \eqref{eq10}.
\item There exists $\wr\uppsi$ associating to each $w_\blt\in\mc W_\blt$ a multivalued $\scr O_{\Pbb^1\setminus\Sbf}$-module morphism \index{zz@$\wr^n\upphi,\wr\uppsi$, propagation of conformal blocks}
\begin{align*}
\wr\uppsi(\cdot,\cdot,w_\blt):\scr U_{\Pbb^1\setminus\Sbf}\rightarrow\scr O_{\Pbb^1\setminus\Sbf}	
\end{align*}
satisfying the following condition: 

For each $1\leq j\leq N$,  identify 
\begin{align}
\scr U_{W_j}=\Ubb\otimes_\Cbb\scr O_{W_j}  \qquad\text{via }\mc U_\varrho(\eta_j).\label{eq38}
\end{align}
Choose   any open simply-connected subset $U\subset W_j\setminus\{x_j\}$ containing $\upgamma_j(0)$ and equipped with a continuous $\arg$ function on $\eta_j(U)$ whose value at $\eta_j\circ\upgamma_j(0)$ is $0$. Choose any $u\in\Ubb\otimes_\Cbb\scr O(U)$.  Then the function $\wr\uppsi(\upgamma_j,u,w_\blt)\in\scr O(U)$ satisfies   
\begin{align}
\wr\uppsi(\upgamma_j,u,w_\blt)_x=\uppsi\big(w_1\otimes\cdots\otimes Y^{g_j}\big(u(x),\eta_j(x)\big)w_j\otimes\cdots\otimes w_N\big)\label{eq18}	
\end{align}
for each $x\in U$, where the right hand side is understood as the limit of the series \eqref{eq17} by replacing $u$ by $u(x)$, substituting $z=\eta_j(x)$, and defining $\arg\eta_j(x)$ using the $\arg$ function on $\eta_j(U)$.
\end{enumerate}
\end{df}

Note that in \eqref{eq18}, we understand $u$ as an $\Ubb$-valued function whose value at each $x\in U$ is $u(x)\in\Ubb$.

We give some comments on this definition.


\begin{rem}
It suffices to check \eqref{eq18} for any constant section $u\in\Ubb\simeq\Ubb\otimes_\Cbb\id\subset\Ubb\otimes_\Cbb\scr O(U)$.
\end{rem}

\begin{rem}\label{lb9}
If for some open simply-connected $U\subset W_j$, the relation \eqref{eq18} holds for any $x\in U$, then due to the uniqueness of analytic continuation, for every open simply-connected $U\subset W_j$ this is also true.

Moreover, given one simply-connected open $U\subset W_j$, if $I$ is subset of $U$ with at least one accumulation point in $U$, then by complex analysis, \eqref{eq18} holds for all $x\in U$ if it holds for all $x\in I$.
\end{rem}

\begin{rem}\label{lb10}
Suppose that  $u$ is $\wtd L_0=L_0$-homogeneous with weight $\wt u$, and each $w_j$ is $\wtd L_0$-homogeneous with weight $\wtd\wt w_j$. Then by \eqref{eq12},
\begin{align*}
P_n^{g_j}Y^{g_j}(u,z)w_j=Y^{g_j}(u)_{-n-1+\wt u+\wtd\wt w_j}w_j\cdot	z^{n-\wt u-\wtd\wt w_j}.
\end{align*}
It follows that for homogeneous vectors, the following statements are equivalent:
\begin{enumerate}[label=(\arabic*)]
\item \eqref{eq17} converges a.l.u. as a function of $\log z$ on $\exp^{-1}\big(\eta_j(W_j\setminus\{x_j\})\big)$.
\item The laurent series
\begin{align}
\sum_{n\in \frac 1{|g_j|}\Zbb}\uppsi(w_1\otimes\cdots\otimes Y^{g_j}(u)_{n}w_j\otimes\cdots\otimes w_N)z^{-n-1}	\label{eq19}
\end{align}
of $z^{1/|g_j|}$ (which is a power series when multiplied by a power of $z^{1/|g_j|}$) converges absolutely for any  $z^{1/|g_j|}$ on the punctured disc $\mc D_{r_j^{1/|g_j|}}^\times$ (if $\eta_j(W_j)$ has radius $r_j$).
\item For each $z\in\eta_j(W_j\setminus\{x_j\})$ and every argument $\arg z$, \eqref{eq17} converges absolutely.
\item For each $z\in\eta_j(W_j\setminus\{x_j\})$ with one argument $\arg z$, \eqref{eq17} converges absolutely.
\end{enumerate}
Moreover, by linearity and triangle inequality, any of these four statements holds for all vectors provided that it holds for homogeneous vectors.
\end{rem}

\begin{rem}\label{lb21}
It follows from the previous two remarks that the definition for $\uppsi:\mc W_\blt\rightarrow\Cbb$ to be a conformal block is independent of (the sizes of) the open discs $W_j$ under $\eta_j$. 

Indeed, suppose we can verify the two conditions for  $W_j$ with radius $r_j$. Let $\wht W_j$ be a larger one with radius $R_j$ centered at $x_j$ on which $\eta_j$ is still defined.  Then condition 2 for $W_j$ implies that \eqref{eq19}, which converges to a holomorphic function of $z^{1/|g_j|}$ on $\mc D_{r_j^{1/|g_j|}}^\times$, can be analytically continued (by $\wr\upphi(\upgamma_j,u,w)$) to one on $\mc D_{R_j^{1/|g_j|}}^\times$. Thus \eqref{eq19}, whose coefficients are given by those of the series expansion of $\wr\upphi(\upgamma_j,u,w)$,  converges absolutely on this larger punctured disc, i.e., condition 1 holds on $\wht W_j$. By Rem. \ref{lb9}, condition 2 also holds on $\wht W_j$.
\end{rem}

\begin{rem}\label{lb2}
Assume a positively $N$-pathed $\fk P'$ is $\Pbb^1$ with the same marked points $x_\blt$ and local coordinates $\eta_\blt$, but with different paths $\upgamma_1',\dots,\upgamma_N'$ (in $\Pbb^1\setminus\Sbf$ ending at a common point $\upgamma'_\blt(1)$). We say that $\upgamma_\blt$ and $\upgamma_\blt'$ are \textbf{equivalent (positive) paths} (or that $\fk P$ a and $\fk P'$ are equivalent) \index{00@Equivalent (positive) paths for pointed Riemann spheres with local coordiates} if there exists a path $\upsigma$ in $\Pbb^1\setminus\Sbf$ with initial point $\upgamma_\blt(1)$, and for each $1\leq j\leq N$ there is a path $l_j$ in $W_j$ from $\upgamma'(0)$ to $\upgamma_j(0)$ satisfying 
\begin{gather}
	\mathrm{Range}(\eta_j\circ l_j)\subset(0,+\infty),\label{eq53}\\
	[\upgamma_j']=[l_j\upgamma_j\upsigma].	\label{eq66}
\end{gather}

If $\upgamma_\blt$ and $\upgamma_\blt'$ are equivalent, then it is clear that a conformal block associated to $\fk P$ and $\mc W_\blt$ is also one associated to $\fk P'$ and $\mc W_\blt$.
\end{rem}

\begin{lm}
If $\uppsi$ is a conformal block, then the $\wr\uppsi$ satisfying condition 2 of Def. \ref{lb8} are unique. 
\end{lm}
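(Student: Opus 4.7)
The plan is to use Lemma~\ref{lb17} to translate uniqueness of $\wr\uppsi(\cdot,\cdot,w_\blt)$ (for each fixed $w_\blt\in\mc W_\blt$) into uniqueness of an $\scr O_{\mc P}$-module morphism on the \emph{connected} universal cover $\mc P$ of $\Pbb^1\setminus\Sbf$, and then to invoke the identity principle for holomorphic functions.

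Fix $w_\blt$ and suppose $\wr\uppsi^{(1)},\wr\uppsi^{(2)}$ both satisfy condition~2 of Def.~\ref{lb8}. By Lem.~\ref{lb17} they correspond to $\scr O_{\mc P}$-module morphisms $\text{\textOlyoghlig}_1,\text{\textOlyoghlig}_2:\scr U_{\mc P}\to\scr O_{\mc P}$. Pick any $j$ and let $U\subset W_j\setminus\{x_j\}$ be a simply-connected neighborhood of $\upgamma_j(0)$ as in condition~2. Equation~\eqref{eq18} completely specifies the function $\wr\uppsi^{(i)}(\upgamma_j,u,w_\blt)$ on $U$ for every $u\in\Ubb\otimes_\Cbb\scr O(U)$; hence on the unique connected open lift $\wtd U\subset\mc P$ of $U$ determined by the path $\upgamma_j$, the two morphisms $\text{\textOlyoghlig}_1$ and $\text{\textOlyoghlig}_2$ agree.

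The next step is to propagate this agreement from the non-empty open set $\wtd U$ to all of $\mc P$. I would consider the set $Z\subset\mc P$ of points at which the germs of $\text{\textOlyoghlig}_1$ and $\text{\textOlyoghlig}_2$ coincide; openness of $Z$ is immediate. For closedness, given $p\in\ovl Z$, pick a connected open neighborhood $V$ of $p$ together with a trivialization $\scr U_{\mc P}|_V\simeq\Ubb\otimes_\Cbb\scr O_V$ pulled back from some $\mc U_\varrho(\mu)$. For each $u\in\Ubb^{\leq n}$, the holomorphic function $\text{\textOlyoghlig}_1(u)-\text{\textOlyoghlig}_2(u)\in\scr O(V)$ vanishes on the non-empty open set $V\cap Z$, hence on all of $V$ by the identity theorem on the connected manifold $V$; by $\scr O_V$-linearity and passage to the direct limit $\scr U_{\mc P}=\varinjlim_n\scr U_{\mc P}^{\leq n}$, this gives $\text{\textOlyoghlig}_1|_V=\text{\textOlyoghlig}_2|_V$, so $p\in Z$. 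Since $\mc P$ is connected and $Z\supset\wtd U\neq\emptyset$, $Z=\mc P$, yielding $\wr\uppsi^{(1)}=\wr\uppsi^{(2)}$.

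There is no substantial obstacle here; the only point meriting care is that $\scr U_{\mc P}$ has infinite rank, so one must apply the identity theorem componentwise on each finite-rank sub-bundle $\scr U_{\mc P}^{\leq n}$ in a single trivialization before taking the direct limit to conclude equality of the full $\scr O_{\mc P}$-module morphisms.
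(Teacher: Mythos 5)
Your argument is correct and is essentially the paper's own proof: both pass through Lemma \ref{lb17} to view the operations as morphisms $\scr U_{\mc P}\rightarrow\scr O_{\mc P}$, use \eqref{eq18} to get agreement on a lift of a neighborhood of $\upgamma_j(0)$, and then run an open--closed connectedness argument on $\mc P$ via the identity theorem in local trivializations. Your explicit reduction to the finite-rank subsheaves $\scr U_{\mc P}^{\leq n}$ merely spells out what the paper summarizes as ``by complex analysis.''
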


\begin{proof}
For two such operations $\wr_1\uppsi,\wr_2\uppsi$ (considered as morphisms $\scr U_{\mc P}\rightarrow\scr O_{\mc P}$), let $\Omega$ be the open set of all $y\in\mc P$ on a neighborhood of which these two agree (for all $w_\blt$). By \eqref{eq18}, $\Omega$ intersects a lift of $W_j$ in $\mc P$. If $U$ is a simply-connected open subset of $\Pbb^1\setminus\Sbf$ such that there is an injective $\eta\in\scr O(V)$, and if $\wtd U$ is a lift of $U$ in $\mc P$ (namely, $\wtd U$ is represented by $U$ and a path $\upgamma$ from inside $U$ to $\upgamma_\blt(1)$), then $\scr U_{\wtd U}=\scr U_U\simeq\Ubb\otimes_\Cbb\scr O_U$. Hence, by complex analysis, if $\wtd U$ intersects $\Omega$,  then $\wtd U\subset\Omega$. So $\Omega$ is closed, and hence must be $\mc P$.
\end{proof}


The following results are not expected to hold for higher genus conformal blocks.

\begin{lm}\label{lb53}
There exists a set $\fk U$ of elements of $\scr U_{\Pbb^1}(\Pbb^1\setminus\Sbf)$ that generates freely the $\scr O_{\Pbb^1\setminus\Sbf}$-module $\scr U_{\Pbb^1\setminus\Sbf}$, namely, any section of $\scr U_{\Pbb^1\setminus\Sbf}$ on an open set $U\subset\Pbb^1\setminus\Sbf$ can be written in a unique way as a finite sum $f_1v_1+f_2v_2+\cdots$ where $v_1,v_2,\dots\in\fk U$ and $f_1,f_2,\dots\in\scr O(U)$.
\end{lm}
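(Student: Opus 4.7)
The plan is to observe that $C := \Pbb^1\setminus\Sbf$ is a non-compact Riemann surface (since $\Sbf \neq \emptyset$), hence a Stein manifold by the Behnke-Stein theorem. On such a $C$, every finite-rank holomorphic vector bundle is holomorphically trivial (equivalently, by Cartan's Theorem B, $H^1(C,\scr F)=0$ for every coherent $\scr O_C$-module $\scr F$). I will build a compatible sequence of free bases of the finite-rank subsheaves $\scr U_C^{\leq n}$, whose union gives the desired $\fk U$.

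First I would analyze the graded quotients $Q_{n+1} := \scr U_C^{\leq n+1}/\scr U_C^{\leq n}$. Since the Virasoro operators $L_k$ for $k>0$ strictly lower the $\wtd L_0$-grading, the transition formula $\mc U(\varrho(\eta|\mu))=(\partial_z\varrho(\eta|\mu))(0)^{\wtd L_0}\exp(\sum_{k>0}c_k L_k)$ reduces, on the associated graded piece $\Ubb(n+1)$, to scalar multiplication by $((\partial_z\varrho(\eta|\mu))(0))^{n+1}=(d\eta/d\mu)^{n+1}$. Hence $Q_{n+1}\simeq\Ubb(n+1)\otimes_\Cbb T_C^{\otimes(n+1)}$ (with $T_C$ the tangent bundle), a finite-rank locally free $\scr O_C$-module, and therefore free of rank $\dim_\Cbb\Ubb(n+1)$ on $C$ by Behnke-Stein/Cartan B.

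Next I would inductively construct $\fk U$. Start with $\scr U_C^{\leq 0}$: the transition $(\partial_z\varrho)(0)^{\wtd L_0}$ acts as the identity on $\Ubb(0)$, so any $\Cbb$-basis of $\Ubb(0)$ yields, coordinate-independently, a family $\fk U_0\subset H^0(C,\scr U_C^{\leq 0})$ that freely generates $\scr U_C^{\leq 0}$. Assume inductively that $\fk U_n\subset H^0(C,\scr U_C^{\leq n})$ freely generates $\scr U_C^{\leq n}$. The short exact sequence
\begin{equation*}
0\to \scr U_C^{\leq n}\to \scr U_C^{\leq n+1}\to Q_{n+1}\to 0
\end{equation*}
splits as $\scr O_C$-modules because $Q_{n+1}$ is free, hence projective; lifting a free $\scr O_C$-basis of $Q_{n+1}$ to $H^0(C,\scr U_C^{\leq n+1})$ yields a finite set $\fk U_{n+1}'$ such that $\fk U_{n+1}:=\fk U_n\cup\fk U_{n+1}'$ freely generates $\scr U_C^{\leq n+1}$.

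Finally, set $\fk U=\bigcup_n\fk U_n$. Since the filtration is ascending, $\scr U_{\Pbb^1\setminus\Sbf}=\varinjlim_n\scr U_C^{\leq n}$ and any section on an open $U$ lies in some $\scr U_C^{\leq n}(U)$, so it admits a unique finite expansion in elements of $\fk U_n\subset\fk U$ with coefficients in $\scr O(U)$; unicity is preserved across $n$ because each $\fk U_n$ is contained in $\fk U_{n+1}$ as part of a free basis. The main technical point is the associated-graded computation — verifying that the leading behavior of the transition $\mc U(\varrho(\eta|\mu))$ on each $Q_{n+1}$ is really $(d\eta/d\mu)^{n+1}$; the Stein-manifold input (vanishing of $H^1$ on an open Riemann surface) and the projectivity-based splitting are then completely standard.
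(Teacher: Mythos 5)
Your proof is correct, but it takes a substantially heavier route than the paper's. The paper simply observes that one may assume $\infty\in\Sbf$, so that $\Pbb^1\setminus\Sbf\subset\Cbb$ carries the \emph{global} injective coordinate $\zeta$; the trivialization $\mc U_\varrho(\zeta):\scr U_{\Pbb^1\setminus\Sbf}\xrightarrow{\simeq}\Ubb\otimes_\Cbb\scr O_{\Pbb^1\setminus\Sbf}$ is then available over all of $\Pbb^1\setminus\Sbf$ at once, and $\fk U=\{\mc U_\varrho(\zeta)^{-1}u:u\in\fk U_0\}$ for any basis $\fk U_0$ of $\Ubb$ is an immediate free generating set — no cohomology needed. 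Your argument instead invokes Behnke--Stein (open Riemann surfaces are Stein), the triviality of line bundles on open Riemann surfaces to identify each $Q_{n+1}\simeq\Ubb(n+1)\otimes T_C^{\otimes(n+1)}$ as free, and the vanishing of $H^1$ to split the filtration inductively. Your associated-graded computation (that $\exp(\sum_{k>0}c_kL_k)$ is the identity on the quotient because $L_k$ strictly lowers the $\wtd L_0$-weight) is accurate. One small caveat in the phrasing: ``free, hence projective, hence the sequence splits'' is a statement about modules over a single ring, not directly about $\scr O_C$-modules; the clean justification for lifting a free basis of $Q_{n+1}$ to global sections of $\scr U_C^{\leq n+1}$ is the vanishing $H^1(C,\scr U_C^{\leq n})=0$, which you do have available from Steinness and coherence. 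What your approach buys, at the cost of this machinery, is generality: unlike the paper's one-liner it does not use that $\Pbb^1\setminus\Sbf$ admits a single global coordinate, and would apply verbatim to any compact Riemann surface minus a nonempty finite set of points. The paper's approach is tied to genus zero but is essentially a triviality.
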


\begin{proof}
Assume with out loss of generality that $x_N\in\Sbf$ is $\infty$. Let $\fk U_0$ be a basis of $\Ubb$. Let $\zeta$ be the standard coordinate of $\Cbb$. Then one can set $\fk U=\{\mc U_\varrho(\zeta)^{-1}u:u\in\fk U_0\}$.
\end{proof}

\begin{pp}\label{lb11}
Let $\fk U$ be a set of elements of $\scr U_{\Pbb^1}(\Pbb^1\setminus\Sbf)$ generating freely  $\scr U_{\Pbb^1\setminus\Sbf}$. Then a linear functional $\uppsi:\mc W_\blt\rightarrow\Cbb$ is a conformal block if and only if the following are true:
\begin{enumerate}
\item Condition 1 of Def. \ref{lb8} is satisfied.
\item There is $\wr\uppsi$ associating to each $w_\blt\in\mc W_\blt$ and $u\in\fk U$ a multivalued holomorphic function $\wr\uppsi(\cdot,u,w_\blt)$ on $\Pbb^1\setminus\Sbf$ (single-valued on any simply-connected subset $U$ if a path $\uplambda$ from inside $U$ to $\upgamma_\blt(1)$ is specified)  satisfying the following condition: 

For each $1\leq j\leq N$,  identify $\scr U_{W_i}$ with $\Ubb\otimes_\Cbb\scr O_{W_i}$ through $\mc U_\varrho(\eta_i)$.  Choose   any open simply-connected subset $U\subset W_j\setminus\{x_j\}$ containing $\upgamma_j(0)$ and equipped with a continuous $\arg$ function on $\eta_j(U)$ whose value at $\eta_j\circ\upgamma_j(0)$ is $0$. Then the function $\wr\uppsi(\upgamma_j,u,w_\blt)\in\scr O(U)$ satisfies \eqref{eq18} for all $x\in U$.
\end{enumerate}

\end{pp}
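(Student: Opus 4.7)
The ``only if'' direction is immediate from Def.~\ref{lb8}: given a conformal block $\uppsi$ with its multivalued morphism $\wr\uppsi$, restriction to $\fk U$ provides all the data required by the proposition. So I focus on ``if''. The plan is to extend the given data on $\fk U$ to a full multivalued $\scr O_{\Pbb^1\setminus\Sbf}$-module morphism by $\scr O$-linearity, and then check that the local boundary compatibility \eqref{eq18} passes automatically from generators to arbitrary sections.

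For the extension step, I pass to the universal cover $\mc P$ using Lemma~\ref{lb17}, viewing each $\wr\uppsi(\cdot,u,w_\blt)$ (for $u\in\fk U$) as a holomorphic function on $\mc P$. The pullback of $\fk U$ to $\mc P$ is again a free $\scr O_{\mc P}$-basis of $\scr U_{\mc P}$, so setting $\wr\uppsi(v,w_\blt):=\sum_k f_k\cdot\wr\uppsi(u_k,w_\blt)$ whenever $v=\sum_k f_k u_k$ yields a well-defined $\scr O_{\mc P}$-module morphism $\scr U_{\mc P}\to\scr O_{\mc P}$. By Lemma~\ref{lb17} this corresponds to a multivalued morphism $\wr\uppsi:\scr U_{\Pbb^1\setminus\Sbf}\to\scr O_{\Pbb^1\setminus\Sbf}$ automatically satisfying properties (a)--(d) of that lemma.

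For the compatibility step, fix $j$ and a simply-connected $U\subset W_j\setminus\{x_j\}$ containing $\upgamma_j(0)$ with its prescribed $\arg$ function. Under the trivialization $\mc U_\varrho(\eta_j)$, each $u_k\in\fk U$ restricts to a (generally non-constant) $\Ubb$-valued function $u_k^{(j)}$ on $U$, and free generation gives a finite expansion $u=\sum_k f_k u_k^{(j)}$ for any $u\in\Ubb\otimes_\Cbb\scr O(U)$, with $f_k\in\scr O(U)$. Using the definition of $\wr\uppsi$ above together with the hypothesis on generators,
\begin{align*}
\wr\uppsi(\upgamma_j,u,w_\blt)_x=\sum_k f_k(x)\cdot\uppsi\big(w_1\otimes\cdots\otimes Y^{g_j}(u_k^{(j)}(x),\eta_j(x))w_j\otimes\cdots\otimes w_N\big),
\end{align*}
and $\Cbb$-linearity of $Y^{g_j}(\cdot,z)$ in its first slot collapses this finite sum to $\uppsi(w_1\otimes\cdots\otimes Y^{g_j}(u(x),\eta_j(x))w_j\otimes\cdots\otimes w_N)$, which is precisely the right-hand side of \eqref{eq18}. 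Convergence of the target series (after projection onto each $P_n^{g_j}$-eigenspace) reduces to a finite combination of absolutely convergent series guaranteed by Condition~1 of the proposition (cf.\ Rem.~\ref{lb10}).

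The main step requiring care is the interplay between the two trivializations of $\scr U$ in play: the global one through which $\fk U$ is a free basis, and the local $\mc U_\varrho(\eta_j)$ used to phrase \eqref{eq18}, under which elements of $\fk U$ are typically non-constant $\Ubb$-valued functions. Once this is made explicit, the extension is pure $\scr O$-linearity combined with $\Cbb$-linearity of the vertex operator in its first argument, and no further analytic obstacle arises beyond the convergence already supplied by Condition~1.
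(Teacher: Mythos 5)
Your proposal is correct and follows essentially the same route as the paper's (very brief) proof: extend $\wr\uppsi$ from the free generators to all of $\scr U_{\Pbb^1\setminus\Sbf}$ by $\scr O$-linearity (equivalently, work on the universal cover via Lemma \ref{lb17}), then transfer \eqref{eq18} from $\fk U$ to arbitrary sections using linearity of $Y^{g_j}$ in its first slot and the convergence supplied by Condition 1. Your explicit attention to the fact that elements of $\fk U$ become non-constant $\Ubb$-valued functions under the local trivialization $\mc U_\varrho(\eta_j)$ is a useful elaboration of what the paper leaves implicit, but it is not a different argument.
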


\begin{proof}
The ``only if" part is obvious. Now assume $\wr\uppsi$ satisfies the two conditions of the present proposition. Then  $\wr\uppsi(\cdot,\cdot,w_\blt)$ can be extended uniquely to a multivalued homomorphism $\scr U_{\Pbb^1\setminus\Sbf}\rightarrow\scr O_{\Pbb^1\setminus\Sbf}$, which satisfies \eqref{eq18} for all $u\in\fk U$, and hence all $u\in\scr U_U(U)$.
\end{proof}

\section{Relating untwisted and permutation-twisted conformal blocks}

\subsection{Permutation branched coverings of $\Pbb^1$}

Let $C$ be a (non-necessarily connected) compact Riemann surface. A \textbf{branched covering} $\varphi:C\rightarrow\Pbb^1$ is by definition a holomorphic map which is non-constant on each connected component of $C$. Then $\varphi$ is surjective on each component of $C$ since the image of $\varphi$ is both open and compact. By complex analysis, $y\in C$ has a neighborhood $V$ such that $\varphi$ on $V$ is equivalent to the holomorphic map $z\mapsto z^n$. $n$ is called the \textbf{branching index} of $\varphi$ at $y$, which is $0$ precisely when $d\varphi$ is not $0$ at $y$. The (necessarily finite) set of  \textbf{branch points} is $\Sigma:=\{x\in C:d\varphi=0\}$, which is also the set of points with non-zero branching indexes. Let $\Delta=\varphi(\Sigma)$ be the \textbf{critical locus}. Then the restricted map $\varphi:C\setminus\varphi^{-1}(\Delta)\rightarrow\Pbb^1\setminus\Delta$ is a finite (unbranched) covering map, since $\varphi$ is proper (cf. \cite[Sec. 4.2.1]{Don}).

\subsubsection{Actions  $\Gamma=\pi_1(\Pbb^1\backslash\Sbf,\upgamma_\blt(1))\curvearrowright E$ and admissible group elements}\label{lb52}

Let $\fk P=\eqref{eq16}$ be an $N$-pathed Riemann spheres with local coordinates. We use the notations in Subsection \ref{lb30}. We do not assume \eqref{eq22}.

Let $E$ be a finite set. Let $\Perm(E)$ be the permutation group of $E$. \index{ESE@$E,\Perm(E)$} An action of $\Gamma$ on $E$ is equivalently a homomorphism $\Gamma\rightarrow\Perm(E)$. 

\begin{df}\label{lb61}
We say that $g_1,\dots,g_N\in\Perm(E)$ are \textbf{admissible (with respect to $\fk P$)} \index{00@Admissible group elements} if there is a (necessarily unique) action $\Gamma\curvearrowright E$ sending
\begin{align}
	[\upalpha_j]\mapsto g_j	\label{eq2}
\end{align}
for each $j=1,\dots, N$. The action $\Gamma\curvearrowright E$ is called \textbf{the action arising from $g_1,\dots,g_N$}.
\end{df}

Assume the setting of Def. \ref{lb8}, in which $\uppsi:\mc W_\blt\rightarrow\Cbb$ is a conformal block. We say $\uppsi$ is \textbf{separating} on $\mc W_j$, if the only $w_j\in\mc W_j$ satisfying $\uppsi(w_1\otimes\cdots\otimes w_N)=0$ for all $w_1\in\mc W_1,\dots,w_{j-1}\in\mc W_{j-1},w_{j+1}\in\mc W_{j+1},\dots,w_N\in\mc W_N$ is $w_j=0$. The following proposition is similar to \cite[Thm. 4.7]{Hua18}. By this proposition, it is reasonable to consider only twisted conformal blocks associated to $g_1,\dots,g_N$-twisted modules where $g_1,\dots,g_N$ are admissible. And we will actually consider in this section  twisted conformal blocks only of these types, which correspond well to untwisted conformal blocks associated to the permutation covering of $\fk P$. (However, the other results of this article do not logically rely on the following proposition.)

\begin{pp}
Assume that $\uppsi$ is separating on $\mc W_i$ for some $1\leq i\leq N$, and the map $u\in\Ubb\mapsto Y^{g_j}(u,z)\in\End(\mc W_j)[[z]]$ is injective. Then $g_1,\dots,g_N$ are admissible with respect to $\fk P$.
\end{pp}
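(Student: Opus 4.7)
The plan is to extract from the multivalued morphism $\wr\uppsi$ a monodromy representation $\rho:\Gamma\to\mathrm{Aut}(\Ubb)$ whose value $\rho([\upalpha_j])$ is $g_j$ (up to the orientation convention in \eqref{eq21}), and then to invoke that $\rho$ is automatically a homomorphism and that $[\upalpha_\blt]$ generate $\Gamma$ by \eqref{eq87}.

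First, by Lemma \ref{lb17}, $\wr\uppsi(\cdot,\cdot,w_\blt)$ corresponds to an honest $\scr O_{\mc P}$-module morphism $\scr U_{\mc P}\to\scr O_{\mc P}$ on the universal cover $\mc P$ of $\Pbb^1\setminus\Sbf$. The sheaf $\scr U_{\mc P}$ is the pullback of $\scr U_{\Pbb^1\setminus\Sbf}$; however, the local trivializations of $\scr U$ induced by coordinates on $\Pbb^1$ are not $\Gamma$-equivariant on $\mc P$, and the defect defines, for each $[\upmu]\in\Gamma$, a well-defined operator $\rho([\upmu])\in\mathrm{Aut}(\Ubb)$ characterized by
\begin{align*}
\wr\uppsi([\uplambda\upmu],u,w_\blt)=\wr\uppsi([\uplambda],\rho([\upmu])u,w_\blt)
\end{align*}
for every path $\uplambda$ in $\Pbb^1\setminus\Sbf$ ending at $\upgamma_\blt(1)$, every $u\in\Ubb$, and every $w_\blt$. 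By composition of deck transformations, $\rho$ is a group homomorphism; coherence of the setup uses that each $g_j$, being a VOA automorphism, commutes with the change-of-coordinate operators $\mc U(\varrho(\eta|\mu))$.

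Next, I would identify $\rho([\upalpha_j])$ by a local computation near $x_j$. Since $[\upgamma_j\upalpha_j]=[\upepsilon_j\upgamma_j]$ as paths from $\upgamma_j(0)$ to $\upgamma_\blt(1)$, the analytic continuation of $\wr\uppsi(\upgamma_j,u,w_\blt)$ along $\upepsilon_j$ corresponds to a shift of the branch of $\arg\eta_j$ by $\pm 2\pi$; the $g_j$-equivariance \eqref{eq21} then translates this shift into the substitution $u\mapsto g_j^{\epsilon}u$ for some sign $\epsilon=\pm 1$ determined by conventions. Combining with the defining identity of $\rho$ yields
\begin{align*}
\wr\uppsi\bigl(\upgamma_j,\,(\rho([\upalpha_j])-g_j^{\epsilon})u,\,w_\blt\bigr)=0
\end{align*}
in a neighborhood of $\upgamma_j(0)$; by the uniqueness of analytic continuation on $\mc P$, the vanishing propagates to a global zero identity.

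Finally, to promote this to an identity of operators on $\Ubb$, I set $v=(\rho([\upalpha_j])-g_j^{\epsilon})u$ and evaluate the global zero-identity near $x_i$ via the local formula \eqref{eq18} at $x_i$, obtaining
\begin{align*}
\uppsi(w_1\otimes\cdots\otimes Y^{g_i}(v,z)w_i\otimes\cdots\otimes w_N)=0
\end{align*}
for all $w_\blt$ and all $z$ in an open set. The separation of $\uppsi$ on $\mc W_i$ forces $Y^{g_i}(v,z)w_i=0$ for every $w_i,z$, hence $Y^{g_i}(v,z)=0$ as an operator; the injectivity hypothesis then gives $v=0$, so $\rho([\upalpha_j])=g_j^{\epsilon}$. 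Consequently $\rho(\Gamma)\subset\bk{g_1,\ldots,g_N}\subset\Perm(E)$, and matching this with the orientation convention determining $\epsilon$ produces the required action $\Gamma\curvearrowright E$ with $[\upalpha_j]\mapsto g_j$, proving admissibility. The main obstacle is the rigorous sheaf-theoretic construction of $\rho$ as a homomorphism, which requires careful bookkeeping of how the non-$\Gamma$-equivariant trivializations $\mc U_\varrho(\eta)$ behave under deck transformations; the key conceptual point is the propagation of the identification $\rho([\upalpha_j])=g_j^{\epsilon}$ from its purely local verification near $x_j$ to a global operator identity on $\Ubb$ by means of the separation-plus-injectivity hypotheses at the single marked point $x_i$.
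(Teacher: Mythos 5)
There is a genuine gap, and it is essentially a circularity.

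You propose to build a homomorphism $\rho:\Gamma\to\mathrm{Aut}(\Ubb)$ with $\rho([\upalpha_j])=g_j^{\epsilon}$, and then read off admissibility by restricting $\rho$ to $\Perm(E)$. But the existence of such a homomorphism is exactly what admissibility \emph{means} under \eqref{eq87}: since $\Gamma$ is a free group of rank $N-1$ generated by the $N$ classes $[\upalpha_1],\dots,[\upalpha_N]$, there are non-trivial relations among the $[\upalpha_j]$, and the assertion that the assignment $[\upalpha_j]\mapsto g_j$ extends to $\Gamma$ is precisely what you are asked to prove. Your sentence ``By composition of deck transformations, $\rho$ is a group homomorphism'' therefore assumes the conclusion. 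The characterizing equation
\begin{align*}
\wr\uppsi([\uplambda\upmu],u,w_\blt)=\wr\uppsi([\uplambda],\rho([\upmu])u,w_\blt)
\end{align*}
uniquely pins down $\rho([\upmu])$ \emph{if it exists} (using the injectivity of $u\mapsto\wr\uppsi(\upgamma_\blt(1),u,w_\blt)$ over all $w_\blt$, which does follow from separation plus the injectivity of $u\mapsto Y^{g_i}(u,z)$ by evaluating via \eqref{eq18} near $x_i$), but you never show that a single operator works for a given $[\upmu]$ independently of the word in the $[\upalpha_j]$ used to present it. Two different words $f_1,f_2$ with $f_1([\upalpha_\blt])=f_2([\upalpha_\blt])$ could a priori give $f_1(g_\blt)\neq f_2(g_\blt)$; ruling this out is the whole content of the proposition. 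Your subsequent local-to-global argument with $v=(\rho([\upalpha_j])-g_j^{\epsilon})u$ spends the separation-plus-injectivity hypothesis on showing $\rho([\upalpha_j])=g_j^{\epsilon}$, which already follows directly from $g_j$-equivariance \eqref{eq21} and does not need these hypotheses; meanwhile the place where they are genuinely needed --- forcing $f_1(g_\blt)=f_2(g_\blt)$ above --- is left unaddressed.

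The paper sidesteps the well-definedness issue by arguing by contradiction: assume a word $f$ satisfies $f([\upalpha_\blt])=1$ but $g:=f(g_\blt)\neq 1$; iterate the local identity \eqref{eq42} to get $\wr\uppsi(\upgamma_\blt(1),gu,w_\blt)=\wr\uppsi(\upgamma_\blt(1),u,w_\blt)$; set $v=gu-u\neq 0$; then the local formula near the distinguished marked point plus separation and injectivity of $Y^{g_i}$ give $v=0$, a contradiction. Your approach can be salvaged by first proving the injectivity of $u\mapsto\wr\uppsi(\upgamma_\blt(1),u,w_\blt)$ and \emph{then} using that injectivity to conclude that word-iteration yields a well-defined $\rho$ on $\Gamma$; but as written, the homomorphism property is asserted rather than derived, and the hypotheses are deployed at the wrong step.
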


\begin{proof}
Let $\fk U$ be as in Prop. \ref{lb11}. Then for each $u\in\fk U$, $\wr\uppsi(\cdot,u,w_\blt)$ is a multivalued holomorphic function on $\Pbb^1\setminus\Sbf$.  Then for each $\uplambda\in\Lambda_{\upgamma_\blt(1)}$, \eqref{eq18} and the $g_j$-equivariance \eqref{eq21} show that for  a small open disc $U$ centered at $\upgamma_j(0)$,
	\begin{align*}
		\wr\uppsi(\upepsilon_j^{\pm 1}\upgamma_j,u,w_\blt)|_U=	\wr\uppsi(\upgamma_j,g_j^{\pm 1}u,w_\blt)|_U
	\end{align*}
	since the left hand side is the analytic continuation of $\wr\uppsi(\upgamma_j,u,w_\blt)$ from $U$ along $\upepsilon_j^{\mp 1}$ to $U$ (i.e.,  multiplying $z$ by $e^{\mp2\im\pi}$   in the expression $\uppsi\big(w_1\otimes\cdots\otimes Y^{g_j}\big(u,z\big)w_j\otimes\cdots\otimes w_N\big)$). By analytic continuation along $\upgamma_j$ from $U$ to a simply-connected neighborhood $V$ of $\upgamma_\blt(1)$, we have (noticing \eqref{eq1})
	\begin{align}
		\wr\uppsi(\upalpha_j^{\pm1},u,w_\blt)|_V=	\wr\uppsi(\upgamma_\blt(1),g_j^{\pm1}u,w_\blt)|_V\label{eq42}
	\end{align}
	where $\upgamma_\blt(1)=\upgamma_j^{-1}\upgamma_j$ denotes the constant path pointing at $\upgamma_\blt(1)$. Apply this relation successively, we get
	\begin{align*}
		\wr\uppsi(\upgamma_\blt(1),f(g_1,\dots,g_N)u,w_\blt)|_V=\wr\uppsi(f(\upalpha_1,\dots,\upalpha_N),u,w_\blt)|_V,
	\end{align*}
for any word $f$.

Now suppose $g_1,\dots,g_N$ are not admissible. Then there exists a word $f$ such that $f([\upalpha_1],\dots,[\upalpha_N])=1$ but $g:=f(g_1,\dots,g_N)\neq 1$. Thus, the above equation implies 	\begin{align*}
\wr\uppsi(\upgamma_\blt(1),gu,w_\blt)|_V=\wr\uppsi(\upgamma_\blt(1),u,w_\blt)|_V.
\end{align*}
Consider $\wr\uppsi(\upgamma_\blt(1),gu-u,w_\blt)$ as a multivalued holomorphic functions on $\Pbb^1\setminus\Sbf$, i.e. single-valued on $\mc P$. Since $g\neq 1$, we can choose $v=gu-u\neq 0$. Then $\uppsi\big(w_1\otimes\cdots\otimes Y^{g_j}\big(v,z\big)w_j\otimes\cdots\otimes w_N\big)=0$ for some nonzero $u\in\Ubb$ and all $w_1,\dots,w_N$. Since $\uppsi$ is separating on $\mc W_j$, we conclude $Y^{g_j}(v,z)w_j=0$ for all $w_j\in\mc W_j$, and hence $v=0$. This gives a contradiction. 
\end{proof}

\begin{rem}
	Note that if a $g$-twisted $\mc W$ ($g\in G$) is non-trivial, then $u\in\Ubb\mapsto Y^g(u,z)$ is injective whenever $\Ubb$ is simple (as a $\Ubb$-module). Indeed, assume $Y^g(u,z)=0$. Choose any $v\in \Ubb$, and assume without loss of generality that $gv=e^{2\im a\pi/k}v$. Then the algebraic Jacobi identity for $Y^g$ (cf. for instance \cite[Sec. 10]{Gui24b}) shows that for each $m,n\in\Zbb$ we have
	\begin{align*}
		\sum_{l\in\Nbb}{\frac ak+m\choose l} Y^g\big(Y(v)_{n+l}u,z\big)\cdot z^{\frac ak+m-l}=0.
	\end{align*}
	Since $Y(v)_nu=0$ for sufficiently large $n$, one can easily show, by induction on $n$ starting from sufficiently large numbers, that $Y^g(Y(v)_nu,z)=0$ for all $n$. A similar argument shows $Y^g(Y(v_1)_{n_1}\cdots Y(v_l)_{n_l}u,z)=0$. If $u\neq 0$, as $\Ubb$ is simple, we have $Y^g(v,z)=0$ for all $v\in\Ubb$. This is impossible.
\end{rem}

\begin{rem}
	Suppose $U\subset\Pbb^1\setminus\Sbf$ is open and simply-connected, and $\uplambda$ is a path in $\Pbb^1\setminus\Sbf$ from inside $U$ to $\upgamma_\blt(1)$. Then, by analytic continuation of \eqref{eq42} from $V$ to $U$ along $\uplambda$, we see
	\begin{align}
		\wr\uppsi(\uplambda\upalpha_j^{\pm1},u,w_\blt)\big|_U=	\wr\uppsi(\uplambda,g_j^{\pm1}u,w_\blt)\big|_U\label{eq43}
	\end{align}
	for all $u\in\fk U$, and hence all $u\in\scr U_{\Pbb^1}(U)$. Formula \eqref{eq43} will be used in Subsec. \ref{lb49}.
\end{rem}

\begin{rem}\label{lb64}
Formula \eqref{eq43} suggests a definition of twisted conformal blocks when $\fk P$ is replaced by an arbitrary positively $N$-pathed compact Riemann surface $\fk X=(X;x_\blt;\eta_\blt;\upgamma_\blt)$, but  $\Gamma=\pi_1(X\setminus\Sbf,\upgamma_\blt(1))$ (where $\Gamma=\pi_1(X\setminus\Sbf,\upgamma_\blt(1))$) is not necessarily generated by $[\upalpha_1],\dots,[\upalpha_N]$. (This will happen even if $X=\Pbb^1$.) Fix an action of $\Gamma$ on $E$, and assume each $\mc W_j$ associated to $x_j$ is $g_j$-twisted where $g_j$ is the image of $[\upalpha_j]$. Then a  linear functional $\uppsi:\mc W_\blt\rightarrow\Cbb$ is called a  conformal block  if it satisfies, in addition to the conditions in Def. \ref{lb8}, that for each path $\updelta$ in $X\setminus\Sbf$ from and to $\upgamma_\blt(1)$, the relation
\begin{align}
\wr\uppsi(\updelta,u,w)\big|_U=	\wr\uppsi(\upgamma_\blt(1),[\updelta]u,w)\big|_U	
\end{align}
holds for any open simply-connected $U\subset X\setminus\Sbf$ containing $\upgamma_\blt(1)$, any $u\in\scr U_X(U)$, and any $w\in\mc W_\blt$.
\end{rem}

\subsubsection{The permutation covering of $\Pbb^1$ associated to $\Gamma\curvearrowright E$}

Fix an action $\Gamma\curvearrowright E$, and let $g_j\in\Perm(E)$ be the image of $[\upalpha_j]$. Then $E$ is the disjoint union of orbits of $\Gamma$. We choose one element for each orbit, called the marked point of that $\Gamma$-orbit. The set of all these marked points of $\Gamma$-orbits is denoted by $E(\Gamma)$, \index{EG@$E(\Gamma)$, the set of marked points of the $\Gamma$-orbits} which is a subset of $E$. Then
\begin{align*}
E=\bigsqcup_{\mbf e\in E(\Gamma)}\Gamma \mbf e.	
\end{align*}

The following is well-known; see  \cite[Sec. 4.2.2, Thm. 2]{Don} or \cite[Sec. 19b]{Ful}.

\begin{pp}\label{lb3}
There is a compact Riemann surface $C=\bigsqcup_{\mbf e\in E(\Gamma)}C^{\mbf e}$ whose connected components $\{C^{\mbf e}:\mbf e\in E(\Gamma)\}$ are in one-to-one correspondence with the points of $E(\Gamma)$, and a branched covering $\varphi:C\rightarrow\Pbb^1$ whose restriction $\varphi:C\setminus\varphi^{-1}(\Sbf)\rightarrow\Pbb^1\setminus\Sbf$ is unbranched,  such that the following condition is satisfied: (Note that $\varphi^{-1}(\Sbf)$ must be a discrete and hence finite subset of $C$.) 

For each $\mbf e\in E(\Gamma)$, there is an element $p^{\mbf e}\in C^{\mbf e}\cap\varphi^{-1}(\upgamma_\blt(1))$ satisfying the following condition: for any path $\upnu\in\Lambda_{\upgamma_\blt(1)}$ in $\Pbb^1\setminus\Sbf$, if $\wtd\upnu$ is its lift to $C$ ending at $p^{\mbf e}$, then the initial point of $\wtd\upnu$ is $p^{\mbf e}$ if and only if $[\upnu]\mbf e=\mbf e$.
\end{pp}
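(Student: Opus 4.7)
The plan is to build $C$ one connected component per $\Gamma$-orbit, using the classical Galois correspondence between subgroups of $\Gamma$ and based connected unbranched coverings of $\Pbb^1\setminus\Sbf$, and then to compactify by filling in one puncture for each $\bk{g_j}$-orbit sitting above each $x_j$.

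First, for each marked point $\mbf e\in E(\Gamma)$ I would form the stabilizer
$$\Stb(\mbf e) = \{[\upnu]\in\Gamma : [\upnu]\mbf e=\mbf e\},$$
a subgroup of index $|\Gamma\mbf e|$ in $\Gamma$. By the classical correspondence between based connected covers of $(\Pbb^1\setminus\Sbf,\upgamma_\blt(1))$ and subgroups of $\Gamma$, there is an unbranched covering
$$\varphi^\circ\colon (\wtd C^{\mbf e}, p^{\mbf e})\longrightarrow (\Pbb^1\setminus\Sbf,\upgamma_\blt(1))$$
whose monodromy subgroup is exactly $\Stb(\mbf e)$. The required path-lifting property is then automatic: a loop $\upnu$ at $\upgamma_\blt(1)$ lifts to a loop at $p^{\mbf e}$ if and only if $[\upnu]\in\Stb(\mbf e)$, i.e.\ $[\upnu]\mbf e=\mbf e$. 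Under path-lifting the fiber $(\varphi^\circ)^{-1}(\upgamma_\blt(1))$ is canonically in bijection with $\Gamma/\Stb(\mbf e) \cong \Gamma\mbf e$.

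Next comes the compactification, where the branching enters. Near $x_j$, since $W_j\setminus\{x_j\}$ deformation-retracts onto $\upepsilon_j$ and $[\upalpha_j]$ acts on the fiber as $g_j$, the restriction of $\varphi^\circ$ above $W_j\setminus\{x_j\}$ breaks into connected components, one per $\bk{g_j}$-orbit $\mc O$ contained in $\Gamma\mbf e$, each a connected finite unbranched cover of the punctured disc $W_j\setminus\{x_j\}$ of degree $|\mc O|$. Any such cover is biholomorphic to the standard $z\mapsto z^{|\mc O|}$ cover of a punctured disc by a punctured disc, so I would fill in the missing centre in each component, declare the uniformising coordinate to be a holomorphic chart, and extend $\varphi$ as $z\mapsto z^{|\mc O|}$. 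Performing this at every $x_j$ produces a compact Riemann surface $C^{\mbf e}$ and a holomorphic branched covering $\varphi\colon C^{\mbf e}\to\Pbb^1$. Taking the disjoint union over $\mbf e\in E(\Gamma)$ yields $C$.

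The main point to check will be that this filling procedure actually defines a Riemann surface structure (rather than merely a topological surface) and produces a proper holomorphic map. But the local model $z\mapsto z^{|\mc O|}$ is already holomorphic on the entire disc; the uniformiser $z$ is unique up to multiplication by an $|\mc O|$-th root of unity, so the holomorphic chart at the filled-in point is well-defined and compatible with the ambient complex structure on $\wtd C^{\mbf e}$. Compactness of $C^{\mbf e}$ is then immediate from compactness of $\Pbb^1$ and properness of the extended $\varphi$, and the stated property of $p^{\mbf e}$ is unaffected since $p^{\mbf e}$ lies in the unbranched part of $C^{\mbf e}$.
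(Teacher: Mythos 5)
Your proposal is correct and follows essentially the same route as the paper: construct, for each $\mbf e\in E(\Gamma)$, the connected unbranched cover of $\Pbb^1\setminus\Sbf$ corresponding to the stabilizer of $\mbf e$ (the paper invokes this as ``basic algebraic topology''), then compactify by filling in one point per connected component over each punctured disc $W_j\setminus\{x_j\}$ using the local model $z\mapsto z^n$, and conclude compactness from properness. Your extra identification of those components with $\bk{g_j}$-orbits is not needed for this proposition (the paper establishes it later, in Prop.~\ref{lb12}) but is correct.
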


\begin{proof}
For each $\mbf e\in E(\Gamma)$, the existence of such a topological (and hence analytic) unbranced covering $\varphi:C^{\mbf e}\setminus\varphi^{-1}(\Sbf)\rightarrow\Pbb^1\setminus\Sbf$ follows from basic algebraic topology. This covering is finite, since for any $x\in\Pbb^1\setminus\Sbf$ the set $C^{\mbf e}\cap\varphi^{-1}(y)$ is bijective to $\Gamma \mbf e$. One checks easily that for any compact set $K$, every sequence of $\varphi^{-1}(K)$ has a subsequence converging to a point in $C^{\mbf e}\cap\varphi^{-1}(\Sbf)$. Namely, $\varphi$ is proper on $C^{\mbf e}\setminus\varphi^{-1}(\Sbf)$.

We now extend it to $C^{\mbf e}$. The (finite) covering $\varphi:\varphi^{-1}(W_j\setminus\{x_j\})\rightarrow W_j\setminus\{x_j\}$ restricts to a map $\varphi:V \rightarrow W_j\setminus\{x_j\}$ for each connected component $V$ of $\varphi^{-1}(W_j\setminus\{x_j\})$, which is (easy to see) surjective and proper, hence a covering map. This covering map is (topologically and hence analytically) equivalent to $\mc D_r^\times\xrightarrow{z^n}\mc D_r^\times$ for some $r>0$. We may thus add a point $y$ to $V$ such that $\varphi:V\cup\{y\}\rightarrow W_j$ is analytically equivalent to $\mc D_r\xrightarrow{z^n}\mc D_r$. By adding all such $y$, we get a new Riemann  surface $C^{\mbf e}$ and a holomorphic $\varphi:C^{\mbf e}\rightarrow\Pbb^1$. It is clear that $\varphi$ is proper on each $\varphi^{-1}(W_j)$. Thus it is proper on $C$. In particular, $C$ is compact.
\end{proof}

Let us formulate the above proposition in a way independent of $E(\Gamma)$.

\begin{df}\label{lb66}
Let $\varphi:C\rightarrow\Pbb^1$ be a branched covering which is unbranched outside $\Sbf$. A map $\Psi_{\upgamma_\blt(1)}:E\rightarrow\varphi^{-1}(\upgamma_\blt(1))$ is called \textbf{$\Gamma$-covariant} \index{00@$\Gamma$-covariant bijection} if for every $e\in E$ and $\upmu\in\Lambda_{\upgamma_\blt(1)}$, the lift of $\upmu$ to $C$ (or more precisely, in $C\setminus\varphi^{-1}(\Sbf)$) ending at $\Psi_{\upgamma_\blt(1)}(e)$ must start from $\Psi_{\upgamma_\blt(1)}([\upmu]e)$.
\end{df}

\begin{thm}\label{lb38}
There exists a compact Riemann surface $C$, a branched covering $\varphi:C\rightarrow\Pbb^1$ unbranched outside $\Sbf$, and a $\Gamma$-covariant bijection $\Psi_{\upgamma_\blt(1)}:E\rightarrow \varphi^{-1}(\upgamma_\blt(1))$.
\end{thm}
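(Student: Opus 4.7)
The strategy is to deduce Theorem \ref{lb38} from Proposition \ref{lb3} by upgrading the finitely many distinguished basepoints $\{p^{\mathbf{e}}\}_{\mathbf{e}\in E(\Gamma)}$ to a covariant labeling of the entire fiber over $\upgamma_\blt(1)$. Take $C = \coprod_{\mathbf{e} \in E(\Gamma)} C^{\mathbf{e}}$ and $\varphi: C \to \Pbb^1$ as supplied by Proposition \ref{lb3}. Using the orbit decomposition $E = \coprod_{\mathbf{e}\in E(\Gamma)} \Gamma\mathbf{e}$, I define $\Psi_{\upgamma_\blt(1)}$ orbit-by-orbit: for each $e \in \Gamma\mathbf{e}$, pick any $\upnu \in \Lambda_{\upgamma_\blt(1)}$ with $[\upnu]\mathbf{e} = e$, let $\wtd\upnu$ be the lift of $\upnu$ to $C^{\mathbf{e}}$ ending at $p^{\mathbf{e}}$, and set $\Psi_{\upgamma_\blt(1)}(e) := \wtd\upnu(0)$. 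In particular $\Psi_{\upgamma_\blt(1)}(\mathbf{e}) = p^{\mathbf{e}}$.

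Well-definedness is the first thing to verify: if $[\upnu]\mathbf{e} = [\upnu']\mathbf{e}$, then $[\upnu'^{-1}\upnu]\mathbf{e} = \mathbf{e}$, so by the defining property of $p^{\mathbf{e}}$ in Proposition \ref{lb3}, the lift of $\upnu'^{-1}\upnu$ ending at $p^{\mathbf{e}}$ is a loop based at $p^{\mathbf{e}}$. Splitting this loop into the lifts of $\upnu'^{-1}$ and $\upnu$ respectively, and reversing the former, identifies $\wtd\upnu(0)$ with $\wtd\upnu'(0)$.

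For $\Gamma$-covariance, let $e = [\upnu]\mathbf{e}$ and $\upmu \in \Lambda_{\upgamma_\blt(1)}$; let $\wtd\upmu$ be the lift of $\upmu$ ending at $\Psi_{\upgamma_\blt(1)}(e) = \wtd\upnu(0)$. The concatenation $\wtd\upmu\cdot\wtd\upnu$ is then the lift of the loop $\upmu\upnu$ ending at $p^{\mathbf{e}}$, so its initial point $\wtd\upmu(0)$ equals $\Psi_{\upgamma_\blt(1)}([\upmu\upnu]\mathbf{e}) = \Psi_{\upgamma_\blt(1)}([\upmu]e)$ by the very definition of $\Psi_{\upgamma_\blt(1)}$, which is the covariance condition. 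Bijectivity then follows from the standard fact $|C^{\mathbf{e}} \cap \varphi^{-1}(\upgamma_\blt(1))| = |\Gamma\mathbf{e}|$ (the monodromy on the fiber of $C^{\mathbf{e}}\setminus\varphi^{-1}(\Sbf) \to \Pbb^1\setminus\Sbf$ at $\upgamma_\blt(1)$ being transitive with stabilizer matching $\mathrm{Stab}(\mathbf{e})$), together with the reverse of the well-definedness argument giving injectivity on each orbit.

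The step most likely to require care is the well-definedness check, which hinges on correctly interpreting the action convention $[\upmu\upnu]\mathbf{e} = [\upmu]([\upnu]\mathbf{e})$ (matching the paper's left-to-right path composition) and on identifying lifts of inverse paths correctly. Otherwise I do not anticipate any genuine obstacle: all the topological content is already packaged into Proposition \ref{lb3}, and the remainder is essentially a bookkeeping translation between the orbit decomposition of $E$ and the connected-component decomposition of $C$.
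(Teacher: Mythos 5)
Your proposal is correct and follows essentially the same route as the paper: take the covering from Proposition \ref{lb3}, define $\Psi_{\upgamma_\blt(1)}(e)$ as the initial point of the lift (ending at $p^{\mathbf e}$) of a path representing $e=[\upnu]\mathbf e$, and check well-definedness and injectivity via the loop-lifting criterion of Proposition \ref{lb3}, with $\Gamma$-covariance by concatenating lifts. The only cosmetic difference is surjectivity: you count fiber points using $|C^{\mathbf e}\cap\varphi^{-1}(\upgamma_\blt(1))|=|\Gamma\mathbf e|$, while the paper lifts a path from an arbitrary fiber point back to $p^{\mathbf e}$ and projects it; both are standard and equivalent here.
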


\begin{proof}
Let $\varphi:C\rightarrow\Pbb^1$ be as in Prop. \ref{lb3}. For each $e\in E$, we may find $\upmu\in\Lambda_{\upgamma_\blt(1)}$ and $\mbf e\in E(\Gamma)$ such that $e=[\upmu]\mbf e$. Define $\Psi_{\upgamma_\blt(1)}(e)$ to be the initial point of the lift of $\upmu$ ending at $p^{\mbf e}$, which is clearly inside the connected component of $C$ containing $p^{\mbf e}=\Psi_{\upgamma_\blt}(\mbf e)$.

We have $\Psi_{\upgamma_\blt(1)}([\upmu_1]\mbf e)=\Psi_{\upgamma_\blt(1)}([\upmu_2]\mbf e)$ iff the lifts of $\upmu_1,\upmu_2$ ending at $p^{\mbf e}$ have the same initial point, iff the lift of $\upmu_2^{-1}\upmu_1$ ending at $p^{\mbf e}$ must start at $p^{\mbf e}$, iff (by the statements in Prop. \ref{lb3}) $[\upmu_2^{-1}\upmu_1]\mbf e=\mbf e$, iff $[\upmu_1]\mbf e=[\upmu_2]\mbf e$. This proves that $\Psi_{\upgamma_\blt(1)}$ is well-defined.

Suppose $e_1,e_2\in E$ and $\Psi_{\upgamma_\blt(1)}(e_1)=\Psi_{\upgamma_\blt(1)}(e_2)$. Write $e_1=[\upmu_1]\mbf e_1,e_2=[\upmu_2]\mbf e_2$ for some $\upmu_1,\upmu_2\in\Lambda_{\upgamma_\blt(1)}$ and $\mbf e_1,\mbf e_2\in E(\Gamma)$. Since $\Psi_{\upgamma_\blt(1)}(e_1)$ belongs to $C^{\mbf e_1}$ and $\Psi_{\upgamma_\blt(1)}(e_2)$ belongs to $C^{\mbf e_2}$,  $\mbf e_1$ and $\mbf e_2$ are equal, which we denote by $\mbf e$. Then the above paragraph shows $[\upmu_1]\mbf e=[\upmu_2]\mbf e$, i.e. $e_1=e_2$. This proves that $\Psi_{\upgamma_\blt(1)}$ is injective. The $\Gamma$-covariance is obvious.

Finally, for each $x\in \varphi^{-1}(\upgamma_\blt(1))$, we choose $\mbf e\in E(\Gamma)$ such that $x\in C^{\mbf e}$. Choose a curve $\wtd\upmu$ in $C^{\mbf e}\setminus\Sbf$ from $x$ to $\mbf e$, and let $\upmu=\varphi\circ\wtd\upmu$. Then we clearly have $\Psi_{\upgamma_\blt(1)}([\upmu]\mbf e)=x$. So $\Psi_{\upgamma_\blt(1)}$ is surjective.
\end{proof}

We explore some properties of this branched covering. 

\begin{thm}\label{lb4}
For each $x\in\Pbb^1\setminus\Sbf$ and $\uplambda\in\Lambda_x$, there is a unique bijection \index{zz@$\Psi_\uplambda$, the trivialization of $\varphi:C\rightarrow\Pbb^1$  determined by $\Psi_{\upgamma_\blt(1)}$}
\begin{gather*}
\Psi_\uplambda:E\longrightarrow \varphi^{-1}(x),
\end{gather*}
satisfying the following properties (a) and (b):
\begin{enumerate}[label=(\alph*)]
\item By considering $\upgamma_\blt(1)$ as the constant path at this point, $\Psi_{\upgamma_\blt(1)}$ is the $\Gamma$-covariant bijection given in Thm. \ref{lb38}.
\item Suppose $\uplambda_1,\uplambda_2$ are paths in $\Pbb^1\setminus\Sbf$, $\uplambda_1$ ends at the initial point of $\uplambda_2$, and $\uplambda_2$ ends at $\upgamma_\blt(1)$. Let $\wtd\uplambda_1$ be the lift to $C$ of $\uplambda_1$ ending at $\Psi_{\uplambda_2}(e)$. Then 
\begin{align}
	\text{$\wtd\uplambda_1$ goes from $\Psi_{\uplambda_1\uplambda_2}(e)$ to $\Psi_{\uplambda_2}(e)$}.	\label{eq67}
\end{align}
\end{enumerate}
$\Psi_\uplambda$ depends only on the homotopy class $[\uplambda]$. Moreover, for each $\upmu\in\Lambda_{\upgamma_\blt(1)}$ and $e\in E$ we have
\begin{align}
	\Psi_\uplambda([\upmu]e)=\Psi_{\uplambda\upmu}(e).\label{eq7}	
\end{align}
\end{thm}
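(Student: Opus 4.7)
The plan is to define $\Psi_\uplambda(e)$ directly via path lifting and then verify the four required properties in turn. Specifically, since $\varphi$ restricts to a genuine (unbranched) covering $C\setminus\varphi^{-1}(\Sbf)\rightarrow\Pbb^1\setminus\Sbf$, for each $\uplambda\in\Lambda_x$ and each $e\in E$ there is a unique lift $\wtd\uplambda_e$ of $\uplambda$ to $C\setminus\varphi^{-1}(\Sbf)$ ending at $\Psi_{\upgamma_\blt(1)}(e)$; I set
\[
\Psi_\uplambda(e):=\wtd\uplambda_e(0)\in\varphi^{-1}(x).
\]
Uniqueness under (a) and (b) is essentially forced: specializing property (b) to $\uplambda_1=\uplambda$ and $\uplambda_2$ being the constant path at $\upgamma_\blt(1)$, and using (a), one reads off exactly the above definition of $\Psi_\uplambda(e)$.

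Next I would check the remaining properties. For (a), the constant path at $\upgamma_\blt(1)$ lifts to constant paths, so $\Psi_{\upgamma_\blt(1)}$ recovers the given $\Gamma$-covariant bijection of Thm. \ref{lb38}. Bijectivity of $\Psi_\uplambda$ follows because path lifting in the unbranched covering gives a bijection $\varphi^{-1}(\upgamma_\blt(1))\rightarrow\varphi^{-1}(x)$ by sending $\Psi_{\upgamma_\blt(1)}(e)$ to $\wtd\uplambda_e(0)$, composed with the bijection $\Psi_{\upgamma_\blt(1)}$. Homotopy invariance $\Psi_{\uplambda}=\Psi_{\uplambda'}$ whenever $[\uplambda]=[\uplambda']$ is the homotopy lifting property of covering maps applied to $\Pbb^1\setminus\Sbf$: a homotopy fixing endpoints lifts to a homotopy in $C\setminus\varphi^{-1}(\Sbf)$ with fixed endpoints, so the initial point of the lift does not change.

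For property (b), write $y=\Psi_{\uplambda_2}(e)$. By definition $\Psi_{\uplambda_2}(e)$ is the initial point of the lift $\wtd{\uplambda_2}_e$ of $\uplambda_2$ ending at $\Psi_{\upgamma_\blt(1)}(e)$. The lift of the concatenation $\uplambda_1\uplambda_2$ ending at $\Psi_{\upgamma_\blt(1)}(e)$ is then the concatenation of $\wtd{\uplambda_2}_e$ with the unique lift of $\uplambda_1$ ending at $y$; by definition its initial point is $\Psi_{\uplambda_1\uplambda_2}(e)$, which is exactly what (b) asserts. Finally, the formula $\Psi_\uplambda([\upmu]e)=\Psi_{\uplambda\upmu}(e)$ is immediate from (b) applied to $\uplambda_1=\uplambda$, $\uplambda_2=\upmu$, combined with the identity $\Psi_\upmu(e)=\Psi_{\upgamma_\blt(1)}([\upmu]e)$; the latter is precisely the $\Gamma$-covariance of $\Psi_{\upgamma_\blt(1)}$ (Def. \ref{lb66}) applied to the lift of $\upmu$ ending at $\Psi_{\upgamma_\blt(1)}(e)$.

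There is no real obstacle here: the statement is essentially a bookkeeping result that packages the usual monodromy action of the fundamental groupoid on the fibers into a convenient notation. The only point requiring minor care is maintaining consistent conventions for composition of paths (the convention here being that $\uplambda_1\uplambda_2$ traverses $\uplambda_1$ first, as suggested by the relation \eqref{eq7}); once that is fixed, each verification above is a one-line appeal to the uniqueness of path lifts and the homotopy lifting property.
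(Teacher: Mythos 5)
Your proof is correct and follows essentially the same approach as the paper: define $\Psi_\uplambda(e)$ as the initial point of the lift of $\uplambda$ ending at $\Psi_{\upgamma_\blt(1)}(e)$, derive uniqueness by specializing (b), and verify the remaining properties via uniqueness of path lifts, homotopy lifting, and the $\Gamma$-covariance identity $\Psi_\upmu(e)=\Psi_{\upgamma_\blt(1)}([\upmu]e)$. The paper states the verifications more tersely (``obviously'', ``it is clear''), but the underlying reasoning is identical to yours.
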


We call $\Psi$ \textbf{the trivialization of $\varphi:C\rightarrow\Pbb^1$  determined by $\Psi_{\upgamma_\blt(1)}$}. This name is justified by the fact that, by varying $x$ in a simply-connected open set $U\subset\Pbb^1\setminus\Sbf$ and multiplying $\lambda$ from the left by a curve $l$ in $U$ ending at the initial point of $\lambda$, we obtain (using $\Psi$) an equivalence between the projection  $E\times U\rightarrow U$ and the covering $\varphi:\varphi^{-1}(U)\rightarrow U$.

\begin{proof}
Uniqueness: $\Psi_{\upgamma_\blt(1)}$ is unique. By (b), $\Psi_\uplambda(e)$ is the initial point of the lift of $\uplambda$ to $C$ ending at $\Psi_{\upgamma_\blt(1)}(e)$, which is unique.

Existence: We already have $\Psi_{\upgamma_\blt(1)}$. For each $x\in\Pbb^1\setminus\Sbf,\uplambda\in\Lambda_x$, we define the map $\Psi_\uplambda$ sending each $e$ to the initial point of the lift of $\uplambda$ to $C$ ending at $\Psi_{\upgamma_\blt(1)}(e)$. It is clear that $\Psi$ satisfies (a) and (b). 

Obviously, $\Psi_\uplambda$ relies only on $[\uplambda]$; it is a bijection since $\Psi_{\upgamma_\blt(1)}$ is so. By $\Gamma$-covariance, $\Psi_{\upgamma_\blt(1)}([\upmu]e)=\Psi_{\upmu}(e)$. Denote this point by $x$. Then by (b), the left and the right of \eqref{eq7} are both the initial point of the lift of $\uplambda$ to $C$ ending at $x$. This proves \eqref{eq7}.	
\end{proof}

We shall investigate the local behavior of $\varphi$ near $\varphi^{-1}(\Sbf)$. Recall that $W_i$ is a disc centered at $x_i$ (with respect to the coordinate $\eta_i$) which does not contain any other points of $\Sbf$.

\begin{pp}\label{lb5}
Each connected component $\wtd W_j$ of $\varphi^{-1}(W_j)$ contains exactly one point of $\varphi^{-1}(x_j)$. Let $R>0$ such that the disc $\eta_j(W_j)$ equals $\mc D_R$. Then there is $n\in\Zbb_+$ and a bi-holomorphic function $\wtd\eta_j:\wtd W_j\rightarrow\mc D_r$ (where $r=\sqrt[n]R$) such that the following diagram commutes: 
\begin{equation}
\begin{tikzcd}
\wtd W_j \arrow[r, "\wtd\eta_j","\simeq"'] \arrow[d, "\varphi"]
& \mc D_r \arrow[d, "z^n"] \\
W_j \arrow[r,  "\eta_j","\simeq"']
&  \mc D_{r^n}
\end{tikzcd}	\label{eq4}
\end{equation}
\end{pp}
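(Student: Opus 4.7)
The plan is to reduce the statement to the standard classification of finite connected unbranched coverings of the punctured disc $\mc D_R^\times$. Since $\varphi$ is unbranched on $C\setminus\varphi^{-1}(\Sbf)$ and is proper (as $C$ is compact), the restriction
\[
\varphi:\varphi^{-1}(W_j\setminus\{x_j\})\longrightarrow W_j\setminus\{x_j\}
\]
is a finite unbranched covering map. Composing with $\eta_j$ identifies the base with $\mc D_R^\times$, whose fundamental group is $\Zbb$. Finite-index subgroups of $\Zbb$ are precisely $n\Zbb$ for $n\in\Zbb_+$, so each connected component $V$ of $\varphi^{-1}(W_j\setminus\{x_j\})$ is biholomorphic to $\mc D_r^\times$ (with $r=\sqrt[n]R$) via some $\tau_V:V\xrightarrow{\simeq}\mc D_r^\times$ satisfying $\eta_j\circ\varphi=\tau_V^{\,n}$; here $n$ is the index of the subgroup corresponding to $V$.

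Next I would fill in the puncture. Each $V$ can be extended to $\wtd W_{j,V}:=V\cup\{y_V\}$ by adjoining a single point $y_V$ so that $\tau_V$ extends to a biholomorphism $\wtd\eta_j:\wtd W_{j,V}\xrightarrow{\simeq}\mc D_r$ and $\eta_j\circ\varphi$ extends to $\wtd\eta_j^{\,n}$; this is exactly the point-adjoining procedure already carried out in the proof of Prop.~\ref{lb3}, which also ensures $\wtd W_{j,V}$ is open in $C$ with $\varphi(y_V)=x_j$. The commutative diagram of the proposition is then immediate.

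It remains to show that the $\wtd W_{j,V}$ exhaust $\varphi^{-1}(W_j)$ and that each contains exactly one point of $\varphi^{-1}(x_j)$. The set $\varphi^{-1}(W_j\setminus\{x_j\})$ is open in $\varphi^{-1}(W_j)$, and its complement $\varphi^{-1}(x_j)\cap\varphi^{-1}(W_j)$ is discrete (indeed finite), hence $\varphi^{-1}(W_j\setminus\{x_j\})$ is dense in $\varphi^{-1}(W_j)$. Every branch point $y\in\varphi^{-1}(x_j)$ lies in the interior of some $\wtd W_{j,V}$ by the extension step, so each connected component of $\varphi^{-1}(W_j)$ is of the form $\wtd W_{j,V}$ and contains the unique branch point $y_V$.

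The ``main obstacle'' is largely a matter of bookkeeping rather than a deep difficulty: the classification of connected coverings of $\mc D_R^\times$ (and their standard analytic models $z\mapsto z^n$) is classical, and the extension to the branch point is built into the construction of $C$ in Prop.~\ref{lb3}. The only point requiring a little care is ensuring the topological covering equivalence can be upgraded to a biholomorphism, which follows because both the covering $V\to W_j\setminus\{x_j\}$ and the model $\mc D_r^\times\xrightarrow{z^n}\mc D_R^\times$ are holomorphic étale covers of the same base, so the trivializing homeomorphism is automatically holomorphic.
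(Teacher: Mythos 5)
Your steps 1--2 (restricting to a finite unbranched covering of the punctured disc and invoking the classification of its connected coverings, with the standard remark that the topological equivalence is automatically holomorphic) are exactly the paper's starting point. The gap is in the step you delegate to Prop.~\ref{lb3}. That proposition \emph{constructs} a covering by adjoining abstract points to a covering of $\Pbb^1\setminus\Sbf$; Prop.~\ref{lb5}, by contrast, is a statement about a branched covering $\varphi:C\rightarrow\Pbb^1$ (unbranched outside $\Sbf$) that is already given, and the paper later needs it in exactly that generality: Cor.~\ref{lb6} built on Prop.~\ref{lb5} is applied to the second, arbitrary covering $\varphi'$ in the uniqueness Thm.~\ref{lb29}, and again to the sewn covering in Thm.~\ref{lb46}. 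If you only prove the proposition for the surface built in Prop.~\ref{lb3}, those later arguments become circular (you cannot first invoke Thm.~\ref{lb29} to transport the property to a general covering, since its proof already uses $\Upsilon'$, hence Prop.~\ref{lb5}, for $\varphi'$).

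For a given $C$, ``filling in the puncture'' is not a construction you are free to perform; you must locate the point inside $C$ and verify the structure around it. Concretely, three things are missing from your proposal. (i) The closure in $\varphi^{-1}(W_j)$ of a component $V$ of $\varphi^{-1}(W_j\setminus\{x_j\})$ must actually contain a point of $\varphi^{-1}(x_j)$; in the paper this comes from properness of $\varphi$ on the component (otherwise a component of $\varphi^{-1}(W_j)$ could a priori contain no point over $x_j$, contradicting ``exactly one''). (ii) The coordinate $\tau_V$ must extend holomorphically, not just set-theoretically, across that point; the paper extends it by $0$ and checks continuity plus Morera. (iii) Most importantly, at most one point of $\varphi^{-1}(x_j)$ can attach to a given $V$: this \emph{is} the ``exactly one'' of the statement, and your last paragraph asserts it (``the unique branch point $y_V$'', ``by the extension step'') rather than proves it — the uniqueness of the adjoined point is true by fiat only in the construction of Prop.~\ref{lb3}. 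The paper closes this with the openness/injectivity argument: the extended coordinate is open near every point of $\wtd W_j\cap\varphi^{-1}(x_j)$, so two such points would yield two nearby points of $V$ with equal coordinate values, contradicting the bijectivity of $\tau_V:V\rightarrow\mc D_r^\times$. Supplying (i)--(iii) (or an equivalent argument, e.g.\ via local normal forms at two hypothetical points over $x_j$ and connectedness of small punctured preimages in $V$) is the actual content of Prop.~\ref{lb5}; without it the proposal proves the statement only for the particular covering constructed in Prop.~\ref{lb3}.
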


\begin{proof}
Since $\varphi$ is locally equivalent to $z\mapsto z^n$ for some $n\in\Zbb_+$, it is clear that $\varphi(\wtd W_j)$ contains at least one point $x$ of $W_j\setminus\Sbf=W_j\setminus\{x_j\}$. For any $x'\in W_j\setminus\{x_j\}$, we can lift a path in $W_j\setminus\{x_j\}$ from $x$ to $x'$ into $C$ (and hence in $\varphi^{-1}(W_j)$), and the end point of that lifted path must be in $\varphi^{-1}(x')$. Since $\wtd W_j$ is a connected component,  the end point must be in $\wtd W_j$. So the locally biholomorphic map $\varphi:\wtd W_j\setminus\varphi^{-1}(\Sbf)\rightarrow W_j\setminus\{x_j\}=W_j\setminus\Sbf$ is surjective. We show that it is proper, and therefore  a covering map. Let $K\subset W_j\setminus\{x_j\}$ be compact, and let $(y_k)$ be a sequence in $\wtd W_j\cap\varphi^{-1}(K)\subset\wtd W_j\setminus\varphi^{-1}(\Sbf)$. By passing to a subsequence, $y_k$ converges to $y\in C$. Since $\varphi(y_k)\rightarrow \varphi(y)$, we have $\varphi(y)\in K\subset W_j$ and hence $y\in\varphi^{-1}(W_j)$. Since any connected component of $\varphi^{-1}(W_j)$ is its closed subset, we have $y\in\wtd W_j$ and hence $y\in\wtd W_j\cap\varphi^{-1}(K)$. This shows $\wtd W_j\cap\varphi^{-1}(K)$ is compact. 

Since $W_j$  is biholomorphic to $\mc D_R$ via $\eta_j$, we identify these two spaces via $\eta_j$. In particular, $W_j\setminus\{x_j\}$ equals $\mc D_R^\times$, and hence we have a holomorphic covering $\varphi:\wtd W_j\setminus\varphi^{-1}(\Sbf)\rightarrow\mc D_R^\times$. Since all connected topological (and hence  analytic) coverings of $\mc D_R^\times$ are equivalent topologically (and hence analytically) to $\mc D_r^\times\xrightarrow{z^n}\mc D_R^\times$ where $n\in\Zbb_+$ and $r=\sqrt[n]{R}$, we conclude that there is a biholomorphic $\wtd\eta_j:\wtd W_j\setminus\varphi^{-1}(\Sbf)\rightarrow\mc D_r^\times$ such that \eqref{eq4} commutes when restricted to $\wtd W_j\setminus\varphi^{-1}(\Sbf)$. 

Note that $\wtd W_j\cap\varphi^{-1}(\Sbf)$ is a discrete  subset of $\wtd W_j$, and hence is finite because $C$ is compact. We set $\wtd\eta_j$ to be $0$ on $\wtd W_j\cap\varphi^{-1}(\Sbf)$. To check that $\wtd\eta_j$ is analytic, by Morera's theorem, it suffices to check that $\wtd\eta_j$ is continuous at any $y\in\wtd W_j\cap\varphi^{-1}(\Sbf)=\wtd W_j\cap\varphi^{-1}(x_j)$. Choose a sequence $y_k\in\wtd W_j\setminus\varphi^{-1}(\Sbf)$ converging to $y$. Then $(\wtd\eta_j(y_k))^n=\eta_j\circ\varphi(y_k)\rightarrow\eta_j\circ\varphi(y)=\eta_j(x_j)=0$. This proves the continuity. 

We have proved that the diagram \eqref{eq4} commutes. To finish the proof, we shall show that $\wtd W_j\cap\varphi^{-1}(\Sbf)$ contains precisely one element of $\varphi^{-1}(\Sbf)$. This will also imply that $\wtd\eta_j:\wtd W_j\rightarrow\mc D_r$ is bijective.

A similar argument as above shows that $\varphi:\wtd W_j\rightarrow W_j$ is proper. Thus, if we choose a sequence $y_k$ in $W_j\setminus\varphi^{-1}(\Sbf)$ such that $\varphi(y_k)\rightarrow x_j$, by passing to a subsequence, we see $y_k\rightarrow y\in W_j$ and $\varphi(y)=x_j$. So $\wtd W_j$ contains at least one point of $\varphi^{-1}(x_j)$.

Now assume $y_1,y_2\in\wtd W_j\cap\varphi^{-1}(x_j)$. For each $i=1,2$, we have proved that $\wtd\eta_j$ is holomorphic near $y_i$, and sends $y_i$ to $0$. Since $\wtd\eta_j$ sends nearby points of $y_i$ to $\mc D_r^\times$, it is not constant near $y_i$, and hence it is open near $y_i$.  Therefore, if $y_1\neq y_2$, we may find $y_1',y_2'\in\wtd W_j\setminus\varphi^{-1}(\Sbf)$ close to $y_1$ and $y_2$ respectively, such that $\wtd\eta_j(y_1')=\wtd\eta_j(y_2')$. This is impossible, since we know that $\wtd\eta_j:\wtd W_j\setminus\varphi^{-1}(\Sbf)\rightarrow\mc D_r^\times$ is bijective.
\end{proof}

The branching index $n$ in the previous Proposition can be calculated explicitly:

\begin{pp}\label{lb12}
Let $\wtd W_j$ be a connected component of $\varphi^{-1}(W_j)$, and let $n$ be the branching index in Prop. \ref{lb5}.  Then $\wtd W_j\cap\varphi^{-1}(\upgamma_j(0))$ has precisely $n$ elements, and there exists $e\in E$ such that
\begin{align*}
\wtd W_j\cap\varphi^{-1}(\upgamma_j(0))=&\Psi_{\upgamma_j}\big(\bk{g_j}e\big)\\
:=&\Big\{\Psi_{\upgamma_j}(g_j^ke):k\in\Zbb\Big\}.	
\end{align*}
In particular,  for any $e\in E$ such that $\Psi_{\upgamma_j}(e)\in \wtd W_j\cap\varphi^{-1}(\upgamma_j(0))$, 
\begin{align*}
n=\big|\bk{g_j}e\big|,	
\end{align*}
the number of elements in the orbit $\bk{g_j}e$.
\end{pp}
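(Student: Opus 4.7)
The plan is to leverage the local model $\wtd\eta_j:\wtd W_j\xrightarrow{\simeq}\mc D_r$ provided by Prop. \ref{lb5}, which identifies $\varphi|_{\wtd W_j}$ with the power map $z\mapsto z^n$ on $\mc D_r$. Under this model, $\wtd W_j\cap\varphi^{-1}(\upgamma_j(0))$ is the fiber over $\eta_j(\upgamma_j(0))$ under $z\mapsto z^n$. By the positivity assumption \eqref{eq22}, $\eta_j(\upgamma_j(0))$ is a positive real number, and so this fiber has exactly $n$ distinct elements, namely $\rho\cdot e^{2\pi\im k/n}$ for $k=0,\dots,n-1$, where $\rho:=\eta_j(\upgamma_j(0))^{1/n}>0$. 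This proves the first assertion.

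For the second assertion, since $\Psi_{\upgamma_j}:E\to\varphi^{-1}(\upgamma_j(0))$ is a bijection (Thm. \ref{lb4}) and $\wtd W_j\cap\varphi^{-1}(\upgamma_j(0))$ is nonempty, we may choose $e\in E$ with $\Psi_{\upgamma_j}(e)\in\wtd W_j$; write $\wtd\eta_j(\Psi_{\upgamma_j}(e))=\rho\zeta_e$ with $\zeta_e$ an $n$-th root of unity. The key computation is to identify $\Psi_{\upgamma_j}(g_je)$. Since $g_j=[\upalpha_j]$ and $\upalpha_j=\upgamma_j^{-1}\upepsilon_j\upgamma_j$, formula \eqref{eq7} gives
\begin{align*}
\Psi_{\upgamma_j}(g_je)=\Psi_{\upgamma_j}([\upalpha_j]e)=\Psi_{\upgamma_j\upalpha_j}(e)=\Psi_{\upepsilon_j\upgamma_j}(e),
\end{align*}
which by Thm. \ref{lb4}(b) is the initial point of the lift $\wtd\upepsilon_j$ of $\upepsilon_j$ ending at $\Psi_{\upgamma_j}(e)$. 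Under $\wtd\eta_j$, the loop $\upepsilon_j$ (anticlockwise in $\eta_j$-coordinates) lifts via $z\mapsto z^n$ to an arc traversing $1/n$ of a full turn; the lift ending at $\rho\zeta_e$ is $t\mapsto\rho\zeta_e e^{2\pi\im(t-1)/n}$, which starts at $\rho\zeta_e e^{-2\pi\im/n}$. Thus $\wtd\eta_j(\Psi_{\upgamma_j}(g_je))=\rho\zeta_e e^{-2\pi\im/n}$.

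Iterating gives $\wtd\eta_j(\Psi_{\upgamma_j}(g_j^ke))=\rho\zeta_e e^{-2\pi\im k/n}$ for every $k\in\Zbb$. In particular each $\Psi_{\upgamma_j}(g_j^ke)$ lies in $\wtd W_j\cap\varphi^{-1}(\upgamma_j(0))$; for $k=0,\dots,n-1$ these are $n$ distinct points and hence exhaust the whole fiber $\wtd W_j\cap\varphi^{-1}(\upgamma_j(0))$. Therefore
\begin{align*}
\wtd W_j\cap\varphi^{-1}(\upgamma_j(0))=\Psi_{\upgamma_j}(\bk{g_j}e).
\end{align*}
Finally, since $\wtd\eta_j(\Psi_{\upgamma_j}(g_j^ne))=\rho\zeta_e=\wtd\eta_j(\Psi_{\upgamma_j}(e))$ and $\Psi_{\upgamma_j}$ is injective, we get $g_j^ne=e$; and the $n$ distinct values of $\rho\zeta_e e^{-2\pi\im k/n}$ for $0\leq k<n$ force $g_j^ke\neq e$ for $0<k<n$. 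Hence $|\bk{g_j}e|=n$, completing the proof.

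The only delicate point I foresee is keeping track of orientations and directions when lifting $\upepsilon_j$ through the covering $z\mapsto z^n$ — specifically, making sure the lift ending at $\rho\zeta_e$ begins at $\rho\zeta_e e^{-2\pi\im/n}$ rather than $\rho\zeta_e e^{+2\pi\im/n}$. Everything else is essentially a bookkeeping exercise using Thm. \ref{lb4} and the explicit local model from Prop. \ref{lb5}.
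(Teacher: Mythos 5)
Your proof is correct and follows essentially the same route as the paper's: identify $\varphi|_{\wtd W_j}$ with $z\mapsto z^n$ via Prop.~\ref{lb5}, count the $n$ points of the fiber over $\upgamma_j(0)$, and use \eqref{eq7} together with Thm.~\ref{lb4}-(b) to see that applying $g_j$ moves $\Psi_{\upgamma_j}(e)$ by the factor $e^{-2\pi\im/n}$ (your orientation bookkeeping for the lift of $\upepsilon_j$ agrees with the paper's). One cosmetic point: the appeal to \eqref{eq22} is unnecessary (and indeed positivity is not assumed in this subsection); any nonzero value of $\eta_j(\upgamma_j(0))$ has exactly $n$ preimages under $z\mapsto z^n$, and you may take $\rho$ to be any fixed $n$-th root of it.
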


Recall that the action of $[\upalpha_j]$ on $E$ equals that of $g_j$. $\bk{g_j}$ is the cyclic subgroup generated by $g_j$.

\begin{proof}
By Prop. \ref{lb5}, we identify $\varphi:\wtd W_j\rightarrow W_j$ with $\mc D_r\xrightarrow{z^n}\mc D_{r^n}$. Since we have assumed $\upgamma_j(0)\in W_j\setminus\{x_j\}=\mc D_{r^n}^\times$,  there are $n$ elements in $\wtd W_j\cap\varphi^{-1}(\upgamma_j(0))$. By Thm. \ref{lb4}, any of them is of the form $\Psi_{\upgamma_j}(e)$ for some $e\in E$. Hence $\wtd W_j\cap\varphi^{-1}(\upgamma_j(0))=\{e^{2k\im\pi/n}\Psi_{\upgamma_j}(e):0\leq k\leq n-1\}$.

Recall $\upepsilon_j$ is an anticlockwise circle in $W_j$ from and to $\upgamma_j(0)$. By Thm. \ref{lb4}-(b), $\Psi_{\upgamma_j}(e)$ is the initial point of the lift of $\upgamma_j$ in $C\setminus\varphi^{-1}(\Sbf)$ ending at $\Psi_{\upgamma_\blt(1)}(e)$. Thus, for each $k\in\Zbb$, the lift of $\upepsilon_j^k\upgamma_j=\upgamma_j\upalpha_j^k$ (recall \eqref{eq1}) ending at $\Psi_{\upgamma_\blt(1)}(e)$ and the lift of $\upepsilon_j^k$ ending at $\Psi_{\upgamma_j}(e)$ have the same initial point, which is $e^{-2k\im\pi/n}\Psi_{\upgamma_j}(e)$. Thus, we conclude $\Psi_{\upgamma_j}(g_j^ke)=\Psi_{\upgamma_j}([\upalpha_j^k] e)=e^{-2k\im\pi/n}\Psi_{\upgamma_j}(e)$. This finishes the proof. (The formula for $n$ follows immediately since $\Psi_{\upgamma_j}$ is one-to-one.)
\end{proof}

The above propositions immediately show:

\begin{co}\label{lb6}
For each $1\leq j\leq N$, there is a (necessarily unique) bijective map \index{zz@$\Upsilon$}
\begin{align*}
\Upsilon:\{\bk{g_j}\emph{-orbits in }E\}\longrightarrow \varphi^{-1}(x_j)
\end{align*}
such that for each $e\in E$, the point $\Upsilon(\bk{g_j} e)$    and the set $\Psi_{\upgamma_j}\big(\bk{g_j} e\big)$ are contained in the same connected component $\wtd W_j$ of $\varphi^{-1}(W_j)$. In particular, the domain and the codomain of $\Upsilon$ are both bijective to the set of connected components of $\varphi^{-1}(W_j)$.
\end{co}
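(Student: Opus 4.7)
The plan is to assemble the corollary directly from Propositions \ref{lb5} and \ref{lb12}, which together reduce the statement to a straightforward identification of three sets: $\bk{g_j}$-orbits in $E$, connected components of $\varphi^{-1}(W_j)$, and points of $\varphi^{-1}(x_j)$.

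First I would establish a bijection between connected components of $\varphi^{-1}(W_j)$ and the set $\varphi^{-1}(x_j)$. This is immediate from Prop. \ref{lb5}, which says each component $\wtd W_j$ contains exactly one point of $\varphi^{-1}(x_j)$; conversely, every point in $\varphi^{-1}(x_j)$ lies in some component. Call this bijection $\Theta$, sending $\wtd W_j$ to its unique point in $\varphi^{-1}(x_j)$.

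Next I would construct a bijection $\Xi$ from $\bk{g_j}$-orbits in $E$ to connected components of $\varphi^{-1}(W_j)$. For a $\bk{g_j}$-orbit $\bk{g_j}e$, Prop. \ref{lb12} tells us that every point of $\Psi_{\upgamma_j}(\bk{g_j}e)$ lies in a single connected component of $\varphi^{-1}(W_j)$ (equivalently, the component containing any one of them also contains all the others); define $\Xi(\bk{g_j}e)$ to be that component. Well-definedness is immediate since the definition does not depend on the choice of representative $e$. Surjectivity follows because for any component $\wtd W_j$, Prop. \ref{lb12} gives an $e \in E$ with $\wtd W_j \cap \varphi^{-1}(\upgamma_j(0)) = \Psi_{\upgamma_j}(\bk{g_j}e)$, so $\Xi(\bk{g_j}e) = \wtd W_j$. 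Injectivity follows because distinct components have disjoint intersections with $\varphi^{-1}(\upgamma_j(0))$, combined with the fact that $\Psi_{\upgamma_j}: E \to \varphi^{-1}(\upgamma_j(0))$ is a bijection (Thm. \ref{lb4}): if $\Xi(\bk{g_j}e_1) = \Xi(\bk{g_j}e_2) = \wtd W_j$, then both $\Psi_{\upgamma_j}(\bk{g_j}e_1)$ and $\Psi_{\upgamma_j}(\bk{g_j}e_2)$ equal the single set $\wtd W_j\cap\varphi^{-1}(\upgamma_j(0))$, forcing $\bk{g_j}e_1 = \bk{g_j}e_2$.

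Finally I would set $\Upsilon = \Theta \circ \Xi$. By construction, $\Upsilon(\bk{g_j}e)$ is the unique point of $\varphi^{-1}(x_j)$ in the component $\Xi(\bk{g_j}e)$, which is the same component containing $\Psi_{\upgamma_j}(\bk{g_j}e)$. Uniqueness of $\Upsilon$ follows because the required property — lying in the same component as $\Psi_{\upgamma_j}(\bk{g_j}e)$ — together with Prop. \ref{lb5} pins down a single point of $\varphi^{-1}(x_j)$. There is no substantive obstacle here; the corollary is a bookkeeping statement once the two preceding propositions are in hand, and the only thing that requires any care is checking that $\Xi$ is well-defined on orbits (handled by Prop. \ref{lb12}) and that injectivity uses the bijectivity of $\Psi_{\upgamma_j}$ from Thm. \ref{lb4}.
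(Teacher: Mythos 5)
Your proof is correct and follows exactly the route the paper intends: the paper states the corollary as an immediate consequence of Prop.\ \ref{lb5} (one point of $\varphi^{-1}(x_j)$ per component) and Prop.\ \ref{lb12} (components of $\varphi^{-1}(W_j)$ meet $\varphi^{-1}(\upgamma_j(0))$ exactly in sets $\Psi_{\upgamma_j}(\bk{g_j}e)$), and your composition $\Upsilon=\Theta\circ\Xi$ with injectivity via the bijectivity of $\Psi_{\upgamma_j}$ from Thm.\ \ref{lb4} is precisely that argument, just written out.
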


Pictorially, $\Upsilon(\bk{g_j}e)$ is the center of the disc $\wtd W_j$, and  $\Psi_{\upgamma_j}(\bk{g_j}e)$ is a set of $n$ points on $\wtd W_j$ surrounding that center.

\begin{rem}
We have suppressed the subscript $j$ and write $\Upsilon_j$ as $\Upsilon$ for simplicity. This means that if $i\neq j$,  $\Upsilon_i$ and $\Upsilon_j$ are different maps. In particular, even if (say) $g_i$ equals $g_j$, $\Upsilon(\bk{g_i}e)$ and $\Upsilon(\bk{g_j}e)$ are allowed to be different.	
\end{rem}

\begin{thm}\label{lb29}
Any two branched coverings satisfying Thm. \ref{lb38} are equivalent. More precisely, assume $\varphi:C\rightarrow\Pbb^1$ and $\varphi':C'\rightarrow\Pbb^1$ are branched coverings with $\Gamma$-covariant bijections $\Psi_{\upgamma_\blt(1)},\Psi_{\upgamma_\blt'(1)}$ as in Thm. \ref{lb38}. Then there is a unique holomorphic map $F:C\rightarrow C'$ such that  the following diagrams commute:
\begin{equation}\label{eq6}
\begin{tikzcd}
C \arrow[rr, "F","\simeq"'] \arrow[dr, "\varphi"'] && C' \arrow[dl, "\varphi'"] \\
&  \Pbb^1
\end{tikzcd}	
\end{equation}
\begin{equation}\label{eq68}
\begin{tikzcd}
&E \arrow[dl, "\Psi_{\upgamma_\blt(1)}"'] \arrow[dr,  "{\Psi'_{\upgamma_\blt'(1)}}"]\\
\varphi^{-1}(\upgamma_\blt(1))\arrow[rr, "F", "\simeq"'] && (\varphi')^{-1}(\upgamma_\blt'(1))
\end{tikzcd}
\end{equation}
$F$ is a bi-holomorphism. Moreover, for each path $\uplambda$ in $\Pbb^1\setminus\Sbf$ ending at $\upgamma_\blt(1)$, $e\in E$, and $1\leq j\leq N$, $F$ satisfies 
\begin{gather*}
F\big(\Psi_\uplambda(e)\big)=\Psi'_\uplambda(e),\\
F\big(\Upsilon(\bk{g_j}e)\big)=\Upsilon'(\bk{g_j}e).
\end{gather*}
\end{thm}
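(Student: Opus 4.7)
The plan is to build $F$ first on the unbranched parts $C \setminus \varphi^{-1}(\Sbf) \to C' \setminus (\varphi')^{-1}(\Sbf)$ using the trivializations $\Psi_\uplambda, \Psi'_\uplambda$ of Thm. \ref{lb4}, then to extend it holomorphically across the branch points via the local normal form from Prop. \ref{lb5}, and finally to obtain biholomorphy by symmetry. Uniqueness will then be an immediate consequence of the rigidity of holomorphic lifts.

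For $y \in C \setminus \varphi^{-1}(\Sbf)$, I pick any path $\uplambda$ in $\Pbb^1 \setminus \Sbf$ from $\varphi(y)$ to $\upgamma_\blt(1)$; by Thm. \ref{lb4} there is a unique $e \in E$ with $\Psi_\uplambda(e) = y$, and I set $F(y) := \Psi'_\uplambda(e)$. Independence of $\uplambda$: if $\uplambda_1, \uplambda_2$ are two such paths with $y = \Psi_{\uplambda_i}(e_i)$, setting $\upmu = \uplambda_1^{-1}\uplambda_2 \in \Gamma$ and applying \eqref{eq7} gives $\Psi_{\uplambda_1}([\upmu]e_2) = \Psi_{\uplambda_1\upmu}(e_2) = \Psi_{\uplambda_2}(e_2) = \Psi_{\uplambda_1}(e_1)$, so $e_1 = [\upmu]e_2$ by injectivity of $\Psi_{\uplambda_1}$; the same manipulation applied to $\Psi'$ then forces $\Psi'_{\uplambda_1}(e_1) = \Psi'_{\uplambda_2}(e_2)$. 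By construction $\varphi' \circ F = \varphi$ and $F \circ \Psi_\uplambda = \Psi'_\uplambda$, so both \eqref{eq6} and \eqref{eq68} commute on $C \setminus \varphi^{-1}(\Sbf)$; since $\varphi$ and $\varphi'$ are local biholomorphisms there, $F$ equals $(\varphi')^{-1} \circ \varphi$ locally, hence is holomorphic.

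To extend $F$ across a branch point $y_0 \in \varphi^{-1}(x_j)$, let $\wtd W_j$ be the component of $\varphi^{-1}(W_j)$ containing $y_0$; since $\wtd W_j \setminus \{y_0\}$ is connected and $\varphi' \circ F = \varphi$, the image $F(\wtd W_j \setminus \{y_0\})$ lies in a single component $\wtd W'_j$ of $(\varphi')^{-1}(W_j)$, with center $y'_0 \in (\varphi')^{-1}(x_j)$. Choosing any $e \in E$ with $\Psi_{\upgamma_j}(e) \in \wtd W_j$, the identity $F \circ \Psi_{\upgamma_j} = \Psi'_{\upgamma_j}$ places $\Psi'_{\upgamma_j}(e)$ in $\wtd W'_j$, and Prop. \ref{lb12} then forces both coverings to have the same branching index $n = |\bk{g_j}e|$. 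Using the charts $\wtd\eta_j, \wtd\eta'_j$ of Prop. \ref{lb5}, the nowhere-vanishing holomorphic function $(\wtd\eta'_j \circ F)/\wtd\eta_j$ on $\wtd W_j \setminus \{y_0\}$ satisfies $((\wtd\eta'_j \circ F)/\wtd\eta_j)^n = (\eta_j \circ \varphi' \circ F)/(\eta_j\circ\varphi) = 1$, so it takes values in the discrete set of $n$-th roots of unity, hence is a constant $\zeta$; the relation $\wtd\eta'_j \circ F = \zeta\,\wtd\eta_j$ extends holomorphically to all of $\wtd W_j$ with $F(y_0) = y'_0$. This simultaneously yields $F(\Upsilon(\bk{g_j}e)) = \Upsilon'(\bk{g_j}e)$ via Cor. \ref{lb6}.

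For uniqueness, any holomorphic candidate $\wtd F$ is pinned down on the fiber $\varphi^{-1}(\upgamma_\blt(1))$ by diagram \eqref{eq68}, hence on $C \setminus \varphi^{-1}(\Sbf)$ by path-lifting against $\varphi' \circ \wtd F = \varphi$, and therefore on all of $C$ by continuity. Swapping the roles of $(C, \varphi, \Psi)$ and $(C', \varphi', \Psi')$ produces a holomorphic inverse candidate $F'$ with the mirror defining properties; uniqueness applied to $F' \circ F$ and $F \circ F'$ (which satisfy the defining properties of the identity maps) shows they are the identities, so $F$ is a biholomorphism. The main subtlety I expect is the holomorphic extension across the branch points; this rests essentially on Prop. \ref{lb12} matching the branching indices of corresponding components, after which the extension reduces to the standard $z \mapsto \zeta z$ local model on a punctured disc.
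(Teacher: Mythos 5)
Your proposal is correct and follows essentially the same route as the paper: you define $F$ on $C\setminus\varphi^{-1}(\Sbf)$ by $\Psi_\uplambda(e)\mapsto\Psi'_\uplambda(e)$ with the identical well-definedness argument via \eqref{eq7}, and you get uniqueness from the rigidity of lifts along the unbranched covering, just as in the paper. The only divergence is at the branch points, where the paper extends $F$ by declaring $F(\Upsilon(\bk{g_j}e))=\Upsilon'(\bk{g_j}e)$ and verifies holomorphy by continuity plus Morera, while you use the local normal forms of Prop.~\ref{lb5} together with the index matching from Prop.~\ref{lb12} to write $\wtd\eta_j'\circ F=\zeta\,\wtd\eta_j$ with $\zeta^n=1$; both are valid, and your variant has the minor advantage of yielding $F(\Upsilon(\bk{g_j}e))=\Upsilon'(\bk{g_j}e)$ automatically.
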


Note that $\Psi_\uplambda'$ is the trivialization for $\varphi'$ defined by $\Psi'_{\upgamma_\blt'(1)}$, which defines $\Upsilon'$  defined for $\varphi'$ as in Cor. \ref{lb6}.

\begin{proof}
Uniqueness: By basic algebraic topology, if we fix a point for each connected component of $C$, the  continuous maps $F$ satisfying \eqref{eq6} are uniquely determined by their values at these points. Therefore, the holomorphic $F$ satisfying the two commuting diagrams are unique when  restricted to $C\setminus\varphi^{-1}(\Sbf)$. Since $\varphi^{-1}(\Sbf)$ is discrete, the values of $F$ on 	$\varphi^{-1}(\Sbf)$ are also unique.
	
Existence: Define a map $F:C\setminus\varphi^{-1}(\Sbf)\rightarrow C'\setminus(\varphi')^{-1}(\Sbf),\Psi_\uplambda(e)\mapsto \Psi'_\uplambda(e)$ for each path $\uplambda$ in $\Pbb^1\setminus\Sbf$ and each $e\in E$. Once we have shown that $F$ is well-defined, then $F$ clearly satisfies \eqref{eq6} and \eqref{eq68} (outside $\varphi^{-1}(\Sbf)$), and is a local homeomorphism by Thm. \ref{lb4}-(b) (applied to the two trivializations).

Suppose $\Psi_{\uplambda_1}(e_1)=\Psi_{\uplambda_2}(e_2)$. Let $x$ denote their image under $\varphi$. Then $\uplambda_1,\uplambda_2\in\Lambda_x$, and hence $\upmu:=\uplambda_2^{-1}\uplambda_1$ belongs to $\Lambda_{\upgamma_\blt(1)}$. By \eqref{eq7}, $\Psi_{\uplambda_2}(e_2)=\Psi_{\uplambda_1\upmu}(e_2)=\Psi_{\uplambda_1}([\upmu]e_2)$. So $\Psi_{\uplambda_1}(e_1)=\Psi_{\uplambda_2}(e_2)$ iff $\Psi_{\uplambda_1}(e_1)=\Psi_{\uplambda_1}([\upmu]e_2)$ iff $e_1=[\upmu]e_2$. Similarly, $\Psi'_{\uplambda_1}(e_1)=\Psi'_{\uplambda_2}(e_2)$ iff $e_1=[\upmu]e_2$. So $F$ is well-defined and is a bijection.

We now extend $F$ to $\varphi^{-1}(\Sbf)$: We let $F$ send each $\Upsilon(\bk{g_j}e)$ to $\Upsilon'(\bk{g_j}e)$ (for all $e\in E$). Now we have a bijective $F:C\rightarrow C'$, and \eqref{eq6} commutes. To finish the proof, it remains to show that $F$ is holomorphic at each point of the finite set $\varphi^{-1}(\Sbf)$. By Morera's theorem, it suffices to check the continuity.

Let $\wtd W_j$ be the connected component of $\varphi^{-1}(W_j)$ containing $y=\Upsilon(\bk{g_j}e)$, and also containing a set $\Psi_{\upgamma_j}(\bk{g_j}e)$ of $n$ points surrounding $\Upsilon(\bk{g_j}e)$. By \eqref{eq4}, $\wtd W_j\setminus\varphi^{-1}(\Sbf)=\wtd W_j\setminus\{y\}$ is  the set of all $\Psi_{l\upgamma_j}(e)$ where $l$ is a path in $W_j\setminus\{x_j\}$ ending at $\upgamma_j(0)$. (Note that $\Psi_{l\upgamma_j}(e)$ is the initial point of the lift of $l$ ending at $\Psi_{\upgamma_j}(e)$.) Similarly, if we let $\wtd W_j'$ be the connected component of $(\varphi')^{-1}(W_j)$ containing $y'=\Upsilon'(\bk{g_j}e)$ and hence containing the set $\Psi_{\upgamma_j}'(\bk{g_j}e)$, then $\wtd W_j'\setminus\{y'\}$ is the set of all $\Psi'_{l\upgamma_j}(e)$. This shows that $F$ restricts to a holomorphic map from $\wtd W_j\setminus\{y\}$ to $\wtd W_j'\setminus\{y'\}$. Choose any sequence $y_k$ in $\wtd W_j\setminus\{y\}$ converging to $y$. Then $\varphi'(Fy_k)=\varphi(y_k)\rightarrow \varphi(y)=x_j$. By Prop. \ref{lb5}, $\varphi':\wtd W_j'\rightarrow W_j$ is equivalent to $\mc D_r\xrightarrow{z^n}\mc D_{r^n}$, and $x_j$ is equivalent to $0\in\mc D_{r^n}$. This shows that $Fy_k$ converges to $0\in\mc D_r$, namely, converges to $y'=Fy$. This proves the continuity.
\end{proof}

We call the data \index{zz@$\varphi:C\rightarrow\Pbb^1$}
\begin{align*}
\big(\varphi:C\rightarrow\Pbb^1;\Psi_{\upgamma_\blt(1)}\big)
\end{align*}
the \textbf{permutation covering} of $\Pbb^1$ associated to the permutations $g_1,\dots,g_N$ (equivalently, the action $\Gamma\curvearrowright E$). Its isomorphism class (in the sense of Thm. \ref{lb29}) depends only on the action $\Gamma\curvearrowright E$ (note that $\Gamma$ also contains the information $\gamma_\blt(1)$) but not on the other information of $\upgamma_\blt$. However, $\Upsilon$ does rely on the homotopy class of each $\upgamma_j$.

\begin{rem}
The $\Gamma$-orbits in $E$ are in one-to-one correspondence with the connected components of $C$: given a $\Gamma$-orbit $\Omega\subset E$, the corresponding connected component $C^\Omega$ is the one containing the set $\Psi_{\upgamma_\blt(1)}(\Omega)$, and hence containing $\Psi_{\uplambda}(\Omega)$ for any path $\uplambda$ in $C\setminus\varphi^{-1}(\Sbf)$ ending at $\upgamma_\blt(1)$. $C^\Omega\setminus\varphi^{-1}(\Sbf)$ is an $|\Omega|$-fold covering of $\Pbb^1\setminus\Sbf$ where $|\Omega|$ is the cardinality of $\Omega$. It is clear that for each $\bk{g_j}$-orbit $\omega=\bk{g_j}e$, the branched point $\Upsilon(\bk{g_j}e)$ is on $C^\Omega$ if and only if $\omega\subset\Omega$. We know that the branching index at $\Upsilon(\omega)$ is $|\omega|$. Therefore, we may use the Riemann-Hurwitz formula \cite[17.14]{For} to conclude:
\end{rem}

\begin{co}
For each $\Gamma$-orbit $\Omega$, let $\Orb_\Omega(g_j)$ be the set of $\bk{g_j}$-orbits inside $\Omega$. Then  the genus $g(C^\Omega)$ of $C^\Omega$ equals
\begin{align}\label{eq88}
g(C^\Omega)=1-|\Omega|+\frac 12\sum_{j=1}^N~\sum_{\omega\in\Orb_\Omega(g_j)}(|\omega|-1).
\end{align}
\end{co}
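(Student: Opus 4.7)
The plan is to apply the Riemann--Hurwitz formula to the restricted branched covering $\varphi : C^\Omega \to \Pbb^1$, using the structural information about its ramification already established in Prop.~\ref{lb12} and Cor.~\ref{lb6}.

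First I would verify that $\varphi : C^\Omega \to \Pbb^1$ has degree $|\Omega|$. By the remark immediately preceding the corollary, $C^\Omega \setminus \varphi^{-1}(\Sbf) \to \Pbb^1 \setminus \Sbf$ is an $|\Omega|$-fold unbranched covering (the fiber over $\upgamma_\blt(1)$ is bijective, via $\Psi_{\upgamma_\blt(1)}$, to $\Omega$), and the degree of the proper extension $\varphi : C^\Omega \to \Pbb^1$ equals this generic degree. Next I would identify the ramification data: since $\varphi$ is unbranched outside $\Sbf$, the only points of $C^\Omega$ with branching index $>1$ lie in $\varphi^{-1}(\Sbf) \cap C^\Omega$. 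By Cor.~\ref{lb6}, for each $j$ the points of $\varphi^{-1}(x_j) \cap C^\Omega$ are precisely those $\Upsilon(\bk{g_j}e)$ for which $\bk{g_j}e \subset \Omega$, i.e.\ those indexed by $\omega \in \Orb_\Omega(g_j)$; and by Prop.~\ref{lb12}, the branching index at $\Upsilon(\omega)$ equals $|\omega|$.

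With these facts in hand, the Riemann--Hurwitz formula \cite[17.14]{For} applied to $\varphi : C^\Omega \to \Pbb^1$ (recall $g(\Pbb^1)=0$) gives
\begin{align*}
2 g(C^\Omega) - 2 = |\Omega| \cdot (2 \cdot 0 - 2) + \sum_{j=1}^{N}\sum_{\omega \in \Orb_\Omega(g_j)} \bigl(|\omega| - 1\bigr),
\end{align*}
and solving for $g(C^\Omega)$ yields exactly \eqref{eq88}. There is essentially no obstacle: the only thing to check carefully is that no $g_j$-orbit contributes a ramification point to more than one $C^\Omega$, which is immediate since each $g_j$-orbit lies in a unique $\Gamma$-orbit $\Omega$ and the branch points are attached to $C^\Omega$ accordingly. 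Thus the proof is a direct bookkeeping application of Riemann--Hurwitz using the combinatorics of $\Gamma\curvearrowright E$ restricted to the individual cyclic subgroups $\bk{g_j}$.
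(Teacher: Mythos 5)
Your proof is correct and follows exactly the paper's route: the paper's preceding remark records that $C^\Omega\setminus\varphi^{-1}(\Sbf)\to\Pbb^1\setminus\Sbf$ is an $|\Omega|$-fold covering, that the branch points on $C^\Omega$ are precisely the $\Upsilon(\omega)$ with $\omega\in\Orb_\Omega(g_j)$, with branching index $|\omega|$ (Prop.~\ref{lb12}, Cor.~\ref{lb6}), and then cites Riemann--Hurwitz \cite[17.14]{For}. Your bookkeeping of the degree and ramification data matches that argument step for step.
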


In this formula, the sum $\sum|\omega|$ clearly equals $|\Omega|$. Therefore
\begin{align}
	g(C^\Omega)=1+\Big(\frac N2-1\Big)|\Omega|-\frac 12\sum_{j=1}^N|\Orb_\Omega(g_j)|\label{eq89}
\end{align}
where $|\Orb_\Omega(g_j)|$ is the number of $\bk{g_j}$-orbits in $\Omega$. When $N=3$, Eq. \eqref{eq89} agrees with \cite[Lemma 8]{BS11}.

\begin{rem}\label{lb25}
Suppose that we have new paths $\upgamma_1',\dots,\upgamma_N'$ with common end point  satisfying that  $[\upgamma_j']=[l_j\upgamma_j\upsigma]$, $l_j$ is a path in $W_j$ from $\upgamma_j'(0)$ to $\upgamma_j(0)$, and $\upsigma$ is a path in $\Pbb^1\setminus\Sbf$ with end point $\upgamma_\blt(1)$. (In particular, we assume $\upgamma_j'(0)=l_j(0)$ and $\upgamma_j'(1)=\upsigma(1)$.) 
	
(Namely, we assume $\upgamma_\blt'$ satisfy the conditions in Rem. \ref{lb2} except \eqref{eq53}.)
	
Similar to \eqref{eq1}, we define $\upalpha_j'=(\upgamma_j')^{-1}\upepsilon_j'\upgamma_j'$ where $\upepsilon_j'$ is an anticlockwise loop in $W_j$ from and to $\upgamma_j'(0)$. This defines an action of $\Gamma'=\pi_1(\Pbb^1\setminus\Sbf,\upgamma_\blt'(1))$ by sending each $[\upalpha_j']$ to $g_j$. We then have an isomorphism $\Gamma\xrightarrow{\simeq}\Gamma'$ defined by $[\upmu]\mapsto[\upsigma^{-1}\upmu\upsigma]$, and this isomorphism sends $[\upalpha_j]$ to $[\upalpha_j']$. This isomorphism intertwines the actions of $\Gamma,\Gamma'$ on $E$, namely, we have commuting diagram
\begin{equation}\label{eq69}
\begin{tikzcd}
\Gamma\arrow[rr, "\simeq"',"{[\upmu]\mapsto [\upsigma^{-1}\upmu\upsigma]}"] \arrow[dr] && \Gamma'\arrow[dl]\\
&\mathrm{Aut}(E)
\end{tikzcd}	
\end{equation}
The following proposition allows us to change  the base point $\upgamma_\blt(1)$.
\end{rem}

\begin{pp}\label{lb26}
Let $(\varphi:C\rightarrow\Pbb^1;\Psi_{\upgamma_\blt(1)})$ be a branched covering associated to $\Gamma\curvearrowright E$ and the paths $\upgamma_\blt$. Assume the setting of Rem. \ref{lb25}. Let $\Psi$ be the trivialization determined by $\Psi_{\upgamma_\blt(1)}$, which defines $\Upsilon$ as in Cor. \ref{lb6}. Then the map
\begin{gather}\label{eq70}
\begin{array}{c}
	\Psi'_{\upgamma_\blt'(1)}:E\rightarrow \varphi^{-1}(\upgamma_\blt'(1)),\\[0.8ex]
	\Psi'_{\upgamma_\blt'(1)}(e)=\Psi_{\upsigma^{-1}}(e)
\end{array}	
\end{gather}
is a $\Gamma'$-covariant bijection, and $(\varphi:C\rightarrow\Pbb^1;\Psi'_{\upgamma'_\blt(1)})$ is a branched covering associated to $\Gamma'\curvearrowright E$ and the paths ${\upgamma'_\blt(1)}$.

Moreover, let $\Psi'$ be the trivialization defined by $\Psi'_{\upgamma_\blt'(1)}$, which defines $\Upsilon'$ as in Cor. \ref{lb6}. Then
\begin{align*}
\Upsilon=\Upsilon',	
\end{align*}
and for each $e\in E$ and each path $\uplambda$ in $\Pbb^1\setminus\Sbf$ ending at $\upgamma_\blt(1)$, 
\begin{gather}
\Psi_{\uplambda}(e)=\Psi'_{\uplambda\upsigma}(e).	\label{eq52}
\end{gather}
\end{pp}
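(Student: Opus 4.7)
My plan is to reduce all four assertions to the lifting characterization of $\Psi_\uplambda$ in Thm. \ref{lb4}-(b). I will first check that $\Psi'_{\upgamma_\blt'(1)}:e\mapsto\Psi_{\upsigma^{-1}}(e)$ is a $\Gamma'$-covariant bijection. Bijectivity is automatic since $\upsigma^{-1}\in\Lambda_{\upgamma_\blt'(1)}$ makes $\Psi_{\upsigma^{-1}}$ a bijection $E\xrightarrow{\simeq}\varphi^{-1}(\upgamma_\blt'(1))$. For the covariance, I take a loop $\upmu'\in\Lambda_{\upgamma_\blt'(1)}$; by the commuting diagram \eqref{eq69} the $\Gamma'$-action on $E$ satisfies $[\upmu']e=[\upsigma\upmu'\upsigma^{-1}]e$, where the right-hand side uses the $\Gamma$-action. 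I then decompose the lift of $\upsigma\upmu'\upsigma^{-1}\in\Lambda_{\upgamma_\blt(1)}$ ending at $\Psi_{\upgamma_\blt(1)}(e)$ into its three consecutive segments: the final $\upsigma^{-1}$-segment lifts to start at $\Psi_{\upsigma^{-1}}(e)$ by definition of the latter; the middle $\upmu'$-segment is then some lift of $\upmu'$ ending at $\Psi_{\upsigma^{-1}}(e)$, with initial point $y$ to be determined; the initial $\upsigma$-segment is a lift of $\upsigma$ ending at $y$. By $\Gamma$-covariance of $\Psi_{\upgamma_\blt(1)}$, this last lift starts at $\Psi_{\upgamma_\blt(1)}([\upsigma\upmu'\upsigma^{-1}]e)$, and reversing the $\upsigma$-segment identifies $y=\Psi_{\upsigma^{-1}}([\upmu']e)$, which is precisely the $\Gamma'$-covariance. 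Thm. \ref{lb38} then promotes $(\varphi;\Psi'_{\upgamma_\blt'(1)})$ to a branched covering associated to $\Gamma'\curvearrowright E$ and $\upgamma'_\blt$.

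Next, I will prove \eqref{eq52} through the intermediate identity $\Psi'_\upsigma(e)=\Psi_{\upgamma_\blt(1)}(e)$. This identity holds because $\Psi'_\upsigma(e)$ is by definition the initial point of the lift of $\upsigma$ ending at $\Psi'_{\upgamma_\blt'(1)}(e)=\Psi_{\upsigma^{-1}}(e)$, and this lift is exactly the reverse of the lift of $\upsigma^{-1}$ ending at $\Psi_{\upgamma_\blt(1)}(e)$, whose initial point is $\Psi_{\upsigma^{-1}}(e)$ by construction. Granted this, for an arbitrary $\uplambda$ ending at $\upgamma_\blt(1)$, Thm. \ref{lb4}-(b) applied to $\Psi'$ with $\uplambda_1=\uplambda,\uplambda_2=\upsigma$ identifies $\Psi'_{\uplambda\upsigma}(e)$ as the initial point of the lift of $\uplambda$ ending at $\Psi'_\upsigma(e)=\Psi_{\upgamma_\blt(1)}(e)$; applying Thm. \ref{lb4}-(b) for $\Psi$ to the same lift exhibits this initial point as $\Psi_\uplambda(e)$, which yields \eqref{eq52}.

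Finally, $\Upsilon=\Upsilon'$ will follow from \eqref{eq52}. By Cor. \ref{lb6}, $\Upsilon(\bk{g_j}e)$ and $\Upsilon'(\bk{g_j}e)$ are characterized as the unique points of $\varphi^{-1}(x_j)$ contained in the connected components of $\varphi^{-1}(W_j)$ that contain $\Psi_{\upgamma_j}(e)$ and $\Psi'_{\upgamma_j'}(e)$ respectively. Using $[\upgamma_j']=[l_j\upgamma_j\upsigma]$ together with \eqref{eq52}, I rewrite $\Psi'_{\upgamma_j'}(e)=\Psi'_{l_j\upgamma_j\upsigma}(e)$; applying Thm. \ref{lb4}-(b) exhibits this as the initial point of the lift of $l_j$ ending at $\Psi'_{\upgamma_j\upsigma}(e)=\Psi_{\upgamma_j}(e)$, and since $l_j\subset W_j$ its lift stays inside a single connected component of $\varphi^{-1}(W_j)$, forcing $\Psi'_{\upgamma_j'}(e)$ and $\Psi_{\upgamma_j}(e)$ to lie in the same component and hence $\Upsilon'(\bk{g_j}e)=\Upsilon(\bk{g_j}e)$. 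The main obstacle throughout is simply the careful bookkeeping of orientations and base points of the various lifts; no new conceptual ingredient beyond iterated use of Thm. \ref{lb4}-(b) should be required.
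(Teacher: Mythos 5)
Your proposal is correct and follows essentially the same route as the paper: both verify $\Gamma'$-covariance of $\Psi_{\upsigma^{-1}}$ by conjugating loops via \eqref{eq69} and tracking lifts (the paper compresses your explicit three-segment lift decomposition into an application of \eqref{eq7} and Thm.~\ref{lb4}-(b)), and both deduce $\Upsilon=\Upsilon'$ from \eqref{eq52} using that the lift of $l_j\subset W_j$ stays in one component of $\varphi^{-1}(W_j)$. The only cosmetic difference is that the paper obtains \eqref{eq52} by defining $\Psi'_{\uplambda'}:=\Psi_{\uplambda'\upsigma^{-1}}$ outright and identifying it with the trivialization, whereas you derive it from the lift characterization through the intermediate identity $\Psi'_\upsigma(e)=\Psi_{\upgamma_\blt(1)}(e)$; both are fine.
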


\begin{proof}
For each path $\uplambda'$ in $\Pbb^1\setminus\Sbf$ ending at $\upgamma_\blt'(1)$, define a bijection $\Psi'_{\uplambda'}=\Psi_{\uplambda'\upsigma^{-1}}:E\rightarrow \varphi^{-1}(\uplambda'(0))$. Then $\Psi'$ satisfies conditions (a) and (b) of Thm. \ref{lb4}. Let us show that, by considering $\upgamma_\blt'(1)$ as a constant path, $\Psi'_{\upgamma_\blt'(1)}=\Psi_{\sigma^{-1}}$ is $\Gamma'$-covariant. Choose any path $\upmu'$ in $\Pbb^1\setminus\Sbf$ from and to $\upgamma_\blt'(1)$. For each $e\in E$, by \eqref{eq69}, $[\upmu']e=[\upsigma\upmu'\upsigma^{-1}]e$. Therefore
\begin{align*}
\Psi'_{\upgamma'_\blt(1)}([\upmu']e)=\Psi_{\upsigma^{-1}}([\upsigma\upmu'\upsigma^{-1}]e)\xlongequal{\eqref{eq7}} \Psi_{\upmu'\upsigma^{-1}}(e),
\end{align*}
which, by Thm. \ref{lb4}-(b), is the initial point of the lift of $\upmu'$ ending at 	$\Psi_{\upsigma^{-1}}(e)=\Psi'_{\upgamma'_\blt(1)}(e)$.

We have proved that $\Psi'_{\upgamma_\blt'(1)}$ is a $\Gamma'$-covariant bijection,  that the above defined $\Psi'$ is the corresponding trivialization, and that \eqref{eq52} holds. We now show $\Upsilon=\Upsilon'$. We know that $\Psi_{\upgamma_j}(\bk{g_j}e)$ are points around $\Upsilon(\bk{g_j}e)$, and that they are on the same connected component of $\varphi^{-1}(W_j)$. The same can be said for $\Psi_{\upgamma_j'}'(\bk{g_j}e)$ and $\Upsilon'(\bk{g_j}e)$. Since $\upgamma'_j=l_j\upgamma_j\upsigma$, from \eqref{eq52}, $\Psi_{\upgamma_j'}'(\bk{g_j}e)=\Psi_{\upgamma_j'\upsigma^{-1}}(\bk{g_j}e)=\Psi_{l_j\upgamma_j}(\bk{g_j}e)$.  Since $\Psi_{l_j\upgamma_j}(g_j^ke)$ is the initial point of the lift of $l_j$ ending at $\Psi_{\upgamma_j}(g_j^ke)$, and since $l_j$ is in $W_j$, $\Psi_{l_j\upgamma_j}(g_j^ke)$ and $\Psi_{\upgamma_j}(g_j^ke)$ are in the same connected component of  $\varphi^{-1}(W_j)$. The same can be said for $\Upsilon(\bk{g_j}e)$ and $\Upsilon'(\bk{g_j}e)$. Therefore, we must have $\Upsilon(\bk{g_j}e)=\Upsilon'(\bk{g_j}e)$. 
\end{proof}

\subsubsection{The permutation covering of $\fk P$ associated to $\Gamma\curvearrowright E$ and $E(g_\blt)$}\label{lb31}

We now assume $\fk P$ is positively $N$-pathed, namely, \eqref{eq22} is also true. We have constructed the permutation covering $(C;\varphi^{-1}(\Sbf))$ of $(\Pbb^1;\Sbf)$. In this subsection, we shall add local coordinates to $C$ at these marked points. These local coordinates should be compatible with those of $\fk P$ and the branched covering map $\varphi$. Moreover, to uniquely determine these local coordinates, we have to fix an element $\tipae$ for each $\bk{g_j}$-orbit, called the \textbf{marked point} of that $\bk{g_j}$-orbit. We let \index{Eg@$E(g_j)$, the set of marked points of the $\bk{g_j}$-orbits}
\begin{align*}
E(g_j)=\{\text{all marked points $\tipae$ of the $\bk{g_j}$-orbits of $E$}\}.	
\end{align*}

For each point of $\varphi^{-1}(x_j)$, necessarily represented by $\Upsilon(\bk{g_j}\tipae)$ where $\tipae\in E(g_j)$, we let $\wtd W_{j,\tipae}$ be the unique connected component of $\varphi^{-1}(W_j)$ containing this point (equivalently, containing $\Psi_{\upgamma_j}(\bk{g_j}\tipae)$). By Prop. \ref{lb5}, we can choose a local coordinate $\wtd\eta_{j,\tipae}\in\scr O(\wtd W_{j,\tipae})$ of $C$ at $\Upsilon(\bk{g_j}\tipae)$ such that (for $k=|\bk{g_j}\tipae|$ and for some $r_i>0$) the diagram \index{Wg@$\wtd W_{j,\tipae}$} \index{zz@$\wtd\eta_{j,\tipae}$}
\begin{equation}\label{eq25}
	\begin{tikzcd}
		\wtd W_{j,\tipae} \arrow[r, "\wtd\eta_{j,\tipae}","\simeq"'] \arrow[d, "\varphi"]
		& \mc D_{\sqrt[k]r_j} \arrow[d, "z^k"] \\
		W_j \arrow[r,  "\eta_j","\simeq"']
		&  \mc D_{r_j}
	\end{tikzcd}	
\end{equation}
commutes. 

Such $\wtd\eta_{j,\tipae}$ is unique up to multiplication by a power of \index{zz@$\tipaomega_k=e^{-\frac{2\im\pi}k}$}
\begin{align}
	\tipaomega_k=e^{-\frac{2\im\pi}k}.	
\end{align}
However, since $\eta_j$ sends $\upgamma_j(0)$ to a positive number (by \eqref{eq22}), we may (and shall) choose $\wtd\eta_{j,\tipae}$ such that
\begin{align}
	\wtd\eta_{j,\tipae}\circ\Psi_{\upgamma_j}(\tipae)>0.	\label{eq28}
\end{align}
Then
\begin{align}
	\boxed{~\fk X=	\big(C;\Upsilon(\bk{g_j}\tipae);\wtd\eta_{j,\tipae}\in\scr O(\wtd W_{j,\tipae})\text{ for all }1\leq j\leq N,\tipae\in E(g_j)\big)~}\label{eq24}
\end{align}
is a pointed compact Riemann surface with local coordinates. The data
\begin{align*}
\big(\fk X,\varphi:C\rightarrow\Pbb^1,\Psi_{\upgamma_\blt(1)}\big)	
\end{align*}
(or simply just $\fk X$) is  called the \textbf{permutation covering} of the positively $N$-pathed \index{00@Permutation coverings of the positively $N$-pathed Riemann sphere with local coordinates}
\begin{align*}
	\boxed{~\fk P=(\Pbb^1;x_1,\dots,x_N;\eta_1,\dots,\eta_N;\upgamma_1,\dots,\upgamma_N)~}
\end{align*}
associated to the action $\Gamma\curvearrowright E$ and the set  $E(g_j)$ (for all $j$) of marked points of $\bk{g_j}$-orbits. $\wtd\eta_{j,\tipae}$ is the local coordinate at $\Upsilon(\bk{g_j}\tipae)$ defined on an open disc $\wtd W_{j,\tipae}$. The set of all marked points is $\varphi^{-1}(\Sbf)$.

\begin{rem}\label{lb27}
Suppose we have positively $N$-pathed $\fk P'=(\Pbb^1;x_\blt;\eta_\blt;\upgamma_\blt')$ where $\upgamma_1',\dots,\upgamma_N'$ are equivalent to $\upgamma_\blt$ (cf. Rem. \ref{lb2}). Define the action of $\Gamma'=\pi_1(\Pbb^1\setminus\Sbf,\upgamma_\blt'(1))$ on $E$ as in  Rem. \ref{lb25}. Let $\fk X$ be \eqref{eq24}. Define a $\Gamma'$-covariant bijection $\Psi'_{\upgamma_\blt'(1)}$ using \eqref{eq70}, which defines a trivialization $\Psi$. Noting \eqref{eq66}, for each $\tipae\in E(g_j)$ we have $\Psi'_{\upgamma_j'}(\tipae)=\Psi_{l_j\upgamma_j}(\tipae)$, whose value under $\wtd\eta_{j,\tipae}$ is positive. This fact, together with Prop. \ref{lb26}, implies that
\begin{align*}
\big(\fk X,\varphi:C\rightarrow\Pbb^1,\Psi'_{\upgamma_\blt'(1)}\big)
\end{align*}
is a permutation covering of $\fk P'$ associated to $\Gamma'\curvearrowright E$ and $E(g_\blt)$. 
\end{rem}

\subsection{Permutation-twisted modules; Main Theorem}\label{lb23}

\begin{df}\label{lb72}
Let $\Ubb$ be the tensor product VOA $\Ubb=\Vbb^{\otimes E}\equiv\bigotimes_{e\in E}\Vbb$. In other words, as a vector space, $\Vbb^{\otimes E}$ is a vector bundle (with possibly infinite rank) over a single point set (say $\{0\}$) such that for any bijection  $\varphi:\{1,\dots,n\}\rightarrow E$ (where $n=|E|$) we have a trivialization $\Psi_\varphi:\Vbb^{\otimes n}\xrightarrow{\simeq} \Vbb^{\otimes E}$; if $\sigma:\{1,\dots,n\}\rightarrow\{1,\dots,n\}$ is a bijection, then $\Psi_\varphi^{-1}\circ\Psi_{\varphi\circ\sigma}:\Vbb^{\otimes n}\rightarrow\Vbb^{\otimes n}$ is the unique (bijective) linear map sending $v_{\sigma(1)}\otimes\cdots\otimes v_{\sigma(n)}$ to $v_1\otimes\cdots\otimes v_n$ (for each $v_1,\dots,v_n\in\Vbb$). 

For each function $\vbf:E\rightarrow\Vbb$, a vector in $\Vbb^{\otimes E}$ is defined:
\begin{align}\label{eq104}
\bigotimes_{e\in E} \vbf(e)=\Psi_\varphi\Big(\vbf(\varphi(1))\otimes\cdots \otimes\vbf(\varphi(n))\Big)
\end{align} 
This definition is independent of the choice of $\varphi$, since $\Psi_\varphi^{-1}\circ\Psi_{\varphi\circ\sigma}$ sends $\vbf(\varphi\circ\sigma(1))\otimes\cdots \otimes\vbf(\varphi\circ\sigma(n))$ to $\vbf(\varphi(1))\otimes\cdots \vbf(\varphi(n))$. We let $\vbf$ also denote $\bigotimes_{e\in E}\vbf(e)$ when no confusion arises.

Pushing forward the VOA structure of $\Vbb^{\otimes n}$ to $\Vbb^{\otimes E}$ via $\Psi_\varphi$, then $\Vbb^{\otimes E}$ becomes a VOA. This VOA structure is independent of the choice of $\varphi$ (because $\Psi_\varphi^{-1}\circ\Psi_{\varphi\circ\sigma}$ is an automorphism of the VOA $\Vbb^{\otimes n}$). The conformal vector $\cbf_\Ubb$ of $\Ubb=\Vbb^{\otimes E}$ is $\sum_{\epsilon\in E}\cbf_\epsilon$ where $\cbf_\epsilon(e)$ equals $\cbf_\Vbb$ (the conformal vector of $\Vbb$) when $\epsilon=e$, and equals $\id$ otherwise.

The (faithful) group action of $\Perm(E)$ on $\Ubb$ is determined by the following fact: for each $g\in\Perm(E)$ and each $\vbf:E\rightarrow \Vbb$, we have
\begin{align}
g\cdot \bigotimes_{e\in E}\vbf(e)=\bigotimes_{e\in E}\vbf(g^{-1}e).
\end{align}
In short, we have $(g\cdot \vbf)(e)=\vbf(g^{-1}e)$.  \hfill\qedsymbol
\end{df}

Thus, if $E=\{1,\dots,n\}$, then $\Ubb$ can simply be the VOA $\Vbb^{\otimes n}$ understood in the usual sense.

Recall $g_1,\dots,g_N\in\Perm(E)$. For each $g_j$, we shall construct $g_j$-twisted $\mbb U$-modules. For each $\bk{g_j}$-orbit $\bk{g_j}\tipae$ we choose a $\Vbb$-module $\Wbb_{j,\tipae}$.

Let $\zeta$ be the standard coordinate of $\Cbb$. Fix any $\tipae\in E(g_j)$. We set a $2$-pointed Riemann sphere with local coordinates
\begin{align}
(\Pbb^1_{j,\tipae};0,\infty;\zeta,\zeta^{-1})	\label{eq40}
\end{align}
where $\Pbb^1_{j,\tipae}=\Pbb^1$. We associate $\Wbb_{j,\tipae}$ and its contragredient  $\Wbb_{j,\tipae}'$ to $0,\infty$ respectively. Then the standard pairing \index{zz@$\tipxphi_{j,\tipae}$}
\begin{gather}
\tipxphi_{j,\tipae}:	\Wbb_{j,\tipae}\otimes \Wbb_{j,\tipae}'\rightarrow\Cbb, \qquad w\otimes w'\mapsto \bk{w,w'}\label{eq33}
\end{gather}
is a conformal block associated to \eqref{eq40} and the two $\Vbb$-modules. Set
\begin{align}
{k}=|\bk{g_j}\tipae|.	
\end{align}
Let $\Cbb_{j,\tipae}=\Pbb_{j,\tipae}^1\setminus\{\infty\}$ and $\Cbb_{j,\tipae}^\times=\Pbb_{j,\tipae}^1\setminus\{0,\infty\}$. Identify
\begin{align}
\boxed{~\scr V_{\Cbb^\times_{j,\tipae}}=\Vbb\otimes_\Cbb\scr O_{\Cbb^\times_{j,\tipae}}\qquad \text{via }\mc U_\varrho(\zeta^{{k}})~}\label{eq35}
\end{align}
where $\zeta^k:z\mapsto z^k$. By Thm. \ref{lb7}, we have
\begin{align*}
\wr^{{k}}\tipxphi_{j,\tipae}:\Big(\Vbb\otimes\scr O(\Cbb_{j,\tipae}^\times)\Big)^{\otimes {k}}\otimes \Wbb_{j,\tipae}\otimes \Wbb_{j,\tipae}'\rightarrow\scr O\big(\Conf^{{k}}(\Cbb_{j,\tipae}^\times)\big)
\end{align*}
where all $\otimes$ are over $\Cbb$. For each $z\in\Cbb^\times$ with argument $\arg z$,  $\sqrt[k]z$ is assumed to have argument $\frac 1k\arg z$. Let \index{zz@$\tipaomega_k^{\blt-1}z^{1/k}$}
\begin{align*}
\tipaomega_k^{\blt-1}z^{1/k}=(z^{1/k},\tipaomega_kz^{1/k},\tipaomega_k^2z^{1/k},\dots,\tipaomega_k^{k-1}z^{1/k})	\qquad\in\Conf^k(\Cbb^\times_{j,\tipae}).
\end{align*}
Then for each $\vbf:E\rightarrow \Vbb$ (viewed as an element of $\Ubb$), $w\in \Wbb_{j,\tipae}$, $w'\in \Wbb_{j,\tipae}'$, and $z\in\Cbb^\times$, we set \index{zz@$\tipxcc_{j,\tipae}$}
\begin{align}
\boxed{\begin{array}{rl}
&\tipxcc_{j,\tipae}(\vbf,w,w',z)\\[0.7ex]
:=&	\wr^{{k}}\tipxphi_{j,\tipae}\Big(\vbf(\tipae),\vbf(g_j\tipae),\dots,\vbf(g_j^{{k}-1}\tipae), w\otimes w'\Big)_{\tipaomega_{{k}}^{\blt-1}z^{1/{k}}}
\end{array}}	\label{eq29}
\end{align}

We now set
\begin{align}
\mc W_j=\bigotimes_{\tipae\in E(g_j)}\Wbb_{j,\tipae}	\label{eq23}
\end{align}
with $\wtd L_0$-action
\begin{align}
\wtd L_0^{g_j} \Big(\bigotimes_{\tipae} w_{j,\tipae}\Big)=\Big(\sum_{\tipae}\frac {\wtd\wt w_{j,\tipae}}{|\bk{g_j}\tipae|}\Big)\Big(\bigotimes_{\tipae} w_{j,\tipae}\Big)\label{eq73}
\end{align}
if each $w_{j,\tipae}\in\Wbb_{j,\tipae}$ is $\wtd L_0$-homogeneous with eigenvalue $\wtd\wt w_{j,\tipae}$.

\begin{thma}\label{lb13}
\textit{There is a (necessarily unique) vertex operation $Y^{g_j}$ which makes the $\wtd L_0^{g_j}$-graded vector space $\mc W_j$ a $g_j$-twisted $\Ubb$-module and satisfies the following condition: for each $\vbf:E\rightarrow\Vbb$ (viewed as an element of $\Ubb$), $\wbf_j=\bigotimes_\tipae w_{j,\tipae}\in\otimes_{\tipae\in E(g_j)}\Wbb_{j,\tipae}=\mc W_j$, and $\wbf_j'=\bigotimes_\tipae w_{j,\tipae}'\in\otimes_{\tipae\in E(g_j)}\Wbb_{j,\tipae}'=\mc W_j'$,}
\begin{align}
\big\langle Y^{g_j}(\vbf,z)\wbf_j,\wbf_j'\big\rangle=\prod_{\tipae\in E(g_j)}{\tipxcc}_{j,\tipae}(\vbf,w_{j,\tipae},w_{j,\tipae}',z).	\label{eq30}
\end{align}
\end{thma}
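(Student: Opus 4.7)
The plan is to define $Y^{g_j}$ directly by the formula \eqref{eq30}, extract its mode expansion from the multivalued function on the right-hand side, and verify the twisted-module axioms using the propagation Theorem \ref{lb7}. As a first simplification, observe that the right-hand side of \eqref{eq30} factorizes as a product over the $g_j$-orbits $\bk{g_j}\tipae$. Therefore, if for each orbit we can construct a $g_j|_{\bk{g_j}\tipae}$-twisted $\Vbb^{\otimes\bk{g_j}\tipae}$-module structure on $\Wbb_{j,\tipae}$ (considered in its own right as a weak module with the appropriate rescaled $\wtd L_0^{g_j}$), then $\mc W_j$ as a tensor product inherits the desired $g_j$-twisted $\Ubb$-module structure. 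So it suffices to treat the case where $g_j$ is a single $k$-cycle on $E=\{\tipae,g_j\tipae,\dots,g_j^{k-1}\tipae\}$.

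In this single-cycle case, I would first check that the right-hand side of \eqref{eq29}, as a function of $z\in\Cbb^\times$ with chosen argument, admits a convergent Laurent expansion in $z^{1/k}$ on any annulus; this is because the propagation $\wr^k\tipxphi_{j,\tipae}$ is a single-valued holomorphic function on $\Conf^k(\Cbb^\times)$ (by Thm.~\ref{lb7}), pulled back via the $k$-fold covering $z^{1/k}\mapsto z$ and the permutation $z^{1/k}\mapsto \tipaomega_k z^{1/k}$ (which cycles the tensor factors of $\vbf$ in accordance with $g_j$). This immediately produces the $g_j$-equivariance \eqref{eq21}. The lower truncation and the vacuum axiom $Y^{g_j}(\id,z)=\id_{\mc W_j}$ follow from parts (3) and (1) of Thm.~\ref{lb7} applied to the vacuum section, together with the trivialization \eqref{eq35}. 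The commutator relation \eqref{eq12} with $\wtd L_0^{g_j}$ as defined in \eqref{eq73} should come from a direct calculation: the factor $1/k$ in \eqref{eq73} is precisely the Jacobian weight for $\mc U(\varrho(\zeta^k|\zeta))$ coming from the change of coordinate $z\mapsto z^k$, so the $\wtd L_0^{g_j}$-grading matches the homogeneity degree in $z$ of \eqref{eq29}.

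The hard part will be the Jacobi identity (condition 4 in Subsec.~\ref{lb34}), i.e., the convergence and the common analytic continuation of the three expressions \eqref{eq13}, \eqref{eq14}, \eqref{eq15}. The plan is to express the product $\bk{Y^{g_j}(\vbf,z)Y^{g_j}(\ubf,\tipaz)\wbf,\wbf'}$, after inserting a complete set of $\wtd L_0^{g_j}$-homogeneous vectors, as the value of $\wr^{2k}\tipxphi_{j,\tipae}$ at the $2k$-point configuration $(z^{1/k},\tipaomega_k z^{1/k},\dots;\tipaz^{1/k},\tipaomega_k\tipaz^{1/k},\dots)\in\Conf^{2k}(\Cbb^\times)$, with the tensor components of $\vbf$ and $\ubf$ distributed accordingly. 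Convergence of \eqref{eq13} when $|z|>|\tipaz|$ follows from Thm.~\ref{lb7}(1) applied twice (nested small discs around $0$ and around the $\tipaomega_k^i\tipaz^{1/k}$ versus the $\tipaomega_k^i z^{1/k}$); \eqref{eq14} is symmetric; and \eqref{eq15} comes from an application of Thm.~\ref{lb7}(2) on discs centered at the $\tipaomega_k^i\tipaz^{1/k}$. All three expressions then extend to the single holomorphic function $\wr^{2k}\tipxphi_{j,\tipae}$ on the configuration space, whose multi-valuedness in $(z,\tipaz)$ is exactly that prescribed in the definition of twisted module (a holomorphic function on the universal cover of $\Cbb^\times\setminus\{\tipaz\}$ for fixed $\tipaz$).

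Uniqueness of $Y^{g_j}$ is automatic: the formula \eqref{eq30} determines all matrix coefficients of $Y^{g_j}(\vbf,z)$ against the non-degenerate pairing between $\mc W_j$ and $\mc W_j'$. The main technical obstacle is thus the double propagation bookkeeping needed for the Jacobi identity, particularly tracking the branch cuts of the various $k$-th roots so that the three formal series converge on the stated domains and continue to the same multivalued function; everything else is a careful translation of the untwisted propagation of $\tipxphi_{j,\tipae}$ through the covering $\zeta^k$.
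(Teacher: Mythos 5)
Your reduction to the single-cycle case is exactly the paper's: the paper disposes of the general case by the same observation that a tensor product of $h_l$-twisted $\Vbb_l$-modules is an $h_1\otimes\cdots\otimes h_m$-twisted module over $\Vbb_1\otimes\cdots\otimes\Vbb_m$, with grading as in \eqref{eq73}. The difference is what happens next: the paper does not reprove the single-orbit case at all --- it cites \cite[Sec. 10]{Gui21b}, where the cyclic-permutation twisted module structure (including the Jacobi identity) is established via the sewing/propagation machinery --- whereas you attempt to verify all the axioms from scratch using only Theorem \ref{lb7}.

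That is where your sketch has a genuine gap. For the Jacobi identity you must show that the sum over intermediate states, $\sum_n\bk{Y^{g_j}(\vbf,z)P_n^{g_j}Y^{g_j}(\ubf,\tipaz)\wbf,\wbf'}$ (and the analogous iterate), converges and equals the value of $\wr^{2k}\tipxphi_{j,\tipae}$ at the combined configuration of $k$-th roots. Inserting a complete set of $\wtd L_0^{g_j}$-homogeneous vectors is a \emph{sewing} of two propagated conformal blocks, and its convergence and identification with the higher propagation is the content of the sewing--propagation compatibility (Thm. \ref{lb19}, i.e. Thm. 9.1 of \cite{Gui21b}), not of Thm. \ref{lb7} alone. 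Your proposed shortcut, ``Thm. \ref{lb7}(1) applied twice with nested discs,'' does not work as stated: part (1) absorbs one propagation point at a time into a module insertion and requires the disc around the marked point to contain no other propagation point, but the $k$ points $\tipaomega_k^{i}\tipaz^{1/k}$ all lie on a single circle about $0$, so no disc centered at $0$ isolates one of them; and even granting such absorptions, one is still left with an infinite sum over graded components whose absolute convergence is precisely the sewing statement. (Compare Step 3 of Sec. \ref{lb1}, where the paper handles the analogous series by passing through Thm. \ref{lb19}.) A smaller point in the same spirit: you should record why each mode $Y^{g_j}(\vbf)_n$ maps $\mc W_j$ into $\mc W_j$ rather than merely into $\prod_n\mc W_j(n)$; this follows from the scaling/grading argument you gesture at with the Jacobian weight, but it is part of what must be checked. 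With the Jacobi step rerouted through Thm. \ref{lb19} (or simply by citing the single-orbit result, as the paper does), the remainder of your outline --- equivariance via Thm. \ref{lb7}(4), vacuum and lower truncation, uniqueness from nondegeneracy of the pairing --- is sound.
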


Note that $Y^{g_j}$ depends not only on $g_j$ and the $\Vbb$-modules associated to the $\bk{g_j}$-orbits, but also on the set $E(g_j)$.
\begin{proof}
When there is only one $\bk{g_j}$-orbit, this was proved in \cite[Sec. 10]{Gui24b}. In the general case, this follows from the fact that if for each $1\leq l\leq m$, $h_l$ is an automorphism of a VOA $\Vbb_l$ with finite order, and if $\mc M_l$ is a $h_l$-twisted $\Vbb_l$-module with vertex operation $Y^{h_l}$, then $\mc M_1\otimes\cdots\otimes\mc M_m$ is an $h=h_1\otimes\cdots\otimes h_m$-twisted $\Vbb_1\otimes\cdots\otimes \Vbb_m$-module with vertex operation $v_1\otimes\cdots\otimes v_m\mapsto Y^{h_1}(v_1,z)\otimes\cdots\otimes Y^{h_m}(v_m,z)$ and $\wtd L^h_0$-grading defined by the sum over all $1\leq l\leq m$ of the $\wtd L^{h_l}_0$-weight on the $\mc M_l$-component.
\end{proof}

The following Proposition is \cite[Thm. 7.10-(1)]{BDM02}

\begin{pp}\label{lb41}
If for each $\tipae\in E(g_j)$, $\Wbb_{j,\tipae}$ is an irreducible $\Vbb$-module, then $\mc W_j$ is an irreducible $g_j$-twisted $\Ubb$-module.
\end{pp}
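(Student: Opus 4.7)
The plan is to reduce Proposition \ref{lb41} to two ingredients: the single-orbit irreducibility (the core BDM result) and the preservation of irreducibility under tensor products of twisted modules over distinct VOA factors. Both are essentially forced by the tensor-product factorization already exploited in the proof of Thm. \ref{lb13}.

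First, I would decompose according to the orbit structure of $g_j$. Write $\Ubb = \bigotimes_{\tipae \in E(g_j)} \Vbb^{\otimes \bk{g_j}\tipae}$ and $g_j = \prod_\tipae g_{j,\tipae}$, where $g_{j,\tipae}$ is the restriction of $g_j$ to the single orbit $\bk{g_j}\tipae$, acting as a full cycle on $\Vbb^{\otimes \bk{g_j}\tipae}$. Then $\mc W_j = \bigotimes_\tipae \Wbb_{j,\tipae}$ and, by the defining formula \eqref{eq30}, the vertex operation $Y^{g_j}$ factors as the tensor product of the single-orbit vertex operations $Y^{g_{j,\tipae}}$, each one making $\Wbb_{j,\tipae}$ a $g_{j,\tipae}$-twisted $\Vbb^{\otimes\bk{g_j}\tipae}$-module. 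The $\wtd L_0^{g_j}$-grading \eqref{eq73} is the corresponding sum of fractional gradings on each factor.

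Second, for the single-orbit case (so $g$ is a $k$-cycle on $\Vbb^{\otimes k}$ and $\Wbb$ is an irreducible $\Vbb$-module), the irreducibility of the resulting $g$-twisted $\Vbb^{\otimes k}$-structure on $\Wbb$ is precisely the content of \cite[Thm. 7.10-(1)]{BDM02}; equivalently, it follows from the single-orbit construction of \cite{Gui21b}. Translated into the present framework, the argument uses the $k$-fold change of coordinate $z \mapsto z^k$ together with \eqref{eq29} to show that any nonzero $g$-twisted $\Vbb^{\otimes k}$-submodule of $\Wbb$ is in fact stable under the ordinary $\Vbb$-action via a single chosen tensor slot, hence equals $\Wbb$ by hypothesis.

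Third, combining these, I would invoke the following general fact: an external tensor product of irreducible twisted modules over pairwise distinct VOA factors is again irreducible. Concretely, given a nonzero $g_j$-twisted $\Ubb$-submodule $\mc N \subset \bigotimes_\tipae \Wbb_{j,\tipae}$, choose $0 \neq w \in \mc N$ expressible as a sum of decomposable tensors of minimal length, and act on $w$ by modes of the form $\id \otimes \cdots \otimes Y^{g_{j,\tipae}}(v)_n \otimes \cdots \otimes \id$ inserted into a single tensor slot. Irreducibility of the $\tipae$-th factor (from step two) forces $\mc N$ to contain an arbitrary prescribed vector in that slot; iterating over all slots gives $\mc N = \mc W_j$.

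The main obstacle will be the density step in the third part: I must verify that mode operators from the $\tipae$-th single-orbit action, with the identity acting on all other factors, generate enough operators to separate the tensor components of a minimal-length tensor. Since each $\Wbb_{j,\tipae}$ has finite-dimensional graded pieces (finite-admissibility) and carries an irreducible single-factor twisted action by step two, a truncation to finitely many weight spaces combined with the Jacobson density theorem supplies the separation. Care is needed with the fractional grading $\frac 1{|g_j|}\Nbb$ in \eqref{eq73}, but this is compatible with the componentwise fractional gradings, so no new analytic input is required beyond what step two already provides.
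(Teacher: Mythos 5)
Your argument is correct, but it is organized quite differently from the paper: the paper gives no proof at all for this proposition, simply attributing it to \cite[Thm. 7.10-(1)]{BDM02}, which already treats a general permutation (not just a single cycle), so both the single-orbit irreducibility and the multi-orbit assembly are outsourced there. You instead cite BDM only for the single-cycle core and supply the rest yourself: the orbit-wise factorization $\Ubb=\bigotimes_{\tipae}\Vbb^{\otimes\bk{g_j}\tipae}$, $Y^{g_j}=\bigotimes_\tipae Y^{g_{j,\tipae}}$ (which is exactly the factorization used in the paper's proof of Thm. \ref{lb13}), followed by the FHL-type statement that an external tensor product of irreducible modules over distinct VOA tensor factors is irreducible, adapted to twisted modules. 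That last step is sound as you set it up: single-slot mode operators are genuinely available because inserting the vacuum in the other slots gives $Y^{g_{j,\tipae'}}(\id,z)=\id$, so the mode algebra of $\Ubb$ on $\mc W_j$ contains the tensor product of the single-factor mode algebras; then the Jacobson density argument applies, the only point worth making explicit being that the commutant of each factor's mode algebra is $\Cbb$ (a Schur-type fact, valid here since each $\Wbb_{j,\tipae}$ has countable dimension thanks to finite-dimensional graded pieces), and, as you note, the fractional $\frac 1{|g_j|}\Nbb$-grading plays no role in this purely algebraic step. What your route buys is a proof that is self-contained relative to the single-cycle case and meshes with the paper's own tensor-factorization framework; what the paper's citation buys is brevity, since BDM02 prove irreducibility for arbitrary permutations directly in their construction.
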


We now relate contragredient untwisted and twisted modules. Set
\begin{align}
h_j=g_j^{-1},\qquad E(h_j)=E(g_j).	
\end{align}
For each $\bk{h_j}$-orbit $\bk{h_j}\tipae$ where $\tipae\in E(h_j)$, we choose $\Wbb_{j,\tipae}'$, the contragedient module of $\Wbb_{j,\tipae}$. Then $\mc W_j'$ equals $\bigotimes_{\tipae\in E(h_j)}\Wbb_{j,\tipae}'$ as $\wtd L_0$-graded vector spaces. We use these data to construct a vertex operation $Y^{h_j}$ for $\mc W_j'$ as in Thm. \ref{lb13}. Recall the definition of contragredient twisted modules in \eqref{eq64}. The following theorem will not be used until Chapter \ref{lb35}.

\begin{thm}\label{lb42}
$(\mc W_j',Y^{h_j})$ is the contragredient twisted module of $(\mc W_j,Y^{g_j})$.
\end{thm}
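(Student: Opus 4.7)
The plan is to reduce to the single-orbit case by exploiting the tensor-product structure of both the twisted module and the contragredient operation, and then to handle the single-orbit case via the explicit formula \eqref{eq30}.

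\textbf{Reduction to a single orbit.} Decompose $E=\coprod_{\tipae\in E(g_j)}\bk{g_j}\tipae$ and correspondingly $\Ubb=\bigotimes_{\tipae}\Ubb_\tipae$ with $\Ubb_\tipae=\Vbb^{\otimes\bk{g_j}\tipae}$. The permutation $g_j$ restricts on each $\Ubb_\tipae$ to a single cyclic permutation, and likewise for $h_j=g_j^{-1}$. By the product formula \eqref{eq30} together with the tensor-product construction of twisted modules recalled in the proof of Thm.~\ref{lb13}, the $g_j$-twisted $\Ubb$-module $(\mc W_j,Y^{g_j})$ is the tensor product over $\tipae$ of the single-orbit $g_j|_\tipae$-twisted $\Ubb_\tipae$-modules $(\Wbb_{j,\tipae},Y^{g_j|_\tipae})$; similarly for $(\mc W_j',Y^{h_j})$. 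Thus it suffices to show two things: (a) the single-orbit version of Thm.~\ref{lb42}, and (b) the contragredient operation for twisted modules is compatible with tensor products of VOAs.

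\textbf{Compatibility of contragredient with tensor products.} The key point is that on $\Ubb=\bigotimes_{\tipae}\Ubb_\tipae$ the conformal vector is the sum $\cbf_\Ubb=\sum_\tipae\cbf_{\Ubb_\tipae}$, so $L_0$ and $L_1$ act as sums of their restrictions to each factor and commute across factors. Consequently
\begin{align*}
\mc U(\tipxgamma_z)=e^{zL_1}(-z^{-2})^{\wtd L_0}=\bigotimes_{\tipae}\mc U(\tipxgamma_z)
\end{align*}
on $\Ubb$. Using the tensor product formula for $Y^{g_j}$ on $\mc W_j$ (and for $Y^{h_j}$ on $\mc W_j'$), the defining identity \eqref{eq64} for the contragredient of $\mc W_j$ factors as a product over $\tipae\in E(g_j)$ of the corresponding identities on the $\Wbb_{j,\tipae}$'s. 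Therefore, the tensor product statement is equivalent to the single-orbit statement applied orbit by orbit.

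\textbf{The single-orbit case.} Assume $E=\bk{g_j}\tipae$ with $|E|=k$. The identity to verify is
\begin{align*}
\bk{Y^{h_j}(\vbf,z)w',w}=\bk{w',Y^{g_j}(\mc U(\tipxgamma_z)\vbf,z^{-1})w}
\end{align*}
for $\vbf\in\Ubb=\Vbb^{\otimes k}$, $w\in\Wbb_{j,\tipae}$, $w'\in\Wbb_{j,\tipae}'$. By \eqref{eq29}--\eqref{eq30}, both sides are single propagations $\wr^k\tipxphi_{j,\tipae}$ of the standard pairing $\tipxphi_{j,\tipae}:\Wbb_{j,\tipae}\otimes\Wbb_{j,\tipae}'\to\Cbb$ on the two-pointed sphere $(\Pbb^1;0,\infty;\zeta,\zeta^{-1})$, evaluated at the configurations $\tipaomega_k^{\blt-1}z^{1/k}$ respectively $\tipaomega_k^{\blt-1}z^{-1/k}$, but with the roles of $\Wbb_{j,\tipae}$ and $\Wbb_{j,\tipae}'$ interchanged on the two sides. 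I would apply the biholomorphism $F:\Pbb^1\to\Pbb^1$, $\zeta\mapsto\zeta^{-1}$: $F$ exchanges the marked points $0,\infty$ and pulls back the local coordinate $\zeta$ at $0$ to $\zeta^{-1}$ at $\infty$ and vice versa, so by the one-dimensionality of the space of conformal blocks associated to $(\Pbb^1;0,\infty;\zeta,\zeta^{-1})$ with contragredient modules, $F^*\tipxphi_{j,\tipae}$ coincides with $\tipxphi_{j,\tipae}$ up to the swap of the two tensor factors. For the $k$-fold propagation, $F$ sends the evaluation points $\tipaomega_k^{m-1}z^{1/k}$ to $\tipaomega_k^{-(m-1)}z^{-1/k}$ (which, after relabeling via $m\mapsto k+2-m$, matches the right-hand side's configuration), and changes the trivialization $\mc U_\varrho(\zeta^k)$ of $\scr V_{\Cbb^\times}$ used in \eqref{eq35} into $\mc U_\varrho(\zeta^{-k})$, with transition function $\mc U(\varrho(\zeta^{-k}|\zeta^k))$ acting on each $\Vbb$-factor. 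A direct calculation (reducing $\varrho(\zeta^{-k}|\zeta^k)$ at the relevant points to $\tipxgamma_z$ via the substitution $\zeta^k\mapsto z$) identifies this transition factor with the $\mc U(\tipxgamma_z)$ appearing on the right-hand side of \eqref{eq64}.

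\textbf{Main obstacle.} The principal difficulty is the single-orbit change-of-coordinate step: one must check that $\mc U(\varrho(\zeta^{-k}|\zeta^k))$ evaluated at $y=\tipaomega_k^{m-1}z^{-1/k}$, and acting separately on the $m$-th tensor component $\vbf(g_j^{m-1}\tipae)$, reproduces exactly $\mc U(\tipxgamma_z)\vbf(g_j^{m-1}\tipae)$ after the full $k$-fold propagation is reassembled, and that the relabeling $m\mapsto k+2-m$ of propagation points corresponds precisely to the substitution $g_j^{m-1}\tipae\mapsto h_j^{m-1}\tipae$. This bookkeeping is essentially the analytic heart of the single-orbit construction in \cite[Sec.~10]{Gui21b}, and I expect one can either run it directly or appeal to the unit-sphere calculation carried out there.
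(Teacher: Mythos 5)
Your proposal follows essentially the same route as the paper: view the pairing $\tipxphi_{j,\tipae}$ through the automorphism $\zeta\mapsto\zeta^{-1}$, track how this swaps $0,\infty$ and reverses the cyclic configuration of propagation points (so Thm.~\ref{lb7}-(4) allows the relabeling), and identify the trivialization-change factor $\mc U(\varrho(\zeta^{-k}|\zeta^k))$ at the points $\tipaomega_k^{m}z^{1/k}$ with $\mc U(\tipxgamma_z)$. Your explicit reduction to a single orbit is also fine and is implicit in the paper, which keeps the product over $\tipae$ throughout.

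One step in your argument is both unnecessary and, as stated, incorrect: you invoke the ``one-dimensionality of the space of conformal blocks associated to $(\Pbb^1;0,\infty;\zeta,\zeta^{-1})$ with contragredient modules'' to conclude $F^*\tipxphi_{j,\tipae}=\tipxphi_{j,\tipae}$ up to swap. This space has dimension $\dim\Hom_\Vbb(\Wbb_{j,\tipae},\Wbb_{j,\tipae})$, which is one only when $\Wbb_{j,\tipae}$ is irreducible; moreover, even if it were one-dimensional, one-dimensionality alone gives equality only up to a scalar and does not pin the scalar down. What you actually need is the much simpler observation that pulling back a conformal block along a biholomorphism that identifies the two pointed spheres (and carries the module assignments correspondingly) yields the \emph{same} linear functional on the same tensor product of modules --- there is no scalar ambiguity. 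After dropping the one-dimensionality appeal, the remainder of your outline --- the relabeling $m\mapsto k+2-m$ matching the orbit substitution, and the transition function computation landing on $\mc U(\tipxgamma_z)$ --- is exactly the calculation the paper carries out in detail.
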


\begin{proof}
Consider $\tipxphi_{j,\tipae}$ defined by \eqref{eq33} as a conformal block associated to $(\Pbb^1;\infty,0;\zeta^{-1},\zeta)$, which is equivalent to $(\Pbb^1_{j,\tipae};0,\infty;\zeta,\zeta^{-1})$ via the map $z\mapsto z^{-1}$. This equivalence sends the standard coordinate $\zeta$ to $\zeta^{-1}$, and sends any $\tipaomega_k^lz^{1/k}$ to $\tipaomega_k^{-l}z^{-1/k}$ where $z\in\Pbb,k\in\Zbb_+,l\in\Zbb$. Thus, assuming the identification $\scr V_{\Cbb^\times}=\Vbb\otimes_\Cbb\scr O_{\Cbb^\times}$ via $\mc U_\varrho(\zeta^{-k_{j,\tipae}})$, by the definition of $Y^{h_j}$, we have (noticing Thm. \ref{lb7}-(4))
\begin{align*}
&\big\langle \wbf_j,Y^{h_j}(\vbf,z)\wbf_j'\big\rangle\\
=&	\prod_{\tipae\in E(h_j)}\wr^{k_{j,\tipae}}\tipxphi_{j,\tipae}\Big(\vbf(\tipae),\vbf(g_j^{-1}\tipae),\dots,\vbf(g_j^{-k_{j,\tipae}+1}\tipae), w\otimes w'\Big)_{\tipaomega_{k_{j,\tipae}}^{-\blt+1}z^{-1/k_{j,\tipae}}}\\
=& \prod_{\tipae\in E(h_j)}\wr^{k_{j,\tipae}}\tipxphi_{j,\tipae}\Big(\vbf(\tipae),\vbf(g_j\tipae),\dots,\vbf(g_j^{k_{j,\tipae}-1}\tipae), w\otimes w'\Big)_{\tipaomega_{k_{j,\tipae}}^{\blt-1}z^{-1/k_{j,\tipae}}}
\end{align*}
where for  each $k\in\Zbb_+$ we set
\begin{align*}
\tipaomega_k^{-\blt+1}z^{-1/k}=(z^{-1/k},\tipaomega_k^{-1}z^{-1/k},\dots,\tipaomega_k^{-k+1}z^{-1/k}).	
\end{align*}

We now do not assume the identification. Then
\begin{align*}
&\Big\langle \wbf_j,Y^{h_j}\big(\mc U(\tipxgamma_z)\vbf,z^{-1}\big)\wbf_j'\Big\rangle\\
=&	\prod_{\tipae\in E(h_j)}\wr^{k_{j,\tipae}}\tipxphi_{j,\tipae}\Big(\mc U_\varrho(\zeta^{-k_{j,\tipae}})^{-1}\mc U(\tipxgamma_z)\vbf(\tipae),\dots,\\
&\qquad\mc U_\varrho(\zeta^{-k_{j,\tipae}})^{-1}\mc U(\tipxgamma_z)\vbf(g_j^{k_{j,\tipae}-1}\tipae), w\otimes w'\Big)_{\tipaomega_{k_{j,\tipae}}^{\blt-1}z^{1/k_{j,\tipae}}}.
\end{align*}
By \eqref{eq65}, for each $k\in\Zbb_+$, $\mc U_\varrho(\zeta^{-k})\mc U_\varrho(\zeta^k)^{-1}=\mc U(\varrho(\zeta^{-k}|\zeta^k))$. By Example \ref{lb36}, the value of $\varrho(\zeta^{-k}|\zeta^k)$ at each $z\in\Cbb^\times$ is $\tipxgamma_{z^k}$. Therefore, its value at $\tipaomega_k^lz^{1/k}$ (for each $l\in\Zbb$) is $\tipxgamma_z$. Thus, the value at $\tipaomega_k^lz^{1/k}$ of $\mc U_\varrho(\zeta^{-k})\mc U_\varrho(\zeta^k)^{-1}$ is $\mc U(\tipxgamma_z)$. It follows that
\begin{align*}
&\Big\langle \wbf_j,Y^{h_j}\big(\mc U(\tipxgamma_z)\vbf,z^{-1}\big)\wbf_j'\Big\rangle\\
=&	\prod_{\tipae\in E(h_j)}\wr^{k_{j,\tipae}}\tipxphi_{j,\tipae}\Big(\mc U_\varrho(\zeta^{k_{j,\tipae}})^{-1}\vbf(\tipae),\dots,\mc U_\varrho(\zeta^{k_{j,\tipae}})^{-1}\vbf(g_j^{k_{j,\tipae}-1}\tipae),w\otimes w'\Big)_{\tipaomega_{k_{j,\tipae}}^{\blt-1}z^{1/k_{j,\tipae}}},
\end{align*}
which is just $\big\langle Y^{g_j}(\vbf,z)\wbf_j,\wbf_j'\big\rangle$.
\end{proof}

The following theorem is due to \cite[Thm. 6.4]{BDM02}. 

\begin{thm}\label{lb47}
Suppose $\Vbb$ is rational (i.e., any admissible $\Vbb$-module is completely reducible). Then any $g_j$-twisted $\Ubb$-module is a direct sum of those of the form \eqref{eq23} (whose module structures are described in Thm. \ref{lb13}), where each $\Wbb_{j,\tipae}$ is an irreducible  $\Vbb$-module.
\end{thm}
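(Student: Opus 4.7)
The plan is to reduce the statement to the single-cycle case established in \cite{BDM02}.

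First, I would exploit the factorization across orbits: writing $g_j = \prod_{\tipae \in E(g_j)} g_{j,\tipae}$, where $g_{j,\tipae}$ is the $k_\tipae$-cycle (with $k_\tipae = |\bk{g_j}\tipae|$) acting on the factor $\Vbb^{\otimes \bk{g_j}\tipae}$ of the tensor decomposition $\Ubb \cong \bigotimes_{\tipae \in E(g_j)} \Vbb^{\otimes \bk{g_j}\tipae}$. Since the $g_{j,\tipae}$'s act on disjoint tensor factors, one has a factorization of the corresponding twisted Zhu algebras $\mathrm{Zhu}_{g_j}(\Ubb) \cong \bigotimes_\tipae \mathrm{Zhu}_{g_{j,\tipae}}(\Vbb^{\otimes k_\tipae})$. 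Rationality of $\Vbb$ passes through to rationality of each $\Vbb^{\otimes k_\tipae}$ (so each factor algebra on the right is finite-dimensional and semisimple), and its simple modules are external tensor products of simples of the factors. Consequently, any $g_j$-twisted $\Ubb$-module is a direct sum of tensor products of $g_{j,\tipae}$-twisted $\Vbb^{\otimes k_\tipae}$-modules, compatibly with the $\wtd L_0$-grading \eqref{eq73}.

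It then remains to treat the single-cycle case: for $g$ a $k$-cycle on $\Vbb^{\otimes k}$, every $g$-twisted $\Vbb^{\otimes k}$-module is a direct sum of modules constructed from irreducible $\Vbb$-modules. This is precisely \cite[Thm.~6.4]{BDM02}. To finish, I would verify that the BDM construction coincides with the construction of Thm.~\ref{lb13} specialized to a single orbit. This compatibility amounts to unwinding the formula \eqref{eq30} in the one-orbit case and checking that $Y^{g}(\vbf,z)$ is expressed via $\wr^k\tipxphi$ evaluated on the configuration $\tipaomega_k^{\blt-1}z^{1/k}$ through the trivialization $\mc U_\varrho(\zeta^k)$ of \eqref{eq35}; this is exactly the identification carried out in \cite[Sec.~10]{Gui21b}. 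Irreducibility of the summands (when the $\Wbb_{j,\tipae}$ are irreducible) is then provided by Prop.~\ref{lb41}.

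The main obstacle, and the substantive content of \cite{BDM02}, is the surjectivity of the construction in the single-cycle case: showing that every $g$-twisted $\Vbb^{\otimes k}$-module arises from some $\Vbb$-module. The strategy there is to compare $\mathrm{Zhu}_g(\Vbb^{\otimes k})$ to $\mathrm{Zhu}(\Vbb)$ via the $z \mapsto z^k$ substitution and an analysis of the lowest-weight subspace of a $g$-twisted module as a module over $\mathrm{Zhu}_g(\Vbb^{\otimes k})$; rationality of $\Vbb$ makes both Zhu algebras semisimple and puts their simple modules in bijection. Granted this, the orbit-wise decomposition of the first paragraph delivers the theorem in the stated form.
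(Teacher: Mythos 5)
The paper offers no argument for this statement at all: Thm. \ref{lb47} is imported directly from \cite[Thm. 6.4]{BDM02} (with the product-of-cycles bookkeeping done in Sec. 7 of that paper), so what matters is whether your reduction stands on its own. The orbit-wise step does not. You pass from the (asserted) factorization of twisted Zhu algebras and the semisimplicity of each tensor factor to the conclusion that \emph{every} $g_j$-twisted $\Ubb$-module is a direct sum of tensor products of irreducibles. That inference is not available: $g$-rationality implies semisimplicity of the twisted Zhu algebra, but the converse is open (already in the untwisted case). The twisted Zhu algebra $A_{g_j}(\Ubb)$ controls only lowest-weight spaces, so even granting its factorization it at best classifies the \emph{irreducible} admissible $g_j$-twisted modules (and even that needs the Dong--Li--Mason bijection plus an identification of the irreducible module attached to a tensor product of simple $A$-modules with the tensor product module, via Prop. \ref{lb41}). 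Complete reducibility of arbitrary admissible $g_j$-twisted modules --- which is the actual content of the theorem --- must come from elsewhere. In \cite{BDM02} it comes from the explicit isomorphism between the categories of weak $\Vbb$-modules and of weak $g$-twisted $\Vbb^{\otimes k}$-modules for a $k$-cycle $g$, through which rationality of $\Vbb$ is transported directly, combined with a direct analysis of twisted modules over tensor product VOAs for a general permutation (their Thm. 7.10); no Zhu-algebra semisimplicity argument is involved.

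Two smaller points. The factorization $A_{g_j}(\Ubb)\cong\bigotimes_{\tipae\in E(g_j)}A_{g_{j,\tipae}}\big(\Vbb^{\otimes k_{j,\tipae}}\big)$ is itself not a formality: the $g_j$-fixed subspace of $\Ubb$ is in general strictly larger than the tensor product of the fixed subspaces of the factors (eigenvalues from different cycles can cancel), so this needs a proof or a reference (cf. \cite{DXY21}). On the other hand, your final step is sound and matches the paper's own mechanism: the identification of the single-cycle construction of \cite{BDM02} with the construction of Thm. \ref{lb13} is exactly the content of \cite[Sec. 10]{Gui21b}, and irreducibility of the resulting summands is Prop. \ref{lb41}.
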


Let $\fk P$ be a positively $N$-pathed Riemann sphere with local coordinates, together with a permutation covering $(\fk X,\varphi,\Psi_{\upgamma_\blt(1)})$ as in Subsection \ref{lb31}.

Associate $\mc W_1,\dots,\mc W_N$ to the marked points $x_1,\dots,x_N$ of $\fk P$. Any marked point of $\fk X$ is of the form $\Upsilon(\bk{g_j}\tipae)$ for some $1\leq j\leq N$ and $\tipae\in E(g_j)$, to which we associate the $\Vbb$-module $\Wbb_{j,\tipae}$. By \eqref{eq23}, we have
\begin{align*}
	\mc W_\blt:=\bigotimes_{1\leq j\leq N}\mc W_j=\bigotimes_{\begin{subarray}{c}
			1\leq j\leq N\\	
			\tipae\in E(g_j)
	\end{subarray}}\Wbb_{j,\tipae}=:\Wbb_{\blt,\blt}.
\end{align*}

Consider a linear functional $\upphi:\Wbb_{\blt,\blt}\rightarrow\Cbb$. This same map can be regarded as a linear functional $\uppsi:\mc W_\blt\rightarrow\Cbb$. The reason for using two different symbols for the same linear functional is due to the following reason: we can ask whether $\upphi$ is a conformal block associated to $\fk X$, and whether $\uppsi$ is a conformal block associated to $\fk P$; even when both are true, their propagations $\wr\upphi$ and $\wr\uppsi$ will have different meanings.

The theorem below is the first major result of this article, and its proof is left to the following two sections.

\begin{thm}[Main Theorem]\label{lb14}
$\upphi$ is a conformal block associated to $\fk X$ and the $\Vbb$-modules $\Wbb_{\blt,\blt}$ if and only if $\uppsi$ is a conformal block associated to $\fk P$ and the twisted $\Vbb^{\otimes 
E}$-modules $\mc W_\blt$.
\end{thm}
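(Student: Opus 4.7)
The plan is to verify both sides of the equivalence by reducing them to the local characterizations: Theorem \ref{lb24} for the untwisted conformal block $\upphi$ on $\fk X$, and Definition \ref{lb8} (combined with Proposition \ref{lb11} and Lemma \ref{lb53}) for the twisted conformal block $\uppsi$ on $\fk P$. Both characterizations have the same structure: absolute convergence of vertex-operator series near marked points, plus existence of a morphism $\wr\upphi$ (resp. multivalued $\wr\uppsi$) from the sheaf of VOA to the structure sheaf that locally realizes these series. The core of the proof is to establish a bijective correspondence between these morphisms via the covering $\varphi:C\rightarrow\Pbb^1$.

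First I would set up the global sheaf correspondence. Since $\varphi$ restricts to an unbranched $|E|$-fold covering $C\setminus\varphi^{-1}(\Sbf)\rightarrow\Pbb^1\setminus\Sbf$, and the trivialization $\Psi$ from Theorem \ref{lb4} identifies each sheet with an element of $E$, for any simply-connected open $U\subset\Pbb^1\setminus\Sbf$ equipped with a path $\uplambda$ from $U$ to $\upgamma_\blt(1)$ there is a canonical decomposition $\varphi^{-1}(U)=\bigsqcup_{e\in E}U_e$ with biholomorphisms $U_e\simeq U$ via $\varphi$. Because $\scr U_{\Pbb^1}=\scr V_{\Pbb^1}^{\otimes E}$ factorizes tensorially, a section $u\in\scr U_{\Pbb^1}(U)$, decomposed into tensor components $u_e\in\scr V_{\Pbb^1}(U)$, transports to sections $\{u_e\in\scr V_C(U_e)\}$; compatibility with Huang's change-of-coordinate formula \eqref{eq65} is preserved under this tensor decomposition. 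Lemma \ref{lb17} together with Proposition \ref{lb11} then shows that a single-valued $\scr O$-morphism $\wr\upphi(\cdot,\wbf):\scr V_{C\setminus\varphi^{-1}(\Sbf)}\rightarrow\scr O_{C\setminus\varphi^{-1}(\Sbf)}$ is equivalent data to a multivalued morphism $\wr\uppsi(\cdot,\cdot,\wbf):\scr U_{\Pbb^1\setminus\Sbf}\rightarrow\scr O_{\Pbb^1\setminus\Sbf}$, related by
\begin{align*}
\wr\uppsi(\uplambda,u,\wbf)\big|_U=\wr^{|E|}\upphi\bigl(\{u_e\}_{e\in E},\wbf\bigr)\quad\text{evaluated on the sheets }\{\Psi_\uplambda(e)\text{-sheet of }\varphi^{-1}(U)\}.
\end{align*}
The $\Gamma$-covariance of $\Psi$ and the identity \eqref{eq7} imply well-definedness modulo the homotopy class of $\uplambda$, and also show that the ``\emph{only if}'' direction's extension across $\varphi^{-1}(\Sbf)$ follows from Hartogs-type removability once continuity is established.

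The heart of the argument is the local verification at each $x_j$. By Proposition \ref{lb5} the preimage $\varphi^{-1}(W_j)=\bigsqcup_{\tipae\in E(g_j)}\wtd W_{j,\tipae}$, with each component covering $W_j$ as $z\mapsto z^k$ where $k=|\bk{g_j}\tipae|$; by Proposition \ref{lb12} and the normalization \eqref{eq28}, the $k$ preimages of $\upgamma_j(0)$ inside $\wtd W_{j,\tipae}$ are precisely $\Psi_{\upgamma_j}(g_j^l\tipae)=\wtd\eta_{j,\tipae}^{-1}(\tipaomega_k^l z_0^{1/k})$ for $l=0,\dots,k-1$, with $z_0=\eta_j(\upgamma_j(0))>0$. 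Applying Theorem \ref{lb24}-(2) at each branch point $\Upsilon(\bk{g_j}\tipae)\in\fk X$ yields
\begin{align*}
\wr\upphi(v,\wbf)\big|_{\Psi_{\upgamma_j}(g_j^l\tipae)}=\upphi\bigl(\cdots\otimes Y\bigl(v,\tipaomega_k^l z_0^{1/k}\bigr)w_{j,\tipae}\otimes\cdots\bigr),
\end{align*}
and taking the product over $l=0,\dots,k-1$ reproduces precisely the value of $\wr^k\tipxphi_{j,\tipae}$ at $\tipaomega_k^{\blt-1}z_0^{1/k}$ defining $\tipxcc_{j,\tipae}$ in \eqref{eq29}. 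Multiplying further across $\tipae\in E(g_j)$ and invoking the definition \eqref{eq30} of $Y^{g_j}$ gives the local condition \eqref{eq18} for $\uppsi$ at $x_j$. Conversely, splitting the product \eqref{eq30} into its factors recovers the untwisted local conditions at each $\Upsilon(\bk{g_j}\tipae)$.

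The main obstacle will be upgrading this pointwise matching at $\upgamma_j(0)$ to a matching of holomorphic sections on a full neighborhood, simultaneously with the correct absolute convergence of both \eqref{eq17} and the untwisted propagation series. This requires examining the Laurent-type expansion of $\wr^k\tipxphi_{j,\tipae}$ at the branch point, using the coordinate-change formula to convert expansions in $\wtd\eta_{j,\tipae}$ into expansions in $\eta_j^{1/k}$, and tracking the chosen arguments so that the branch $\arg\eta_j\circ\upgamma_j(0)=0$ matches $\wtd\eta_{j,\tipae}\circ\Psi_{\upgamma_j}(\tipae)>0$. The $g_j$-equivariance \eqref{eq21} will emerge automatically: monodromy of $\wr\uppsi$ around $[\upalpha_j]$ corresponds under $\varphi$ to cyclically permuting the sheets of $\wtd W_{j,\tipae}$, which sends $\Psi_{\upgamma_j}(g_j^l\tipae)\mapsto\Psi_{\upgamma_j}(g_j^{l+1}\tipae)$ and reorders the factors in the product \eqref{eq30}. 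Finally, the convergence assertions reduce to Remark \ref{lb10} applied on both sides of the covering, together with the holomorphy of $\wr^k\tipxphi_{j,\tipae}$ on the configuration space, which I expect to be the most delicate point of the estimates.
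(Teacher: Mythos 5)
Your overall skeleton — building $\wr\uppsi$ out of the $E$-fold propagation $\wr^E\upphi$ evaluated on the fiber $\varphi^*_{\uplambda}y$, and analyzing the fibers over $\upgamma_j(0)$ inside $\wtd W_{j,\tipae}$ with the normalization \eqref{eq28} so that $\Psi_{\upgamma_j}(g_j^l\tipae)$ has coordinate $\tipaomega_k^l z_0^{1/k}$ — coincides with Steps 1--2 of the paper's argument. But the central step is wrong as stated. You claim that ``taking the product over $l=0,\dots,k-1$'' of the values $\wr\upphi(v,\wbf)\big|_{\Psi_{\upgamma_j}(g_j^l\tipae)}$ reproduces $\wr^{k}\tipxphi_{j,\tipae}$ at $\tipaomega_k^{\blt-1}z_0^{1/k}$ and hence \eqref{eq18}. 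A multi-point insertion such as $\wr^{k}\tipxphi_{j,\tipae}(v_1,\dots,v_k,w\otimes w')$ is a simultaneous propagation, not a product of one-point propagations, and the product appearing in \eqref{eq30} runs over the orbits $\tipae\in E(g_j)$ (one factor per connected component $\wtd W_{j,\tipae}$), not over the $k$ sheets inside one orbit. Moreover $\tipxcc_{j,\tipae}$ in \eqref{eq29} lives on the auxiliary sphere $\Pbb^1_{j,\tipae}$ carrying only the pairing block $\tipxphi_{j,\tipae}$, not on $C$; to compare $\uppsi(w_1\otimes\cdots\otimes Y^{g_j}(\vbf,z)w_j\otimes\cdots\otimes w_N)$ with untwisted data one must expand along a dual basis of $\mc W_j$, which produces the double series \eqref{eq32} over $\mbf n\in\Nbb^{E(g_j)}$ and $\bsb\alpha$ — i.e.\ a sewing series — and this sum over intermediate states is entirely absent from your ``product'' shortcut.

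That missing sum is exactly where the analytic content sits, and you defer it (``Remark \ref{lb10} \dots the most delicate point of the estimates'') without an argument. Remark \ref{lb10} only converts between equivalent formulations of convergence of a fixed series; it cannot prove that \eqref{eq32} converges absolutely, nor that its limit is $\wr^E\upphi(\varphi^*\vbf,\wbf)_{\varphi^*_{\upgamma_j}z}$. The paper's mechanism is to realize $\fk X$ as the sewing of $\fk X\sqcup\coprod_{\tipae}\fk Q_{j,\tipae}$ (with $\fk Q_{j,\tipae}=(\Pbb^1;0,\infty;\zeta,\zeta^{-1})$ glued at $\Upsilon(\bk{g_j}\tipae)$), observe that the sewn block $\scr S\upchi$ converges $q$-absolutely to $\upphi$, and then invoke Theorem \ref{lb19} (propagation commutes with sewing, with a.l.u.\ convergence) to obtain both the absolute convergence of \eqref{eq32} and its identification with the propagation of $\upphi$; some substitute for this is indispensable. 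The same collapse mechanism — inserting the vacuum in every tensor slot except one, so that all factors of \eqref{eq30} reduce to plain pairings except a single $\wr\tipxphi_{j,\tipae}$ — is what drives the converse direction; your ``splitting the product'' remark again hides the dual-basis sum, and the well-definedness of $\wr\upphi$ under change of sheet label and path requires the monodromy identity \eqref{eq43} for twisted blocks, not merely the $\Gamma$-covariance of $\Psi$ and \eqref{eq7}.
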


Consequently, the spaces of the two types of conformal blocks are isomorphic.

\subsection{From untwisted to permutation-twisted conformal blocks}\label{lb1}

In this section, we prove the ``only if" part of Thm. \ref{lb14}. Let us  make some preparations. First, note that:

\begin{rem}\label{lb15}
$\scr U_{\Pbb^1}$ can be identified with a tensor product (indexed by $E$) over $\scr O_{\Pbb^1}$ of $\scr V_{\Pbb^1}$ such that for any open $V\subset\Pbb^1$ and locally injective $\mu\in\scr O(V)$, the trivialization $\mc U_\varrho(\eta):\scr U_V\xrightarrow{\simeq}\Ubb\otimes_\Cbb\scr O_V=\Vbb^{\otimes E}\otimes_\Cbb\scr O_V$ agrees with $\mc U_\varrho(\eta)^{\otimes E}:\scr V_V^{\otimes E}\xrightarrow{\simeq}\Vbb^{\otimes E}\otimes_\Cbb\scr O_V$. To see this, one checks that the two transition functions agree. This is easy from the definition of $\cbf_\Ubb$. It follows that for each connected open $U\subsetneq\Pbb^1$, we have
\begin{align}
\scr U_{\Pbb^1}(U)=\scr V_{\Pbb^1}(U)^{\otimes E}\equiv\otimes_{e\in E}\scr V_{\Pbb^1}(U)\label{eq41}
\end{align}
where the tensor product is over $\scr O(U)$ (rather than over $\Cbb$).
\end{rem}

Another useful fact is:

\begin{rem}\label{lb16}
For any open and simply-connected $U\subset\Pbb^1\setminus\Sbf$, since $\varphi^{-1}(U)$ is a disjoint union of some open sets biholomorphic to $U$, we can use the pull back $\varphi^*:\scr O(U)\rightarrow\scr O(\varphi^{-1}(U)),f\mapsto f\circ\varphi$ to define a natural pullback map $\varphi^*:\scr V_{\Pbb^1}(U)\rightarrow\scr V_C(\varphi^{-1}(U))$, i.e. the one compatible with the restriction to open subsets of $U$ and intertwines the actions of $\scr O(U)$, and in the case that there exists any locally injective $\mu\in\scr O(U)$, the following diagram commutes
\begin{equation}\label{eq36}
	\begin{tikzcd}
		\scr V_{\Pbb^1}(U) \arrow[r, "\varphi^*"] \arrow[d,"\mc U_\varrho(\mu)"', "\simeq"] &\scr V_C(\varphi^{-1}(U)) \arrow[d, "\mc U_\varrho(\mu\circ\varphi)", "\simeq"']\\
		\Vbb\otimes_\Cbb\scr O(U)\arrow[r,"1\otimes\varphi^*"] &\Vbb\otimes_\Cbb\scr O(\varphi^{-1}(U))
	\end{tikzcd}	
\end{equation} 
\end{rem}

We now begin to prove the ``only if" part of Thm. \ref{lb14}. Assume $\upphi:\Wbb_{\blt,\blt}\rightarrow\Cbb$ is a conformal block associated to $\fk X$, which equals $\uppsi$ as a linear functional. $\zeta$ always denotes the standard coordinate of a complex plane.
 
\begin{proof}[\textbf{Step 1}] Let us construct the $\wr\uppsi$ that will satisfy Condition 2 of Def. \ref{lb8}. Choose any $\wbf=\bigotimes_{j,\tipae}w_{j,\tipae}\in \mc W_\blt$. For any open simply-connected $U\subset\Pbb^1\setminus\Sbf$ and a path $\uplambda$ in $\Pbb^1\setminus\Sbf$ from inside $U$ to $\upgamma_\blt(1)$, we define an $\scr O(U)$-module homomorphism $\wr\uppsi(\uplambda,\cdot,\wbf):\scr U_{\Pbb^1}(U)\rightarrow\scr O(U)$ as follows. By Rem. \ref{lb15}, we have identification $\scr U_{\Pbb^1}(U)=\scr V_{\Pbb^1}(U)^{\otimes E}$. This space is spanned by $\vbf=\bigotimes_{e\in E}\vbf(e)$ where each $\vbf(e)\in\scr V_{\Pbb^1}(U)$. We understand
\begin{align*}
	\varphi^*\vbf\in \scr V_C(\varphi^{-1}(U))^E	
\end{align*}
as the $|E|$-tuple labeled by $E$ (i.e. a function from $E$ to $\scr V_C(\varphi^{-1}(U))$) whose $e$-component (for each $e\in E$) is $\varphi^*\vbf(e)=\varphi^*(\vbf(e))$ defined by Rem. \ref{lb16}.

Let $\Conf^E(\varphi^{-1}(U))$ be the subset of all points $\mbf y$ of $\varphi^{-1}(U)^E$ (i.e. any function $\mbf y:E\rightarrow \varphi^{-1}(U)$) such that any two components $\mbf y(e_1),\mbf y(e_2)$ are different if $e_1\neq e_2$. For each $y\in U$, we can set 
\begin{gather}
\varphi_\uplambda^*y\in\Conf^E(\varphi^{-1}(U)),\nonumber\\
\varphi_\uplambda^*y(e)=\Psi_{l\uplambda}(e)\qquad (\text{$l$ is a path in $U$ from $y$ to $\uplambda(0)$}).	\label{eq27}
\end{gather}
Then  $y\in U\mapsto\varphi_{\uplambda}^*y$ is holomorphic, and $\varphi_\uplambda^*y\in\varphi^{-1}(y)$.

Consider the $E$-propagation $\wr^E\upphi$ of $\upphi$, i.e., the $|E|$-propagation whose components are labeled by $E$. Then
\begin{align*}
\wr^E\upphi(\varphi^*\vbf,\wbf)\in\Conf^E(\varphi^{-1}(U)).	
\end{align*}
We can then define $\wr\uppsi(\uplambda,\vbf,\wbf)\in\scr O(U)$ such that for each $y\in U$,
\begin{gather}
\boxed{~\wr\uppsi(\uplambda,\vbf,\wbf)_y=\wr^E\upphi(\varphi^*\vbf,\wbf)_{\varphi_\uplambda^*y}~}\label{eq3}
\end{gather}
One checks easily that $\wr\uppsi(\cdot,\cdot,\wbf)$ is a multivalued $\scr O_{\Pbb^1\setminus\Sbf}$-module morphism from $\scr U_{\Pbb^1\setminus\Sbf}$ to $\scr O_{\Pbb^1\setminus\Sbf}$ (cf. Lemma \ref{lb17}).

We remark that $\varphi^*\vbf$ is not uniquely determined by $\vbf$: it depends  on how $\vbf$ is factored into the tensor product over $\scr O(U)$ of elements of $\scr V_{\Pbb^1}(U)$. However, if any tensor component of $\vbf$ is multiplied by some $f\in\scr O(U)$, then $\wr^E\upphi(\varphi^*\vbf,\wbf)_{\varphi_\uplambda^*y}$ is multiplied $f(y)$ since $\wr^E\upphi$ intertwines the actions of $\scr O_C$ (cf. Thm. \ref{lb7}). Therefore \eqref{eq3} is $\scr O(U)$-multilinear with respect to the   components of $\vbf$, and hence $\wr^E\upphi(\varphi^*\vbf,\wbf)_{\varphi_\uplambda^*y}$  is uniquely determined.
\end{proof}

\begin{proof}[\textbf{Step 2}] Let us give an explicit expression of $\varphi^*_{\upgamma_j}y$ when $y$ is near $\upgamma_j(0)$. This will help us relate $\wr\uppsi$ and the expression \eqref{eq29}. 

For each $j$, identify 
\begin{align*}
\wtd W_{j,\tipae}=\wtd\eta_{j,\tipae}(\wtd W_{j,\tipae})\qquad\text{via }	\wtd\eta_{j,\tipae},
\end{align*}
which is an open disc inside $\Cbb_{j,\tipae}$ centered at $0$. Likewise, we identify 
\begin{align*}
W_j=\eta_j(W_j) \qquad\text{via }\eta_j,	
\end{align*}
which is an open disc inside $\Cbb_j:=\Cbb$ centered at $0$. Then by \eqref{eq25}, the branched covering $\varphi:\wtd W_{j,\tipae}\rightarrow W_j$ is equal to $\zeta^{k_{j,\tipae}}:z\mapsto z^{k_{j,\tipae}}$ where
\begin{align*}
k_{j,\tipae}=|\bk{g_j}\tipae|.	
\end{align*}
(Recall $\zeta$ is the standard coordinate of $\Cbb_j=\Cbb$).

Choose an open simply-connected $U\subset W_j\setminus\{x_j\}\subset\Cbb_j^\times$ containing $\upgamma_j(0)$ together with a continuous $\arg$-function as in Condition 2 of Def. \ref{lb8}. We prove in this step that for each $z\in U,m\in\Zbb,\tipae\in E(g_j)$,
\begin{align}
\varphi^*_{\upgamma_j}z(g_j^m\tipae)=\tipaomega_k^mz^{\frac 1{k}}\in\wtd W_{j,\tipae}\qquad\text{where }k=k_{j,\tipae}.\label{eq26}
\end{align}	
We note that both sides of this relation rely continuously on $z$ and is sent by the covering map $\varphi$ to $z$. Namely, both sides are lifts of the inclusion $U\hookrightarrow W_j\setminus\{x_j\}=W_j\setminus\{0\}$ to  $C\setminus\varphi^{-1}(\Sbf)$. Moreover, if $z=\upgamma_j(0)$, by \eqref{eq27}, $\varphi^*_{\upgamma_j}z(g_j^m\tipae)=\Psi_{\upgamma_j}(g_j^m\tipae)\in\wtd W_{j,\tipae}$. Therefore, both sides of \eqref{eq26} are lifts of $U\hookrightarrow W_j\setminus\{0\}$ to $\wtd W_{j,\tipae}\setminus \{0\}$. Thus, it suffices to prove \eqref{eq26} for one point in $U$.  Note that, since the left hand side of \eqref{eq26} is a lift of $z$ through $\varphi=\zeta^k$, it must be a $k$-th root of $z$. 

Let us prove \eqref{eq26} for $z=\upgamma_j(0)$. Then
\begin{align*}
\varphi^*_{\upgamma_j}z(g_j^m\tipae)\xlongequal{\eqref{eq27}} \Psi_{\upgamma_j}(g_j^m\tipae)	\xlongequal{\eqref{eq2}}\Psi_{\upgamma_j}([\upalpha_j]^m\tipae)\xlongequal{\eqref{eq7}}\Psi_{\upgamma_j\upalpha_j^m}(\tipae)\xlongequal{\eqref{eq1}}\Psi_{\upepsilon_j^m\upgamma_j}(\tipae).
\end{align*}
By \eqref{eq22}, $z=\upgamma_j(0)$ is positive with zero argument. When $m=0$, $\Psi_{\upepsilon_j^m\upgamma_j}(\tipae)$, which is a $k$-th root of $z$, is also positive due to \eqref{eq28}. So it must be $z^{\frac 1k}$. This proves \eqref{eq26} when $m=0$. For a general $m\in\Zbb$, by the definition of $\Psi$ in Thm. \ref{lb4}, $\Psi_{\upepsilon_j^m\upgamma_j}(\tipae)$ is the initial point of the lift of the path $\upepsilon_j^m$ through $\varphi=\zeta^k$ into the punctured disc $\wtd W_{j,\tipae}\setminus \{0\}$ ending at $\Psi_{\upgamma_j}(\tipae)=z^{\frac 1k}$. It must be $\tipaomega_k^mz^{\frac 1{k}}$, since $\upepsilon_j$ is the anticlockwise circle around the origin (whose lift under $\zeta^k$ goes anticlockwisely by $2\pi/k$).
\end{proof}

\begin{proof}[\textbf{Step 3}] Assume the identifications in Step 2. Let $\wbf\in\mc W_\blt$ be $\bigotimes_{1\leq j\leq N}\wbf_j\in\otimes_j\mc W_j$ where $\wbf_j=\bigotimes_{\tipae\in E(g_j)}w_{j,\tipae}$. Choose $\vbf:E\rightarrow\Vbb$ (viewed as an element of $\Ubb$).  Choose any $z\in W_j\setminus\{0\}$. 

For each $n\in\Nbb$, let $\{m_{j,\tipae}(n,\alpha):\alpha\in\mc A_{j,\tipae,n}\}$ be a finite set of basis of $\Wbb_{j,\tipae}(n)$ whose dual basis $\{\wch m_{j,\tipae}(n,\alpha):\alpha\in\mc A_{j,\tipae,n}\}$ is a basis of $\Wbb_{j,\tipae}'(n)=\Wbb_{j,\tipae}(n)^*$. For each $\mbf n\in \Nbb^{E(g_j)}$ (i.e.,  a function $E(g_j)\rightarrow\Nbb$), and for each $\bsb\alpha$ sending each $\tipae\in E(g_j)$ to an element $\bsb\alpha(\tipae) \in\mc A_{j,\tipae,\mbf n(\tipae)}$ (the set of all such $\bsb\alpha$ is denoted by $\fk A_{j,\mbf n}$), set
\begin{gather*}
\mbf m_j(\mbf n,\bsb\alpha)=\bigotimes_{\tipae\in E(g_j)} m_{j,\tipae}(\mbf n(\tipae),\bsb\alpha(\tipae))\qquad \in \bigotimes_{\tipae\in E(g_j)} \Wbb_{j,\tipae}=\mc W_j,\\
\wch{\mbf m}_j(\mbf n,\bsb\alpha)=\bigotimes_{\tipae\in E(g_j)} \wch m_{j,\tipae}(\mbf n(\tipae),\bsb\alpha(\tipae))\qquad \in \bigotimes_{\tipae\in E(g_j)} \Wbb_{j,\tipae}'=\mc W_j'.
\end{gather*}
Then  the following infinite series of $n\in\frac 1{|g_j|}\Nbb$
\begin{align}
\uppsi (\wbf_1\otimes\cdots\otimes Y^{g_j}(\vbf,z)\wbf_j\otimes\cdots\otimes\wbf_N),	\label{eq31}
\end{align}
understood in the sense of \eqref{eq17}, converges absolutely provided that the following multi-series of $\mbf n\in\Nbb^{E(g_j)}$ converges absolutely:
\begin{align}
&~~~~~\sum_{\mbf n\in\Nbb^{E(g_j)}}\sum_{\bsb\alpha\in\fk A_{j,\mbf n}}\uppsi \big(\wbf_1\otimes\cdots\otimes \mbf m_j(\mbf n,\bsb\alpha)\otimes\cdots\otimes\wbf_N\big)\cdot\big\langle Y^{g_j}(\vbf,z)\wbf_j,\wch{\mbf m}_j(\mbf n,\bsb\alpha)\big\rangle\nonumber\\
&=\sum_{\mbf n\in\Nbb^{E(g_j)}}\sum_{\bsb\alpha\in\fk A_{j,\mbf n}}\upphi \big(\wbf_1\otimes\cdots\otimes \mbf m_j(\mbf n,\bsb\alpha)\otimes\cdots\otimes\wbf_N\big)\nonumber\\
&\cdot\prod_{\tipae\in E(g_j)}\wr^{{k_{j,\tipae}}}\tipxphi_{j,\tipae}\Big(\vbf(\tipae),\vbf(g_j\tipae),\dots,\vbf(g_j^{{k_{j,\tipae}}-1}\tipae), w_{j,\tipae}\otimes \wch m_{j,\tipae}(\mbf n(\tipae),\bsb\alpha(\tipae))\Big)_{\tipaomega_{{k_{j,\tipae}}}^{\blt-1}z^{1/{k_{j,\tipae}}}}.\label{eq32}
\end{align}
(We have used \eqref{eq30} and\eqref{eq29}.) In that case, \eqref{eq31} converges absolutely to \eqref{eq32}. 

Note that inside $\wr^{{k_{j,\tipae}}}\tipxphi_{j,\tipae}$ (for each $\tipae\in E(g_j)$), we have used the identification \eqref{eq35} by setting $k=k_{i,\tipae}$. 

According to \eqref{eq11}, we know that \eqref{eq32} is the sewing of a propagation. By Thm. \ref{lb19}, it converges absolutely to the propagation of the sewing, provided that the sewing of the (unpropagated) conformal block converges $q$-absolutely, and that the marked points and the points of propagation are away from the discs to be sewn (Assumption \ref{lb20} and the statement ``...disjoint from $W_j',W_j''$..." in Thm. \ref{lb19}). 

The unpropagated pointed Riemann surface with local coordinates is 
\begin{align*}
\fk Y=\fk X\sqcup\bigsqcup_{\tipae\in E(g_j)}\fk Q_{j,\tipae}	
\end{align*}
where $\fk Q_{j,\tipae}=(\Pbb^1_{j,\tipae};0,\infty;\zeta,\zeta^{-1})$. The two marked points $0,\infty$ of $\fk Q_{j,\tipae}$ are associated with $\Vbb$-modules $\Wbb_{j,\tipae},\Wbb_{j,\tipae}'$ respectively. The (unpropagated) conformal block associated to $\fk Y$ is (recall \eqref{eq33})
\begin{align*}
\upchi:=\upphi\otimes \Big(\bigotimes_{\tipae\in E(g_j)}\tipxphi_{j,\tipae}\Big):\mc W_\blt\otimes	\Big(\bigotimes_{\tipae\in E(g_j)}\Wbb_{j,\tipae}\otimes\Wbb_{j,\tipae}'\Big)\rightarrow\Cbb.
\end{align*}
We sew $\fk Y$ along each pair $\Upsilon(\bk{g_j}\tipae)$ (i.e. the center of $\wtd W_{j,\tipae}$) and $\infty\in\Pbb^1_{j,\tipae}$  (for every $\tipae\in E(g_j)$) using their local coordinates. More precisely, we remove a small disc inside $\wtd W_{j,\tipae}$ and another disc inside
\begin{align*}
M_{j,\tipae}:=\{z\in\Pbb^1_{j,\tipae}:|z^{-1}|<R^{1/|E|}\}
\end{align*}
for sufficiently large $R>1$, and glue the remaining part using the relation $\wtd\eta_{j,\tipae}\cdot \zeta^{-1}=1$. 

The sewn data  $\scr S\fk Y$ is equal to $\fk X$.   Moreover, our sewing is compatible with the identifications in Step 2. Namely: this sewing process is just removing $\wtd W_{j,\tipae}$ (for each $\tipae$) from $C$, and filling into the holes  the equivalent open disc $\wtd W_{j,\tipae}\subset\Pbb^1_{j,\tipae}$ (associated to $\fk Q_{j,\tipae}$). It is clear that $\scr S\upchi$ converges $q$-absolutely to $\upphi$. Also, when $0<|z|<R^{-1}$, $\tipaomega_{k_{j,\tipae}-1}^\blt z^{1/{k_{j,\tipae}}}$ is disjoint from $M_{j,\tipae}$. So, by Thm. \ref{lb19}, \eqref{eq32} converges absolutely. This proves Condition 1 of Def. \ref{lb8} for all $0<|z|<R^{-1}$ (with any choice of $\arg z$), thanks to Rem. \ref{lb10}.
\end{proof}

\begin{proof}[\textbf{Step 4}] Assume the identifications in Step 2. Let $U=\{z\in\Cbb_j^\times:|z|<R^{-1}\}\setminus (-\infty,0]$ with $\arg$ function ranging in $(-\pi,\pi)$. Note $U\subset W_j\subset\Cbb_j$. Choose $z\in U$. After sewing, the point $\tipaomega_{k_{j,\tipae}}^mz^{1/k_{j,\tipae}}$ originally in $\fk Q_{j,\tipae}$ becomes the same point in $\wtd W_{j,\tipae}$, which is $\varphi^*_{\upgamma_j}z(g_j^m\tipae)$ by \eqref{eq26}. 

Note that both Rem.  \ref{lb15} and Rem. \ref{lb16} are considered in the definition of $\wr\uppsi$ in Step 1. In view of Rem. \ref{lb16}, we assume two more identifications
\begin{gather}
	\scr V_{\Pbb^1}(U)=\Vbb\otimes_\Cbb\scr O(U)	\qquad\text{via }\mc U_\varrho(\zeta)\label{eq39}\\
	\scr V_C(\varphi^{-1}(U))=\Vbb\otimes_\Cbb\scr O(\varphi^{-1}(U))\qquad\text{via }\mc U_\varrho(\zeta\circ\varphi).	\label{eq37}
\end{gather}
Since
\begin{align*}
	\zeta\circ\varphi=\zeta^{k_{j,\tipae}}\qquad\text{on }\varphi^{-1}(U)\cap \wtd W_{j,\tipae},
\end{align*}
after sewing,  the identification \eqref{eq35} (where $k=k_{j,\tipae}$ for each $\tipae$) used in the definition of $\wr^{{k_{j,\tipae}}}\tipxphi_{j,\tipae}$ is compatible with the identification \eqref{eq37}. Also, if we take Rem.\ref{lb15} into account, then \eqref{eq39} yields the identification
\begin{align}
\scr U_{\Pbb^1}(U)=\Ubb\otimes_\Cbb\scr O(U)\qquad\text{via }\mc U_\varrho(\zeta).
\end{align}


Under these identifications, for each $\vbf:E\rightarrow\Vbb$ (viewed as an element of $\Ubb\subset\Ubb\otimes_\Cbb\scr O(U)$),  $\varphi^*\vbf\in\Vbb^E\subset(\Vbb\otimes_\Cbb\scr O(\varphi^{-1}(U)))^E$ is a tuple of constant sections whose component at each $e=g_j^m\tipae$ (where $m\in\Zbb,\tipae\in E(g_j)$) is $\vbf(g_j^m\tipae)$. Thus \eqref{eq32}, which is the sewing of propagation, converges absolutely to the propagation of the sewing $\scr S\upchi=\upphi$ (by Thm. \ref{lb19}), which is $\wr^E\upphi(\varphi^*\vbf,\wbf)$ at the point of $\Conf^E(\varphi^{-1}(U))$ whose $g_j^m\tipae$-component is $\varphi^*_{\upgamma_j}z(g_j^m\tipae)$ (according to the first paragraph). This proves that \eqref{eq32} (and hence \eqref{eq31}) converge absolutely to $\wr^E\upphi(\varphi^*\vbf,\wbf)_{\varphi_{\upgamma_j}^*z}$, which is $\wr\uppsi(\upgamma_j,\vbf,\wbf)_z$ by \eqref{eq3}. Thus, by Rem. \ref{lb10}, the two conditions of Def. \ref{lb8} hold for possibly smaller discs $W_1,\dots,W_N$; the choice of $\arg z$ is not important for Condition 1, due to part (2) of that remark; the choice of the simply-connected subset $U$ can be arbitrary, due to Rem. \ref{lb9}. By Rem. \ref{lb21}, these two conditions hold for the original discs.
\end{proof}

We are done with the proof of the ``only if" part of Thm. \ref{lb14}.

\subsection{From permutation-twisted to untwisted conformal blocks}\label{lb49}

We prove the ``if" part of Thm. \ref{lb14}. Assume $\uppsi:\mc W_\blt\rightarrow\Cbb$ is a conformal block associated to $\fk P$. We have $\wr\uppsi$ as described in Def. \ref{lb8}.

\begin{proof}[\textbf{Step 1}] To show that the same linear functional $\upphi:\Wbb_{\blt,\blt}\rightarrow\Cbb$ is a conformal block associated to $\fk X$, we shall construct its propagation $\wr\upphi$.

Choose any simply-connected open $U\subset\Pbb^1\setminus\Sbf$ together with a path $\uplambda$ in $\Pbb^1\setminus\Sbf$ from inside $U$ to $\upgamma_\blt(1)$. For each $e\in E$, let $\wtd U_{\uplambda,e}$ be the connected component of $\varphi^{-1}(U)$ containing $\Psi_{\uplambda}(e)$. Then all such sets form a basis of the topology of $C$. Define
\begin{align*}
\varphi_*:\scr V_C(\wtd U_{\uplambda,e})\xrightarrow{\simeq}\scr V_{\Pbb^1}(U)	
\end{align*}
to be the inverse of $\varphi^*$ defined in Rem. \ref{lb16}. Recall $\scr U_{\Pbb^1}(U)=\scr V_{\Pbb^1}(U)^{\otimes_{\scr O(U)} E}$ by Rem. \ref{lb15}. For each $v\in\scr V_C(\wtd U_{\uplambda,e})$,  we let
\begin{align*}
(\varphi_*v)_e\otimes\id\in\scr U_{\Pbb^1}(U)	
\end{align*}
be a tensor product of elements of $\scr V_{\Pbb^1}(U)$ such that the $e$-component is $\varphi_*v$, and that the other tensor components are the vacuum section $\id$.

We now define $\wr\upphi$ on $\wtd U_{\uplambda,e}$. Choose any $\wbf=\bigotimes_{j,\tipae}w_{j,\tipae}\in \mc W_\blt$. For each $\wtd y\in\wtd U_{\uplambda,e}$ and $v\in\scr V_C(\wtd U_{\uplambda,e})$, we define
\begin{align}
\boxed{~\wr\upphi\big(v,\wbf\big)_{\wtd y}=\wr\uppsi\big(\uplambda,(\varphi_*v)_e\otimes\id,\wbf\big)_{\varphi(\wtd y)}~}\label{eq50}	
\end{align}
We need to show that this definition is independent of $\uplambda$ and $e$. Suppose $\uplambda'$ is another path from inside $U$ to $\upgamma_\blt(1)$, $e'\in E$, and $\wtd U_{\uplambda,e}=\wtd U_{\uplambda',e'}$. The above definition is clearly unchanged if $e'=e$ and $\uplambda'=l\uplambda$ where $l$ is a path in $U$ from $\uplambda'(0)$ to $\uplambda(0)$. Thus, we may assume $\uplambda$ and $\uplambda'$ have the same initial point (and end point). Choose $\upmu\in\Lambda_{\upgamma_\blt(1)}$ such that
\begin{align*}
\uplambda'=\uplambda\upmu.	
\end{align*}
We compute
\begin{align*}
\Psi_{\uplambda}(e)=\Psi_{\uplambda'}(e')=\Psi_{\uplambda\upmu}(e')\xlongequal{\eqref{eq7}}\Psi_{\uplambda}([\upmu]e').	
\end{align*}
Therefore, as $\Psi_{\uplambda}$ is one-to-one (cf. Thm. \ref{lb4}), we have
\begin{align*}
e=[\upmu]e'.	
\end{align*}

The map $[\upalpha_j]\mapsto g_j$ defines an action of the fundamental group $\Gamma$ on $\Ubb$ and hence on $\scr U_{\Pbb^1}$. It is easy to check that
\begin{align*}
[\upmu]	\big((\varphi_*v)_e\otimes\id\big)=(\varphi_*v)_{[\mu]e}\otimes\id.
\end{align*}
Therefore, by \eqref{eq43},
\begin{align*}
&\wr\uppsi\big(\uplambda',(\varphi_*v)_{e'}\otimes\id,\wbf\big)=\wr\uppsi\big(\uplambda\upmu,(\varphi_*v)_{e'}\otimes\id,\wbf\big)\\
=&	\wr\uppsi\big(\uplambda,(\varphi_*v)_{[\upmu]e'}\otimes\id,\wbf\big)=\wr\uppsi\big(\uplambda,(\varphi_*v)_e\otimes\id,\wbf\big).
\end{align*}
Thus $\wr\upphi(\cdot,\wbf)$ is a well-defined $\scr O_{C\setminus\varphi^{-1}(\Sbf)}$-module morphism $\scr V_{C\setminus\varphi^{-1}(\Sbf)}\rightarrow \scr O_{C\setminus\varphi^{-1}(\Sbf)}$.
\end{proof}

In Step 2, we verify the two conditions of Thm. \ref{lb24}.

\begin{proof}[\textbf{Step 2}]

We write $\wbf_j=\bigotimes_{\tipae\in E(g_j)}w_{j,\tipae}\in\mc W_j$ so that $\wbf=\bigotimes_j\wbf_j$. Let $U\subset W_j\setminus\{x_j\}$ be open and simply-connected. According to the notations in Step 1, 
\begin{align*}
\Psi_{\upgamma_j}(\tipae)\in\wtd U_{\upgamma_j,\tipae}\subset\wtd W_{j,\tipae}.	
\end{align*} 
(Recall that $\wtd W_{j,\tipae}$ is a disc with center $\Upsilon(\bk{g_j}\tipae)$, and hence contains the set of points $\Psi_{\upgamma_j}(\bk{g_j}\tipae)$ surrounding the center.) Choose any $v\in\Vbb\subset\Vbb\otimes_\Cbb\scr O(\wtd U_{\upgamma_j,\tipae})$, and set
\begin{align*}
v_\tipae\otimes\id\in\Ubb=\Vbb^{\otimes E}	
\end{align*}
to be a tensor product of vectors of $\Vbb$ whose $\tipae$-component is $v$ and the other components are $\id$. Then by \eqref{eq50} and Condition 2 of Def. \ref{lb8} (notice the identification there), for each $\wtd y\in\wtd U_{\upgamma_j,\tipae}$,
\begin{align*}
&\wr\upphi\big(\mc U_\varrho(\eta_j\circ\varphi)^{-1}v,\wbf\big)_{\wtd y}\\
=&\uppsi\big(\wbf_1\otimes\cdots\otimes Y^{g_j}\big(v_\tipae\otimes\id,\eta_j\circ\varphi(\wtd y)\big)\wbf_j\otimes\cdots\otimes\wbf_N \big)	
\end{align*}
where the right hand side converges a.l.u. in the sense of Def. \ref{lb8}. Using the notations of Step 3 in Sec. \ref{lb1}, we have
\begin{align*}
&\wr\upphi\big(\mc U_\varrho(\eta_j\circ\varphi)^{-1}v,\wbf\big)_{\wtd y}\\
=&\sum_{\mbf n\in\Nbb^{E(g_j)}}\sum_{\bsb\alpha\in\fk A_{j,\mbf n}}\uppsi\big(\wbf_1\otimes\cdots\otimes \mbf m_j(\mbf n,\bsb\alpha)\otimes\cdots\otimes\wbf_N \big)\nonumber\\
&\cdot \big\langle Y^{g_j}\big(v_\tipae\otimes\id,\eta_j\circ\varphi(\wtd y)\big)\wbf_j,\wch{\mbf m}_j(\mbf n,\bsb\alpha)\big\rangle.	
\end{align*}

Let us write the above $\bk{Y^{g_j}\cdots}$ in terms of $\wr\tipxphi_{j,\tipae}$. Let
\begin{align*}
	k=k_{j,\tipae}=|\bk{g_j}\tipae|.	
\end{align*}
Recall that $\zeta$ is the standard coordinate of $\Cbb$. By \eqref{eq29} and Thm. \ref{lb7}-(3,4), if $\vbf:E\rightarrow\Vbb$ is such that $\vbf(g_j\tipae)=\vbf(g_j^2\tipae)=\cdots=\id$ then
\begin{align*}
\tipxcc_{j,\tipae}(\vbf,w,w',z)=\wr\tipxphi_{j,\tipae}\big(\mc U_\varrho(\zeta^k)^{-1}\vbf(\tipae),w\otimes w'\big)_{z^{1/k}};	
\end{align*}
if also $\vbf(\tipae)=\id$ then $\tipxcc_{j,\tipae}(\vbf,w,w',z)=\bk{w,w'}$.  For each $m_{j,\tipae}\in\Wbb_{j,\tipae}$ we set
\begin{align*}
\wbf_{\setminus j,\tipae}\otimes m_{j,\tipae}\in\Wbb_{\blt,\blt}	
\end{align*}
be a tensor product of vectors whose $(j,\tipae)$-component is $m_{j,\tipae}$ and the other components agree with the corresponding ones of $\wbf$. By \eqref{eq25}, we have
\begin{align*}
(\eta_j\circ\varphi(\wtd y))^{1/k}=\wtd\eta_{j,\tipae}(\wtd y),
\end{align*}
where the argument of $\eta_j\circ\varphi(\wtd y)$ is defined  such that if $\wtd y$ changes continuously to $\Psi_{\upgamma_j}(\tipae)$ then the argument changes continuously to $0$ (notice \eqref{eq28}). Then, using the construction of  $Y^{g_j}$ in Sec. \ref{lb23} (especially, pay attention to the identification \eqref{eq35} there), we have
\begin{align}
&\wr\upphi\big(\mc U_\varrho(\eta_j\circ\varphi)^{-1}v,\wbf\big)_{\wtd y}\nonumber\\
=&\sum_{n\in\Nbb}\sum_{\alpha\in\mc A_{j,\tipae,n}}\uppsi\big(\wbf_{\setminus j,\tipae}\otimes m_{j,\tipae}(n,\alpha) \big)\cdot \wr\tipxphi_{j,\tipae}\Big(\mc U_\varrho(\zeta^k)^{-1}v,w_{j,\tipae}\otimes\wch m_{j,\tipae}(n,\alpha)\Big)_{\wtd\eta_{j,\tipae}(\wtd y)}\label{eq51}	
\end{align}
where the right hand side converges absolutely as a series of $n$.

We now identify
\begin{align*}
\wtd W_{j,\tipae}=\wtd\eta_{j,\tipae}	(\wtd W_{j,\tipae})\qquad\text{via }\wtd\eta_{j,\tipae},
\end{align*}
considered as an open disc in $\Cbb_{j,\tipae}=\Cbb$. Then $\wtd\eta_{j,\tipae}$ is the standard coordinate $\zeta$, $\eta_j\circ\varphi=\zeta^k$, and $\wtd y=\wtd\eta_{j,\tipae}(\wtd y)$. Therefore, \eqref{eq51} holds if we replace $\wtd \eta_{j,\tipae}(\wtd y)$ by $\wtd y$, and (by $\scr O(\wtd U_{j,\tipae})$-linearity) replace $\mc U_\varrho(\eta_j\circ\varphi)^{-1}v=\mc U_\varrho(\zeta^k)^{-1}v$ by any element of $\scr V_C(\wtd U_{\upgamma_j,\tipae})$. Thus, if we fix identification
\begin{align*}
\scr V_{\wtd U_{j,\tipae}}=\Vbb\otimes_\Cbb\scr O_{\wtd U_{j,\tipae}}\qquad\text{via }\mc U_\varrho(\wtd\eta_{j,\tipae})=\mc U_\varrho(\zeta),	
\end{align*}
then for each $z\in\wtd U_{j,\tipae}\subset\Cbb_{j,\tipae}$  and $v\in\Vbb\subset\Vbb\otimes_\Cbb\scr O_{\wtd U_{j,\tipae}}$,
\begin{align*}
&\wr\upphi(v,\wbf)_z\nonumber\\
=&\sum_{n\in\Nbb}\sum_{\alpha\in\mc A_{j,\tipae,n}}\uppsi\big(\wbf_{\setminus j,\tipae}\otimes m_{j,\tipae}(n,\alpha) \big)\cdot \wr\tipxphi_{j,\tipae}\Big(v,w_{j,\tipae}\otimes\wch m_{j,\tipae}(n,\alpha)\Big)_z\\
=&\sum_{n\in\Nbb}\sum_{\alpha\in\mc A_{j,\tipae,n}}\upphi\big(\wbf_{\setminus j,\tipae}\otimes m_{j,\tipae}(n,\alpha) \big)\cdot\big\langle Y(v,z)w_{j,\tipae},\wch m_{j,\tipae}(n,\alpha)\big\rangle\\
=&\sum_{n\in\Nbb}\upphi\big(\wbf_{\setminus j,\tipae}\otimes P_nY(v,z)w_{j,\tipae} \big),
\end{align*}
where the rightmost part converges absolutely. Similar to the argument in Rem. \ref{lb10}, the above equation holds and the series of $n$ converges a.l.u. for $z\in\wtd W_{j,\tipae}\setminus\{x_j\}$. So $\upphi$ satisfies the two conditions of Thm. \ref{lb24}. 
\end{proof}

\begin{subappendices}
\subsection{Dimension of the space of permutation-twisted conformal blocks}\label{lb58}

In this subsection, we assume $\Vbb$ is CFT-type, $C_2$-cofinite, and rational. Let $\mc E$ be a (finite) complete list of irreducible $\Vbb$-modules (cf. the paragraph above Thm. \ref{lb40}). By Main Theorem \ref{lb14}, the calculation of the dimension of the space of permutation-twisted conformal blocks is reduced to that of untwisted ones. For the reader's convenience, we explicitly write down the steps of calculating such dimension.

If $C$ is a connected compact Riemann surface of genus $g$, together with $N$ marked points, local coordinates, and semi-simple  $\Vbb$-modules $\Wbb_1,\dots,\Wbb_N$, we let
\begin{align*}
\Nbf(g;\Wbb_\blt)=\Nbf(g;\Wbb_1,\dots,\Wbb_N)	 
\end{align*}
be the dimension of the space of conformal blocks associated to these data. By \cite{DGT21,DGT22}, this number is finite, and  is independent of the complex structure of $C$, the position of $N$ marked points, and the local coordinates. Moreover, $\Nbf(g;\Wbb_1,\dots,\Wbb_N)$ is unchanged if we rearrange the order $\Wbb_1,\dots,\Wbb_N$. In the special case that $g=0$ and $N=3$,
\begin{align*}
\Nbf_{\Wbb_1,\Wbb_2}^{\Wbb_3}:=\Nbf(0;\Wbb_1,\Wbb_2,\Wbb_3')	
\end{align*}
is the \textbf{fusion rule} among $\Wbb_1,\Wbb_2,\Wbb_3$.

By the factorization property proved by \cite{DGT22} (cf. also Thm. \ref{lb40}), if $g\geq 1$, we have
\begin{align}
\Nbf(g;\Wbb_1,\dots,\Wbb_N)=\sum_{\Mbb\in\mc E}\Nbf(g-1;\Wbb_1,\dots,\Wbb_N,\Mbb,\Mbb').	
\end{align}
If $g=g_1+g_2$ where $g_1,g_2\geq 0$, and if $1\leq L<N$, then
\begin{align}
\Nbf(g;\Wbb_1,\dots,\Wbb_N)=\sum_{\Mbb\in\mc E}\Nbf(g_1;\Wbb_1,\dots,\Wbb_L,\Mbb)\cdot\Nbf(g_2;\Wbb_{L+1},\dots,\Wbb_N,\Mbb').	
\end{align}	
These two formulas allow us to calculate any $\Nbf(g;\Wbb_\blt)$ using the fusion rules.

Now we assume the setting of Main Theorem \ref{lb14}. Namely, $\fk P=\eqref{eq16}$ is a positively  $N$-pathed Riemann sphere with local coordinates, and to each marked point $x_j$ we associate a $g_j$-twisted  $\Ubb=\Vbb^{\otimes E}$-module $\mc W_j$ defined in Thm. \ref{lb13}. Note that $E(g_j)$ is the set of marked points of $\bk{g_j}$-orbits. We assume $g_1,\dots,g_N$ are admissible, i.e., there is an action $\Gamma\curvearrowright E$ where each $[\upalpha_j]$ acts as $g_j$. So $\Gamma$-orbits in $E$ are precisely $\bk{g_1,\dots,g_N}$-orbits. We let $\Orb(\Gamma)$ be the set of $\Gamma$-orbits in $E$. As usual, for $\Omega\in\Orb(\Gamma)$, $|\Omega|$ denotes its cardinality.

Now, by the Main Theorem \ref{lb14}, we have:

\begin{co}\label{lb50}
For each $\Omega\in\Orb(\Gamma)$, let $(\Wbb)_\Omega$ be the list of all $\Wbb_{j,\tipae}$ where $j=1,\dots,N$, and $\tipae\in E(g_j)$ is such that the $\bk{g_j}$-orbit $\bk{g_j}\tipae$ is contained in $\Omega$. Then the dimension of the space of conformal blocks associated to $\fk P$ and the twisted modules $\mc W_\blt$ equals
\begin{align*}
\prod_{\Omega\in\Orb(\Gamma)}\Nbf(g(C^\Omega);(\Wbb)_\Omega)	
\end{align*}
where $g(C^\Omega)$ is given by \eqref{eq88} or equivalently \eqref{eq89}.
\end{co}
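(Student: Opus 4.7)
The plan is to combine the Main Theorem \ref{lb14} with the standard untwisted dimension theory of \cite{DGT19a,DGT19b}. By Main Theorem \ref{lb14}, the space of twisted conformal blocks associated to $\fk P$ and $\mc W_\blt$ is canonically isomorphic to the space of untwisted $\Vbb$-conformal blocks associated to the permutation covering $\fk X$ and the family of modules $\Wbb_{\blt,\blt}$. So it suffices to compute the dimension of the latter.

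The permutation covering $C$ decomposes into connected components $C = \coprod_{\Omega \in \Orb(\Gamma)} C^\Omega$ indexed by the $\Gamma$-orbits of $E$. The marked points of $\fk X$ lying on $C^\Omega$ are precisely the points $\Upsilon(\bk{g_j}\tipae)$ where $1 \leq j \leq N$ and $\tipae \in E(g_j)$ is such that $\bk{g_j}\tipae \subset \Omega$; the modules attached to these marked points are exactly the list $(\Wbb)_\Omega$. Thus $\fk X$ is the disjoint union of $N$-pointed compact Riemann surfaces $\fk X^\Omega$ (each connected and of genus $g(C^\Omega)$), and so I would first establish the tensor factorization
\begin{align*}
\CB_{\fk X}(\Wbb_{\blt,\blt}) \;\cong\; \bigotimes_{\Omega \in \Orb(\Gamma)} \CB_{\fk X^\Omega}((\Wbb)_\Omega).
\end{align*}
This factorization follows directly from the definition of conformal blocks, since $H^0(C, \scr V_C \otimes \omega_C(\star S_{\fk X})) = \bigoplus_\Omega H^0(C^\Omega, \scr V_{C^\Omega} \otimes \omega_{C^\Omega}(\star S_{\fk X^\Omega}))$ and the vertex-algebraic invariance condition decouples across the components.

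Next, for each connected component $\fk X^\Omega$, the dimension $\dim \CB_{\fk X^\Omega}((\Wbb)_\Omega)$ depends only on the genus of $C^\Omega$ and on the unordered list $(\Wbb)_\Omega$, by the deformation-invariance results of \cite{DGT19a,DGT19b} (recalled in the paragraph defining $\Nbf(g; \Wbb_\blt)$ in Subsection \ref{lb58}). Hence this dimension equals $\Nbf(g(C^\Omega); (\Wbb)_\Omega)$. Finally, the genus $g(C^\Omega)$ is given by formula \eqref{eq88} (equivalently \eqref{eq89}), which is the direct application of Riemann-Hurwitz to the branched covering $\varphi: C^\Omega \to \Pbb^1$ with branching indices $|\omega|$ for $\omega \in \Orb_\Omega(g_j)$ as in Prop. \ref{lb12}. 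Multiplying over all $\Omega$ yields the stated formula. No step presents a real obstacle; the only thing to be slightly careful about is that the list $(\Wbb)_\Omega$ is genuinely determined (as an unordered collection) by $\Omega$ alone, which is clear from the bijection in Cor. \ref{lb6} between $\bk{g_j}$-orbits in $\Omega$ and marked points of $C^\Omega$ above $x_j$.
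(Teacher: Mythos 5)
Your proposal is correct and follows exactly the route the paper intends: reduce to untwisted blocks on the permutation covering via Main Theorem \ref{lb14}, factor over the connected components $C^\Omega$, invoke the deformation-invariance of $\Nbf(g;\Wbb_\blt)$ from \cite{DGT19a,DGT19b}, and compute $g(C^\Omega)$ by Riemann--Hurwitz as in \eqref{eq88}. The paper states the corollary with no further argument, so your write-up simply supplies the same (routine) details.
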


\end{subappendices}

\section{Relating untwisted and permutation-twisted sewing and factorization}\label{lb35}

\subsection{Sewing Riemann spheres and their permutation coverings}\label{lb59}

\subsubsection{The setting}\label{lb45}

Let $\Pbb^1_a=\Pbb^1,\Pbb^1_b=\Pbb^1$. Choose two positively pathed Riemann spheres with local coordinates
\begin{gather*}
\fk P^a=\big(\Pbb^1;x_0,x_1,\dots,x_N;\eta_0,\eta_1,\dots,\eta_N;\upgamma_0,\upgamma_1,\dots,\upgamma_N\big),\\
\fk P^b=\big(\Pbb^1;y_0,y_1,\dots,y_K;\varpi_0,\varpi_1,\dots,\varpi_K;\updelta_0,\updelta_1,\dots,\updelta_K\big)	
\end{gather*}
where $N,K\geq 1$. So each $\eta_j$ (resp. $\varpi_l$) is a local coordinate at $x_j$ (resp. $y_l$). For each $1\leq j\leq N,1\leq k\leq K$, we assume $\eta_j\in\scr O(W_j)$ (resp. $\varpi_l\in\scr O(M_l)$) where $W_j\subset\Pbb^1$ (resp. $M_l\subset\Pbb^1$) is open, and $\eta_j(W_j)$ (resp. $\varpi_l(M_l)$) is an open disc centered at $0$ with radius $r_j$ (resp. $\rho_l$). We choose  $r_0,\rho_0>0$ such that
\begin{gather*}
\eta_0(W_0)=\mc D_{r_0},\qquad\varpi_0(M_0)=\mc D_{\rho_0}.	
\end{gather*}
We assume
\begin{align*}
r_0\rho_0>1.	
\end{align*}

We assume that $W_0$ does not contain $x_1,\dots,x_N$, and $M_0$ does not contain $y_1,\dots,y_K$. Let
\begin{align*}
F^a=\{x\in W_0:|\eta_0(x)|\leq 1/\rho_0\},\qquad F^b=\{y\in M_0:|\varpi_0(y)|\leq 1/r_0\}.	
\end{align*}
We assume\footnote{Some assumptions in this chapter are similar to those in the previous chapters and are reviewed here for the readers' convenience. As for some other assumptions that were not mentioned in the previous chapters, we enclose them in boxes.}
\begin{gather}\label{eq90}
\boxed{~\begin{array}{c}
F^a\text{ is disjoint from }	W_1,\dots,W_N,\upgamma_0,\upgamma_1,\dots,\upgamma_N\\[0.8ex]
F^b\text{ is disjoint from }M_1,\dots,M_K,\updelta_0,\updelta_1,\dots,\updelta_K
\end{array}~}
\end{gather}
Note that every $\eta_j(\upgamma_j(0))$ and every $\varpi_l(\updelta_l(0))$ are positive by \eqref{eq22}. We also assume
\begin{align}
\boxed{~\eta_0\big(\upgamma_0(0)\big)\varpi_0\big(\updelta_0(0)\big)=1~}	\label{eq54}
\end{align}
so that $\upgamma_0(0)$ and $\updelta_0(0)$ can be glued to the same point.

We let
\begin{align*}
\Sbf^a=\{x_0,x_1,\dots,x_N\},\qquad \Sbf^b=\{y_0,y_1,\dots,y_K\}.	
\end{align*}
Note that $\upgamma_0,\dots,\upgamma_N$ (resp. $\updelta_0,\dots,\updelta_K$) have a common end point $\upgamma_\blt(1)$ (resp. $\updelta_\blt(1)$). We set
\begin{align*}
\Gamma^a=\pi_1\big(\Pbb^1_a\setminus\Sbf^a,\upgamma_\blt(1)\big),\qquad \Gamma^b=\pi_1\big(\Pbb^1_b\setminus\Sbf^b,\updelta_\blt(1)\big).
\end{align*}
Similar to the setting of Sec. \ref{lb22}, we let $\upepsilon_j^a$ (resp. $\upepsilon_l^b$) be a small anticlockwise circle in $W_j$ (resp. $M_l$) around the center $x_j$ (resp. $y_l$) from and to $\upgamma_j(0)$ (resp. $\updelta_l(0)$), and let
\begin{align}
\upalpha_j=\upgamma_j^{-1}\upepsilon_j^a\upgamma_j,\qquad\upbeta_l=\updelta_l^{-1}\upepsilon_l^b\updelta_l.\label{eq56}
\end{align}
We assume
\begin{align}
\boxed{~\Gamma^a=\bk{[\upalpha_1],\dots,[\upalpha_N]},\quad\Gamma^b=\bk{[\upbeta_1],\dots,[\upbeta_K]}~}	\label{eq86}
\end{align}

Recall that $E$ is a finite set.  We fix an action of $\Gamma^a$ on $E$ and another one $\Gamma^b$ on $E$, namely, we fix homomorphisms $\Gamma^a,\Gamma^b\rightarrow\Perm(E)$. We assume these two homomorphisms send each $[\upalpha_j]\in\Gamma^a$ and each $[\upgamma_l]\in\Gamma^b$ to
\begin{align*}
[\upalpha_j]\mapsto g_j,\qquad [\upbeta_l]\mapsto h_l	
\end{align*}
where $g_0,g_1,\dots,g_N,h_0,h_1,\dots,h_M\in\Perm(E)$, and 
\begin{align}\label{eq55}
\boxed{~g_0h_0=1~}
\end{align}

\subsubsection{$\fk P^{a\#b}$ is the sewing of $\fk P^a$ and $\fk P^b$}\label{lb60}

We define a positively $(N+K)$-pathed Riemann sphere with local coordinates
\begin{align*}
\fk P^{a\#b}=\big(&\Pbb^1_{a\#b};x_1,\dots,x_N,y_1,\dots,y_K;\eta_1,\dots,\eta_N,\varpi_1,\dots,\varpi_K;\nonumber\\
&\upgamma_1\upgamma_0^{-1},\dots,\upgamma_N\upgamma_0^{-1},\updelta_1\updelta_0^{-1},\dots,\updelta_K\updelta_0^{-1}\big).	
\end{align*}
as follows. Without the paths, $\fk P^{a\#b}$ is the sewing of $\fk P^a\sqcup\fk P^b$ along the pair of points $x_0,y_0$ using the local coordinates $\eta_0,\varpi_0$. (Cf. Sec. \ref{lb28}.) Here are the details: we remove $F^a$ from the $\Pbb^1_a$, remove $F^b$ from $\Pbb^1_b$, and glue the remaining parts by identifying $W_0\setminus F^a$ and $M_0\setminus F^b$ using the relation
\begin{gather*}
x\in W_0\setminus F^a\text{ equals }y\in M_0\setminus F^b\\
\Updownarrow\\
\eta_0(x)\varpi_0(y)=1.
\end{gather*}
This gives us a new Riemann surface $\Pbb^1_{a\#b}\simeq\Pbb^1$. Since, by our assumption, $x_1,\dots,x_N$, $W_1,\dots,W_N$, and $\upgamma_0,\upgamma_1,\dots,\upgamma_N$ are disjoint from $F^a$, they can be regarded as sets/open subsets/paths of $\Pbb^1_{a\#b}$. The same can be said about $y_1,\dots,y_K$, $M_1,\dots,M_K$, and $\updelta_0,\updelta_1,\dots,\updelta_K$. Moreover, for each $1\leq j\leq N,1\leq k\leq K$, $\eta_j\in\scr O(W_j)$ and $\varpi_l\in\scr O(M_l)$ can be viewed as local coordinates of $\Pbb^1_{a\#b}$ at $x_j,y_l$ respectively.

By \eqref{eq54}, $\upgamma_0(0)$ and $\updelta_0(0)$ become the same point of $\Pbb^1_{a\#b}$ after gluing, which we denote by $\bigvarstar$. We record this definition:
\begin{align*}
\boxed{~\bigvarstar=\upgamma_0(0)=\updelta_0(0)\in\Pbb^1_{a\#b}~}	
\end{align*}
Set
\begin{align*}
\Sbf^{a\#b}=\{x_1,\dots,x_N,y_1,\dots,y_K\}\subset \Pbb^1_{a\#b}.	
\end{align*}
Then $\bigvarstar$ is the common end point of the paths $\upgamma_1\upgamma_0^{-1},\dots,\upgamma_N\upgamma_0^{-1},\updelta_1\updelta_0^{-1},\dots,\updelta_K\updelta_0^{-1}$ in $\Pbb^1_{a\#b}\setminus\Sbf^{a\#b}$.

We define
\begin{align*}
\Gamma^{a\#b}=\pi_1\big(\Pbb^1_{a\#b}\setminus\Sbf^{a\#b},\bigvarstar \big).	
\end{align*}
For each $1\leq j\leq N,1\leq k\leq K$, we let
\begin{gather}
\upalpha_j^\#:=\upgamma_0\upalpha_j\upgamma_0^{-1}=	\upgamma_0\upgamma_j^{-1}\upepsilon_j^a\upgamma_j\upgamma_0^{-1},\qquad\upbeta_l^\#:=\updelta_0\upbeta_l\updelta_0^{-1}=	\updelta_0\updelta_l^{-1}\upepsilon_j^b\updelta_l\updelta_0^{-1},\label{eq60}
\end{gather}
and let $[\upalpha_j^\#],[\upbeta_l^\#]\in\Gamma^{a\#b}$ be their homotopy classes (on $\Pbb^1_{a\#b}\setminus\Sbf^{a\#b}$). By \eqref{eq86}, \eqref{eq55}, and Van Kampen Theorem, $\Gamma^{a\#b}$ is generated by $[\upalpha_j^\#],[\upbeta_l^\#]$ for all $l,k\geq 1$ (note that this is required in the definition of positive pathed Riemann spheres; see \eqref{eq87}), and we can define a unique action 
\begin{align}
\boxed{~\Gamma^{a\#b}\curvearrowright E:[\upalpha_j^\#]\mapsto g_j,	[\upbeta_l^\#]\mapsto h_l~}\label{eq59}
\end{align}

\subsubsection[$\fk X^a$ and $\fk X^b$ are the permutation coverings of $\fk P^a$ and $\fk P^b$]{$\fk X^a$ and $\fk X^b$ are the permutation coverings of $\fk P^a$ and $\fk P^b$ associated to $\Gamma^a,\Gamma^b\curvearrowright E$}

As in Sec. \ref{lb23}, we can construct permutation coverings 
\begin{align*}
\big(\fk X^a,\varphi^a:C^a\rightarrow\Pbb^1_a,\Psi^a_{\upgamma_\blt(1)}\big),\qquad \big(\fk X^b,\varphi^b:C^b\rightarrow\Pbb^1_b,\Psi^b_{\updelta_\blt(1)}\big)
\end{align*}
of $\fk P^a$ and $\fk P^b$ respectively, where
\begin{gather*}
\fk X^a=	(C^a;\Upsilon^a(\bk{g_j}\tipae);\wtd\eta_{j,\tipae}\in\scr O(\wtd W_{j,\tipae})\text{ for all }0\leq j\leq N,\tipae\in E(g_j)),\\
\fk X^b=	(C^b;\Upsilon^b(\bk{h_l}\tipae);\wtd\varpi_{l,\tipae}\in\scr O(\wtd M_{l,\tipae})\text{ for all }0\leq l\leq K,\tipae\in E(h_l)).		
\end{gather*}
We recall some details for the readers' convenience.  As in Thm. \ref{lb4} and Cor. \ref{lb6},  we define $\Psi^a,\Upsilon^a$ for $\fk P^a$ and $\fk X^a$, and define $\Psi^b,\Upsilon^b$ for $\fk P^b$ and $\fk X^b$. For each $0\leq j\leq N$ (resp. $0\leq k\leq K$), $E(g_j)$ (resp. $E(h_l)$) is the set marked points of $\bk{g_j}$-orbits (resp. $\bk{h_l}$-orbits). For each $\tipae\in E(g_j)$ (resp. $\tipae\in E(h_l)$), we let $\wtd W_{j,\tipae}$ (resp. $\wtd M_{l,\tipae}$) be the connected component of $W_j$ (resp. $M_l$) containing $\Upsilon^a(\bk{g_j}\tipae)$ (resp. $\Upsilon^b(\bk{h_l}\tipae)$). Let
\begin{align*}
k_{j,\tipae}=|\bk{g_j}\tipae|\qquad\text{resp.}\qquad\tipak_{l,\tipae}=|\bk{h_l}\tipae|.	
\end{align*}
Then, as in \eqref{eq25}, $\wtd\eta_{j,\tipae}$ (resp. $\wtd\varpi_{l,\tipae}$) is determined by the fact that the diagram
\begin{equation}\label{eq57}
\begin{tikzcd}
	\wtd W_{j,\tipae} \arrow[r, "\wtd\eta_{j,\tipae}","\simeq"'] \arrow[d, "\varphi^a"]
	& \mc D_{(r_j)^{1/k_{j,\tipae}}} \arrow[d, "z^{k_{j,\tipae}}"] \\
	W_j \arrow[r,  "\eta_j","\simeq"']
	&  \mc D_{r_j}
\end{tikzcd}
\qquad \text{resp.}\qquad
\begin{tikzcd}
	\wtd M_{l,\tipae} \arrow[r, "\wtd\varpi_{j,\tipae}","\simeq"'] \arrow[d, "\varphi^b"]
	& \mc D_{(\rho_l)^{1/\tipak_{l,\tipae}}} \arrow[d, "z^{\tipak_{l,\tipae}}"] \\
	M_l \arrow[r,  "\varpi_l","\simeq"']
	&  \mc D_{\rho_l}
\end{tikzcd}	
\end{equation}
commutes for some $r_j>0$ (resp. $\rho_l>0$), and that (similar to \eqref{eq28})
\begin{align}
\wtd\eta_{j,\tipae}\circ\Psi^a_{\upgamma_j}(\tipae)>0\qquad\text{resp.}\qquad \wtd\varpi_{l,\tipae}\circ\Psi^b_{\updelta_l}(\tipae)>0.\label{eq58}	
\end{align}

In addition to the above conditions (which we have assumed before), for the purpose of sewing $\fk X^a$ and $\fk X^b$, we also assume that $\bk{g_0}$-orbits and $\bk{h_0}$-orbits have the same set of marked points, i.e.,
\begin{align}
\boxed{~E(g_0)=E(h_0)~}
\end{align} 
This is possible, since we have assumed $g_0=h_0^{-1}$ (Eq. \eqref{eq55}). Thus, for each $\tipae\in E(g_0)=E(h_0)$, we have
\begin{align*}
k_{0,\tipae}=\tipak_{0,\tipae}.	
\end{align*}

\subsubsection{$\fk X^{a\#b}$ is the sewing of $\fk X^a$ and $\fk X^b$}\label{lb43}

We now sew $\fk X^a\sqcup\fk X^b$ along all the pairs $\Upsilon^a(\bk{g_0}\tipae),\Upsilon^b(\bk{h_0}\tipae)$ (for all $\tipae\in E(g_0)=E(h_0)$) to obtain
\begin{align*}
\fk X^{a\#b}=	\big(&C^{a\#b};\Upsilon^a(\bk{g_j}\tipae),\Upsilon^b(\bk{h_l}\tipae');\wtd\eta_{j,\tipae}\text{ and }\wtd\varpi_{l,\tipae'}\nonumber\\
&\text{ for all }1\leq j\leq N,1\leq l\leq K,\tipae\in E(g_j),\tipae'\in E(h_l)\big).	
\end{align*}
Thus, for each $\tipae\in E(g_0)=E(h_0)$, we remove closed subsets
\begin{align*}
\wtd F_\tipae^a:=\big\{\wtd x\in\wtd W_{0,\tipae}:|\wtd\eta_{0,\tipae}(\wtd x)|^{k_{0,\tipae}}\equiv|\eta_0\circ\varphi^a(\wtd x)|\leq 1/\rho_0\big\}
\end{align*}
from $\wtd W_{0,\tipae}$ and 
\begin{align*}
\wtd F_\tipae^b:=\big\{\wtd y\in\wtd M_{0,\tipae}:|\wtd\varpi_{0,\tipae}(\wtd y)|^{k_{0,\tipae}}\equiv|\varpi_0\circ\varphi^b(\wtd y)|\leq 1/r_0\big\},
\end{align*}
from $\wtd M_{0,\tipae}$, and glue the remaining parts such that
\begin{gather*}
\wtd x\in \wtd W_{0,\tipae}\setminus \wtd F_\tipae^a\text{ equals }\wtd y\in \wtd M_{0,\tipae}\setminus F_\tipae^b\\
\Updownarrow\\
\wtd \eta_{0,\tipae}(\wtd x)\wtd\varpi_{0,\tipae}(\wtd y)=1.
\end{gather*}
Thus we obtained $C^{a\#b}$ as the sewing of $C^a$ and $C^b$.

This gluing process is compatible with $\varphi^a$ and $\varphi^b$. Thus, we obtain a holomorphic surjective
\begin{align*}
\varphi^{a\#b}:C^{a\#b}\rightarrow \Pbb^1_{a\#b}	
\end{align*}
such that
\begin{gather*}
\varphi^{a\#b}=\varphi^a\text{ when restricted to }C^a\Big\backslash\bigcup_{\tipae\in E(g_0)}\wtd F_\tipae^a,	\\
\varphi^{a\#b}=\varphi^b\text{ when restricted to }C^b\Big\backslash\bigcup_{\tipae\in E(h_0)}\wtd F_\tipae^b.
\end{gather*}

Since $\Upsilon^a(\bk{g_j}\tipae),\Upsilon^b(\bk{h_l}\tipae')$ (for all $j\geq 1,k\geq 1,\tipae\in E(g_j),\tipae'\in E(h_j)$) are outside any $\wtd F^a_e,\wtd F^b_e$ (where $e\in E(g_0)$), we can define these points as marked points of $\fk X^{a\#b}$. Thus, we can also view $\wtd\eta_{j,\tipae},\wtd\varpi_{l,\tipae'}$ as the local coordinates of $\fk X^{a\#b}$ at these two points points, defined on $\wtd W_{j,\tipae},\wtd M_{l,\tipae'}\subset C^{a\#b}$.

\subsubsection[$\fk X^{a\#b}$ is the permutation covering of $\fk P^{a\#b}$]{$\fk X^{a\#b}$ is the permutation covering of $\fk P^{a\#b}$ associated to $\Gamma^{a\#b}\curvearrowright E$ and $E(g_\blt),E(h_\blt)$}

Since $\upgamma_0(0)$ and $\updelta_0(0)$ are outside $F^a,F^b$ (by \eqref{eq90}), their preimages $\Psi_{\upgamma_0}^a(e)$ and $\Psi_{\updelta_0}^b(e)$ (for all $e\in E$) are outside $\wtd F^a_\tipae,\wtd F^b_\tipae$ (for every $\tipae\in E(g_0)=E(h_0)$). Moreover:

\begin{lm}
$\Psi_{\upgamma_0}^a(e)$ and $\Psi_{\updelta_0}^b(e)$ are the same point on $C^{a\#b}$.
\end{lm}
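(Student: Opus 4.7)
The plan is to pin down the component of the preimage in which each point sits, compute its value under the local coordinate attached to that component, and verify the sewing relation.

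First I would locate the components. By Prop.~\ref{lb12} applied to $\varphi^a$ with $j=0$, the point $\Psi^a_{\upgamma_0}(e)$ lies in the unique component $\wtd W_{0,\tipae}$ of $(\varphi^a)^{-1}(W_0)$ with $\tipae\in E(g_0)$ the marked point of the $g_0$-orbit of $e$; the analogous statement for $\varphi^b$ places $\Psi^b_{\updelta_0}(e)$ in $\wtd M_{0,\tipae'}$, where $\tipae'\in E(h_0)$ is the marked point of the $h_0$-orbit of $e$. Since $g_0h_0=1$ the $g_0$- and $h_0$-orbits of $E$ coincide, and since we assumed $E(g_0)=E(h_0)$ we get $\tipae=\tipae'$. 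Set $k=k_{0,\tipae}=\tipak_{0,\tipae}$, and write $e=g_0^m\tipae=h_0^{-m}\tipae$ for some $m\in\Zbb$.

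Next I would compute the two local-coordinate values. Running the same calculation as in Step~2 of Sec.~\ref{lb1} (the identification of $\varphi^*_{\upgamma_j}z$ at $z=\upgamma_j(0)$), but now for the sewing point $\upgamma_0(0)$ with the positivity condition \eqref{eq58} in place of \eqref{eq28}, one gets
\begin{align*}
\wtd\eta_{0,\tipae}\bigl(\Psi^a_{\upgamma_0}(g_0^m\tipae)\bigr)=\tipaomega_k^m\,\bigl(\eta_0(\upgamma_0(0))\bigr)^{1/k},
\end{align*}
where $\eta_0(\upgamma_0(0))>0$ has argument $0$ by \eqref{eq22} and the $k$-th root is taken with argument $0$. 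The identical argument applied to $\fk P^b,\fk X^b,h_0$ gives
\begin{align*}
\wtd\varpi_{0,\tipae}\bigl(\Psi^b_{\updelta_0}(h_0^{-m}\tipae)\bigr)=\tipaomega_k^{-m}\,\bigl(\varpi_0(\updelta_0(0))\bigr)^{1/k}.
\end{align*}

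Finally I would verify the sewing identification. Multiplying the two expressions,
\begin{align*}
\wtd\eta_{0,\tipae}\bigl(\Psi^a_{\upgamma_0}(e)\bigr)\cdot\wtd\varpi_{0,\tipae}\bigl(\Psi^b_{\updelta_0}(e)\bigr)=\bigl(\eta_0(\upgamma_0(0))\cdot\varpi_0(\updelta_0(0))\bigr)^{1/k}=1
\end{align*}
by the gluing hypothesis \eqref{eq54}. To apply the gluing identification in Sec.~\ref{lb43}, one still needs to check that neither point has been removed, i.e.\ that $\Psi^a_{\upgamma_0}(e)\notin\wtd F^a_\tipae$ and $\Psi^b_{\updelta_0}(e)\notin\wtd F^b_\tipae$; this is immediate from $|\wtd\eta_{0,\tipae}(\Psi^a_{\upgamma_0}(e))|^k=\eta_0(\upgamma_0(0))=1/\varpi_0(\updelta_0(0))>1/\rho_0$, and symmetrically for the other, because $\varpi_0(\updelta_0(0))\in(0,\rho_0)$. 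The two points therefore satisfy the gluing relation $\wtd\eta_{0,\tipae}(\wtd x)\wtd\varpi_{0,\tipae}(\wtd y)=1$ on $(\wtd W_{0,\tipae}\setminus\wtd F^a_\tipae)$ and $(\wtd M_{0,\tipae}\setminus\wtd F^b_\tipae)$, so they represent the same point of $C^{a\#b}$.

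The only mild subtlety, and the place a careful reader should linger, is keeping the two conventions consistent: the roles of $\Gamma^a$ versus $\Gamma^b$ and of $g_0$ versus $h_0=g_0^{-1}$ must be tracked so that the exponent of $\tipaomega_k$ for the $b$-side comes out $-m$ rather than $+m$; once that sign bookkeeping is right, the rest is a one-line verification using \eqref{eq54}.
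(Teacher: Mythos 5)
Your proof is correct and follows essentially the same route as the paper: write $e=g_0^m\tipae=h_0^{-m}\tipae$, compute $\wtd\eta_{0,\tipae}\big(\Psi^a_{\upgamma_0}(e)\big)=\tipaomega_k^m\,d^{1/k}$ and $\wtd\varpi_{0,\tipae}\big(\Psi^b_{\updelta_0}(e)\big)=\tipaomega_k^{-m}\,d^{-1/k}$ using the positivity conditions, and conclude from \eqref{eq54} that the product of the two coordinates is $1$, so the points are identified under the gluing. The only cosmetic differences are that you invoke the Step-2 computation of Sec.~\ref{lb1} instead of re-deriving the rotation by lifting $\upepsilon_0^{\pm m}$, and you re-check membership outside $\wtd F^a_\tipae,\wtd F^b_\tipae$ via \eqref{eq54} where the paper had already settled this from \eqref{eq90}; both are fine.
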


\begin{proof}
Write $e=g_0^m\tipae=h_0^{-m}\tipae$ for some $m\in\Zbb,\tipae\in E(g_0)=E(h_0)$. Let $\wtd x=\Psi_{\upgamma_0}^a(e)$ and $\wtd y=\Psi_{\updelta_0}^b(e)$. Set $k=k_{0,\tipae}$. We have
\begin{align*}
\wtd x=\Psi_{\upgamma_0}^a([\upalpha_0]^m\tipae)\xlongequal{\eqref{eq7}}\Psi^a_{\upgamma_0\upalpha_0^m}(\tipae)\xlongequal{\eqref{eq56}}\Psi^a_{(\upepsilon_0^a)^m\upgamma_0}(\tipae).	
\end{align*}
So $\wtd x$ is the initial point of the lift of $(\upepsilon_0^a)^m$ to $C^a$ ending at $\Psi^a_{\upgamma_0}(\tipae)$. Since $\eta_0\circ\upepsilon_0^a$ is  an anticlockwise circle going by $2\pi$, $\wtd\eta_{0,\tipae}$ sends the lift of $\upepsilon_0^a$ to the anticlockwise arc going by $2\pi/k$. Therefore
\begin{align*}
\wtd\eta_{0,\tipae}(\wtd x)=\tipaomega_k^m\cdot\wtd\eta_{0,\tipae}\big(\Psi_{\upgamma_0}^a(\tipae)\big).	
\end{align*}
A similar argument shows
\begin{align*}
\wtd\varpi_{0,\tipae}(\wtd y)=\tipaomega_k^{-m}\cdot\wtd\varpi_{0,\tipae}\big(\Psi_{\updelta_0}^b(\tipae)\big).	
\end{align*}

Set $d=\eta_0(\upgamma_0(0))=\varpi_0(\updelta_0(0))^{-1}$ (cf. \eqref{eq54}), which is positive. Since $\Psi_{\upgamma_0}^a(\tipae)\in(\varphi^a)^{-1}(\upgamma_0(0))$, by \eqref{eq57},
\begin{align*}
\wtd\eta_{0,\tipae}\big(\Psi_{\upgamma_0}^a(\tipae)\big)^k=\eta_0\circ\varphi^a\big(\Psi_{\upgamma_0}^a(\tipae)\big)=\eta_0\circ\upgamma_0(0)=d.
\end{align*}
Thus, by \eqref{eq58}, we have $\wtd\eta_{0,\tipae}\big(\Psi_{\upgamma_0}^a(\tipae)\big)=d^{1/k}$. A similar argument shows $\wtd\varpi_{0,\tipae}\big(\Psi_{\updelta_0}^b(\tipae)\big)=d^{-1/k}$. Therefore $\wtd\eta_{0,\tipae}(\wtd x)=1/\wtd\varpi_{0,\tipae}(\wtd y)$, which shows that $\wtd x$ and $\wtd y$ are identical after sewing.
\end{proof}

It follows that the preimage of $\bigvarstar=\upgamma_0(0)=\updelta_0(0)$ under $\varphi^{a\#b}$ consists of 
\begin{align}
\Psi^{a\#b}_\bigvarstar(e):= \Psi_{\upgamma_0}^a(e)=\Psi_{\updelta_0}^b(e)\label{eq61}	
\end{align}
for all $e\in E$.  This gives us a bijection
\begin{align*}
\Psi_\bigvarstar^{a\#b}:E\rightarrow(\varphi^{a\#b})^{-1}(\bigvarstar).	
\end{align*}
The following lemma is an easy consequence of Thm. \ref{lb4}-(b).

\begin{lm}\label{lb33}
Let $\uplambda$ be a path in $\Pbb^1_{a\#b}\setminus\Sbf^{a\#b}$ ending at $\bigvarstar$. Choose $e\in E$, and let $\wtd\uplambda$ be the lift of $\uplambda$ to $C^{a\#b}$ ending at $\Psi_\bigvarstar^{a\#b}(e)$. If $\uplambda$ is in $\Pbb^1_a\backslash F^a$ (resp. in $\Pbb^1_b\backslash F^b$), then the initial point of $\wtd\uplambda$ is $\Psi^a_{\uplambda\upgamma_0}(e)$ (resp. $\Psi^b_{\uplambda\updelta_0}(e)$).
\end{lm}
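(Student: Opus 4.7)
The plan is to reduce the assertion to Theorem \ref{lb4}-(b) applied to the covering $\varphi^a:C^a\to\Pbb^1_a$, using the fact that when $\uplambda$ stays in $\Pbb^1_a\setminus F^a$, its lift in $C^{a\#b}$ never enters the region coming from $C^b$ and can therefore be identified with a lift in $C^a$. I will treat the first case; the case $\uplambda\subset\Pbb^1_b\setminus F^b$ is completely symmetric.

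First I would observe that $\uplambda$ can be regarded as a path in $\Pbb^1_a\setminus\Sbf^a$. Indeed, by hypothesis $\uplambda$ avoids $F^a$, which contains $x_0$; and $\uplambda\subset\Pbb^1_{a\#b}\setminus\Sbf^{a\#b}$ forces $\uplambda$ to avoid $x_1,\dots,x_N$. Under the identification of $\Pbb^1_a\setminus F^a$ with an open subset of $\Pbb^1_{a\#b}$ used in Subsec.~\ref{lb60}, the path $\uplambda$ therefore becomes a genuine path in $\Pbb^1_a\setminus\Sbf^a$ ending at $\upgamma_0(0)=\bigvarstar$.

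Next, by the construction in Subsec.~\ref{lb43}, the restriction of $\varphi^{a\#b}$ to the open subset $C^a\setminus\bigcup_{\tipae\in E(g_0)}\wtd F^a_\tipae\subset C^{a\#b}$ coincides with $\varphi^a$, and the preimage under $\varphi^{a\#b}$ of the open set $\Pbb^1_a\setminus F^a\subset\Pbb^1_{a\#b}$ equals exactly $C^a\setminus\bigcup_{\tipae}\wtd F^a_\tipae$. Since the endpoint $\Psi^{a\#b}_\bigvarstar(e)$ equals $\Psi^a_{\upgamma_0}(e)$ by \eqref{eq61}, and since this point lies in $C^a\setminus\bigcup_\tipae\wtd F^a_\tipae$, the lift $\wtd\uplambda$ in $C^{a\#b}$ must lie entirely in this open subset. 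Consequently $\wtd\uplambda$ is nothing other than the unique lift of $\uplambda$ through $\varphi^a:C^a\to\Pbb^1_a$ ending at $\Psi^a_{\upgamma_0}(e)$.

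Finally, I would apply Theorem \ref{lb4}-(b) to $\fk P^a$ with $\uplambda_1=\uplambda$ and $\uplambda_2=\upgamma_0$: since $\upgamma_0\in\Lambda_{\upgamma_\blt(1)}$ with $\upgamma_0(0)=\bigvarstar=\uplambda(1)$, the lift of $\uplambda$ to $C^a$ terminating at $\Psi^a_{\upgamma_0}(e)$ begins at $\Psi^a_{\uplambda\upgamma_0}(e)$, which is the desired conclusion. The argument is essentially bookkeeping; the only point requiring care is ensuring that the lift truly avoids the sewing locus, for which the hypothesis $\uplambda\subset\Pbb^1_a\setminus F^a$ together with the preimage description $(\varphi^{a\#b})^{-1}(\Pbb^1_a\setminus F^a)=C^a\setminus\bigcup_\tipae\wtd F^a_\tipae$ does all the work.
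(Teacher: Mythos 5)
Your argument is correct and is precisely the intended one: the paper states the lemma as "an easy consequence of Thm.~\ref{lb4}-(b)", and your proof fills in exactly that bookkeeping — the hypothesis $\uplambda\subset\Pbb^1_a\setminus F^a$ together with $(\varphi^{a\#b})^{-1}(\Pbb^1_a\setminus F^a)=C^a\setminus\bigcup_\tipae\wtd F^a_\tipae$ (on which $\varphi^{a\#b}=\varphi^a$) forces the lift to live in the $C^a$ part, after which Thm.~\ref{lb4}-(b) with $\uplambda_1=\uplambda$, $\uplambda_2=\upgamma_0$ and the identification $\Psi^{a\#b}_\bigvarstar(e)=\Psi^a_{\upgamma_0}(e)$ from \eqref{eq61} gives the claim.
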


The main result of this section is the following theorem.

\begin{thm}\label{lb46}
The bijection $\Psi^{a\#b}_\bigvarstar$ is $\Gamma^{a\#b}$-covariant, and 
\begin{align*}
\big(\fk X^{a\#b},\varphi^{a\#b}:C^{a\#b}\rightarrow\Pbb^1_{a\#b},\Psi^{a\#b}_\bigvarstar\big)	
\end{align*}
is a permutation covering of $\fk P^{a\#b}$ associated to the action $\Gamma^{a\#b}\curvearrowright E$ (defined by \eqref{eq59}) and $E(g_j),E(h_l)$ (for all $1\leq j\leq N$ and $1\leq l\leq K$). 

Moreover, let $\Psi^{a\#b}$ be the trivialization determined by $\Psi^{a\#b}_\bigvarstar$, and define $\Upsilon^{a\#b}$ as in Cor. \ref{lb6}.  Then for any path $\uplambda$ in $\Pbb^1_{a\#b}\setminus\Sbf^{a\#b}$ ending at $\bigvarstar$, if $\uplambda$ is in $\Pbb^1_a\backslash F^a$ (resp. in $\Pbb^1_b\backslash F^b$), then
\begin{align}
\Psi_\uplambda^{a\#b}(e)=\Psi^a_{\uplambda\upgamma_0}(e)\qquad\text{resp.}\qquad\Psi_\uplambda^{a\#b}(e)=\Psi^b_{\uplambda\updelta_0}(e).\label{eq62}
\end{align}
For any $1\leq j\leq N$ and $1\leq l\leq K$, we have
\begin{align}
\Upsilon^{a\#b}(\bk{g_j}e)=\Upsilon^a(\bk{g_j}e),\qquad 	\Upsilon^{a\#b}(\bk{h_l}e)=\Upsilon^b(\bk{h_l}e).\label{eq63}
\end{align}
\end{thm}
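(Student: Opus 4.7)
The plan is to verify the three assertions in order: (i) $\Gamma^{a\#b}$-covariance of $\Psi^{a\#b}_\bigvarstar$, (ii) the identity \eqref{eq62}, and (iii) the identification of the branch points via \eqref{eq63} together with the fact that the local coordinates $\wtd\eta_{j,\tipae},\wtd\varpi_{l,\tipae'}$ are precisely the ones prescribed by the permutation-covering recipe of Subsec.~\ref{lb31} applied to $\fk P^{a\#b}$.

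For (i), since by \eqref{eq86} and Van~Kampen's theorem the group $\Gamma^{a\#b}$ is generated by the classes $[\upalpha_j^\#]$ and $[\upbeta_l^\#]$, it suffices to check covariance on these generators. I will treat $[\upalpha_j^\#]$; the case of $[\upbeta_l^\#]$ is symmetric. The loop $\upalpha_j^\# = \upgamma_0\upalpha_j\upgamma_0^{-1}$ is contained in $\Pbb^1_a\setminus F^a$ by \eqref{eq90}, so Lem.~\ref{lb33} applies: the lift of $\upalpha_j^\#$ to $C^{a\#b}$ ending at $\Psi^{a\#b}_\bigvarstar(e)=\Psi^a_{\upgamma_0}(e)$ has initial point $\Psi^a_{\upalpha_j^\#\upgamma_0}(e)$. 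Since $\upalpha_j^\#\upgamma_0 \simeq \upgamma_0\upalpha_j$ rel endpoints (in $\Pbb^1_a\setminus\Sbf^a$), Thm.~\ref{lb4}-\eqref{eq7} gives
\begin{align*}
\Psi^a_{\upalpha_j^\#\upgamma_0}(e)=\Psi^a_{\upgamma_0\upalpha_j}(e)=\Psi^a_{\upgamma_0}([\upalpha_j]e)=\Psi^a_{\upgamma_0}(g_je)=\Psi^{a\#b}_\bigvarstar(g_je),
\end{align*}
which is exactly the covariance condition under the $\Gamma^{a\#b}$-action defined in \eqref{eq59}. Then (ii) follows at once: if $\uplambda$ is a path in $\Pbb^1_a\setminus F^a$ ending at $\bigvarstar$, the trivialization $\Psi^{a\#b}_\uplambda(e)$ is by Thm.~\ref{lb4}-(b) the initial point of the lift of $\uplambda$ to $C^{a\#b}$ ending at $\Psi^{a\#b}_\bigvarstar(e)$, which by Lem.~\ref{lb33} is $\Psi^a_{\uplambda\upgamma_0}(e)$; the $b$-side argument is symmetric.

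For (iii), fix $1\leq j\leq N$ and $\tipae\in E(g_j)$. Applying \eqref{eq62} with $\uplambda=\upgamma_j\upgamma_0^{-1}$ (which lies in $\Pbb^1_a\setminus F^a$ by \eqref{eq90}) gives $\Psi^{a\#b}_{\upgamma_j\upgamma_0^{-1}}(e')=\Psi^a_{\upgamma_j}(e')$ for every $e'\in\bk{g_j}\tipae$. By Prop.~\ref{lb12} and Cor.~\ref{lb6}, the set $\Psi^a_{\upgamma_j}(\bk{g_j}\tipae)$ sits inside the unique connected component $\wtd W_{j,\tipae}$ of $(\varphi^a)^{-1}(W_j)$, whose center is $\Upsilon^a(\bk{g_j}\tipae)$. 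Since $W_j\cap F^a=\emptyset$, the component $\wtd W_{j,\tipae}$ of $(\varphi^a)^{-1}(W_j)$ is also a connected component of $(\varphi^{a\#b})^{-1}(W_j)$, so by the defining property of $\Upsilon^{a\#b}$ (Cor.~\ref{lb6}) the point $\Upsilon^{a\#b}(\bk{g_j}\tipae)$ must coincide with $\Upsilon^a(\bk{g_j}\tipae)$; this establishes \eqref{eq63}, and the $h_l$-case is the same.

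It remains to check that the local coordinates $\wtd\eta_{j,\tipae}$ attached to the marked points of $\fk X^{a\#b}$ match those the permutation-covering construction of Subsec.~\ref{lb31} would produce for $\fk P^{a\#b}$ with action \eqref{eq59} and orbit-markers $E(g_j)$, $E(h_l)$. On the relevant neighborhood $\wtd W_{j,\tipae}$, one has $\varphi^{a\#b}=\varphi^a$, so diagram \eqref{eq57} remains valid with $\varphi^a$ replaced by $\varphi^{a\#b}$, giving the required relation $\wtd\eta_{j,\tipae}^{k_{j,\tipae}}=\eta_j\circ\varphi^{a\#b}$. The positivity condition analogous to \eqref{eq28}, namely $\wtd\eta_{j,\tipae}\circ\Psi^{a\#b}_{\upgamma_j\upgamma_0^{-1}}(\tipae)>0$, is then equivalent to $\wtd\eta_{j,\tipae}\circ\Psi^a_{\upgamma_j}(\tipae)>0$ by \eqref{eq62}, which holds by \eqref{eq58}. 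The main technical point (and the only place one might be tempted to slip) is the bookkeeping of paths when passing through the gluing locus; the careful statement of Lem.~\ref{lb33}, together with the identity $\Psi^a_{\upgamma_0}(e)=\Psi^b_{\updelta_0}(e)=\Psi^{a\#b}_\bigvarstar(e)$ already proved in \eqref{eq61}, is designed precisely to make this bookkeeping mechanical.
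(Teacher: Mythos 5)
Your proof is correct and follows essentially the same route as the paper's: establish covariance on the generators $[\upalpha_j^\#],[\upbeta_l^\#]$ via Lemma~\ref{lb33} and $\Gamma^{a\#b}=\bk{[\upalpha_j^\#],[\upbeta_l^\#]}$, deduce \eqref{eq62} from Thm.~\ref{lb4}-(b) and Lemma~\ref{lb33}, and then obtain \eqref{eq63} by specializing \eqref{eq62} to $\uplambda=\upgamma_j\upgamma_0^{-1}$ and invoking Cor.~\ref{lb6}. The only difference is cosmetic — you make explicit the homotopy $\upalpha_j^\#\upgamma_0\simeq\upgamma_0\upalpha_j$ and spell out the verification that the local coordinates $\wtd\eta_{j,\tipae},\wtd\varpi_{l,\tipae'}$ satisfy the prescriptions of Subsec.~\ref{lb31} (including the positivity condition via \eqref{eq58} and \eqref{eq62}), steps the paper dispatches as "clear from the construction."
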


\begin{proof}
Note that for any $\upalpha_j^\#$ and $\upbeta_l^\#$ defined by \eqref{eq60} (where $j,l\geq 1$), applying  Lemma \ref{lb33} to $\uplambda=\upalpha_j^\#,\upbeta_l^\#$, we see that the lift of $\upalpha_j^\#$ (resp. $\upbeta_l^\#$) to $C^{a\#b}$ ending at $\Psi_\bigvarstar^{a\#b}(e)$ must start from $\Psi^a_{\upgamma_0\upalpha_j}(e)=\Psi^a_{\upgamma_0}([\upalpha_j]e)=\Psi^{a\#b}_\bigvarstar(g_je)$ (resp. $\Psi^{a\#b}_\bigvarstar(h_le)$). Thus, for any path $\upmu$ in $\Pbb^1_{a\#b}\setminus\Sbf^{a\#b}$ from and to $\bigvarstar$, which is homotopic to a product of powers of $\upalpha_j^\#,\upbeta_l^\#$ (for all $j,l\geq 1$), the initial point of the lift of $\upmu$ to $C^{a\#b}$ ending at $\Psi^{a\#b}_\bigvarstar(e)$ is  $\Psi^{a\#b}_\bigvarstar(ge)$, where $g$ is the same product of powers of $g_j,h_l$, which is $[\upmu]$. So $\Psi^{a\#b}_\bigvarstar$ is $\Gamma^{a\#b}$-covariant. 

By the construction of local coordinates, it is clear that  $\fk X^{a\#b}$ is the permutation covering of $\fk P^{a\#b}$ associated to the action of $\Gamma^{a\#b}$ and all $E(g_j),E(h_l)$ (where $j,l\geq 1$).

For any path $\uplambda$ in $\Pbb^1_a\setminus F^a$ ending at $\bigvarstar$, by Thm. \ref{lb4}-(b) (applied to $\Psi^{a\#b}$), we know that $\Psi_\uplambda^{a\#b}(e)$ is the initial point of the lift $\wtd\uplambda$ of $\uplambda$ to $C^{a\#b}$ ending at $\Psi^{a\#b}_\bigvarstar(e)=\Psi^a_{\upgamma_0}(e)$. Since $\wtd\uplambda$ is also the lift of $\uplambda$ to $C^a$  ending at $\Psi^a_{\upgamma_0}(e)$,  $\Psi_\uplambda^{a\#b}(e)=\wtd\uplambda(0)$ must equal $\Psi^a_{\uplambda\upgamma_0}(e)$ by Thm. \ref{lb4}-(b).   This proves the first half of \eqref{eq62}. The second half follows from a similar argument. 

By Cor. \ref{lb6} and the definition of the paths of $\fk P^{a\#b}$, $\Upsilon^{a\#b}(\bk{g_j}e)$ and $\Psi^{a\#b}_{\upgamma_j\upgamma_0^{-1}}(\bk{g_j}e)$ are in the same connected component of $(\varphi^{a\#b})^{-1}(W_j)=(\varphi^a)^{-1}(W_j)$. Similarly, $\Upsilon^a(\bk{g_j}e)$ and  $\Psi^a_{\upgamma_j}(\bk{g_j}e)$ (which equals $\Psi^{a\#b}_{\upgamma_j\upgamma_0^{-1}}(\bk{g_j}e)$ by \eqref{eq62}) are in the same connected component of $(\varphi^a)^{-1}(W_j)$. This proves the first half of \eqref{eq63}. The second half can be proved in the same way.
\end{proof}

\subsection{Sewing and factorization of permutation-twisted conformal blocks}

In this section, we assume $\Vbb$ is $C_2$-cofinite. Then so is $\Ubb=\Vbb^{\otimes E}$. For each $0\leq j\leq N,0\leq l\leq K$, and for each $\tipae\in E(g_j)$ (resp. $\tipae\in E(h_l)$), we associate a \emph{finitely-generated} $\Vbb$-module $\Wbb_{j,\tipae}$ (resp. $\Mbb_{l,\tipae}$) to the marked point $\Upsilon^a(\bk{g_j}\tipae)$ of $\fk X^a$ (resp. $\Upsilon^b(\bk{h_l}\tipae)$ of $\fk X^b$). Then we have a $g_j$- resp. $h_l$-twisted $\Ubb$-module
\begin{gather*}
\mc W_j=\bigotimes_{\tipae\in E(g_j)}\Wbb_{j,\tipae}\qquad \text{resp.}\qquad 	\mc M_l=\bigotimes_{\tipae\in E(h_l)}\Mbb_{l,\tipae}
\end{gather*}
defined as in Thm. \ref{lb13} associated to the marked point $x_j$ of $\fk P^a$ (resp. $y_l$ of $\fk P^b$). As usual, we set
\begin{align*}
k_{j,\tipae}=|\bk{g_j}\tipae|,\qquad \tipak_{l,\tipae}=|\bk{h_l}\tipae|.	
\end{align*}
According to \eqref{eq73}, the $\wtd L_0^{g_j}$- (resp. $\wtd L_0^{h_j}$-)grading and the $\wtd L_0$-grading are related by the fact that for each $n\in\frac 1{|g_j|}\Nbb$,
\begin{gather}\label{eq74}
\begin{array}{c}
\mc W_j(n)=\bigoplus\limits_{\sum_{\tipae\in E(g_j)}n_{\tipae}/k_{j,\tipae}=n}\bigg(\bigotimes_{\tipae\in E(g_j)}\Wbb_{j,\tipae}(n_\tipae)\bigg)\\[1.5ex]
\mc M_l(n)=\bigoplus\limits_{\sum_{\tipae\in E(h_l)}n_{\tipae}/\tipak_{l,\tipae}=n}\bigg(\bigotimes_{\tipae\in E(h_l)}\Mbb_{l,\tipae}(n_\tipae)\bigg)
\end{array}	
\end{gather}
where all $n_\tipae$ are in $\Nbb$.

We assume that for each $\tipae\in E(g_0)=E(h_0)$,
\begin{align*}
\boxed{~\Mbb_{0,\tipae}=\Wbb_{0,\tipae}'~}
\end{align*}
i.e., $\Wbb_{0,\tipae}$ and $\Mbb_{0,\tipae}$ are the contragredient $\Vbb$-modules of each other.  Then, by Thm. \ref{lb42}, $\mc W_0$ and $\mc M_0$ are the contragredient twisted $\Ubb$-modules of each other.

Let 
\begin{gather*}
\mc W_\blt=\bigotimes_{0\leq j\leq N}\mc W_j,\qquad 	\mc M_\blt=\bigotimes_{0\leq j\leq N}\mc M_j,\\
\mc W_{\blt\backslash 0}=\bigotimes_{1\leq j\leq N}\mc W_j,\qquad 	\mc M_{\blt\backslash 0}=\bigotimes_{1\leq j\leq N}\mc M_j.
\end{gather*}
Let $\CB_{\fk P^a}(\mc W_\blt)$ resp. $\CB_{\fk P^b}(\mc M_\blt)$ be the space of conformal blocks associated to $\fk P^a$ and the twisted $\Ubb$-modules $\mc W_\blt$ (resp. $\fk P^b$ and $\mc M_\blt$).  For each $\uppsi^a\in\CB_{\fk P^a}(\mc W_\blt)$ and $\uppsi^b\in\CB_{\fk P^b}(\mc M_\blt)$, we define, for every $n\in \frac 1{|g_0|}\Nbb=\frac{1}{|h_0|}\Nbb$, a linear functional
\begin{align*}
\uppsi^{a\#b}_n:	\mc W_{\blt\backslash 0}\otimes\mc M_{\blt\backslash 0}\rightarrow \Cbb
\end{align*}
such that for each $w\in \mc W_{\blt\backslash 0}$ and $m\in \mc M_{\blt\backslash 0}$, $\uppsi^{a\#b}_n(w\otimes m)$ is the contraction of the linear functional (noting $\mc W_0'(n)=\mc W_0(n)^*$)
\begin{align*}
\uppsi^a(\cdot \otimes w)\uppsi^b(\cdot\otimes m)\Big|_{\mc W_0(n)\otimes\mc W_0(n)^*}:	\mc W_0(n)\otimes\mc W_0(n)^*\rightarrow\Cbb.
\end{align*}
We say that  \textbf{the sewing $\uppsi^{a\#b}$ converges $q$-absolutely} if there exists $R>1$ such that for each $w\in\mc W_{\blt\backslash 0},m\in\mc M_{\blt\backslash 0}$, 
\begin{align*}
\sum_{n\in\frac 1{|g_0|}\Nbb}\big|\uppsi_n^{a\#b}(w\otimes m)\big|R^n<+\infty.
\end{align*} 
If so, we define
\begin{gather*}
\uppsi^{a\#b}:	\mc W_{\blt\backslash 0}\otimes\mc M_{\blt\backslash 0}\rightarrow \Cbb\\
w\otimes m\mapsto \sum_{\frac 1{|g_0|}\Nbb}\uppsi^{a\#b}_n(w\otimes m).
\end{gather*}

We have $\mc W_\blt=\Wbb_{\blt,\blt}$, $\mc M_\blt=\Mbb_{\blt,\blt}$, $\mc W_{\blt\backslash0}=\Wbb_{\blt\backslash0,\blt}$, $\mc M_{\blt\backslash0}=\Mbb_{\blt\backslash0,\blt}$ where
\begin{gather*}
\Wbb_{\blt,\blt}=\bigotimes_{0\leq j\leq N}\bigotimes_{\tipae\in E(g_j)}\Wbb_{j,\tipae},\qquad 	\Mbb_{\blt,\blt}=\bigotimes_{0\leq l\leq N}\bigotimes_{\tipae\in E(h_l)}\Wbb_{l,\tipae},\\
\Wbb_{\blt\backslash0,\blt}=\bigotimes_{1\leq j\leq N}\bigotimes_{\tipae\in E(g_j)}\Wbb_{j,\tipae},\qquad 	\Mbb_{\blt\backslash0,\blt}=\bigotimes_{1\leq l\leq N}\bigotimes_{\tipae\in E(h_l)}\Wbb_{l,\tipae}.
\end{gather*}
Let $\CB_{\fk X^a}(\Wbb_{\blt,\blt})$ resp. $\CB_{\fk X^b}(\Mbb_{\blt,\blt})$ be the space of conformal blocks associated to $\fk X^a$ and the $\Vbb$-modules $\Wbb_{\blt,\blt}$ (resp. $\fk X^b$ and $\Mbb_{\blt,\blt}$). Then by Thm. \ref{lb14},
\begin{gather*}
\CB_{\fk X^a}(\Wbb_{\blt,\blt})=\CB_{\fk P^a}(\mc W_\blt),\qquad \CB_{\fk X^b}(\Mbb_{\blt,\blt})=\CB_{\fk P^b}(\mc M_\blt).	
\end{gather*}
Consider $\uppsi^a,\uppsi^b$ as elements of $\CB_{\fk P^a}(\mc W_\blt),\CB_{\fk P^b}(\mc M_\blt)$, which we denote by $\upphi^a,\upphi^b$ in order to follow the notation in Thm. \ref{lb14}. Then $\upphi^a\cdot\upphi^b:\Wbb_{\blt,\blt}\otimes\Mbb_{\blt,\blt}\rightarrow\Cbb$ is a conformal block associated to $\fk X^a\sqcup\fk X^b$. Corresponding to the geometric sewing process in Subsec. \ref{lb43}, we can define the algebraic sewing
\begin{align*}
\scr S(\upphi^a\cdot \upphi^b):\Wbb_{\blt\setminus0,\blt}\otimes\Mbb_{\blt\backslash0,\blt}\rightarrow\Cbb	
\end{align*}
as in Subsec. \ref{lb44}, which converges $q$-absolutely by Thm. \ref{lb39}. Moreover, let
\begin{align*}
\CB_{\fk X^{a\#b}}(\Wbb_{\blt\backslash0,\blt}\otimes\Mbb_{\blt\backslash0,\blt})	
\end{align*}
be the space of conformal blocks associated to $\fk X^{a\#b}$ and the corresponding $\Vbb$-modules. For each $1\leq j\leq N$ and $\tipae\in E(g_j)$,   $\Wbb_{j,\tipae}$ is associated to $\Upsilon^{a\#b}(\bk{g_j}\tipae)=\Upsilon^a(\bk{g_j}\tipae)$, and for each $1\leq l\leq K$ and $\tipae\in E(h_l)$, $\Mbb_{l,\tipae}$ is associated to $\Upsilon^{a\#b}(\bk{h_l}\tipae)=\Upsilon^b(\bk{h_l}\tipae)$. (Recall \eqref{eq63}.) Then $\scr S(\upphi^a\cdot \upphi^b)\in\CB_{\fk X^{a\#b}}(\Wbb_{\blt\backslash0,\blt}\otimes\Mbb_{\blt\backslash0,\blt})$ by Thm. \ref{lb37}.

Let $\CB_{\fk P^{a\#b}}(\mc W_{\blt\backslash0}\otimes\mc M_{\blt\backslash0})$ denote the space of conformal blocks associated to $\fk P^{a\#b}$ and the corresponding twisted $\Ubb$-modules. $g_j,\mc W_j$ are associated to $x_j$, and $h_l,\mc M_j$ to $y_l$ (where $1\leq j\leq N,1\leq l\leq K$).

\begin{thm}\label{lb48}
Assume the setting of Sec. \ref{lb59}. Assume that $\Vbb$ is $C_2$-cofinite. Assume that $\Wbb_{j,\tipae}$ (for each $0\leq j\leq N,\tipae\in E(g_j)$) and $\Mbb_{l,\tipae}$ (for each $0\leq l\leq K,\tipae\in E(h_l)$) are finitely-generated $\Vbb$-modules. Choose $\uppsi^a\in\CB_{\fk P^a}(\mc W_\blt)$ and $\uppsi^b\in\CB_{\fk P^b}(\mc W_\blt)$, where the same linear functionals are denoted by $\upphi^a\in\CB_{\fk X^a}(\Wbb_{\blt,\blt})$ and $\upphi^b\in\CB_{\fk X^b}(\Mbb_{\blt,\blt})$. Then 
\begin{enumerate}[label=(\arabic*)]
\item $\uppsi^{a\#b}$ converges $q$-absolutely to an element of $\CB_{\fk P^{a\#b}}(\mc W_{\blt\backslash0}\otimes\mc M_{\blt\backslash0})$.
\item $\uppsi^{a\#b}$ equals $\scr S(\upphi^a\cdot \upphi^b)$ as linear functionals.
\end{enumerate}
\end{thm}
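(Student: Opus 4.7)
The plan is to reduce both assertions to the corresponding assertions for the untwisted sewing $\scr S(\upphi^a\cdot\upphi^b)$ on the permutation covering $\fk X^{a\#b}$. By Main Thm.~\ref{lb14}, $\upphi^a,\upphi^b$ are conformal blocks associated to $\fk X^a,\fk X^b$ and the finitely-generated $\Vbb$-modules $\Wbb_{\blt,\blt},\Mbb_{\blt,\blt}$. Hence $\upphi^a\cdot\upphi^b$ is a conformal block associated to $\fk X^a\sqcup\fk X^b$, and the $C_2$-cofiniteness of $\Vbb$ together with Thm.~\ref{lb39} gives the $q$-absolute convergence of $\scr S(\upphi^a\cdot\upphi^b)$. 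By Thm.~\ref{lb37}, the limit is a conformal block associated to the sewn surface, which by Thm.~\ref{lb46} equals $\fk X^{a\#b}$, the permutation covering of $\fk P^{a\#b}$ with respect to the action $\Gamma^{a\#b}\curvearrowright E$ defined in \eqref{eq59} and the sets $E(g_\blt),E(h_\blt)$. Applying Main Thm.~\ref{lb14} in the reverse direction to $\fk P^{a\#b}$, any such untwisted conformal block, viewed as a linear functional on $\mc W_{\blt\backslash 0}\otimes\mc M_{\blt\backslash 0}$, is automatically a twisted $\Ubb$-conformal block associated to $\fk P^{a\#b}$.

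The remaining step is to match the two sewing procedures term-by-term. Fix a basis of each $\Wbb_{0,\tipae}(n_\tipae)$ with dual basis of $\Wbb_{0,\tipae}'(n_\tipae)=\Mbb_{0,\tipae}(n_\tipae)$, so that the identity element $\bowtie_{0,\tipae,n_\tipae}\in\Wbb_{0,\tipae}(n_\tipae)\otimes\Wbb_{0,\tipae}'(n_\tipae)$ decomposes accordingly. By Thm.~\ref{lb42}, $\mc M_0$ is the contragredient twisted $\Ubb$-module of $\mc W_0$, and the canonical pairing $\mc W_0\otimes\mc M_0\to\Cbb$ is exactly the tensor product of the individual pairings $\Wbb_{0,\tipae}\otimes\Wbb_{0,\tipae}'\to\Cbb$. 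Combined with the decomposition~\eqref{eq74}, this shows that for each $n\in\tfrac 1{|g_0|}\Nbb$, the identity element $\bowtie_{0,n}\in\mc W_0(n)\otimes\mc W_0(n)^*$ splits as
\begin{align*}
\bowtie_{0,n}=\sum_{\substack{\mbf n\in\Nbb^{E(g_0)}\\ \sum_\tipae n_\tipae/k_{0,\tipae}=n}}\bigotimes_{\tipae\in E(g_0)}\bowtie_{0,\tipae,n_\tipae}.
\end{align*}
Substituting this into the definition of $\uppsi^{a\#b}_n$, summing over $n$, and reindexing gives, as a formal identity of series,
\begin{align*}
\sum_{n\in\frac{1}{|g_0|}\Nbb}\uppsi^{a\#b}_n(w\otimes m)=\sum_{\mbf n\in\Nbb^{E(g_0)}}(\upphi^a\cdot\upphi^b)\Big(w\otimes m\otimes\bigotimes_{\tipae\in E(g_0)}\bowtie_{0,\tipae,n_\tipae}\Big),
\end{align*}
the right-hand side being exactly $\scr S(\upphi^a\cdot\upphi^b)(w\otimes m)$.

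Since the right-hand side already converges $q$-absolutely by Thm.~\ref{lb39}, reindexing is justified and the left-hand side converges $q$-absolutely as well (with the implicit parameter $q^n$ on the left corresponding under $\sum n_\tipae/k_{0,\tipae}=n$ to the multiparameter $\prod q_\tipae^{n_\tipae}$ with common specialization $q_\tipae=q^{1/k_{0,\tipae}}$; a sufficiently small common $R>1$ produces the required majorant). This establishes both conclusions simultaneously.

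The main technical point is the matching of the two gradings: the twisted grading on $\mc W_0$ comes from $\wtd L_0^{g_0}$ defined in \eqref{eq73}, while the untwisted sewing is indexed by the $\wtd L_0$-gradings of the individual factors $\Wbb_{0,\tipae}$. I expect no obstacle here beyond carefully invoking~\eqref{eq74} and Thm.~\ref{lb42}; the only subtle issue is the reindexing of absolutely convergent multi-series by the linear functional $\mbf n\mapsto\sum_\tipae n_\tipae/k_{0,\tipae}$, for which one uses the standard comparison with a geometric majorant valid on a polydisc of polyradius $(R^{1/k_{0,\tipae}})_{\tipae}$ for some $R>1$ supplied by Thm.~\ref{lb39}.
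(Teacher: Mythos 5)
Your proof is correct and follows essentially the same approach as the paper: reduce to the untwisted sewing on $\fk X^{a\#b}$ via Thm.~\ref{lb46} and Main Thm.~\ref{lb14}, and match $\uppsi^{a\#b}$ with $\scr S(\upphi^a\cdot\upphi^b)$ by way of Thm.~\ref{lb42} and the grading decomposition \eqref{eq74}. You are somewhat more explicit than the paper (which compresses the entire matching of $\bowtie_{0,n}$ with $\bigotimes_\tipae\bowtie_{0,\tipae,n_\tipae}$, the reindexing $\sum_\tipae n_\tipae/k_{0,\tipae}=n$, and the choice of $R=\min_\tipae R_\tipae^{k_{0,\tipae}}$ into one clause), but the content is the same.
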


\begin{proof}
By the definition of $\uppsi^{a\#b}$ and $\scr S(\upphi^a\cdot \upphi^b)$, and by \eqref{eq74}, $\uppsi^{a\#b}$ converges $q$-absolutely to $\scr S(\upphi^a\cdot \upphi^b)$, which is an element of $\CB_{\fk X^{a\#b}}(\Wbb_{\blt\backslash0,\blt}\otimes\Mbb_{\blt\backslash0,\blt})$. By Thm. \ref{lb46}, $\fk X^{a\#b}$ is a permutation covering of $\fk P^{a\#b}$ associated to the action \eqref{eq59} and the sets $E(g_j),E(h_l)$ (where $j,l>0$) of marked points of $\bk{g_j}$- and $\bk{h_l}$-orbits. By Thm. \ref{lb14}, we have
\begin{align}
\CB_{\fk X^{a\#b}}(\Wbb_{\blt\backslash0,\blt}\otimes\Mbb_{\blt\backslash0,\blt})=\CB_{\fk P^{a\#b}}(\mc W_{\blt\backslash0}\otimes\mc M_{\blt\backslash0}).\label{eq75}
\end{align}
This finishes the proof.
\end{proof}

\begin{rem}
Relations \eqref{eq63} are necessary for the above results: they tell us that the association of $\Vbb$-modules to the marked points of $\fk X^{a\#b}$ determined by sewing $\fk X^a,\fk X^b$ agrees with the one determined by the permutation covering of $\fk X^{a\#b}$.
\end{rem}

As an application of Thm. \ref{lb48}, we prove a sewing-factorization theorem for genus-$0$ permutation-twisted conformal blocks.

Let $\mc E$ be a complete list of irreducible $\Vbb$-modules. (Cf. the paragraph containing \eqref{eq71}.) Define $\mc E_{0,\blt}$ to be the set consisting of
\begin{align*}
\mc X_0=\otimes_{\tipae\in E(g_0)}\Xbb_{0,\tipae}	
\end{align*}
where each $\Xbb_{0,\tipae}$ is in $\mc E$. Set
\begin{align*}
\mc X_0'=\otimes_{\tipae\in E(g_0)}\Xbb_{0,\tipae}'.	
\end{align*}	
Consider $\mc X_0$ and $\mc X_0'$  as mutually contragredient twisted $\Ubb$-modules (cf. Thm. \ref{lb42}). Note that by Prop. \ref{lb41} and Thm. \ref{lb47}, if $\Vbb$ is also rational, then $\mc E_{0,\blt}$ is a finite and complete list of irreducible $g_0$-twisted $\Ubb$-modules.

\begin{thm}\label{lb62}
Assume that $\Vbb$ is CFT-type (i.e. $\Vbb(0)=\Cbb\id$), $C_2$-cofinite, and rational. Choose finitely-generated $\Vbb$-modules $\Wbb_{j,\tipae}$ (for each $1\leq j\leq N,\tipae\in E(g_j)$) and $\Mbb_{l,\tipae}$ (for each $1\leq l\leq K,\tipae\in E(h_l)$). Then the linear map
\begin{gather*}
\bigoplus_{\mc X_0\in\mc E_{0,\blt}}	\CB_{\fk P^a}(\mc X_0\otimes\mc W_{\blt\backslash0})\otimes 	\CB_{\fk P^b}(\mc X_0'\otimes\mc M_{\blt\backslash0})\rightarrow\CB_{\fk P^{a\#b}}(\mc W_{\blt\backslash0}\otimes \mc M_{\blt\backslash0})\\
\bigoplus_{\mc X_0} \uppsi^a_{\mc X_0}\otimes \uppsi^b_{\mc X_0}\mapsto \sum_{\mc X_0} \uppsi^{a\#b}_{\mc X_0}
\end{gather*}
(where $\uppsi^{a\#b}_{\mc X_0}$ is the sewing of $\uppsi^a_{\mc X_0}$ and $\uppsi^b_{\mc X_0}$) is bijective.
\end{thm}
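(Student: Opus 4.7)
The plan is to reduce the statement to the untwisted sewing/factorization theorem \ref{lb40} via the Main Theorem \ref{lb14}. First, I would apply Main Theorem \ref{lb14} to identify all three kinds of twisted conformal block spaces appearing in the statement with untwisted ones on the corresponding permutation coverings. More precisely, for any $\mc X_0 = \bigotimes_{\tipae \in E(g_0)} \Xbb_{0,\tipae} \in \mc E_{0,\blt}$, the Main Theorem gives
\begin{align*}
\CB_{\fk P^a}(\mc X_0 \otimes \mc W_{\blt\backslash 0}) &= \CB_{\fk X^a}(\Xbb_{0,\blt} \otimes \Wbb_{\blt\backslash 0,\blt}), \\
\CB_{\fk P^b}(\mc X_0' \otimes \mc M_{\blt\backslash 0}) &= \CB_{\fk X^b}(\Xbb_{0,\blt}' \otimes \Mbb_{\blt\backslash 0,\blt}),
\end{align*}
and by Theorem \ref{lb46} (which identifies $\fk X^{a\#b}$ with the permutation covering of $\fk P^{a\#b}$ respecting the labelings via \eqref{eq63}) also
\begin{align*}
\CB_{\fk P^{a\#b}}(\mc W_{\blt\backslash 0} \otimes \mc M_{\blt\backslash 0}) = \CB_{\fk X^{a\#b}}(\Wbb_{\blt\backslash 0,\blt} \otimes \Mbb_{\blt\backslash 0,\blt}).
\end{align*}

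Next, I would observe that parametrizing $\mc X_0 \in \mc E_{0,\blt}$ is precisely the same as choosing independently an irreducible $\Vbb$-module $\Xbb_{0,\tipae} \in \mc E$ for each $\tipae \in E(g_0)$, which is exactly the index set appearing in Theorem \ref{lb40} when it is applied to the sewing of $\fk X^a$ and $\fk X^b$ along the $|E(g_0)|$ pairs $\bigl(\Upsilon^a(\bk{g_0}\tipae), \Upsilon^b(\bk{h_0}\tipae)\bigr)$ to produce $\fk X^{a\#b}$ (see Subsection \ref{lb43}). Since $\Vbb$ is CFT-type, $C_2$-cofinite, and rational, Theorem \ref{lb40} yields a bijective sewing map
\begin{align*}
\scr S: \bigoplus_{\mc X_0 \in \mc E_{0,\blt}} \CB_{\fk X^a}(\Xbb_{0,\blt} \otimes \Wbb_{\blt\backslash 0,\blt}) \otimes \CB_{\fk X^b}(\Xbb_{0,\blt}' \otimes \Mbb_{\blt\backslash 0,\blt}) \xrightarrow{\simeq} \CB_{\fk X^{a\#b}}(\Wbb_{\blt\backslash 0,\blt} \otimes \Mbb_{\blt\backslash 0,\blt}).
\end{align*}

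Finally, I would check that under the identifications provided by the Main Theorem, the untwisted sewing map $\scr S$ coincides with the twisted sewing map $\uppsi^a_{\mc X_0} \otimes \uppsi^b_{\mc X_0} \mapsto \uppsi^{a\#b}_{\mc X_0}$ of Theorem \ref{lb62}. This is exactly the content of Theorem \ref{lb48}: for each summand, $\uppsi^{a\#b}_{\mc X_0}$ converges $q$-absolutely and equals $\scr S(\upphi^a_{\mc X_0} \cdot \upphi^b_{\mc X_0})$ when both sides are viewed as functionals on $\Wbb_{\blt\backslash 0,\blt} \otimes \Mbb_{\blt\backslash 0,\blt}$. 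Composing the three bijections then gives the claim.

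The only real subtlety, which I would address before invoking Theorem \ref{lb40}, is confirming that the indexing set $\mc E_{0,\blt}$ on the twisted side matches what Theorem \ref{lb40} demands on the untwisted side: namely, the tuples $(\Xbb_{0,\tipae})_{\tipae \in E(g_0)}$ of irreducible $\Vbb$-modules. This is immediate by the definition of $\mc E_{0,\blt}$, and Proposition \ref{lb41} together with Theorem \ref{lb47} further justify that $\mc E_{0,\blt}$ is in fact a complete list of irreducible $g_0$-twisted $\Ubb$-modules (a sanity check explaining why the twisted factorization is governed by exactly this set, although the formal proof only requires the identification as an indexing set). Once these bookkeeping matters are in place, no additional analytic work is needed beyond what is already encoded in Theorems \ref{lb14}, \ref{lb40}, \ref{lb46}, and \ref{lb48}, so I do not foresee a genuinely difficult step.
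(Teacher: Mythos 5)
Your proposal is correct and follows essentially the same route as the paper: identify the twisted conformal block spaces with untwisted ones via Main Theorem \ref{lb14}, match the twisted sewing with the untwisted sewing via Theorems \ref{lb46} and \ref{lb48}, and then invoke the untwisted factorization Theorem \ref{lb40} for bijectivity. The paper's own proof is a compressed version of exactly this argument.
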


\begin{proof}
By Thm. \ref{lb14} and \ref{lb48}, the above linear map is equivalent to a sewing map of spaces of untwisted conformal blocks. This linear map is bijective by Thm. \ref{lb40}.
\end{proof}

\section{Applications}

\subsection{Twisted intertwining operators}

In this section, we assume the setting at the beginning of Sec. \ref{lb22}, namely, $G$ is a general finite automorphism automorphism group of a VOA $\Ubb$. We assume that $1$ has argument $0$, and that in general $e^{\im t}$ has argument $t$. If the arguments of $z_1,z_2\in\Cbb^\times$ are chosen, we assume the argument of $z_1z_2$ is $\arg(z_1z_2)=\arg z_1+\arg z_2$.

\begin{df}\label{lb51}
Let $g_1,g_2\in G$ and $g_3=g_1g_2$. Let $\mc W_1,\mc W_2,\mc W_3$ be respectively $g_1$-,  $g_2$-, $g_3$-twisted $\Vbb$-modules. A type $\mc W_3\choose\mc W_1\mc W_2$-\textbf{intertwining operator} is an operation $\mc Y$ that associates to each $w_1\in\mc W_1,w_2\in\mc W_2,w_3'\in\mc W_3'$ a multivalued homolorphic function
\begin{align*}
z\in\Cbb^\times\mapsto \bk{\mc Y(w_1,z)w_2,w_3'}	
\end{align*}
(i.e. a holomorphic function which depends on $z$ as well as its argument $\arg z$) depending linearly on $w_1,w_2,w_3'$, such that the following conditions are satisfied for every $w_1\in\mc W_1,w_2\in\mc W_2,w_3'\in\mc W_3'$. Consider $\mc Y(w_1,z)$ as a linear map from $\mc W_2$ to $(\mc W_3')^*$.
\begin{enumerate}
\item ($L_{-1}$-derivative)	For each $z\in\Cbb^\times$,
\begin{align}
\frac d{dz}\bk{\mc Y(w_1,z)w_2,w_3'}=\bk{\mc Y(L_{-1}w_1,z)w_2,w_3'}	\label{eq81}
\end{align}
\item (Analytic Jacobi identity)\footnote{A similar property is called the duality property in \cite[Def. 4.1]{Hua18}.}  For each $u\in\Ubb$, and for each $z\neq \tipaz$ in $\Cbb^\times$ with chosen $\arg z,\arg \tipaz,\arg(z-\tipaz)$, the following series of single-valued functions of $\log z,\log z,\log(z-\tipaz)$
\begin{gather}
	\bk{Y^{g_3}(u,z)\mc Y(w_1,\tipaz)w_2,w_3'}:=\sum_{n\in\frac 1{|g_3|}\Nbb}	\bk{Y^{g_3}(u,z)P_n^{g_3}\mc Y(w_1,\tipaz)w_2,w_3'}\label{eq76}\\
	\bk{\mc Y(w_1,\tipaz)Y^{g_2}(u,z)w_2,w_3'}:=\sum_{n\in\frac 1{|g_2|}\Nbb}	\bk{\mc Y(w_1,\tipaz)P_n^{g_2}Y^{g_2}(u,z)w_2,w_3'}\label{eq77}\\
	\bk{\mc Y(Y^{g_1}(u,z-\tipaz)w_1,\tipaz)w_2,w'_3}:=\sum_{n\in\frac 1{|g_1|}\Nbb}	\bk{\mc Y(P_n^{g_1}Y^{g_1}(u,z-\tipaz)w_1,\tipaz)w_2,w'_3}\label{eq78}
\end{gather}
converge a.l.u.  on $|z|>|\tipaz|$, $|z|<|\tipaz|$, $|z-\tipaz|<|\tipaz|$ respectively (in the sense of \eqref{eq10}). Moreover, for any fixed $\tipaz\in\Cbb^\times$ with chosen argument $\arg \tipaz$, let $R_{\tipaz}=\{t\tipaz:t\in(0,1)\cup (1,+\infty)\}$. For any $z\in R_{\tipaz}$, we assume that 
\begin{gather}
\begin{array}{c}
\arg z=\arg\tipaz,\\[0.7ex]
\arg(z-\tipaz)=\arg \tipaz\text{ or }\arg \tipaz-\pi,
\end{array}	\label{eq82}
\end{gather}
where second equality depends on whether $|z|>|\tipaz|$ or $|z|<|\tipaz|$. Then the above three expressions \eqref{eq76}-\eqref{eq78}, considered as functions of $z$ defined on $R_{\tipaz}$ satisfying the three mentioned inequalities respectively, can be analytically continued to the same holomorphic function on the simply-connected open set
\begin{align*}
\Upsigma_\tipaz=\Cbb\setminus\{\im t\tipaz,\tipaz+\im t\tipaz:t\geq 0\},	
\end{align*}
which can furthermore be extended to a multivalued holomorphic function $f_{\tipaz}(z)$ on $\Cbb^\times\setminus\{\tipaz\}$ (i.e., a holomorphic function on the universal cover of $\Cbb^\times\setminus\{\tipaz\}$).
\end{enumerate}
\end{df}

\begin{rem}\label{lb55}
In the above analytic Jacobi identity, if we assume $u$ is fixed by $G$, then $f_\tipaz(z)$ is single-valued. In particular, if we choose $u$ to be the conformal vector of $\Ubb$, and apply the residue theorem to $zf_\tipaz(z)dz$, we obtain
\begin{align*}
	&\bk{\mc Y(w_1,\tipaz)w_2,L_0w_3'}-\bk{\mc Y(w_1,\tipaz)L_0w_2,w_3'}\nonumber\\
	=&	\bk{\mc Y(L_0w_1,\tipaz)w_2,w_3'}+\tipaz\bk{\mc Y(L_{-1}w_1,\tipaz)w_2,w_3'}.
\end{align*}
Thus, assuming the analytic Jacobi identity, condition 1  of Def. \ref{lb51}  (the $L_{-1}$-derivative) is equivalent to
\begin{align}
	&\bk{\mc Y(w_1,\tipaz)w_2,L_0w_3'}-\bk{\mc Y(w_1,\tipaz)L_0w_2,w_3'}\nonumber\\
	=&	\bk{\mc Y(L_0w_1,\tipaz)w_2,w_3'}+\tipaz\frac d{d\tipaz}\bk{\mc Y(w_1,\tipaz)w_2,w_3'}\label{eq79}.
\end{align}	
If we apply the residue theorem to $f_\tipaz(z)dz$, we get
\begin{align}
\bk{L_{-1}\mc Y(w_1,\tipaz)w_2,w_3'}-\bk{\mc Y(w_1,\tipaz)L_{-1}w_2,w_3'}=\bk{\mc Y(L_{-1}w_1,\tipaz)w_2,w_3'}.\label{eq98}	
\end{align}
\end{rem}

\begin{rem}
If $\mc W$ is a $g$-twisted $\Ubb$-module, then its vertex operator $Y^g$ clearly defines a type $\mc W\choose\Ubb\mc W$-intertwining operator.
\end{rem}

\begin{rem}
Since $[L_0,\wtd L_0^g]=0$ by \eqref{eq12}, $L_0$ preserves each $\mc W(n)$. Since $\mc W(n)$ is finite-dimensional, we have a decomposition $L_0=\Lss+\Lni$ where $[\Lss,\Lni]=0$, $\Lss$ is diagonal, and for every $w\in\mc W$, $\Lni^kw=0$ for sufficiently large $k$. Thus we can define $z^{L_0}w\in\mc W[\log z]\{z\}$	by
\begin{align*}
z^{L_0}w=z^\Lss\sum_{k\in\Nbb}\frac{\Lni^k w}{k!}(\log z)^k.
\end{align*}
\end{rem}

\begin{pp}\label{lb56}
Let $\mc Y$ be as in Def. \ref{lb51}. Then for each $\xi,z\in\Cbb^\times$ with chosen $\arg\xi,\arg z$, and for each $w_1\in\mc W_1,w_2\in\mc W_2,w_3'\in\mc W_3'$,
\begin{align}
\bk{\mc Y(w_1,z\xi)w_2,w_3'}=\bk{\mc Y(z^{-L_0}w_1,\xi)z^{-L_0}w_2,z^{L_0}w_3'}.	\label{eq84}
\end{align}
\end{pp}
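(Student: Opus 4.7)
The strategy is to identify both sides of \eqref{eq84}, as multivalued holomorphic functions of $z\in\Cbb^\times$ with $\xi,w_1,w_2,w_3'$ fixed, as the unique solution of a first-order linear ODE with a common value at $z=1,\arg z=0$. The main input is identity \eqref{eq79} of Rem.~\ref{lb55}, which I would rearrange as
\begin{align*}
\tipaz\tfrac{d}{d\tipaz}\bk{\mc Y(w_1,\tipaz)w_2,w_3'}&=\bk{\mc Y(w_1,\tipaz)w_2,L_0w_3'}\\
&\quad-\bk{\mc Y(w_1,\tipaz)L_0w_2,w_3'}-\bk{\mc Y(L_0w_1,\tipaz)w_2,w_3'}.
\end{align*}
Substituting $\tipaz=z\xi$ (with $\xi$ fixed) and using $z\frac{d}{dz}f(z\xi)=(\tipaz\frac{d}{d\tipaz}f(\tipaz))|_{\tipaz=z\xi}$ converts this to an ODE in $z$.

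To exploit this cleanly, I would introduce the operator
\begin{align*}
\mathsf L:=L_0\otimes 1\otimes 1+1\otimes L_0\otimes 1-1\otimes 1\otimes L_0
\end{align*}
on $\mc W_1\otimes\mc W_2\otimes\mc W_3'$ and the family of linear functionals $\Phi_z\in(\mc W_1\otimes\mc W_2\otimes\mc W_3')^*$ defined by $\Phi_z(w_1\otimes w_2\otimes w_3'):=\bk{\mc Y(w_1,z\xi)w_2,w_3'}$. A short calculation then shows that the transported identity above is equivalent to
\begin{align*}
z\tfrac{d}{dz}\Phi_z=-\Phi_z\circ\mathsf L.
\end{align*}
Since $L_0$ commutes with $\wtd L_0$ and hence preserves each finite-dimensional eigenspace $\mc W_i(n)$, the operator $\mathsf L$ preserves each finite-dimensional subspace $V_{\bsb n}:=\mc W_1(n_1)\otimes\mc W_2(n_2)\otimes\mc W_3'(n_3)$. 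Restricted to the dual of any such subspace, the above is a standard first-order linear ODE with regular singularity at $0$; its unique holomorphic solution on the universal cover of $\Cbb^\times$ with prescribed value at $z=1,\arg z=0$ is
\begin{align*}
\Phi_z=\Phi_1\circ z^{-\mathsf L},\qquad z^{-\mathsf L}:=\exp(-\mathsf L\log z),
\end{align*}
the exponential being interpreted via the Jordan decomposition of $\mathsf L$.

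Finally, because the three summands defining $\mathsf L$ pairwise commute and act on distinct tensor factors, $z^{-\mathsf L}$ factors as $z^{-L_0}\otimes z^{-L_0}\otimes z^{L_0}$. Applying $\Phi_1$ to $z^{-\mathsf L}(w_1\otimes w_2\otimes w_3')=z^{-L_0}w_1\otimes z^{-L_0}w_2\otimes z^{L_0}w_3'$ produces exactly the right-hand side of \eqref{eq84}, while the left-hand side is $\Phi_z(w_1\otimes w_2\otimes w_3')$ by definition. Both sides evidently agree at $z=1,\arg z=0$ (where $z^{\pm L_0}=\mathbf 1$), so uniqueness closes the argument. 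I do not anticipate a genuine obstacle beyond keeping careful track of the multivalued structure of both $z^{\pm L_0}$ and $\mc Y(-,z\xi)$, which is handled uniformly by passing to the universal cover of $\Cbb^\times$; once this is done, the argument reduces to a standard uniqueness statement for a finite-dimensional linear ODE.
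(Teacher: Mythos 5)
Your proposal is correct and is essentially the paper's own proof in a different packaging: the paper sets $f(z)=\bk{\mc Y(z^{L_0}w_1,z\xi)z^{L_0}w_2,z^{-L_0}w_3'}$ and uses \eqref{eq79} to check $\frac{d}{dz}f(z)=0$, which is exactly the statement that your $\Phi_z\circ z^{\mathsf L}$ is constant, i.e.\ $\Phi_z=\Phi_1\circ z^{-\mathsf L}$. The only difference is presentational — you solve the first-order ODE on the finite-dimensional graded pieces by uniqueness and factor the exponential, whereas the paper verifies directly that the conjugated function has vanishing derivative.
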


\begin{proof}
$f(z)=\bk{\mc Y(z^{L_0}w_1,z\xi)z^{L_0}w_2,z^{-L_0}w_3'}$ is a multivalued holomorphic function of $z\in\Cbb^\times$. Using \eqref{eq79}, it is easy to see that $\frac{d}{dz}f(z)=0$. Thus, $f(z)=f(1)$, which is equivalent to \eqref{eq84}.
\end{proof}

\begin{rem}
Set $\xi=1$ in \eqref{eq84}. If we assume $w_1,w_2,w_3'$ are eigenvectors of $\Lss$ with eigenvalues $\wt w_1,\wt w_2,\wt w_3'$, then \eqref{eq84} is clearly an element of $\Cbb\cdot z^{\wt w_3'-\wt w_1-\wt w_2}[\log z]$. Suppose that each eigenspace of $\Lss$ on $\mc W_3$ is finite-dimensional. Then for each $n\in\Cbb, k\in\Nbb$ we can find a unique $\Lss$-eigenvector $\mc Y(w_1)_{n,k}w_2$ of $\mc W_3$ with eigenvalue $\wt w_1+\wt w_2-n-1$ such that for each $\Lss$-eigenvector $w_3'\in\mc W_3$ of $\mc W_3'$ with the same eigenvalue, $z^{n+1}\bk{\mc Y(w_1,z)w_2,w_3'}=\sum_{k\in\Nbb}\bk{\mc Y(w_1)_{n,k}w_2,w_3'}(\log z)^k$. 

It follows that for any $\Lss$-eigenvalues $w_1\in\mc W_1,w_2\in\mc W_2$ (and hence, for any non-necessarily eigenvalues), we have expansion
\begin{gather}
	\mc Y(w_1,z)w_2=\sum_{n\in\Cbb}\sum_{k\in\Nbb}\mc Y(w_1)_{n,k}w_2\cdot z^{-n-1}(\log z)^k,\nonumber\\	
\mc Y(w_1)_{n,k}w_2\in\mc W_3,	\label{eq103}
\end{gather}
and we have
\begin{align}
[\Lss,\mc Y(w_1)_{n,k}]=\mc Y(\Lss w_1)_{n,k}-(n+1)\mc Y(w_1)_{n,k}.	
\end{align}

Consequently,  if we assume that each $\Lss$-eigenspace of $\mc W_3$ is finite-dimensional, and that the real parts of the eigenvalues of $\Lss$ on $\mc W_3$ are bounded below, then we have the expansion \eqref{eq103}, and $\mc Y$ satisfies the lower truncation property: $\mc Y(w_1)_{n,k}w_2=0$ for sufficiently large $n$. Thus, $\mc Y$ is an intertwining operator of the fixed point subalgebra $\Ubb^G$ in the usual sense as \cite{HLZ10}. Note that by \cite[Lemma 2.4]{Miy04}, our assumption on $\mc W_3$ automatically holds when $\Ubb^G$ is $C_2$-cofinite and $\mc W_3$ is $\Ubb^G$-generated by finitely many vectors (equivalently, $\mc W_3$ is a grading-restricted generalized $\Ubb^G$-module \cite[Cor. 3.16]{Hua09}).
\end{rem}

The following property says that in order to show that $\mc Y$ is an intertwining operator, it suffices to verify the analytic Jacobi identity for  $\tipaz$ in a small region. Therefore, at least in the case that $\Ubb^G$ is $C_2$-cofinite and the modules are $\Ubb^G$-finitely generated, our definition of intertwining operators agrees with that in \cite{McR21} (cf. the paragraph above Rem. 4.16).

\begin{pp}\label{lb57}
Let $U\subset\Cbb^\times$ be non-empty and simply-connected with a continuous $\arg$ function $\arg_U$. Let $\mc Y$ be as in Def. \ref{lb51}, but satisfies the two conditions only when $\tipaz\in U$ and $\arg z=\arg_U(z)$ is defined by that of $U$. Then $\mc Y$ is a type $\mc W_3\choose\mc W_1\mc W_2$ intertwining operator.
\end{pp}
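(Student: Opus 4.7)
The plan is to transport both defining conditions from $\tipaz\in U$ to arbitrary $\tipaz'\in\Cbb^\times$ via the scaling symmetry $(z,\tipaz)\mapsto(\lambda z,\lambda\tipaz)$, with the key tool being a version of the scaling identity \eqref{eq84} valid under our weaker hypothesis.

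First I would upgrade the hypotheses from $U$ to all of $\Cbb^\times$ in stages. Since $\bk{\mc Y(w_1,z)w_2,w_3'}$ and $\bk{\mc Y(L_{-1}w_1,z)w_2,w_3'}$ are both multivalued holomorphic in $z\in\Cbb^\times$ and agree on $U$ by hypothesis, condition (1) extends to every $z\in\Cbb^\times$ by analytic continuation. For condition (2) on $U$, Remark \ref{lb55} (with $u=\cbf$) shows that Jacobi identity plus the $L_{-1}$-derivative together force \eqref{eq79}; and since both sides of \eqref{eq79} are multivalued holomorphic in $\tipaz$, the identity extends to every $\tipaz\in\Cbb^\times$. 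With \eqref{eq79} available throughout $\Cbb^\times$, the proof of Proposition \ref{lb56} (which uses only \eqref{eq79}) goes through verbatim and yields \eqref{eq84} for all $z,\xi\in\Cbb^\times$. The formal expression $z^{L_0}w$ is meaningful because $L_0$ commutes with $\wtd L_0^{g_i}$ (it preserves each finite-dimensional eigenspace $\mc W_i(n)$), and so decomposes there into semisimple plus nilpotent parts.

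Next, given any $\tipaz'\in\Cbb^\times$ with some chosen $\arg\tipaz'$, I would pick any $\tipaz_0\in U$ with $\arg\tipaz_0=\arg_U(\tipaz_0)$ and set $\lambda=\tipaz'/\tipaz_0$ with $\arg\lambda=\arg\tipaz'-\arg_U(\tipaz_0)$. Applying \eqref{eq84} to $\mc Y$ and (as special cases) to each $Y^{g_i}$, I would rewrite each of \eqref{eq76}--\eqref{eq78} at $(\lambda z,\tipaz')=(\lambda z,\lambda\tipaz_0)$ as the corresponding expression at $(z,\tipaz_0)$ with $(u,w_1,w_2,w_3')$ replaced by $(\lambda^{-L_0}u,\lambda^{-L_0}w_1,\lambda^{-L_0}w_2,\lambda^{L_0}w_3')$, moving the outermost $\lambda^{L_0}$ onto $w_3'$ via $\bk{L_0w,w'}=\bk{w,L_0w'}$. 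Because $[L_0,\wtd L_0^{g_i}]=0$, the operator $\lambda^{L_0}$ commutes with every projection $P_n^{g_i}$, so the term-by-term structure of the three series is preserved under this substitution. The inequalities $|z|>|\tipaz|$, $|z|<|\tipaz|$, $|z-\tipaz|<|\tipaz|$, the argument conventions \eqref{eq82}, and the simply-connected region $\Upsigma_\tipaz$ are manifestly invariant under uniform rescaling by $\lambda$, so the a.l.u.\ convergence of the three series and their common analytic continuation $f_{\tipaz'}(\lambda z)$ on the universal cover of $\Cbb^\times\setminus\{\tipaz'\}$ follow directly from the hypothesis at $\tipaz_0\in U$. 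Varying $\tipaz_0\in U$ and $\lambda\in\Cbb^\times$ covers every $\tipaz'\in\Cbb^\times$ with any argument.

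The main obstacle I anticipate is the bookkeeping: $\lambda^{\pm L_0}w$ is in general a polynomial in $\log\lambda$ with vector coefficients, and the Jacobi identity for $(\lambda z,\tipaz')$ with the original vectors $(u,w_1,w_2,w_3')$ has to be extracted from the identity for $(z,\tipaz_0)$ coefficient-by-coefficient in $\log\lambda$. Likewise, the mutual consistency of the choices of $\arg z$, $\arg\tipaz$, $\arg(z-\tipaz)$, and $\arg\lambda$ under the rescaling needs to be checked on each of the three convergence regions, but all of this is routine once the scaling identity \eqref{eq84} is in hand.
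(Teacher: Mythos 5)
Your proposal is correct and follows essentially the same route as the paper: derive \eqref{eq79} on $U$ via Rem.\ \ref{lb55}, extend it by analytic continuation, run the proof of Prop.\ \ref{lb56} to get the scaling identity \eqref{eq84} for $\mc Y$ (and note it holds for each $Y^{g_i}$ since these are genuine intertwining operators), and then transport the Jacobi identity from $\tipaz_0\in U$ to an arbitrary $\tipaz$ by the substitution $\tipaz=\xi\tipaz_0$, using that $\xi^{\pm L_0}$ commutes with the projections $P_n^{g_i}$ and that the convergence regions and argument conventions rescale. Your anticipated obstacle about extracting the identity coefficient-by-coefficient in $\log\lambda$ is not actually needed: for a fixed $\lambda$ with chosen argument, $\lambda^{\pm L_0}u,\lambda^{\pm L_0}w_i$ are just fixed vectors, so one applies the hypothesis at $\tipaz_0$ to these vectors directly and identifies the resulting series with those at $\tipaz$ via \eqref{eq84}.
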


\begin{proof}
Note that \eqref{eq84} applies to $Y^{g_j}$ since $Y^{g_j}$ is an intertwining operator. By the argument in Rem. \ref{lb55}, \eqref{eq79} holds  for all $\tipaz\in U,\arg(\tipaz)=\arg_U(\tipaz)$. Hence, it holds for any $\tipaz\in\Cbb^\times$ and any compatible $\arg\tipaz$. Thus, the same argument as in Prop. \ref{lb56} proves \eqref{eq84} for $\mc Y$ and any $w_1,w_2,w_3'$.

Fix $\tipaz_0\in U$ with $\arg\tipaz_0=\arg_U(\tipaz_0)$. For each $\tipaz\in\Cbb^\times$, write $\tipaz=\xi\tipaz_0$ and, in particular, $\arg\tipaz=\arg\xi+\arg\tipaz_0$. Choose $\arg(\xi^{-1})=-\arg\xi$. Since $[L_0,\wtd L_0^g]=0$, we see that $L_0$ commutes with $P_n^{g_j}$. Therefore $\xi^{-L_0}$ commutes with each $P_n^{g_j}$. Using this fact and \eqref{eq84}, we obtain three equations about series of $n$:
\begin{gather*}
\bk{Y^{g_3}(u,z)\mc Y(w_1,\tipaz)w_2,w_3'}=\bk{Y^{g_3}(\xi^{-L_0}u,\xi^{-1}z)\mc Y(\xi^{-L_0}w_1,\tipaz_0)\xi^{-L_0}w_2,\xi^{L_0}w_3'}\\
\bk{\mc Y(w_1,\tipaz)Y^{g_2}(u,z)w_2,w_3'}=\bk{\mc Y(\xi^{-L_0}w_1,\tipaz_0)Y^{g_2}(\xi^{-L_0}u,\xi^{-1}z)\xi^{-L_0}w_2,\xi^{L_0}w_3'}\\
\bk{\mc Y(Y^{g_1}(u,z-\tipaz)w_1,\tipaz)w_2,w'_3}=\bk{\mc Y(Y^{g_1}(\xi^{-L_0}u,\xi^{-1}(z-\tipaz))\xi^{-L_0}w_1,\tipaz_0)\xi^{-L_0}w_2,\xi^{L_0}w'_3}	
\end{gather*}
which are defined in the similar way as \eqref{eq76}, \eqref{eq77}, \eqref{eq78}. Since the analytic Jacobi identity holds for $\tipaz_0$, we know that: (a) The above three series converges a.l.u. when $|\xi^{-1}z|>|\tipaz_0|,|\xi^{-1}z|<|\tipaz_0|,|\xi^{-1}(z-\tipaz)|<|\tipaz_0|$ respectively. (b) If we assume that $\xi^{-1}z\in R_{\tipaz_0}$, that $\arg(\xi^{-1}z)=\arg z-\arg\xi$ equals $\arg\tipaz_0$, and that $\arg(\xi^{-1}z-\tipaz_0):=\arg(\xi^{-1}(z-\tipaz))=\arg(z-\tipaz)-\arg\xi$ equals either $\arg \tipaz_0$ or $\arg\tipaz_0-\pi$, then the above three functions of $\xi^{-1}z$ can be extended to  the same holomorphic function on $\Upsigma_{\tipaz_0}$ which can furthermore be extended to a multi-valued holomorphic function on $\Cbb^\times\setminus\{\tipaz_0\}$. The analytic Jacobi identity for $\tipaz$ follows immediately.
\end{proof}

We now relate twisted intertwining operators and twisted conformal blocks. Choose any $r>0$. Then
\begin{align*}
\Upsigma_r=\Cbb\setminus\{\im t,r+\im t:t\geq 0\}.	
\end{align*}
Define a positively $3$-pathed Riemann sphere with local coordinates
\begin{align*}
\fk P_r=\{\Pbb^1;0,r,\infty;\zeta,\zeta-r,1/\zeta;\upgamma_0,\upgamma_r,\upgamma_\infty\}	
\end{align*}
where $\zeta$ is the standard coordinate of $\Cbb$, $\upgamma_0,\upgamma_r,\upgamma_\infty$ are paths in $\Upsigma_r$ with common end point $\bigstar$, and their initial points are on the real line satisfying
\begin{align*}
0<\upgamma_0(0)<r<\upgamma_r(0)<2r<\upgamma_\infty(0).	
\end{align*}
The equivalence classes of  $(\upgamma_0,\upgamma_r,\upgamma_\infty)$ (in the sense of Rem. \ref{lb2}) subject to such condition are clearly unique. Thus, the  twisted conformal blocks and the branched coverings associated to $\fk P_r$ are independent of the choice of such paths, thanks to Rem. \ref{lb2} and \ref{lb27}.

Set $\Sbf=\{x_1,x_2,x_3\}$ as usual. For $x=0,r,\infty$, let $\upepsilon_x$ be anticlockwise circle (defined by the give local coordinate at $x$) from and to $\upgamma_x(0)$, and set  $\upalpha_x=\upgamma_x^{-1}\upepsilon_x\upgamma_x$ (cf. \eqref{eq1}). Then $[\upalpha_0],[\upalpha_r],[\upalpha_\infty]$ generate $\Gamma=\pi_1(\Pbb^1\setminus\Sbf;\bigstar)$ (which is required in the definition of positively pathed Riemann spheres) since any two of these three elements are clearly free generators  of $\Gamma$. Recall $g_3=g_1g_2$
\begin{lm}
$g_1,g_2,g_3^{-1}$ are admissible, i.e., there is an action $\Gamma\curvearrowright E$ sending
\begin{align*}
[\upalpha_r]\mapsto g_1,\quad [\upalpha_0]\mapsto g_2,\quad [\upalpha_\infty]\mapsto g_3^{-1}.	
\end{align*}
\end{lm}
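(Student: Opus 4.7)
The plan is to exploit the fact that $\Gamma$ is free of rank two (since $N=3$), and that any two of the three elements $[\upalpha_0],[\upalpha_r],[\upalpha_\infty]$ freely generate $\Gamma$. I would first use this freeness to \emph{define} a homomorphism $\Gamma\rightarrow\Perm(E)$ by setting $[\upalpha_0]\mapsto g_2$ and $[\upalpha_r]\mapsto g_1$. The entire content of the lemma is then the assertion that this homomorphism automatically sends $[\upalpha_\infty]$ to $g_3^{-1}=(g_1g_2)^{-1}$.

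To pin down the image of $[\upalpha_\infty]$, I would determine the single defining relation that expresses $[\upalpha_\infty]$ as a word in $[\upalpha_0],[\upalpha_r]$. This is where the geometry of $\fk P_r$ enters: since $\Pbb^1$ has Euler characteristic $2$ and we are removing three points, the loops $[\upalpha_0],[\upalpha_r],[\upalpha_\infty]$ around the three punctures (with the orientations induced by their local coordinates $\zeta$, $\zeta-r$, and $1/\zeta$, and with their chosen bridging paths $\upgamma_0,\upgamma_r,\upgamma_\infty$) satisfy a product-one relation. The key point about orientations is that $\upepsilon_\infty$, being an anticlockwise circle in the coordinate $1/\zeta$, is a clockwise circle in $\zeta$; equivalently, $[\upalpha_\infty]^{-1}$ is represented by a large anticlockwise loop in the $\zeta$-plane enclosing both $0$ and $r$.

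Concretely, I would work in the simply-connected region $\Upsigma_r$ (which contains $\bigstar$ and all of the initial points of $\upgamma_0,\upgamma_r,\upgamma_\infty$ on the positive real axis, in the order $\upgamma_0(0)<r<\upgamma_r(0)<2r<\upgamma_\infty(0)$). Using this configuration, I would construct an explicit homotopy in $\Pbb^1\setminus\Sbf$ showing that
\begin{align*}
[\upalpha_r]\cdot[\upalpha_0]\cdot[\upalpha_\infty]=1\qquad\text{in }\Gamma,
\end{align*}
i.e., that going around $\infty$ clockwise in the $\zeta$-plane (based at $\bigstar$) is the same as first going around $0$ anticlockwise and then around $r$ anticlockwise, in the appropriate order compatible with the paths. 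Applying our homomorphism to this relation yields
\begin{align*}
g_1\cdot g_2\cdot [\upalpha_\infty]\mapsto 1,
\end{align*}
so $[\upalpha_\infty]\mapsto (g_1g_2)^{-1}=g_3^{-1}$ as required.

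The main obstacle is the bookkeeping in the geometric step: one must carefully verify that the specific paths $\upgamma_0,\upgamma_r,\upgamma_\infty$ (with their common endpoint $\bigstar$ and with initial points on the real axis in the stated order) really yield the relation $[\upalpha_r][\upalpha_0][\upalpha_\infty]=1$ with this ordering, rather than, say, $[\upalpha_0][\upalpha_r][\upalpha_\infty]=1$. Once established, the lemma follows formally. Note that the freedom in choosing $\upgamma_\blt$ within their equivalence class (Rem.~\ref{lb2}) does not affect the admissibility, since equivalent path systems induce the same action of $\Gamma$ on $E$.
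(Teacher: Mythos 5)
Your plan is correct and is essentially the paper's own proof: since $[\upalpha_0],[\upalpha_r]$ are free generators of $\Gamma$, the assignment $[\upalpha_0]\mapsto g_2$, $[\upalpha_r]\mapsto g_1$ defines a homomorphism $\Gamma\rightarrow\Perm(E)$, and the relation $[\upalpha_r\upalpha_0]=[\upalpha_\infty^{-1}]$ then forces $[\upalpha_\infty]\mapsto(g_1g_2)^{-1}=g_3^{-1}$. The ordering you single out as the main bookkeeping issue is indeed the one that holds for this configuration (paths in $\Upsigma_r$ with initial points ordered $\upgamma_0(0)<r<\upgamma_r(0)<2r<\upgamma_\infty(0)$ and $\upepsilon_\infty$ anticlockwise in the coordinate $1/\zeta$), and it is exactly the relation the paper asserts without further detail.
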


\begin{proof}
We have $[\upalpha_r\upalpha_0]=[\upalpha_\infty^{-1}]$, and $[\upalpha_0],[\upalpha_r]$ are free generators of $\Gamma$.
\end{proof}

We shall also choose open discs $W_0,W_r,W_\infty$, each of which contains only one of $0,r,\infty$. We set
\begin{align*}
W_0=\{z\in\Cbb:|z|<r\},\quad W_r=\{z\in\Cbb:|z-r|<r\},\quad W_\infty=\{z\in\Pbb^1:|z|>r\}.
\end{align*}

\begin{pp}\label{lb54}
Let $\mc Y(\cdot,r)$ denote a linear map
\begin{gather*}
\mc W_1\otimes \mc W_2\rightarrow (\mc W_3')^*,\qquad w_1\otimes w_2\mapsto\mc Y(w_1,r)w_2,	
\end{gather*}
also regarded as a linear functional on $\mc W_1\otimes\mc W_2\otimes \mc W_3$. Then the following two statements are equivalent.
\begin{enumerate}[label=(\alph*)]
\item $\mc Y(\cdot,r)$ satisfies the analytic Jacobi identity in Def. \ref{lb51} (for the $g_1$-, $g_2$-, and $g_3$-twisted modules $\mc W_1,\mc W_2,\mc W_3$) in the special case that $\tipaz=r$ and $\arg\tipaz=0$.	
\item $\mc Y(\cdot,r)$ is a conformal block associated to $\fk P_r$ and the $g_1$-, $g_2$-, and $g_3^{-1}$-twisted modules $\mc W_1,\mc W_2,\mc W_3'$.
\end{enumerate}	
\end{pp}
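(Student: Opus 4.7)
The plan is to match, at each of the three marked points $0,r,\infty$ of $\fk P_r$, the local expansion \eqref{eq47} of the would-be conformal block $\uppsi(w_1\otimes w_2\otimes w_3'):=\bk{\mc Y(w_1,r)w_2,w_3'}$ with one of the three series \eqref{eq76}--\eqref{eq78} of the Jacobi identity at $\tipaz=r$, and to identify the multivalued continuation $f_r(z)$ of Def.~\ref{lb51} with the multivalued propagation $\wr\uppsi$ of Def.~\ref{lb8}. Concretely: at $x_1=0$ with $\eta_1=\zeta$ and $g_2$-twisted $\mc W_2$, the local expansion reads $\bk{\mc Y(w_1,r)Y^{g_2}(u,z)w_2,w_3'}$, which is \eqref{eq77}; at $x_2=r$ with $\eta_2=\zeta-r$ and $g_1$-twisted $\mc W_1$, it reads $\bk{\mc Y(Y^{g_1}(u,z-r)w_1,r)w_2,w_3'}$, which is \eqref{eq78}; and at $x_3=\infty$ with $\eta_3=1/\zeta$, evaluating the global section $\mc U_\varrho(\zeta)^{-1}u$ in the $\mc U_\varrho(1/\zeta)$-trivialization (transition operator $\mc U(\tipxgamma_\zeta)$, by Ex.~\ref{lb36}) yields $\bk{\mc Y(w_1,r)w_2,Y^{g_3^{-1}}(\mc U(\tipxgamma_\zeta)u,1/\zeta)w_3'}$; applying the contragredient rule \eqref{eq64} to $Y^{g_3^{-1}}$ on $\mc W_3'$ and using $\mc U(\tipxgamma_{1/\zeta})\mc U(\tipxgamma_\zeta)=\id$, this collapses to $\bk{Y^{g_3}(u,\zeta)\mc Y(w_1,r)w_2,w_3'}$, matching \eqref{eq76} (with $\zeta$ in the role of $z$, $r$ in the role of $\tipaz$, and $|\zeta|>r$). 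The positivity condition \eqref{eq22} along $\upgamma_0(0)\in(0,r)$, $\upgamma_r(0)\in(r,2r)$, $\upgamma_\infty(0)\in(2r,+\infty)$ forces exactly the argument convention \eqref{eq82} on $R_r$, and $\Upsigma_r$ is a natural simply-connected subset of $\Pbb^1\setminus\Sbf$.

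For (2)$\Rightarrow$(1), assume $\uppsi$ is a conformal block with propagation $\wr\uppsi$. For each $u\in\Ubb$, the constant section $v_u:=\mc U_\varrho(\zeta)^{-1}u$ is a global section of $\scr U_{\Pbb^1}$ on $\Pbb^1\setminus\{\infty\}$. The compatibility \eqref{eq18} at $\upgamma_0(0),\upgamma_r(0),\upgamma_\infty(0)$, combined with the dictionary of the previous paragraph, yields the a.l.u.\ convergence of the three Jacobi series (apply Rem.~\ref{lb10} to upgrade absolute convergence at one argument to a.l.u.\ convergence), and the branch on $\Upsigma_r$ of the multivalued function $\wr\uppsi(\cdot,v_u,w_\blt)$ on $\Pbb^1\setminus\Sbf\cong\Cbb^\times\setminus\{r\}$ is the common holomorphic continuation required by Def.~\ref{lb51}.

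For (1)$\Rightarrow$(2), I would invoke Prop.~\ref{lb11} and Lemma~\ref{lb53}. Pick a basis $\fk U_0$ of $\Ubb$; the set $\fk U:=\{\mc U_\varrho(\zeta)^{-1}u:u\in\fk U_0\}$ is a free generating set of $\scr U_{\Pbb^1\setminus\Sbf}$. For each $v_u\in\fk U$ and $w_\blt$, define $\wr\uppsi(\upmu,v_u,w_\blt)$ on a simply-connected $U\subset\Pbb^1\setminus\Sbf$ to be the branch of the common multivalued function $f_r$ from the Jacobi identity determined by the homotopy class of $\upmu$, where the base branch on $\Upsigma_r$ is fixed by the argument convention \eqref{eq82}. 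Condition~(1) of Prop.~\ref{lb11} is the convergence already built into the Jacobi identity, and condition~(2) at each of $x_1,x_2,x_3$ reduces by the dictionary to the three series \eqref{eq76}--\eqref{eq78} themselves. Hence $\uppsi$ is a conformal block.

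The principal technical obstacle is the identification at $\infty$: one must track both the transition operator $\mc U(\tipxgamma_\zeta)$ between the trivializations $\mc U_\varrho(\zeta)$ and $\mc U_\varrho(1/\zeta)$, and the branch of $\arg\zeta$ fixed by the positivity of $\upgamma_\infty$, and then verify that composing with the contragredient identity \eqref{eq64} recovers \eqref{eq76} on the correct branch. The corresponding calculations at $0$ and $r$ are routine because the local coordinates there agree directly with the variables $z$ and $z-r$ appearing in \eqref{eq77}--\eqref{eq78}, with $\arg z=0$ on the positive real segment fixed by $\upgamma_0$ and $\upgamma_r$.
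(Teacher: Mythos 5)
Your proof is correct and takes essentially the same approach as the paper: you set up the same dictionary matching the local expansions at $0,r,\infty$ with \eqref{eq77}, \eqref{eq78}, \eqref{eq76} respectively, with the nontrivial computation at $\infty$ done exactly as in the paper (transition operator $\mc U(\tipxgamma)$ from Ex.~\ref{lb36}, contragredient rule \eqref{eq64}, and $\tipxgamma_{z^{-1}}\tipxgamma_z=1$), and you organize the two directions via Prop.~\ref{lb11} with the free generating set from Lemma~\ref{lb53}, which is also what the paper does in its Step~2.
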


\begin{proof}
Step 1. Each of the two statements consists of two parts: the ``convergence part" and the ``extension" part. Let us first verify the equivalence of the ``convergence parts". This is obvious ``near $0$ and $r$". So we only need to focus on the convergence  near $\infty$, namely, the equivalence of convergence of \eqref{eq76} and of \eqref{eq17} (when $j=3$). On the side of \eqref{eq17}, noting Rem. \ref{lb10}, we have the absolute  convergence of the series of $z^{1/|g_3|}$
\begin{align*}
\bk{\mc Y(w_1,r)w_2,Y^{g_3^{-1}}(u,z)w_3'}:=\sum_{n\in\frac 1{|g_3|}\Nbb}	\bk{\mc Y(w_1,r)w_2,Y^{g_3^{-1}}(u)_nw_3'}z^{-n-1}	
\end{align*}
on $0<|z^{1/|g_3|}|<|r^{-1/|g_3|}|$. This is equivalent, by linearity, to the convergence  of the series of $z^{-1/|g_3|}$
\begin{align*}
&\bk{\mc Y(w_1,r)w_2,Y^{g_3^{-1}}(\mc U(\tipxgamma_z)u,z^{-1})w_3'}\\
:=&\sum_{n\in\frac 1{|g_3|}\Nbb}	\bk{\mc Y(w_1,r)w_2,Y^{g_3^{-1}}(e^{zL_1}(-z^{-2})^{L_0}u)_nw_3'}z^{n+1}	
\end{align*}
on $0<|z^{-1/|g_3|}|<|r^{-1/|g_3|}|$, where $\mc U(\tipxgamma_z)$ equals $e^{zL_1}(-z^{-2})^{L_0}$ on $\Ubb$. By \eqref{eq64}, the above series is equivalent to the convergence of the series of \eqref{eq76} (by setting $\tipaz=r$ with zero arg) on the same domain, and we have
\begin{align}
\bk{\mc Y(w_1,r)w_2,Y^{g_3^{-1}}(\mc U(\tipxgamma_z)u,z^{-1})w_3'}=	\bk{Y^{g_3}(u,z)\mc Y(w_1,r)w_2,w_3'}\label{eq83}
\end{align}
where the right hand side is defined by \eqref{eq76}. This proves the equivalence of the ``convergence" part.

Step 2. Recall that $\zeta$ is the standard coordinate of $\Cbb$. We set $\fk U=\{\mc U_\varrho(\zeta)^{-1}u:u\in\Ubb\}$, which is a subspace of $\scr U_{\Pbb^1}(\Pbb^1\setminus\Sbf)$ containing a subset that generates freely $\scr U_{\Pbb^1\setminus\Sbf}$ (cf. the proof of Lemma. \ref{lb53}). Set $\uppsi=\mc Y(\cdot,r)$. We shall verify the equivalence of: (a) the two conditions of Prop. \ref{lb11} (stated for the linear functional $\mc Y(\cdot,r)$), (b) the analytic Jacobi identity in Def. \ref{lb51}.

Assume (b). Part 1 of Prop. \ref{lb11} is already proved. For each $u\in\Ubb,w_1\in\mc W_1,w_2\in\mc W_2,w_3'\in\mc W_3'$, set $w_\blt=w_1\otimes w_2\otimes w_3'$, let $f_{r,u,w_\blt}$ be the multivalued holomorphic function $f_r=f_\tipaz$ in the analytic Jacobi identity of Def. \ref{lb51}, which becomes single-valued on any open simply-connected  $U\subset\Pbb^1\setminus\Sbf$ if we specify a path $\uplambda$ in $\Pbb^1\setminus\Sbf$ from inside $U$ to $\bigstar$.  We define this function on $U$ to be $\wr\uppsi(\uplambda,\mc U_\varrho(\zeta)^{-1}u,w_\blt)\big|_U$. Moreover, this multivalued function can be chosen such that when $U=\Upsigma_r$ and $\uplambda$ is any path in $\Upsigma_r$ with initial point $\bigstar$, then $\wr\uppsi(\uplambda,\mc U_\varrho(\zeta)^{-1}u,w_\blt)\big|_{\Upsigma_r}$ agrees with the common (single-valued) holomorphic function on $\Upsigma_r$ mentioned in the analytic Jacobi identity of Def. \ref{lb51}. It is now easy to check that condition 2 of Prop. \ref{lb11} holds for $W_j$ being $W_0$ or $W_r$ (notice Rem. \ref{lb9}). As for $W_\infty$, condition 2 also holds due to \eqref{eq83} and the fact that $\mc U_\varrho(\zeta)^{-1}u$ equals $\mc U(\tipxgamma_z)u$ under the trivialization $\mc U_\varrho(\zeta^{-1})$  (cf. Ex. \ref{lb36}). The proof of (a) is complete.

Assume (a). Let $\wr\uppsi$ be as in Prop. \ref{lb11}. Then it is easy to check that $\wr\uppsi(\uplambda,\mc U_\varrho(\zeta^{-1})u,w_\blt)\big|_{\Upsigma_r}$ restricts to \eqref{eq76}, \eqref{eq77}, \eqref{eq78} in the required regions. This verifies the analytic Jacobi identity in Def. \ref{lb51}, hence proves (b).
\end{proof}

We are now ready to prove the main result of this subsection. Let
\begin{align*}
\mc I{\mc W_3\choose \mc W_1\mc W_3}=\bigg\{\text{Type }{\mc W_3\choose \mc W_1\mc W_2}\text{ intertwining operators of }\Ubb\bigg\}.	
\end{align*}
Recall that $\mc W_1,\mc W_2,\mc W_3$ are  $g_1$-, $g_2$-, $g_3$-twisted $\Ubb$-modules. Let $\CB_{\fk P_r}(\mc W_1,\mc W_2,\mc W_3')$ be the space of conformal blocks associated to $\fk P_r$ and $\mc W_1,\mc W_2,\mc W_3'$. For each $r>0$, define the restriction map
\begin{gather*}
\nu_r:	\mc I{\mc W_3\choose \mc W_1\mc W_2}\rightarrow \CB_{\fk P_r}(\mc W_1,\mc W_2,\mc W_3'),\\
\mc Y\mapsto \mc Y(\cdot,r).
\end{gather*}
That the range of $\nu_r$ is inside $ \CB_{\fk P_r}(\mc W_1,\mc W_2,\mc W_3')$ is due to Prop. \ref{lb54}.

\begin{thm}
For each $r>0$, the linear map $\nu_r$ is bijective.
\end{thm}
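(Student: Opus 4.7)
My approach is to use formula \eqref{eq84} of Proposition \ref{lb56} both as the key to injectivity and as the blueprint for reconstructing $\mc Y$ from $\mc Y(\cdot,r)$; Proposition \ref{lb54} will then supply the Jacobi identity at $\tipaz=r$.

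\emph{Injectivity.} If $\nu_r(\mc Y)=0$, then \eqref{eq84} with $\xi=r$ gives
\[
\bk{\mc Y(w_1,zr)w_2,w_3'}=\bk{\mc Y(z^{-L_0}w_1,r)z^{-L_0}w_2,z^{L_0}w_3'}=0
\]
for every $z\in\Cbb^\times$ and all $w_1,w_2,w_3'$, so $\mc Y=0$ since $zr$ ranges over $\Cbb^\times$.

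\emph{Surjectivity.} Given $\uppsi\in\CB_{\fk P_r}(\mc W_1,\mc W_2,\mc W_3')$, I will define, for each $\tipaz\in\Cbb^\times$ with chosen $\arg\tipaz$,
\[
\bk{\mc Y(w_1,\tipaz)w_2,w_3'}:=\uppsi\bigl((\tipaz/r)^{-L_0}w_1\otimes(\tipaz/r)^{-L_0}w_2\otimes(\tipaz/r)^{L_0}w_3'\bigr),
\]
with $(\tipaz/r)^{\pm L_0}$ defined via the Jordan decomposition $L_0=\Lss+\Lni$. At $\tipaz=r$, $\arg\tipaz=0$ this specializes to $\mc Y(\cdot,r)=\uppsi$, so $\nu_r(\mc Y)=\uppsi$ once $\mc Y$ is shown to be an intertwining operator. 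Proposition \ref{lb54} immediately supplies the Jacobi identity at $\tipaz=r$, $\arg\tipaz=0$. For Jacobi at an arbitrary $\tipaz=rt$, I will use the rescaling $t^{-L_0}Y^{g_j}(u,z)w=Y^{g_j}(t^{-L_0}u,z/t)t^{-L_0}w$ (an instance of \eqref{eq84} applied to the intertwining operator $Y^{g_j}$); substituting into each of the three series \eqref{eq76}--\eqref{eq78} for $\mc Y$ at $\tipaz=rt$ transforms them into the corresponding series for $\uppsi$ at $\tipaz=r$, under $z\mapsto z/t$ and $u\mapsto t^{-L_0}u$. The three regions $|z|>|\tipaz|$, $|z|<|\tipaz|$, $|z-\tipaz|<|\tipaz|$ match those for $\uppsi$ after the substitution, so the common analytic extension for $\uppsi$ transfers to $\mc Y$. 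Finally, using $\tipaz\partial_\tipaz(t^{\pm L_0}w)=\pm L_0 t^{\pm L_0}w$, a short algebraic computation verifies equation \eqref{eq79} at every $\tipaz$; by Remark \ref{lb55}, \eqref{eq79} combined with the Jacobi identity is equivalent to the $L_{-1}$-derivative, so $\mc Y\in\mc I{\mc W_3\choose\mc W_1\mc W_2}$.

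The delicate point will be the change-of-variable step in the Jacobi identity: I must confirm that the simply-connected domain $\Upsigma_{\tipaz}$ and the multivalued continuation to $\Cbb^\times\setminus\{\tipaz\}$ pull back correctly through $z\mapsto z/t$ with consistent choices of $\arg z$, $\arg\tipaz$, $\arg(z-\tipaz)$, and the argument choices implicit in $(\tipaz/r)^{L_0}$ and $t^{-L_0}u$. If this tracking becomes too intricate to carry out directly for all $\tipaz$, the fallback is to verify the two conditions only for $\tipaz$ in a small simply-connected neighborhood of $r$ and invoke Proposition \ref{lb57} to extend them globally.
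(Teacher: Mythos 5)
Your proof is correct and follows essentially the same route as the paper for surjectivity: define $\mc Y(\cdot,\tipaz)$ by rescaling $\mc Y(\cdot,r)$ via $(\tipaz/r)^{\pm L_0}$, invoke Proposition \ref{lb54} for Jacobi at $\tipaz=r$, extend to all $\tipaz$ by the rescaling argument of Proposition \ref{lb57}, and recover the $L_{-1}$-derivative from \eqref{eq79} and Remark \ref{lb55}. The one small variation is in the injectivity step — you apply the already-established scaling identity \eqref{eq84} directly, whereas the paper deduces uniqueness from the $L_{-1}$-derivative \eqref{eq81} together with the power-series Lemma \ref{lb68}; both are valid, and your version is arguably a little more direct since it produces the value $\mc Y(\cdot,z)$ at every $z$ explicitly rather than reconstructing it coefficient by coefficient from the Taylor expansion at $r$.
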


\begin{proof}
By \eqref{eq81} and Lemma \ref{lb68}, $\nu_r$ is injective. To prove that $\nu_r$ is surjective, we choose any $\mc Y(\cdot,r)$ in the codomain of $\nu_r$, which satisfies the analytic Jacobi identity (in Def. \ref{lb51}) for $\tipaz=r$ and $\arg\tipaz=\arg r=0$. For a general $\tipaz\in\Cbb^\times$ with argument, we set
\begin{align*}
\bk{\mc Y(w_1,\tipaz)w_2,w_3'}=\bk{\mc Y(\xi^{-L_0}w_1,r)\xi^{-L_0}w_2,\xi^{L_0}w_3}	
\end{align*}
where $\xi=r^{-1}\tipaz$ and $\arg\xi=\arg\tipaz$. We may use the same method as in the proof of Prop. \ref{lb57} to prove the analytic Jacobi identity for all $\tipaz$. Finally, it is straightforward to check that \eqref{eq79} holds, which is equivalent to condition 1 of Def. \ref{lb51} due to Rem. \ref{lb55}. This proves that $\mc Y$ is an intertwining operator whose value at $r$ is the given $\mc Y(\cdot,r)$.
\end{proof}

\begin{lm}\label{lb68}
Let $\mc V$ be a vector space, and let $f(t_1,\dots,t_N)=\sum_{n_1,\dots,n_N\in\Nbb}f_{n_1,\dots,n_N}t_1^{n_1}\cdots t_N^{n_N}$ be a formal power series of $t_1,\dots,t_N$ where  $f_{n_1,\dots,n_N}\in\mc V$. Suppose that for each $i$ there is a linear operator $A_i$ on $\mc V$ such that $\partial_{t_i}f=A_if$. Then $f$ is determined by $f(0,\dots,0)=f_{0,\dots,0}$.
\end{lm}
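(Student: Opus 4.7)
The plan is to extract a recursion on the coefficients $f_{n_1,\dots,n_N}$ by comparing coefficients of $t_1^{n_1}\cdots t_N^{n_N}$ in the formal identity $\partial_{t_i} f = A_i f$, then iterate the recursion starting from the constant term $f_{0,\dots,0}$. Since the problem is purely formal-algebraic, no convergence issues enter, and the argument is a short induction on the total degree.

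First, I would write
\begin{align*}
\partial_{t_i} f = \sum_{n_1,\dots,n_N\in\Nbb} (n_i+1)\, f_{n_1,\dots,n_{i-1},n_i+1,n_{i+1},\dots,n_N}\, t_1^{n_1}\cdots t_N^{n_N}
\end{align*}
and
\begin{align*}
A_i f = \sum_{n_1,\dots,n_N\in\Nbb} A_i f_{n_1,\dots,n_N}\, t_1^{n_1}\cdots t_N^{n_N}.
\end{align*}
Equating coefficients yields
\begin{align*}
f_{n_1,\dots,n_{i-1},n_i+1,n_{i+1},\dots,n_N} = \frac{1}{n_i+1}\, A_i\, f_{n_1,\dots,n_N}.
\end{align*}
This expresses any coefficient of total degree $d+1$ in terms of a coefficient of total degree $d$.

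Next, I would argue by induction on $n_1+\cdots+n_N$. The case $n_1=\cdots=n_N=0$ is given by the hypothesis $f(0,\dots,0)=f_{0,\dots,0}$. For the inductive step, any coefficient $f_{n_1,\dots,n_N}$ with $n_1+\cdots+n_N>0$ has some $n_i>0$, and the recursion expresses it as $\frac{1}{n_i}A_i f_{n_1,\dots,n_i-1,\dots,n_N}$; by the inductive hypothesis that coefficient is determined by $f_{0,\dots,0}$, hence so is $f_{n_1,\dots,n_N}$. Iterating gives the closed form
\begin{align*}
f_{n_1,\dots,n_N} = \frac{1}{n_1!\cdots n_N!}\, A_1^{n_1}\cdots A_N^{n_N}\, f_{0,\dots,0},
\end{align*}
which depends only on $f_{0,\dots,0}$, proving the lemma.

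There is no serious obstacle. The only subtlety worth remarking is that different orderings of the recursion (e.g. applying $A_i$ before $A_j$ or vice versa) must yield the same coefficient; but this consistency is automatic from the hypothesis, since $\partial_{t_i}\partial_{t_j}f=\partial_{t_j}\partial_{t_i}f$ forces $A_iA_jf_{n_\blt}=A_jA_if_{n_\blt}$ on every coefficient appearing in $f$. So no commutativity assumption on the $A_i$ needs to be imposed globally on $\mc V$; the coefficients of $f$ automatically live in a common invariant subspace on which the relevant compositions agree.
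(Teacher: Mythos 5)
Your proof is correct and follows essentially the same route as the paper: extract the coefficient recursion $f_{n_1,\dots,n_i+1,\dots,n_N}=(n_i+1)^{-1}A_i f_{n_1,\dots,n_N}$ from $\partial_{t_i}f=A_if$ and iterate from the constant term. The extra remark on order-independence of the iteration is fine but not needed for mere determinacy, since the recursion holds for every $i$ and any fixed order already pins down each coefficient.
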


\begin{proof}
	$\partial_{t_i}f=A_if$ shows $f_{n_1,\dots,n_i+1,\dots,n_N}=(n_i+1)^{-1}A_if_{n_1,\dots,n_i,\dots,n_N}$. Therefore all $f_{n_1,\dots,n_N}$ are determined by $f_{0,\dots,0}$.
\end{proof}

\subsection{OPE for permutation-twisted intertwining operators}\label{lb71}

Choose $0<r_2<r_1$ satisfying $r_1-r_2<r_2$. Set $\arg r_1=\arg r_2=\arg(r_2-r_1)=0$. Set
\begin{align*}
\Upsigma_{r_1,r_2}=\Cbb\backslash\{\im t,r_1+\im t,r_2+\im t:t\geq 0\}.	
\end{align*}
We can then define a positively $4$-pathed Riemann sphere
\begin{align*}
\fk P_{r_1,r_2}=(\Pbb^1;0,r_1,r_2,\infty;\zeta,\zeta-r_1,\zeta-r_2,\zeta^{-1};\upsigma_1,\upsigma_2,\upsigma_3,\upsigma_4)	
\end{align*}
where the four paths $\upsigma_1,\dots,\upsigma_4$ with common end point are all inside $\Upsigma_{r_1,r_2}$. Then, by replacing $\upsigma_\blt$ by equivalent paths (cf. \ref{lb2}), $\fk P_{r_1,r_2}$ has the following two decompositions:
\begin{enumerate}[label=(\alph*)]
\item $\fk P_{r_1,r_2}$ is the sewing of  $\fk P_{r_1}$ along the marked point $0$ (and its local coordinate $\zeta$), and $\fk P_{r_2}^{(a)}\simeq\fk P_{r_2}$ along the marked point $\infty$ (with local coordinate $\zeta^{-1}$). To perform this sewing, we choose $r_2<\rho_2<\rho_1<r_1$, remove a small closed disk from $\{z\in\Cbb:|z|<\rho_1\}$ inside $\fk P_{r_1}$, remove and one from $\{z\in\Pbb^1:|z|^{-1}<\rho_2^{-1}\}$ inside $\fk P_{r_2}^{(a)}$ (the two discs play the role of $W_0,M_0$ in Sec. \ref{lb51}), and glue the remaining part to obtain $\fk P_{r_1,r_2}$. After gluing, any point $|z|\geq r_1$ of $\fk P_{r_1}$ and any point $|z|\leq r_2$ of $\fk P_{r_2}^{(b)}$ become the point $z$ of $\fk P_{r_1,r_2}$.

\item $\fk P_{r_1,r_2}$ is the sewing of  $\fk P_{r_2}^{(b)}\simeq\fk P_{r_2}$ along the marked point $r_2$ and $\fk P_{r_1-r_2}$ along the marked point $\infty$. Similar to (a), one removes closed discs from two open discs and glue the remaining part to get $\fk P_{r_1,r_2}$. After gluing, any point $|z|\geq r_2$ of $\fk P^{(b)}_{r_2}$ becomes $z$ of $\fk P_{r_1,r_2}$, and any point $|z|\leq r_1-r_2$ of $\fk P_{r_1-r_2}$ becomes the point $z+r_2$ of $\fk P_{r_1,r_2}$.
\end{enumerate}

Note that in both (a) and (b), we need to replace the paths in $\fk P_{r_1},\fk P_{r_1-r_2},\fk P_{r_2}^{(a)},\fk P_{r_2}^{(b)}$ by equivalent paths such that \eqref{eq90} and \eqref{eq54} hold. We record the result
\begin{align}
\fk P_{r_1,r_2}=\fk P_{r_1}\# \fk P_{r_2}^{(a)}=\fk P_{r_2}^{(b)}\#\fk P_{r_1-r_2}.	\label{eq97}
\end{align}

Assume $E$ is a finite set.  Choose $g_1,g_2,g_3\in\Perm(E)$ and set $g_4=(g_1g_2g_3)^{-1}$, and assign group elements to the marked points as follows
\begin{gather}\label{eq91}
\begin{array}{c}
\fk P_{r_1}: g_2g_3\rightsquigarrow 0,\quad g_1\rightsquigarrow	r_1,\quad (g_1g_2g_3)^{-1}\rightsquigarrow \infty\\[0.7ex]
\fk P_{r_2}^{(a)}: g_3\rightsquigarrow 0,\quad g_2\rightsquigarrow	r_2,\quad (g_2g_3)^{-1}\rightsquigarrow \infty
\end{array}
\end{gather}
and also
\begin{gather}\label{eq92}
\begin{array}{c}
\fk P_{r_2}^{(b)}: g_3\rightsquigarrow 0,\quad g_1g_2\rightsquigarrow	r_2,\quad (g_1g_2g_3)^{-1}\rightsquigarrow \infty\\[0.7ex]
\fk P_{r_1-r_2}: g_2\rightsquigarrow 0,\quad g_1\rightsquigarrow	r_1-r_2,\quad (g_1g_2)^{-1}\rightsquigarrow \infty
\end{array}
\end{gather}
In each of the four cases, the group elements are admissible (cf. Def. \ref{lb61}). We can use either of the  above two sets of data to define an action of $\Gamma_{r_1,r_2}=\pi_1(\Pbb^1\setminus\{0,r_1,r_2,\infty\},\bigvarstar)$ on $E$ (where $\bigvarstar$ is the common end point of the four paths of $\fk P_{r_1,r_2}$) as in Subsec. \ref{lb60}, and the results are the same: let $\upepsilon_j$ be the circle around $0,r_1,r_2,\infty$ respectively when $j=1,2,3,4$, then $[\upsigma_j^{-1}\upepsilon_j\upsigma_j]$ acts as $g_j$. (We set $g_4=(g_1g_2g_3)^{-1}$.)

Now, let $\fk X_{r_1},\fk X_{r_2}^{(a)},\fk X_{r_2}^{(b)},\fk X_{r_1-r_2}$ be, respectively, the permutation branched coverings of $\fk P_{r_1},\fk P_{r_2}^{(a)},\fk P_{r_2}^{(b)},\fk P_{r_1-r_2}$ and the fundamental group actions defined by \eqref{eq91} and \eqref{eq92}. Let $\fk X_{r_1,r_2}$ be the permutation branched covering of $\fk P_{r_1,r_2}$ defined by the action described previously. Then, by Thm. \ref{lb46}, 
\begin{align}
\fk X_{r_1,r_2}\simeq \fk X_{r_1}\#\fk X_{r_2}^{(a)}\simeq \fk X_{r_2}^{(b)}\#\fk X_{r_1-r_2}	\label{eq99}
\end{align}
where the two sewings are defined with respect the sewings of $\fk P_{r_1}$ with $\fk P_{r_2}^{(a)}$ and $\fk P_{r_2}^{(b)}$ with $\fk P_{r_1-r_2}$ (cf. Subsec. \ref{lb43}).

Now we assume $\Vbb$ is CFT-type, $C_2$-cofinite, and rational. Let $\Ubb=\Vbb^{\otimes E}$.  Associate semi-simple  permutation twisted $\Ubb$-modules (``semi-simple" means that it is a finite direct sum of irreducible twisted $\Ubb$-modules) to marked points
\begin{gather}\label{eq93}
	\begin{array}{c}
		\fk P_{r_1}: \mc W_\pi\rightsquigarrow 0,\quad \mc W_1\rightsquigarrow	r_1,\quad \mc W_4'\rightsquigarrow \infty\\[0.7ex]
		\fk P_{r_2}^{(a)}: \mc W_3\rightsquigarrow 0,\quad \mc W_2\rightsquigarrow	r_2,\quad \mc W_\pi'\rightsquigarrow \infty
	\end{array}
\end{gather}
and also
\begin{gather}\label{eq94}
	\begin{array}{c}
		\fk P_{r_2}^{(b)}: \mc W_3\rightsquigarrow 0,\quad \mc W_\iota\rightsquigarrow	r_2,\quad \mc W_4'\rightsquigarrow \infty\\[0.7ex]
		\fk P_{r_1-r_2}: \mc W_2\rightsquigarrow 0,\quad \mc W_1\rightsquigarrow	r_1-r_2,\quad \mc W_\iota'\rightsquigarrow \infty
	\end{array}
\end{gather}
whose types correspond to the group elements in \eqref{eq91}, \eqref{eq92}. By Thm. \ref{lb47}, all these twisted $\Ubb$-modules arise from untwisted  semi-simple $\Vbb$-modules as described in Thm. \ref{lb14}. The following is the main result of this section. We do not assume $0<r_1-r_2<r_2<r_1$. 

\begin{thm}\label{lb63}
The following are true.
\begin{enumerate}
\item For any $\mc Y_\alpha\in\mc I{\mc W_4\choose \mc W_1\mc W_\pi}$, $\mc Y_\beta\in\mc I{\mc W_\pi\choose\mc W_2\mc W_3}$, $\mc Y_\gamma\in\mc I{\mc W_4\choose\mc W_\iota\mc W_3}$, $\mc Y_\delta{\mc W_\iota\choose\mc W_1\mc W_2}$, and for any $w_1\in\mc W_1,w_2\in\mc W_2,w_3\in\mc W_3,w_4'\in\mc W_4$, the series
\begin{align}
&\bk{\mc Y_\alpha(w_1,r_1)\mc Y_\beta(w_2,r_2)w_3,w_4'}\nonumber\\
:=&\sum_{n\in\frac 1{|g_2g_3|}\Nbb}\bk{\mc Y_\alpha(w_1,r_1)P_n^{g_2g_3}\mc Y_\beta(w_2,r_2)w_3,w_4'}\label{eq95}
\end{align}
and
\begin{align}
&\bk{\mc Y_\gamma(\mc Y_\delta(w_1,r_1-r_2)w_2,r_2)w_3,w_4'}\nonumber\\
:=&\sum_{n\in\frac 1{|g_1g_2|}\Nbb}\bk{\mc Y_\gamma(P_n^{g_1g_2}\mc Y_\delta(w_1,r_1-r_2)w_2,r_2)w_3,w_4'}\label{eq96}
\end{align}
converge absolutely on $I_1=\{(r_1,r_2):0<r_2<r_1\}$ and  $I_2=\{(r_1,r_2):0<r_1-r_2<r_1\}$ respectively. Moreover, if we vary $r_1,r_2$ and assume the $\arg$ of $r_1,r_2,r_1-r_2$ are all $0$, then these two functions are real analytic functions of  $r_1,r_2$, namely, they can be extended to holomorphic functions on neighborhoods of $I_1$ and $I_2$ respectively.

\item Let $\mc W_1,\mc W_2,\mc W_3,\mc W_4$ be semi-simple $g_1,g_2,g_3,g_1g_2g_3$-twisted $\Ubb=\Vbb^{\otimes E}$-modules. Then for each semi-simple $g_2g_3$-twisted module $\mc W_\pi$ (resp. $g_1g_2$-twisted module $\mc W_\iota$) and each $\mc Y_\alpha\in\mc I{\mc W_4\choose \mc W_1\mc W_\pi}$, $\mc Y_\beta\in\mc I{\mc W_\pi\choose\mc W_2\mc W_3}$ (resp. $\mc Y_\gamma\in\mc I{\mc W_4\choose\mc W_\iota\mc W_3}$, $\mc Y_\delta{\mc W_\iota\choose\mc W_1\mc W_2}$), there exists a $g_1g_2$-twisted module $\mc W_\iota$ (resp. $g_2g_3$-twisted module $\mc W_\pi$) and $\mc Y_\gamma\in\mc I{\mc W_4\choose\mc W_\iota\mc W_3}$, $\mc Y_\delta{\mc W_\iota\choose\mc W_1\mc W_2}$ (resp. $\mc Y_\alpha\in\mc I{\mc W_4\choose \mc W_1\mc W_\pi}$, $\mc Y_\beta\in\mc I{\mc W_\pi\choose\mc W_2\mc W_3}$) such that for any $w_1\in\mc W_1,w_2\in\mc W_2,w_3\in\mc W_3,w_4'\in\mc W_4$, \eqref{eq95} and \eqref{eq96} agree on $I_1\cap I_2$, assuming the $\arg$ of $r_1,r_2,r_1-r_2$ are all $0$.
\end{enumerate}
\end{thm}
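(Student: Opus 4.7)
\medskip

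The plan is to translate everything into conformal blocks on pathed Riemann spheres, sew them geometrically as in \eqref{eq97}, pass to their permutation coverings via \eqref{eq99}, and then apply the untwisted sewing/factorization results of Thm. \ref{lb48} and Thm. \ref{lb62}.

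First I would reinterpret each intertwining operator as a genus-$0$ twisted conformal block. By Prop. \ref{lb54}, the maps $\mc Y_\alpha(\cdot,r_1)$, $\mc Y_\beta(\cdot,r_2)$, $\mc Y_\gamma(\cdot,r_2)$, $\mc Y_\delta(\cdot,r_1-r_2)$ are conformal blocks associated respectively to $\fk P_{r_1}$, $\fk P_{r_2}^{(a)}$, $\fk P_{r_2}^{(b)}$, $\fk P_{r_1-r_2}$ (with the module assignments \eqref{eq93}--\eqref{eq94}). Since $\mc W_\pi$ is $g_2g_3$-twisted and $\mc W_\pi'$ is $(g_2g_3)^{-1}$-twisted, and similarly for $\mc W_\iota,\mc W_\iota'$, the condition $g_0h_0=1$ of \eqref{eq55} is exactly met at the pair of marked points being sewn. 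Up to reorganising paths by equivalence (Rem. \ref{lb2}, \ref{lb27}) and choosing initial points with $\eta_0(\upgamma_0(0))\varpi_0(\updelta_0(0))=1$, the sewing procedure of Subsec. \ref{lb60} identifies \eqref{eq95} with the twisted sewing $\uppsi^{a\#b}$ of the two conformal blocks corresponding to $\mc Y_\alpha,\mc Y_\beta$ along $0\in\fk P_{r_1}$ and $\infty\in\fk P_{r_2}^{(a)}$; likewise \eqref{eq96} is the twisted sewing of those corresponding to $\mc Y_\gamma,\mc Y_\delta$. The indexing $n\in \frac 1{|g_2g_3|}\Nbb$ in \eqref{eq95} matches the $\wtd L_0^{g_2g_3}$-grading of $\mc W_\pi$, which under the $\otimes$-factorizable identification \eqref{eq74} is exactly the grading used in Thm. \ref{lb48} (and similarly for \eqref{eq96}).

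For part (1), since $\Vbb$ is CFT-type, $C_2$-cofinite, and rational, Thm. \ref{lb47} writes each semi-simple twisted $\Ubb$-module as a finite direct sum of $\otimes$-factorizable irreducibles built from finitely-generated (irreducible) $\Vbb$-modules. Thus Thm. \ref{lb48} applies and gives the $q$-absolute convergence of the sewings, which in particular yields the absolute convergence of \eqref{eq95} on $I_1$ and of \eqref{eq96} on $I_2$, with the limit a twisted conformal block on $\fk P_{r_1,r_2}$. For the real-analyticity in $r_1,r_2$: by Prop. \ref{lb56}, each individual summand in \eqref{eq95} is of the form $r_1^{a} r_2^{b}$ times matrix coefficients of operators built from $L_0$ (and the operators $\Lss,\Lni$), hence is holomorphic in $r_1,r_2$ on a neighbourhood of the positive quadrant once the $\arg$'s are fixed to be $0$; $q$-absolute convergence (Def. \ref{lb18}) upgrades to uniform convergence on compact subsets of this neighbourhood, so the sum is holomorphic in $(r_1,r_2)$. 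The argument for \eqref{eq96} is identical.

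For part (2), consider the twisted conformal block $\uppsi^{a\#b}$ on $\fk P_{r_1,r_2}$ obtained by sewing $\uppsi^a=\mc Y_\alpha(\cdot,r_1)$ with $\uppsi^b=\mc Y_\beta(\cdot,r_2)$. By Main Thm. \ref{lb14} and Thm. \ref{lb48}, this equals the sewing of the corresponding untwisted $\Vbb$-conformal blocks $\upphi^a,\upphi^b$ on the permutation coverings $\fk X_{r_1},\fk X_{r_2}^{(a)}$, yielding a $\Vbb$-conformal block on $\fk X_{r_1}\#\fk X_{r_2}^{(a)}$. By \eqref{eq99} this permutation covering is biholomorphic (with matching marked points and local coordinates) to $\fk X_{r_2}^{(b)}\#\fk X_{r_1-r_2}$. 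Applying the sewing/factorization Thm. \ref{lb62} to the latter decomposition writes the block as a finite sum
\[
\sum_{\mc X_\iota\in\mc E_{\iota,\blt}}\upphi^c_{\mc X_\iota}\cdot\upphi^d_{\mc X_\iota}
\]
of sewings of $\Vbb$-conformal blocks on $\fk X_{r_2}^{(b)}$ and $\fk X_{r_1-r_2}$, where $\mc X_\iota$ runs over $\otimes$-factorizable irreducible $g_1g_2$-twisted $\Ubb$-modules (Thm. \ref{lb47}). Translating back via Main Thm. \ref{lb14} gives twisted $\Ubb$-conformal blocks on $\fk P_{r_2}^{(b)}$ and $\fk P_{r_1-r_2}$, and Prop. \ref{lb54} converts these into the desired $\mc Y_\gamma,\mc Y_\delta$, with $\mc W_\iota$ the direct sum of the $\mc X_\iota$ appearing with nonzero summand. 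On $I_1\cap I_2$ both \eqref{eq95} and the resulting $\sum_{\mc X_\iota}$-decomposition of \eqref{eq96} converge to the same conformal block on $\fk P_{r_1,r_2}$, giving the claimed equality. The reverse direction (starting from $\mc Y_\gamma,\mc Y_\delta$) is symmetric.

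The main obstacle will be the careful bookkeeping of three things: (i) matching the twisted grading index $n\in\frac 1{|g_2g_3|}\Nbb$ of \eqref{eq95} with the $\Nbb^{E(g_0)}$-indexing of the untwisted sewing via the identity \eqref{eq74}, so that Def. \ref{lb18} applies as stated; (ii) checking the disjointness/positivity hypotheses \eqref{eq90}, \eqref{eq54} for the specific paths $\upsigma_\blt$ in $\fk P_{r_1,r_2}$, which forces rechoosing the paths of $\fk P_{r_1}$ and $\fk P_{r_2}^{(a)}$ within their equivalence classes; and (iii) verifying that the two sewing decompositions of $\fk X_{r_1,r_2}$ in \eqref{eq99} really share the same labelling of marked points by irreducible $\Vbb$-modules via $\Upsilon$, so that Thm. \ref{lb62} can be invoked without ambiguity in the module assignments.
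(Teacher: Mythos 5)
Your overall strategy — reinterpret the intertwining operators as twisted conformal blocks via Prop.~\ref{lb54}, identify the series \eqref{eq95}, \eqref{eq96} with twisted sewings corresponding to the geometric decompositions \eqref{eq97}, and then invoke Thm.~\ref{lb48} for convergence and Thm.~\ref{lb62} for factorization — is exactly the paper's approach, and the convergence part of (1) is handled the same way. However, there are two places where your argument is incomplete, and the second one is a real gap.

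First, your analyticity argument ``$q$-absolute convergence upgrades to uniform convergence on compact subsets'' is not quite right as stated. The $R>1$ witnessing $q$-absolute convergence in Def.~\ref{lb18} depends a priori on the fixed $(r_1,r_2)$ for which the sewing is performed, so it does not directly yield a uniform bound over a compact $(r_1,r_2)$-set. The paper closes this by applying the scaling identity \eqref{eq84} to rewrite \eqref{eq95} as $\sum_n f_n(r_1,r_2)$ with
\begin{align*}
f_n(r_1,r_2)=\Big\langle\mc Y_\alpha(r_1^{-L_0}w_1,1)\Big(\tfrac{r_2}{r_1\rho}\Big)^{L_0}P_n^{g_2g_3}\mc Y_\beta\big((\rho/r_2)^{L_0}w_2,\rho\big)(\rho/r_2)^{L_0}w_3,\,r_1^{L_0}w_4'\Big\rangle,
\end{align*}
restricting to simple submodules on which $\wtd L_0^{g_2g_3}-L_0$ is a constant $\lambda$ so that the factor becomes $(r_2/(r_1\rho))^{n+\lambda}$, and then choosing $\rho<1$ bigger than every $r_2/r_1$ over the compact set $K$. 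This produces the uniform domination $\sup_{K}\sum_n|f_n|<\infty$ needed for holomorphy. Your sketch captures the right flavour, but the precise mechanism matters.

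The more serious gap is in part (2). Your application of Thm.~\ref{lb62} (via the isomorphism \eqref{eq99} and the untwisted factorization) produces $\mc W_\iota,\mc Y_\gamma,\mc Y_\delta$ for which \eqref{eq95}$=$\eqref{eq96} at the one fixed $(r_1,r_2)$ where the factorization theorem was applied. You then assert ``on $I_1\cap I_2$ both converge to the same conformal block,'' but nothing in the argument extends the pointwise equality to all of $I_1\cap I_2$: the factorization and the modules $\mc X_\iota$ are determined relative to a fixed pointed Riemann surface, and there is no a priori reason the same objects match \eqref{eq95} as $(r_1,r_2)$ moves. The paper supplies the missing step: using the $L_{-1}$-derivative \eqref{eq81} and the commutation \eqref{eq98}, both sides are functions $f_i(t_1,t_2,\cdot)$ with $\partial_{t_1}f_i=f_i(\cdot,L_{-1}w_1\otimes\cdots)$ and $\partial_{t_2}f_i=f_i(\cdot,w_1\otimes L_{-1}w_2\otimes\cdots)$, so by the real-analyticity established in (1) and the uniqueness Lemma~\ref{lb68} (a power series with $\partial_{t_i}f=A_if$ is determined by $f(0,\dots,0)$), agreement at one point propagates to all of $I_1\cap I_2$. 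Without this last paragraph the argument for part (2) does not close. Note also that this is a genuine missing idea and not among the three ``bookkeeping'' obstacles you flag at the end.
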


Note that if $g_1,g_2,g_3\in G\leq\Perm(E)$ where $G$ is solvable, $\Ubb^G$ is $C_2$-cofinite and rational by \cite{Miy15,CM16}. Then Thm. \ref{lb63} follows from \cite{McR21}.  The $C_2$-cofiniteness  of $\Ubb^G$ is conjectured to be true for any finite group $G$. If this were proved, then \cite{McR21} would imply that $\Ubb^G$ is also rational. Then Thm. \ref{lb63} would also follow from \cite{McR21}. Here, we provide a proof without knowing $\Ubb^G$ to be $C_2$-cofinite.

\begin{proof}
\eqref{eq95} and \eqref{eq96} are the sewing of permutation-twisted conformal blocks corresponding to the geometric sewing \eqref{eq97}. Therefore, by Thm. \ref{lb48}, when $r_1,r_2$ are in $I_1$ or $I_2$ respectively, \eqref{eq95} or \eqref{eq96} converges absolutely to a conformal block associated to $\fk P_{r_1,r_2}$ and the twisted modules $\mc W_1,\mc W_2,\mc W_3,\mc W_4'$. Moreover, due to Thm. \ref{lb62}, for fixed $r_1,r_2$, any such conformal block can be expressed either as \eqref{eq95} or \eqref{eq96}. Therefore, statement 2 holds for any fixed $(r_1,r_2)\in I_1\cap I_2$.

Note that $L_0$ commutes with $P_n^g$ (since $[L_0,\wtd L_0^g]=0$). Choose $\rho\in (0,1)$. Then by \eqref{eq84}, the series \eqref{eq95} equals $\sum_{n\in\frac 1{|g_2g_3|}\Nbb}f_n(r_1,r_2)$ where
\begin{align*}
f_n(r_1,r_2)=\bk{\mc Y_\alpha(r_1^{-L_0}w_1,1)\Big(\frac{r_2}{r_1\rho}\Big)^{L_0}P_n^{g_2g_3}\mc Y_\beta((\rho/r_2)^{L_0}w_2,\rho)(\rho/r_2)^{L_0}w_3,r_1^{L_0}w_4'}.
\end{align*}
To prove the analyticity of \eqref{eq95}, it suffices to restrict to each simple submodule of $\mc W_\pi$ such that  on this submodule $L_0$ and $\wtd L_0^{g_2g_3}$ differ by a constant $\lambda$. It follows that $\big(\frac{r_2}{r_1\rho}\Big)^{L_0}P_n^{g_2g_3}=\big(\frac{r_2}{r_1\rho}\Big)^{n+\lambda}P_n^{g_2g_3}$. It is clear that for each $(r_1,r_2)$ in a compact subset $K$ of $O_1=\{(z_1,z_2)\in\Cbb^2:0<|z_2|<|z_1|,\mathrm{Re}(z_1)>0,\mathrm{Re}(z_2)>0\}$, one can choose $\rho$ greater than every $r_2/r_1$. Hence, by the absolute convergence proved in the first paragraph, we have $\sup_{(r_1,r_2)\in K}\sum_n |f_n(r_1,r_2)|<+\infty$. Therefore, as each $f_n(r_1,r_2)$ is analytic over $r_1,r_2$, the sum of $f_n$, which is just \eqref{eq95}, must be analytic on $O_1$ and hence analytic on $I_1$. The same method proves that \eqref{eq96} is analytic on $I_2$.

Finally, we assume that \eqref{eq95} equals \eqref{eq96} for one $(r_1,r_2)\in I_1\cap I_2$, and show that they are equal for all $(r_1+t_1,r_2+t_2)\in I_1\cap I_2$. By \eqref{eq81}  (the $L_{-1}$-derivative) and \eqref{eq98} (applied to $\mc Y_\delta$), if we write \eqref{eq95} (resp. \eqref{eq96})  as $f_i(t_1,t_2,w_1\otimes w_2\otimes w_3\otimes w_4')$ where $i=1$ (resp. $i=2$), then 
\begin{gather*}
\partial_{t_1}f_i(t_1,t_2,w_1\otimes w_2\otimes w_3\otimes w_4')=f_i(t_1,t_2,L_{-1}w_1\otimes w_2\otimes w_3\otimes w_4'),\\
\partial_{t_2}f_i(t_1,t_2,w_1\otimes w_2\otimes w_3\otimes w_4')=f_i(t_1,t_2,w_1\otimes L_{-1}w_2\otimes w_3\otimes w_4').	
\end{gather*}
The proof is thus finished by taking power series expansions of $t_1,t_2$ and applying Lemma \ref{lb68}.
\end{proof}

\begin{rem}
By the Main Theorem \ref{lb14}, $\mc Y_\alpha,\mc Y_\beta,\mc Y_\gamma,\mc Y_\delta$ can be viewed as untwisted conformal blocks associated to $\fk X_{r_1},\fk X_{r_2}^{(a)},\fk X_{r_2}^{(b)},\fk X_{r_1-r_2}$ (and suitable $\Vbb$-modules) respectively. Thus, Thm. \ref{lb63} can be viewed as the equivalence of sewing untwisted conformal blocks for $\Vbb$-modules associated to the two geometric sewing procedures described in \eqref{eq99}. Namely, Thm. \ref{lb63}, which describes the operator product expansion (OPE) of permutation-twisted intertwining operators, describes equivalently the OPE of certain untwisted conformal blocks associated to permutation coverings of $\Pbb^1$. Therefore, it relates the associativity isomorphism for tensor products in the crossed braided fusion category of  $\Perm(E)$-twisted $\Vbb^{\otimes E}$-modules and the OPE of untwisted $\Vbb$-conformal blocks associated to (possibly non-zero genera) compact Riemann surfaces. See Figure \ref{fig2} in the Introduction.
\end{rem}

\printindex

\noindent {\small \sc Yau Mathematical Sciences Center, Tsinghua University, Beijing, China.}

\noindent {\textit{E-mail}}: binguimath@gmail.com\qquad bingui@tsinghua.edu.cn
\end{document}